\begin{document}
\maketitle
\vspace{\stretch{1}}

\section*{Abstract}
\paragraph{} The study of Frobenius endomorphism provides numerous information about its corresponding Abelian variety. To understand the action of the Frobenius endomorphism, one may be interested in its eigenvalues. According to Weil's third conjecture ("Riemann hypothesis over finite fields"), they all have absolute value less than or equal to $2g\sqrt{p}$. Thus, the eigenvalues of the Frobenius endomorphism all belong to the same compact subset of the complex plane, and are roots of the same monic polynomial with integer coefficients (the characteristic polynomial of the Frobenius endomorphism). Such complex numbers are called \emph{algebraic integers "totally" in a compact subset}, which means algebraic integers all conjugates of which belong to a same given compact subset of the complex plane.

\medskip

\par The study of such algebraic integers helps to understand the eigenvalues of the Frobenius endomorphism, especially their distribution. In this paper, we will study the following question : under which conditions a compact subset of the complex plane has a finite or infinite number of algebraic integers \emph{totally} in it ? The problem can be studied in light of the notion of capacity of a compact subset, which comes from potential theory. In this paper, we will present the theory of capacity and some theorems (Fekete, Szegö, Robinson) derived from it that partially answer the question: in the case of a union of real segments, when the capacity is smaller (resp. larger) than 1, it contains a finite (resp. infinite) number of algebraic integers totally in it. For instance, for real line segments, the limit length is 4.

\medskip

This paper is written as part of a collective project conducted in École Polytechnique (France). It is aimed towards undergraduate audience in mathematics, with basic knowledge in algebra, topology, analysis, and dwells into a modern topic of research.

\vspace{\stretch{1}}

\section*{Acknowledgements}

\paragraph{} We are very grateful to our supervisor Javier \textsc{Fres\'an}. This project could not exist without Javier, without his involvement, his availability, his good humour and his always relevant advice. Most importantly, thanks to him, we experienced the pleasure of studying mathematics during this project.

\medskip

We would also like to thank our coordinator Stéphane \textsc{Bijakowski} for his kindness and constructive comments on our work.

\vspace{\stretch{1}}

\newpage
\vspace{\stretch{1}}

\tableofcontents
\newpage

\newpage

\section*{Preamble}

\subsection{Motivation}
\paragraph{}
Since the XIX-th century, research in arithmetic uses concepts from other branches of mathematics, leading to great progress with surprising efficiency. One of the branches created by this diversification is arithmetic geometry, which combines algebra and geometry to solve number theory problems. A quick description of the success of this union can be found in \cite{articlegéoarithmétique}.
\newline

One of the main subject of study in this area is the behavior of geometric objects (\textit{e.g.} "curves") defined over finite fields. These "curves" are rigorously defined as the domain on which a certain number of multi-variable polynomials defined over the considered finite field vanish. For instance, the first bisector is the domain in which the polynomial $P(X,Y) = X - Y$ vanishes. As a reminder, a finite field is a finite set with a well defined addition and multiplication. For example, we can take the field  $\mathbb{F}_p=\mathbb{Z}/p\mathbb{Z}$, where $p$ is a prime number, as well as the extensions $\mathbb{F}_{p^n}$ obtained by adding the root of an irreducible polynomial of degree $n$. 
\newline

One of the remarkable properties of these curves is that we can associate them with abelian groups in which it is possible -- in a certain way -- to "sum" two points on the curve, and to find an opposite to each point. These groups, called \emph{abelian varieties} exhibit very interesting properties and are still today an active area of mathematical research, with numerous results 
\cite{articleRechercheRécent}\cite{articleVariétés}. 
\newline

To study the curves over finite fields, a classical method is to study the action of homomorphisms (\emph{i.e.} applications compatible with the algebraic structure of the field) on those curves. One of the most famous and important homomorphisms in such cases is the so-called Frobenius endomorphism $\mathrm{Fr}$ which raises the coordinates of a point of the curve to the $p$\textit{-th} power, where $p$ is the characteristic of the field (the smallest integer such that $p\times1 = 0$ in the field, with $1$ the unit of the field).
In general algebra, the study of Frobenius endomorphism allows to deduce a number of properties on the set on which it acts. As such, it is natural to try  to study it in the case of curves over finite fields. Especially, since $x^p=x$ for all $x \in \mathbb{F}_p$, the number of fixed points of the Frobenius endomorphism is the number of points of the curve with coordinates in $\mathbb{F}_p$. In the same way, the fixed points of $\mathrm{Fr}^n$ are the points of the curve whose coordinates belong to $\mathbb{F}_{p^n}$. 
\newline

It is then possible to construct the generating function $\exp(\sum_{n \geq 1} N_n \frac{T^n}{n})$, where $N_n$ is the number of points of the curve with coordinates in $\mathbb{F}_{p^n}$. According to the Weil conjectures \cite{articleFrob}, this power series is actually a rational function : more precisely, a quotient $P(T)/(1-T)(1-pT)$ with $P \in \mathbb{Z}[X]$ a monic polynomial with integer coefficients. This polynomial is the characteristic polynomial of $\mathrm{Fr}$ acting -- not on the curve as previously seen -- but on the abelian variety defined from the curve. The eigenvalues of this Frobenius endomorphism are the root of the characteristic polynomial, which, according to Weil's third conjecture, ("Riemann hypothesis over finite fields"), are all inferior in absolute value to $2g \sqrt{p}$, in which $g$ is the dimension of the abelian variety. In particular, they are all included in the same compact subset (\emph{i.e.} closed and bounded) $K$ of the complex plane. The roots of such a polynomial are called \emph{algebraic integers totally included in $K$}. Of course, all monic polynomials with such roots are not associated with Frobenius endomorphisms. But understanding the behavior of this kind of polynomials allows a better comprehension of Frobenius endomorphisms.
\newline

With that being said, it is natural to inquire about the distribution of those eigenvalues, which can be deduced from the distribution of algebraic integers totally in a compact subset. This distribution itself can be deduced from the case where $K$ is a real line segment, studied by Robinson who showed in 1962 the following result: if the length of $K$ is strictly more than 4, there is an infinite number of algebraic integers totally in it; if the length is strictly inferior to 4, there is only a finite number of such numbers. The case in which the length is exactly 4 is -- for now -- only solved for a few special cases. The paper we present here gives a proof of this result.
\newpage

\subsection{Outline of the paper}

\paragraph{}
In a formal way, \emph{algebraic integers} are complex numbers which are the roots of a monic polynomial with coefficients in $\Z$. We shall call $\Q$-conjugates of an algebraic integer $\alpha$ the roots of the unique monic polynomial $P$ with rational coefficients of minimal degree for which  $P(\alpha) = 0$ (minimal polynomial). Given a compact set $K$ of $\C$, we say that an algebraic integer is totally in $K$ if all his $\Q$-conjugates are in $K$. We call \emph{degree of an algebraic integer} the degree of his minimal polynomial.

\medskip

The study of the distribution of algebraic integers totally in a real line began in the beginning of the XX-th century, with a first result from Schur in 1918 which solves the case of real lines with length strictly below 4. Many other results follow in the next years. Then, some 40 years later, Robinson eventually demonstrates his theorem, leaving only the case of length 4 unresolved.
\newline

The proof we shall give here for Robinson's theorem follows an article from
Jean-Pierre Serre \cite{serre2018bourbaki} which summarizes a lecture given during the Bourbaki seminar of March 2018 in Paris. The purpose of the present paper is to be understandable by a non-specialist audience, but one that has a solid grasp on the basic elements of algebra (group, polynomials), topology (weak convergence) and analysis (continuity, basic complex analysis). It seems to us that an undergraduate audience can follow our work and the proof we shall give without too much trouble. The more interested readers can find in the appendix additions on measure theory, on more subtle analysis points used in our reasoning, as well as proofs deemed too technical and without much interest as far as understanding the essence of this proof goes.
\newline

In the first part (Elementary remarks), we shall make a range of first observations on this problem, which will lead to ideas of proof, or invalidate certain proof schemes that could be thought of. This part includes an algorithmic approach that allows us to get an intuition on the result (although the programs' complexity does not allow for a very deep dive into polynomials of high degrees). Finally, we will demonstrate a first result first found by Kronecker (1857) which deals with the case where the compact subset of the complex plane is the unit circle. From this case, we will deduce the behavior of the real line $[-2,2]$, and, more widely, of any real line of the form $[n-2,n+2]$, with $n \in \mathbb{Z}$. From these remarks, we will give first bounds for the length of real lines containing an infinite/finite number of algebraic integers totally in it.
\newline

In the second part, we will look into the theory of capacity, which is the natural frame of study for dealing with the " size" of a compact set for this problem. This notion of capacity is deeply linked to the concepts introduced in measure theory, and some of the results from this part -- once combined with results from the last part -- allow us to prove a slightly stronger result than Robinson's theorem. In particular, we prove the convergence (in a certain sense) of a measure associated with a sub-sequence of polynomials towards the measure of equilibrium of the compact set.
\newline

We will then show in the third part Fekete's theorem \ref{thmFekete} which solves the case of a real line of length strictly less than four, then, Fekete-Szegö's theorem \ref{thmFeketeSzego} which is the equivalent of Robinson's theorem for complex compact subset of the complex plane (as opposed to the real lines of Robinson's theorem). We will see how the proof of this theorem is not sufficient to yield the real case, although some ideas can be reused to solve the latter, which is why the study of this theorem is very interesting for this proof.
\newline

In the last part of this paper, we will eventually give the full proof of Robinson's theorem, using the tools developed in the first parts as well as a few results pertaining to the field of algebraic geometry. After a quick introduction to the core concept of algebraic curves, we will use those results to finish the proof of the main result of this paper.
\newpage

\section{Elementary remarks}\label{section1}

\paragraph{}
Let us start by reminding ourselves of the definition of our object of study: algebraic integers totally in a compact set.

\begin{defi}
\begin{enumerate}[label=(\roman*)]
    \item An \emph{algebraic integer} is a complex number that is a root of some monic polynomial with coefficients in $\mathbb{Z}$.
    \item Given an algebraic integer $\zeta\in\C$, we call \emph{minimal polynomial of $\zeta$} the unique monic polynomial with integer coefficients among these with $\zeta$ as a root.
    \item Let us denote by $\gal(\zeta)$ the set of all roots of the minimal polynomial of $\zeta$.
    \item Let $E$ be a subset of $\C$ and $\zeta$ an algebraic integer. We say that $\zeta\in\C$ is \emph{totally in $E$} if $\gal(\zeta)\subset E$.
   
\end{enumerate}
\end{defi}

\medskip

\subsection{First results}

\paragraph{}
In this first section, we will start by stating three "elementary" remarks about the problem: first, at a fixed degree, there is only a finite number of algebraic integers totally in a given compact set; then, the derivatives of a minimal polynomial of algebraic integers totally in a compact set have interesting properties; finally, if $K_1 \subset K_2$ are two compact sets, then the numbers of algebraic integers totally in $K_1$ and $K_2$ respectively, can be compared.

\medskip

\begin{rmq}
Let $K$ be a compact subset of $\mathbb{C}$. Let $P$ be a monic polynomial of degree $n$ the roots of which are all in $K$, we can upper bound the $k$-th coefficient of $P$ by $ \sup(K)^k \binom{n}{k}$. It is enough to assure that, at fixed degree, there exists a finite number of such polynomials with all roots in $K$, and therefore a finite number of algebraic integers of degree $n$ totally in $K$.
\\

This remark leads us to several interesting approaches: 
\begin{itemize}\label{rmqBornes}
    \item We can find (algorithmically) the adequate polynomials of small degrees.
    \item If we want to prove that there exists a finite number of algebraic integers in a compact set, it is enough to prove that the degree of such an integer is bounded.
\end{itemize}
\end{rmq}

\begin{rmq}\label{rmqDerivee}
Let us consider a monic polynomial $P$ with integer coefficients whose roots are algebraic integers totally in a compact set $K$, where the degree of $P$ is greater than or equal to $2$, its derivative is not a monic polynomial. However it is still a polynomial with integer coefficients, and its roots are still in $K$, if this compact set is convex (which is the case for real line segments). Indeed, it follows from Gauss-Lucas theorem that the roots of $P'$ are in the convex hull of the set of roots of $P$.

Therefore we notice that the knowledge of $P'$ gives us information on the roots of $P$. This idea will be useful when we compute algorithmically algebraic integers totally in a compact set.

\end{rmq}

\begin{rmq}\label{rmqInclusion}
Let $K_1$ and $K_2$ be $2$ compact sets such that $K_1 \subset K_2$, then there is at least as many algebraic integers totally in $K_2$ as those totally in $K_1$ (since any algebraic integer totally in $K_1$ is an algebraic integer totally in $K_2$). This somewhat obvious remark will be of great use for proofs where it is easier to work on a smaller compact set.
\end{rmq}

\subsection{Algorithmic approach}

\paragraph{}
In order to get an intuition about the algebraic integers totally in a compact set, we have decided to conceive and implement an algorithm listing algebraic integers totally in a line segment, the minimal polynomial of which, is of fixed degree. These results will later be compared to the demonstrated theorems. 

\medskip

Let $K$ be a compact subset of $\C$. Two questions were kept to be answered by numerical experiments:
\begin{itemize}
    \item What are the algebraic integers totally in $K$? A particular focus is done on the distribution of such numbers. The results can be compared to the cases studied in section \ref{subsection13}. This question seems difficult because it requires a fine knowledge of the algebraic integers totally in $K$. That is why the core question of our paper is actually :
    \item Is there an infinite number of algebraic integers totally in $K$ ? The numerical answer to this question can be compared to the results of sections \ref{sectionthFEk} and \ref{section3}.
\end{itemize}
First, we will state the algorithmic principle that we use, then we will comment on the results we obtained.

\subsubsection{The algorithmic principle}
\paragraph{} In order to get numerical results, only monic polynomials with integer coefficients, the degree of which, is less than a given integer $n$ will be studied. The goal is to find a list as short as possible with all minimal polynomials, with degree less than $n$, of algebraic integers totally in $K$.
\medskip

\paragraph{} The remark \ref{rmqBornes} allows to give bounds to the coefficients of polynomial with a given degree, the roots of which, are all in $K$. Therefore, for a given degree, the polynomials that could be the minimal polynomial of an algebraic integer totally in $K$ can be enumerated:  

\vspace{\stretch{1}}

\begin{algorithm}
\caption{Naive algorithm}
\begin{algorithmic} 
\STATE{$l$ empty list}
\FORALL{$P$ monic polynomial of degree less than n, the coefficients of which, are integers in the bounds of remarks \ref{rmqBornes}}
\IF{the roots of $P$ are totally in $K$}
\STATE{add the roots of $P$ to $l$}
\ENDIF
\ENDFOR
\RETURN {$l$}
\end{algorithmic}
\end{algorithm}
\vspace{\stretch{2}}

\clearpage

However, this method is not very efficient. Indeed, the complexity is greater than the product of the double of the bounds:
$$ \displaystyle\prod_{k=1}^n  (2\sup(K))^k \binom{n}{k} C(n)\geqslant (2\sup(K))^{n^2/2},$$ where $C(n)$ is the complexity of computing the roots of a polynomial of degree $n$ (done by computing the eigenvalues of the companion matrix). For instance, when the compact set $K$ is a line segment subset of [-2.5,2.5], this algorithm cannot list the algebraic integers totally in $K$ with minimal polynomial of degree higher than $n=4$. This data is unfortunately not enough to make interesting conjectures.

\paragraph{} Since we mainly focus on the case of real segments, we implemented another specific algorithm  for segments of $\R$. Remark \ref{rmqDerivee} can therefore apply and gives a great improvement in efficiency. Indeed, when $K$ is a segment of $\R$, for all algebraic integers totally in $K$, all the derivatives of its minimal polynomial have simple roots and all the zeros are in $K$. More generally, let $l<n-1$ and let us consider the polynomials $P=\displaystyle\sum^n_{k=0}a_k X^k$ where the $n-l-1$ greatest coefficients $a_{l+1},\dots,a_{n-1}$ are fixed and $a_n=1$. Then $P^{(l)} = l!a_l + \displaystyle\sum_{k=1}^{n-l}\frac{(l+k)!}{k!}a_{l+k}X^k$ has $n-l$ roots in $K$. More specifically, $-a_l$ is between the maximum of all local minimums and the minimum of all local maximums of $\displaystyle\sum_{k=1}^{n-l}\frac{(l+k)!}{k!}a_{l+k}X^k$. Thus, by repeating for $l$ ranging from $n-2$ to $0$, we obtain stricter constraints on $a_0,\dots,a_{n-1}$ and thus a list of polynomials likely to have all their roots in $K$ much more restricted than in the naive algorithm. In summary, the limits of the remark \ref{rmqBornes} have been refined by taking into account the value of the other coefficients already set. Thus the polynomials whose roots are calculated are much less numerous and of lower degree, which explains why this algorithm is more efficient than the previous one. Thanks to this optimization, it was possible to obtain all algebraic integers, whose minimum polynomial's degree is at most $n=7$ for segments included in $[-2.5,2.5]$. The analysis of figure \ref{complexite} corroborates this observation. While it has been calculated that the naive algorithm has a complexity of at least $C^{n^2}$, the second algorithm has an experimental complexity close to $15^{n}$.

\paragraph{}

After writing this algorithm, we tried to compare it with the state of the art. In \cite{articleAlgo}, the authors combine techniques very close to ours but more refined with algebraic properties to solve a problem close to ours. Their algorithm allows  to go up to degree 13 in a reasonable time. But unfortunately, we cannot directly apply their algorithm to our problem.

\subsubsection{Results}

\paragraph{}
Considering remark \ref{rmqInclusion}, an algebraic integer totally in a compact set $K$ is totally in all compact sets containing $K$. We can think that "large" compact sets will have an infinite number of algebraic integers while "small" compact sets will have a finite number.

\paragraph{} To verify this, we have plotted  the evolution of the number of algebraic integers totally in $K$ according to the degree of their minimal polynomial for different segments $K$ of $\R$. The results in figure \ref{superFig} show that the length 4 is the boundary between "small" and "large" segments. For segments with lengths less than 4, there are fewer and fewer new algebraic integers as the degree increases (blue, orange). On the contrary, for segments longer than 4, there are more and more (green, red, purple). This observation remains valid when translating the segments.

\paragraph{} These observations are in accordance with the theorems that we will demonstrate in this paper. The behavior of "small" segments is treated by Fekete's theorem \ref{thmFekete} and that of "large" segments by Robinson's theorem \ref{thmRobinson}.

\paragraph{} When $K=[-2.2]$, we can notice that the number of new algebraic integers totally in $K$ found at the $n$ degree seems to increase, but not exponentially (figure \ref{figMoins22}). We can verify that the algebraic integers found are of the form $2\cos(\frac{2k\pi}{m})$ with $k$ and $m$ integers. We will see in the following subsection (\ref{prop-22}) that the algebraic integers totally in $[-2.2]$ are the $2\cos(\frac{2k\pi}{m})$ with $k$ and $m$ integers. However, the algorithm does not find them by increasing $m$.

\vspace{\stretch{1}}

\paragraph{} Beyond the finiteness of the set of algebraic integers totally in a compact set, one can be interested in the distribution of these numbers. The figure \ref{figRepart} shows that the distribution is essentially the same both when translating slightly and when the length changes. Moreover, this distribution is similar to that of the roots of the Chebyshev polynomials.

\begin{figure}[ht]
   
      \begin{minipage}[c]{.4\linewidth}
       \centering
    \includegraphics[width = 1.20\linewidth]{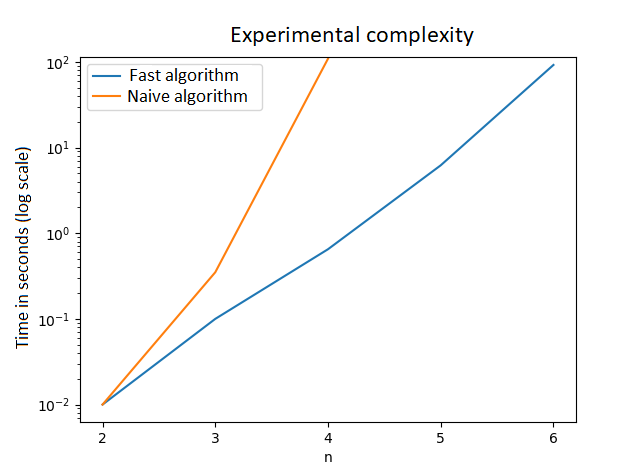}
    \caption{Experimental complexity of the two algorithms (length = 3.9)}
    \label{complexite}
   \end{minipage}
   \hfill
   \hspace{0.5cm}
   \begin{minipage}[c]{.55\linewidth}
    \centering
    \includegraphics[width=0.9\textwidth]{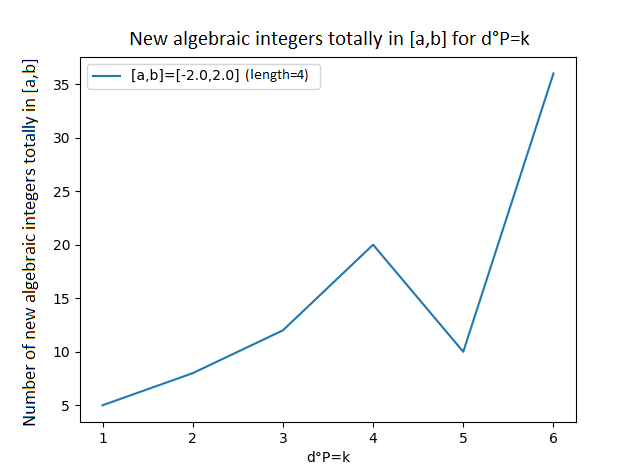}
    \caption{Number of new algebraic integers in function of the degree of the minimum polynomials for $K=[-2,2]$}
    \label{figMoins22}
   \end{minipage}
\end{figure}

\vspace{\stretch{2}}

\begin{figure}[b]
    \centering
    \includegraphics[width = 0.8\linewidth]{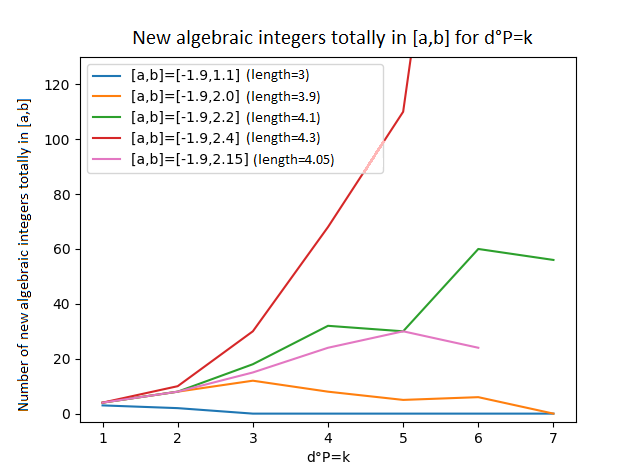}
    \caption{Number of new algebraic integers found in function of the degree of the polynomials traversed}
    \label{superFig}
\end{figure}

\begin{figure}[t]
    \centering
    \includegraphics[width = \linewidth]{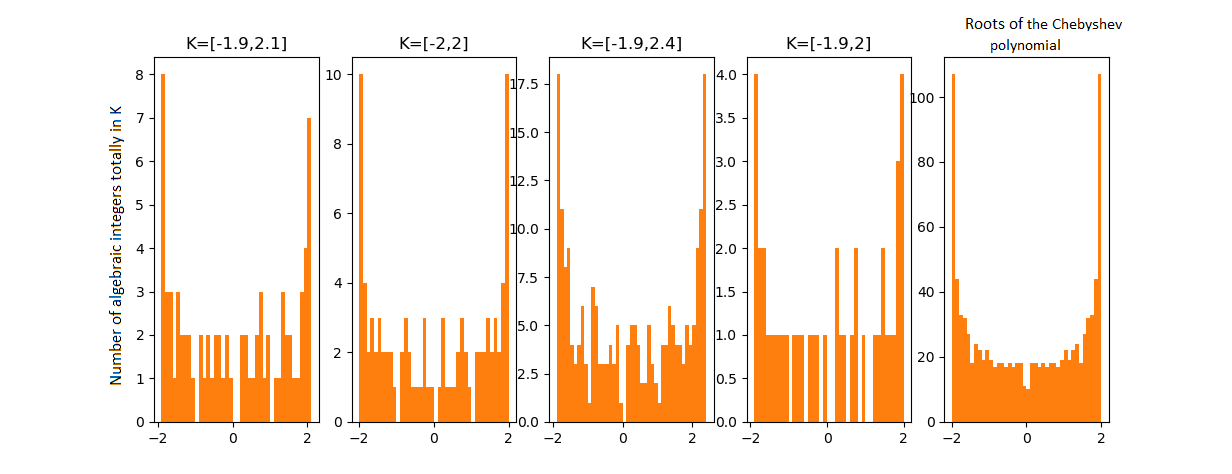}
    \caption{Distribution of algebraic integers of lower degree for different segments compared to that of the roots of Chebyshev polynomials}
    \label{figRepart}
\end{figure}

\clearpage
\subsection{Two elementary cases : $\U$ and $[-2,2]$ }\label{subsection13} 
\paragraph{}
Let us start with the well known cases of the unit circle and segments with length $4$ and integer end points.

\subsubsection{$K=\U$ : Kronecker's theorem}
\paragraph{}
When $K=\U$ is the unit circle, Kronecker's theorem gives us the following result : 

\begin{thm}[Kronecker]
Let $P\in\Z [X]$ be a monic polynomial whose complex roots are all in the unit circle $\U$, then its roots are roots of unity.
\end{thm}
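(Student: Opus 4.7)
The plan is to show that every root $\alpha$ of $P$ satisfies $\alpha^m = 1$ for some integer $m \geq 1$, by a pigeonhole argument applied to the orbit of $\alpha$ under powering.

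Let $\alpha_1, \ldots, \alpha_n$ denote the roots of $P$ (with multiplicity), so that $|\alpha_i| = 1$ for all $i$. For each integer $k \geq 1$, I would introduce the auxiliary polynomial
\[
P_k(X) = \prod_{i=1}^{n} (X - \alpha_i^k).
\]
The first key step is to show that $P_k \in \mathbb{Z}[X]$. Its coefficients are, up to sign, the elementary symmetric functions of $\alpha_1^k, \ldots, \alpha_n^k$, which are themselves symmetric polynomials in $\alpha_1, \ldots, \alpha_n$ with integer coefficients. By the fundamental theorem of symmetric polynomials, they can be written as polynomials with integer coefficients in the elementary symmetric functions of the $\alpha_i$, i.e.\ in the coefficients of $P$, which lie in $\mathbb{Z}$. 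So $P_k \in \mathbb{Z}[X]$.

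The second step is to bound the family $\{P_k\}_{k\geq 1}$. Since $|\alpha_i^k| = 1$, the $j$-th elementary symmetric function of $\alpha_1^k, \ldots, \alpha_n^k$ is bounded in absolute value by $\binom{n}{j}$, so each coefficient of $P_k$ lies in a bounded subset of $\mathbb{Z}$. Combined with $\deg P_k = n$, this shows that $\{P_k : k \geq 1\}$ is a \emph{finite} set of polynomials. In particular, the set $R := \bigcup_{k \geq 1} \{\text{roots of } P_k\}$ is finite, and for every root $\alpha_i$ of $P$ the entire sequence $\alpha_i, \alpha_i^2, \alpha_i^3, \ldots$ lies in $R$.

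The final step is pigeonhole: since $(\alpha_i^k)_{k \geq 1}$ takes values in a finite set, there exist integers $k < \ell$ with $\alpha_i^k = \alpha_i^\ell$, and since $|\alpha_i| = 1 \neq 0$ we may cancel to obtain $\alpha_i^{\ell - k} = 1$. Thus each $\alpha_i$ is a root of unity, which is the desired conclusion.

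I expect the main obstacle to be the verification that $P_k$ has integer coefficients — this is where the hypothesis $P \in \mathbb{Z}[X]$ is really used, and it relies on the fundamental theorem of symmetric polynomials (as opposed to merely writing the coefficients as algebraic integers in $\mathbb{C}$). Everything else — the uniform bound $\binom{n}{j}$, the finiteness of $\{P_k\}$, and the pigeonhole argument — is essentially automatic once that step is in place.
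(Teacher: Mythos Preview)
Your proof is correct and follows essentially the same approach as the paper: introduce the polynomials $P_k=\prod_i(X-\alpha_i^k)$, show they lie in $\Z[X]$, bound their coefficients by $\binom{n}{j}$ to get finiteness, and conclude by pigeonhole. The only minor difference is that the paper's primary proof of $P_k\in\Z[X]$ uses the companion matrix of $P$ (whose $k$-th power has characteristic polynomial $P_k$), while you invoke the fundamental theorem of symmetric polynomials---an argument the paper also records as an alternative in a subsequent remark.
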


Let us start with the following lemma :

\begin{lmm}\label{lmm-entier}
Let $ n, k \in \mathbb{N}$ and $Q$ be a polynomial with expression $\displaystyle\prod_{i=1}^n (X-\alpha_{i})$.\\
If $Q \in \mathbb{Z}[X]$, then $Q_k(X):=\displaystyle\prod_{i=1}^n (X-\alpha_{i}^k) \in \mathbb{Z}[X]$.
\end{lmm}

\begin{proof}

Let us consider such a polynomial $Q$ and integers $n$ and $k$. The Frobenius companion matrix $M$ of $Q$ is a matrix of $M_n(\mathbb{Z})$ whose characteristic polynomial is $Q$. Therefore the $\alpha_i$ are the eigenvalues of $M$. $M$ is similar to an upper triangular matrix with main diagonal $\alpha_1,\dots,\alpha_n$, which results in $M^k$ being similar to an upper triangular matrix with main diagonal  $\alpha_1^k,\dots,\alpha_n^k$. Thus, the $\alpha_i^k$ are the eigenvalues of $M^k$ with the same multiplicity as for $M$. It implies that $Q_k$ is the characteristic polynomial of $M^k$. Since this matrix has integer coefficients, $Q_k$ has as well.
\end{proof}

\medskip

Let us prove Kronecker's theorem:
\medskip

\begin{proof}
Let us consider $P = X^n + p_{n-1}X^{n-1} + \dots + p_0 \in \mathbb{Z}[X]$ a monic polynomial such that $Z(P)\subset \mathbb{U}$, where $Z(P) = \{\alpha_1,\dots, \alpha_n\}$ is the set of the complex roots of $P$. It follows from the Vieta's formulas that:
\[p_{n-k}=(-1)^{k} \displaystyle\sum_{J \in \mathcal{P}_k([1,\dots,n])}\displaystyle\prod_{i=1}^{k}\alpha_{j_{i}}.\]

Since the $\alpha_i$ are of magnitude $1$, the triangle inequality gives us that $\vert p_{n-k}\vert \leq \binom{n}{k}$. The coefficients of $P$ being integers, we deduce that $\mathcal{E}_n$, the set of polynomials of degree $n$  which verify the hypotheses of the theorem, is a finite set. It implies that $\mathcal{R}_n$, the set of roots of polynomials in $\mathcal{E}_n$ is also finite.

Let $\alpha_i$ be a root of $P$ and let us consider the multiplicative group generated by $\alpha_i$, $G=\lbrace\alpha_i^k, k \in \mathbb{Z}\rbrace$. With the notations from lemma ~\ref{lmm-entier}, for all $k\in\Z$, $\alpha_i^k$ is a root of $Q_k \in \mathcal{E}_n$ according to the lemma. Therefore, $G$ is a finite group since $G\subset \mathcal{R}_n$. As a result, there exists integers $k<k'$ such that $\alpha^k=\alpha^{k'}$, which means $\alpha^{k'-k}=1$.
\end{proof}

\begin{rmq}\label{rmqCombineSymmEntier}
Let us give another proof of lemma~\ref{lmm-entier} using symmetric polynomials. 
Let us consider $P = \displaystyle\sum_{j=1}^n p_j X^j =\displaystyle \prod _{i=1}^n (X-\alpha_i) \in \Z[X]$ and $Q = \displaystyle\sum_{j=1}^n q_j X^j = \displaystyle \prod _{i=1}^n (X-\alpha_i^k)$.
Let $S_j$ denote the $j$-th elementary symmetric polynomial in $n$ variables. It follows from the Vieta's formulas that : $p_{n-j} = (-1)^j S_j(\alpha_1, \dots, \alpha_n)$ and $q_{n-j} = (-1)^j S_j(\alpha_1^k, \dots, \alpha_n^k)$.  According to the fundamental theorem of symmetric polynomials,

there exists a polynomial $R\in\Z[X_1,\dots,X_n]$ such that $$S_j(X_1^k,\dots,X_n^k) = R(S_1,\dots,S_n)$$ 
Therefore, the coefficients of $Q$ 
$$q_{n-j} = (-1)^j S_k(\alpha_1^k, \dots, \alpha_n^k) =(-1)^j R(-p_{n-1},\cdots,(-1)^n p_0)$$ are integers.

Actually, the fundamental theorem of symmetric polynomials applies to all the symmetric polynomials, so that all the symmetric combinations of $\alpha_1,\dots,\alpha_n$, with integer coefficients, are integers.
\end{rmq}

\begin{rmq}
Note that the use of companion matrices in the proof of lemma \ref{lmm-entier} allows us to give a direct proof of this following particular case of the fundamental theorem of symmetric polynomials: "there exists  
$ R\in\Z[X_1,\dots,X_n],$ such that $  S_j(X_1^k,\dots,X_n^k) = R(S_1,\dots,S_n)$".
\paragraph{}
Indeed, using the previous notations, let $M$ be the companion matix of $P$.  $$\forall \,1\leqslant i,j\leqslant n,\quad M_{i,j}\in\{S_j(\alpha_1,\dots,\alpha_n))\,|\,0\leqslant j\leqslant n\}\cup\{0,1\}.$$
We deduce from it that :
\[\forall \, (i,j),\quad M_{i,j}\in \Z [S_0(\alpha_1,\dots,\alpha_n),\dots,S_n(\alpha_1,\dots,\alpha_n)].\]

Thus the coefficients of the characteristic polynomial of $M^k$ are in $\Z [S_0(\alpha_1,\dots,\alpha_n),\dots,S_n(\alpha_1,\dots,\alpha_n)]$. However, these coefficients turn out to be, regardless of the sign, the $(S_i(\alpha_1^k,\dots,\alpha_n^k))_{0\leqslant i\leqslant n}$.\\
We conclude that :
\[\forall\, 0\leqslant i\leqslant n,\quad S_i(\alpha_1^k,\dots,\alpha_n^k) \in\Z [S_0(\alpha_1,\dots,\alpha_n),\dots,S_n(\alpha_1,\dots,\alpha_n)].\]
\end{rmq}

\paragraph{Conclusion of the case $K=\U$ :} algebraic integers totally in $\U$ are the roots of unity, and there is an infinite number of them. We have an example of a compact set with an infinite number of algebraic integers totally in it. 

\subsubsection{$K = [-2,2]$}
\begin{prop}\label{prop-22}
Let us consider $P \in \mathbb{Z}[X]$ a monic polynomial such that $Z(P)\subset [-2;2] $.\\
The roots of $P$ all have the following expression: $z+\overline{z}$, where $z$ is a root of unity.
\end{prop}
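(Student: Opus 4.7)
The plan is to reduce to Kronecker's theorem via the substitution $X = Y + Y^{-1}$, which biject\-ively parametrizes $[-2,2]$ by the unit circle.

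First I would write each root $\alpha_i$ of $P$ as $\alpha_i = z_i + z_i^{-1}$ with $|z_i|=1$: since $\alpha_i \in [-2,2]$ we can set $z_i = e^{i\theta_i}$ with $\alpha_i = 2\cos\theta_i$. Note the elementary identity
\begin{equation*}
Y\bigl(Y + Y^{-1} - z_i - z_i^{-1}\bigr) \;=\; (Y-z_i)(Y-z_i^{-1}).
\end{equation*}
Multiplying these over all roots, I obtain, with $n = \deg P$,
\begin{equation*}
Q(Y) \;:=\; Y^n P(Y + Y^{-1}) \;=\; \prod_{i=1}^n (Y-z_i)(Y-z_i^{-1}).
\end{equation*}
The right-hand side shows that $Q$ is a monic polynomial of degree $2n$ whose complex roots all lie on the unit circle $\mathbb{U}$.

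Next I would verify that $Q$ has integer coefficients. Writing $P(X) = \sum_{k=0}^n p_k X^k$ with $p_k \in \mathbb{Z}$ and $p_n = 1$, one has
\begin{equation*}
Q(Y) \;=\; \sum_{k=0}^n p_k\, Y^{n-k}(Y^2+1)^k,
\end{equation*}
which is manifestly in $\mathbb{Z}[Y]$. (Alternatively, I could invoke Remark~\ref{rmqCombineSymmEntier}: each coefficient of $Q$, as a symmetric function of the $\alpha_i$ with integer coefficients, is an integer polynomial in the coefficients of $P$.)

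Now I apply Kronecker's theorem to $Q$: its roots $z_1, z_1^{-1}, \dots, z_n, z_n^{-1}$ are all roots of unity. Therefore each $\alpha_i = z_i + z_i^{-1} = z_i + \overline{z_i}$ with $z_i$ a root of unity, which is exactly the claimed form. The only delicate point is the verification that $Q \in \mathbb{Z}[Y]$, but this is essentially immediate from the explicit formula above, so no substantial obstacle arises; the whole proof is a clean transfer of the problem from $[-2,2]$ to $\mathbb{U}$ via the Joukowski-type map $z \mapsto z + z^{-1}$.
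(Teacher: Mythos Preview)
Your proof is correct and follows essentially the same route as the paper: define $Q(Y)=Y^nP(Y+Y^{-1})$, check that $Q\in\Z[Y]$ is monic of degree $2n$, verify that its roots lie on $\U$, and apply Kronecker's theorem. The only cosmetic difference is that you factor $Q$ explicitly as $\prod_i(Y-z_i)(Y-z_i^{-1})$ from the outset, whereas the paper first defines $Q$ and then argues via a short case analysis (writing $z=\rho e^{i\theta}$ and using that $z+1/z$ is real) that any root of $Q$ must have modulus~$1$; your direct factorization is a bit cleaner but the two arguments are interchangeable.
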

\begin{figure}[ht]
    \centering
    \includegraphics[width = 9cm]{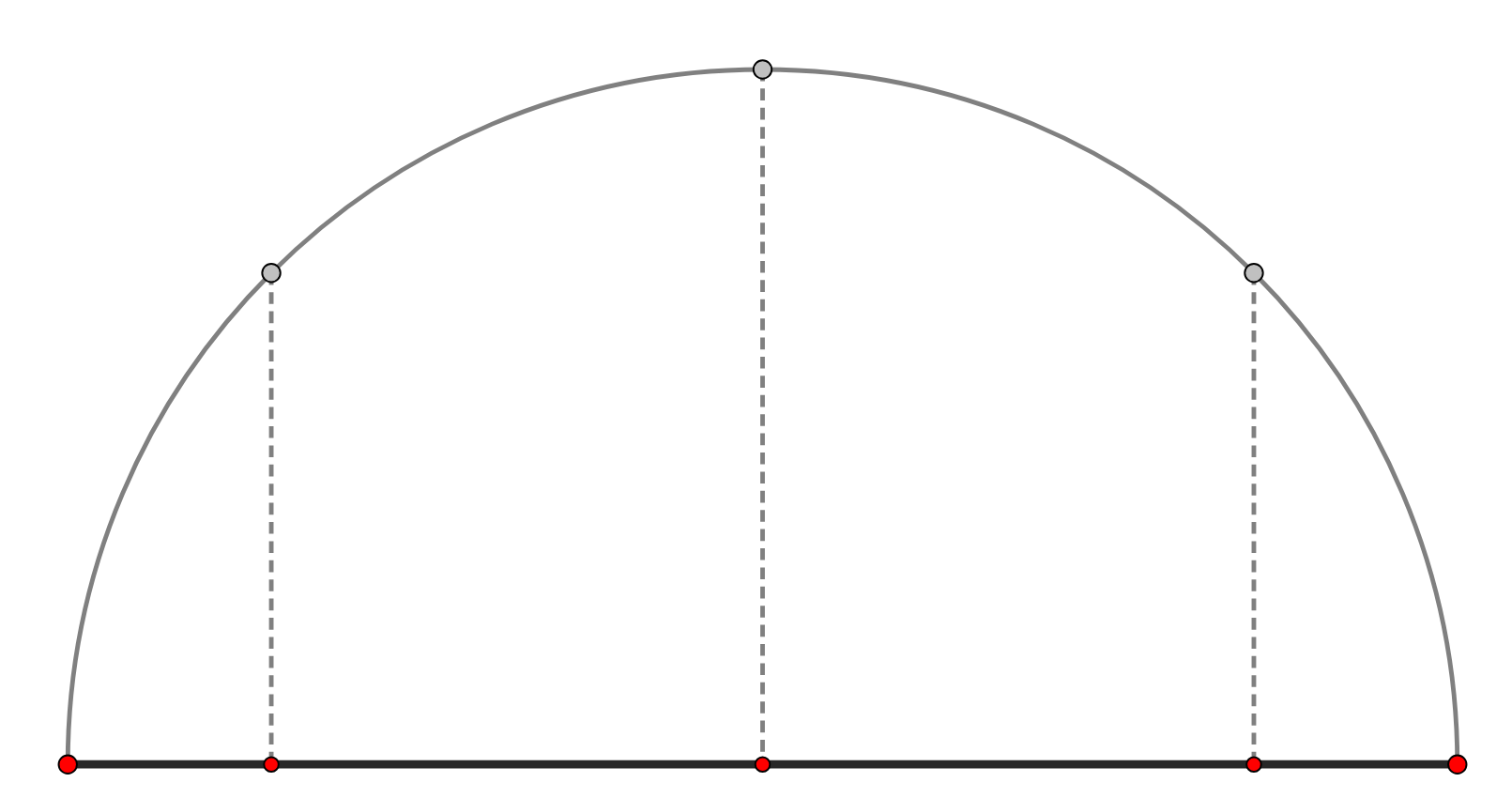}
    \caption{examples of points totally in $K=[-2,2]$}
    \label{fig:PtsTotalementDansPlusMoinsDeux}
\end{figure}

\begin{proof}
Let $P$ be a polynomial of degree $n$ verifying the assumptions of the proposition. Let us consider $Q= X^n P(X+\frac{1}{X})$. Using the binomial formula, we show that $Q$ is a monic polynomial of degree $2n$ with integer coefficients. Let us note that $0$ is not a root of $Q$ since the polynomial's constant coefficient equals $1$. Therefore, for all $z\in\C$,
\[Q(z)=0  \Leftrightarrow P(z+\frac{1}{z})=0.\]
    Let $z$ be a complex root of $Q$. Then, $z+\frac{1}{z}$ is a root of $P$, and $z+ \frac 1z  \in [-2,2]$. In particular, $z+ \frac 1z$ is a real number. If we write the obvious equality between this real number and its complex conjugate, we get that $\rho \sin(\theta)=\frac{1}{\rho}\sin(\theta)$, where $\rho$ and $\theta$ are the modulus and an argument of $z$ respectively. Two cases are possible:\\
\begin{itemize}
\item[$\bullet$] if $\sin(\theta)=0$, then $z \in \mathbb{R}$. In order to verify $z+\frac{1}{z} \in [-2,2]$, studying the sign of this expression shows us that $z=\pm 1$.
\item[$\bullet$] if $\rho=\frac{1}{\rho}$,  $\rho=1$, since the modulus is a non-negative number.
\end{itemize}
\vspace{1em}
In all cases, $z\in \U$. It follows from Kronecker's theorem that all the roots of $Q$ are roots of unity.

Let $r$ be a root of $P$. Since $r\in [-2,2]$, we can write $r = z +\overline{z}$, with $z = \frac r2 + i \sqrt{1-\frac{r^2}{4}}$. Since $z\in\U$, $r = z + \frac 1z $. Therefore, $z$ is a root of $Q$, which means that $z$ is a root of unity.
\end{proof}

\begin{cor}
The algebraic integers totally in $[-2,2]$ are real numbers with the following expression: $z +\overline{z}$, where $z$ is a root of unity.
\end{cor}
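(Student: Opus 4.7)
The corollary is essentially a reformulation of Proposition \ref{prop-22}, with one extra verification required (the converse direction), so the plan is to address both implications.

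For the direct implication, I would simply unpack the definitions. Let $\alpha$ be an algebraic integer totally in $[-2,2]$, and let $P$ be its minimal polynomial. By definition of an algebraic integer, $P \in \Z[X]$ and $P$ is monic; by the hypothesis that $\alpha$ is totally in $[-2,2]$, we have $Z(P) \subset [-2,2]$. Thus $P$ satisfies exactly the assumptions of Proposition \ref{prop-22}, and since $\alpha \in Z(P)$, we conclude that $\alpha = z + \overline{z}$ for some root of unity $z$. Note that the proof of Proposition \ref{prop-22} explicitly exhibits such a $z$, namely $z = \frac{\alpha}{2} + i\sqrt{1 - \frac{\alpha^2}{4}}$, which is convenient.

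For the converse, I would verify that any number of the form $\alpha = z + \overline{z}$ with $z$ a root of unity is itself an algebraic integer totally in $[-2,2]$. First, $z$ is a root of $X^n - 1$ for some $n \geq 1$, so $z$ and $\overline{z} = z^{-1}$ are both algebraic integers, and hence so is their sum (using that the set of algebraic integers is stable under addition, or more directly, by observing that $\alpha$ is a root of the monic integer polynomial $\prod_{k}(X - (z^k + z^{-k}))$ obtained by pairing conjugate roots of $X^n - 1$). Moreover, $\alpha = 2\operatorname{Re}(z) \in [-2,2]$. To see that $\alpha$ is totally in $[-2,2]$, I would note that every $\Q$-conjugate of $\alpha$ has the form $\zeta + \zeta^{-1}$ for some $\Q$-conjugate $\zeta$ of $z$, and that the $\Q$-conjugates of a root of unity are themselves roots of unity (roots of the same cyclotomic polynomial); hence each such conjugate lies in $[-2,2]$.

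The only mildly delicate point is justifying that the $\Q$-conjugates of $\alpha = z + z^{-1}$ are indeed all of the form $\zeta + \zeta^{-1}$ for $\zeta$ a conjugate of $z$. The cleanest way is to consider the polynomial $R(X) = \prod_{\zeta}(X - (\zeta + \zeta^{-1}))$, where $\zeta$ ranges over the roots of the minimal polynomial of $z$ (a cyclotomic polynomial); this polynomial has integer coefficients (by the fundamental theorem on symmetric polynomials, as explicited in Remark \ref{rmqCombineSymmEntier}) and vanishes at $\alpha$, so the minimal polynomial of $\alpha$ divides $R$, and its roots are thus among the $\zeta + \zeta^{-1}$. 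This is the main \emph{conceptual} obstacle, but it is already implicit in the machinery developed earlier; no further work is needed.
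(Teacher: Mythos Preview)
Your forward direction is exactly the paper's: invoke Proposition~\ref{prop-22} on the minimal polynomial. For the converse, however, the paper takes a different and more explicit route. Rather than arguing abstractly that the conjugates of $z+\overline{z}$ are again of this form, the paper exhibits concrete monic integer polynomials whose roots are precisely such numbers: it introduces the rescaled Chebyshev polynomials of the second kind $P_n(X)=U_n(X/2)$, checks by induction on the recurrence $U_{n+1}=2XU_n-U_{n-1}$ that $P_n\in\Z[X]$ is monic, and uses the identity $P_n(2\cos\theta)=\sin((n+1)\theta)/\sin\theta$ to read off $Z(P_n)=\{2\cos(k\pi/(n+1)):1\le k\le n\}\subset[-2,2]$. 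This directly produces, for every root of unity $z$, a witness polynomial showing $z+\overline{z}$ is totally in $[-2,2]$. Your approach via the auxiliary polynomial $R(X)=\prod_\zeta(X-(\zeta+\zeta^{-1}))$ over the roots of a cyclotomic is correct and conceptually clean (the invocation of Remark~\ref{rmqCombineSymmEntier} goes through once one notes $\zeta^{-1}=\zeta^{n-1}$, making the coefficients genuinely symmetric in the $\zeta_j$), but it is less constructive. The paper's choice also pays off later: the Chebyshev polynomials reappear in the computation of capacities (Section~\ref{ssec-constcheb}) and in the proof of Robinson's theorem, so introducing them here foreshadows that machinery.
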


\begin{proof}
    The previous proposition shows us that the algebraic integers totally in $[-2,2]$ all have the following expression:  $z +\overline{z}$, where $z$ is a root of unity. Let us prove the reciprocal implication.
    
    \paragraph{}
    Let $(U_n)_n$ be the Chebyshev polynomials of the second kind defined as follows: $$U_0 = 1 \text{ et }\displaystyle U_{n+1}=2XU_{n}-U_{n-1},\ \ \forall n\geq 1.$$ 
    We use induction on $n$ to show that $P_n(X)=U_n(\frac X 2)$ are monic polynomials of degree $n$ and with integer coefficients. They verify that $\forall\theta\in\R\setminus\pi\Z\quad\frac{\sin \left((n+1)\theta\right)}{\sin\theta}=P_n(2\cos \theta)$. We deduce from it that: 
    $$ \forall n \geqslant 1, Z(P_n)=\{2\cos {\frac {k\pi }{n+1}},\quad \forall k\in \{1,\ldots ,n\}\}\subset [-2,2].$$ Therefore, for all $n\geqslant 1, Z(P_n)$ is a set of algebraic integers totally in $[-2,2]$, which implies that for any root of unity $z$, $z+\overline{z}$ is totally in $[-2,2]$.
\end{proof}

\paragraph{ Conclusion of the case $K = [-2,2]$ : } algebraic integers totally in $[-2,2]$ are the $z + \frac 1z$, with $z$ some root of unity. 
\vspace{2em}

It follows from translating the previous result, this claim for any segment of $\R$ with length $4$ and integer end points:

\begin{cor}

Let $K=[n-2, n+2]$ be a real segment with length $4$, integer endpoints and $n\in\Z$ its middle point. The algebraic integers totally in $K$ are $n+z+\frac 1z$, with $z$ a root of unity.

\end{cor}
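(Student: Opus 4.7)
The plan is a straightforward translation argument, leveraging the previous corollary which fully characterises the algebraic integers totally in $[-2,2]$.

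First, I would fix an algebraic integer $\alpha$ totally in $K=[n-2,n+2]$ with minimal polynomial $P\in\Z[X]$, and consider the shifted polynomial $\widetilde{P}(X):=P(X+n)$. Since $n\in\Z$ and $P$ is monic with integer coefficients, $\widetilde{P}$ is again monic in $\Z[X]$; moreover translation by $n$ is a $\Q$-algebra automorphism of $\Q[X]$, so $\widetilde{P}$ remains irreducible over $\Q$. Hence $\widetilde{P}$ is the minimal polynomial of $\beta:=\alpha-n$, and its roots are exactly $\{\,r-n : r\in\gal(\alpha)\,\}\subset [n-2,n+2]-n = [-2,2]$. This shows that $\beta$ is an algebraic integer totally in $[-2,2]$.

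Applying the preceding corollary to $\beta$, there exists a root of unity $z$ such that $\beta = z+\overline{z} = z + \tfrac{1}{z}$, and therefore
\[
\alpha \;=\; n + z + \tfrac{1}{z}.
\]
For the converse direction, I would start from any root of unity $z$ and set $\gamma := n + z + \tfrac{1}{z}$. The previous corollary gives that $z+\tfrac{1}{z}$ is an algebraic integer totally in $[-2,2]$; calling its minimal polynomial $Q\in\Z[X]$, the polynomial $Q(X-n)$ is monic in $\Z[X]$, irreducible, and vanishes at $\gamma$, and its roots form the set $\{r+n : r\in\gal(z+\tfrac{1}{z})\}\subset[n-2,n+2]$. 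Thus $\gamma$ is totally in $K$, which closes the equivalence.

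There is really no hard step: the argument reduces entirely to checking that the bijection $X\mapsto X-n$ preserves both \emph{monicity with integer coefficients} and \emph{irreducibility over $\Q$}, and that it maps $K$ onto $[-2,2]$. The only point deserving a moment of care is invoking that the roots of $P(X+n)$ are precisely the $\Q$-conjugates of $\alpha-n$ (so that the notion of being \emph{totally} in the segment transports under translation); this follows from the uniqueness of the minimal polynomial established in the definitions at the start of Section~\ref{section1}.
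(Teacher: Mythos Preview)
Your proof is correct and follows the same translation argument as the paper: shift by $n$ to reduce to the case $[-2,2]$, then invoke the characterisation already established there. You are in fact more thorough than the paper's version, spelling out the preservation of irreducibility and the converse direction explicitly, but the underlying idea is identical.
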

\begin{proof}
Let $P\in\Z[X]$ be a monic polynomial such that $Z(P)\subset[n-2, n+2]$ and let us consider $Q=P(X+n)$. 
Since $n\in\Z$, $Q$ is a monic polynomial with integer coefficients, and $Z(Q)\subset [-2,2]$. The proposition~\ref{prop-22} allows us to conclude.
\end{proof}

From this particular case, we deduce a more general theorem which gives us a first piece of information:

\begin{thm}[Upper bound of the minimal length to contain an infinite number of algebraic integers totally in a segment]

Segments with length greater than or equal to 5 have an infinite number of algebraic integers totally in them.
    
\end{thm}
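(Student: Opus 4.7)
The plan is to reduce the statement directly to the already-established case of segments of the form $[n-2, n+2]$ with $n \in \Z$, by exploiting the monotonicity given in Remark~\ref{rmqInclusion}.

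More precisely, given any segment $[a,b] \subset \R$ with $b - a \geq 5$, I would first look at the interval $[a+2, b-2]$: it has length $b - a - 4 \geq 1$, and therefore a closed real interval of length at least $1$ contains at least one integer. Pick such an integer $n \in [a+2, b-2]$. Then by construction $a \leq n-2$ and $n+2 \leq b$, so
\[
[n-2,\, n+2] \ \subset \ [a,b].
\]

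Now I would invoke the previous corollary, which tells us that the algebraic integers totally in $[n-2, n+2]$ are exactly the numbers of the form $n + z + 1/z$ where $z$ ranges over roots of unity. Since there are infinitely many roots of unity and these produce infinitely many distinct real values in $[n-2, n+2]$ (for example, take $z = e^{2i\pi/m}$ for $m \geq 1$, giving the pairwise distinct values $n + 2\cos(2\pi/m)$), the segment $[n-2, n+2]$ contains infinitely many algebraic integers totally in it. Applying Remark~\ref{rmqInclusion} to the inclusion above, all of these algebraic integers are also totally in $[a,b]$, which concludes the proof.

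There is essentially no obstacle here: the entire content of the theorem has already been carried by the work on the case $[-2,2]$ (via Kronecker's theorem) and its integer translates. The only thing to verify is the combinatorial observation that a length-$5$ segment is large enough to contain a length-$4$ segment centered at an integer, which is precisely what the hypothesis $b-a \geq 5$ ensures. It is worth noting that this bound of $5$ is not expected to be sharp — Robinson's theorem will later show that the true threshold is $4$ — but the current argument cannot improve it, because the method requires fitting an entire translated copy of $[-2,2]$ with integer center inside $[a,b]$.
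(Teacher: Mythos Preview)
Your proof is correct and follows exactly the same approach as the paper: show that a segment of length at least $5$ contains some $[n-2,n+2]$ with $n\in\Z$, then invoke Remark~\ref{rmqInclusion} and the corollary on integer-centered length-$4$ segments. You have simply spelled out the details (the pigeonhole step via $[a+2,b-2]$ and the explicit infinitude of the $n+2\cos(2\pi/m)$) that the paper leaves implicit.
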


\begin{proof}
    Segments with length $5$ contain a segment with length $4$ and integer endpoints, and according to remark 1.3, they contain at least as many algebraic integers as the smaller segments, therefore containing an infinite number, according to corollary 1.2.
\end{proof}

A first stage of the work was done with general remarks, an algorithmic approach to convince ourselves of the result and a first particular case. Let us now build a general framework to tackle the general case of the theorem.

\clearpage
\section{Capacity theory} \label{section2}
\paragraph{}
First and foremost, let us focus our interest on the notion of \emph{capacity}. It is a non-negative number describing the size of a set, but not a geometric size related to its measure : the terminology comes from physics, more specifically, from the \emph{capacity of a capacitor}, which describes the ability of a set to contain electric charges. We shall see that it is an "adequate " definition of the size of
a compact set when it comes to algebraic integers. This notion gives us a criterion which allows us to determine whether or not a compact set has an infinite number of algebraic integers totally in it (cf. \ref{section3}), with Fekete's and Fekete-Szegö's theorems. 

\medskip

The goal of this section is to give three equivalent definitions of the notion of capacity : transfinite diameter (\ref{ssec-diamTrans}), logarithmic capacity (\ref{ssec-capacitelogarithmique}) and the Chebyshev's constant (\ref{ssec-constcheb}). The main theorem of this section is theorem \ref{chebEgalCapaEgalTau} which proves the equivalence between the three definitions, and states important properties of objects which allow us to link these three approaches: Fekete's measures, equilibrium measure, equilibrium potential, Fekete's polynomials, Chebyshev polynomials... In section \ref{ssec-potentielsemiharmonic}, we will also prove a formula to calculate a type of capacity thanks to semi harmonic functions: this section, which is quite technical, can be skipped at first reading.

\paragraph{}

We will first introduce the notion of transfinite diameter of a compact set thanks to Fekete points ; then we will study the notion of logarithmic capacity and equilibrium measure before unifying these two notions. Finally we will give a third definition of the capacity using Chebyshev polynomials. Thanks to this last approach we will be able to calculate the capacity of segments. 

\medskip

\subsection{Transfinite diameter, Fekete points}\label{ssec-diamTrans}
\paragraph{}
To get a first intuition, let us consider a geometric problem from electrostatics: let us place electrons in a bounded domain. These electrons will tend to maximize their mutual distances in order to minimize the overall energy. In $3$ dimensions, the force is proportional to the square of the inverse of the distance, and the potential is proportional to the inverse of the distance. Since we are working in the two-dimensional complex plane, the force is proportional to the inverse of the distance and the potential is proportional to $\log$ of the distance.

\medskip

Given $K$ a compact subset of $\C$ and $z_1,\dots, z_n \in K$, let us define the \emph{potential}  at point $z_i$ (which can be equal to $+\infty$) as follows :
\[U_{z_1,\dots,z_n}(z_i) = -\sum _{j\neq i} \log\abs{z_j-z_i}.\]

\paragraph{}
Let us also define the energy of the configuration $(z_1,\dots,z_n)$ as the mean of the potentials at each point : 
\[E(z_1,\dots,z_n) =- \frac{2}{n(n-1)}\sum_{i<j}\log\abs{z_i-z_j}
= - \log \prod_{i<j}\abs{z_i-z_j}^{\frac {2}{n(n-1)}}\]

Let us focus on minimizing the energy of the compact set $K$ for a given number $n$ of points : 
\[E(K) = \min_{z_1,\dots,z_n\in K}E(z_1,\dots,z_n) = - \log \max_{z_1,\dots,z_n\in K}\prod_{i<j}\abs{z_i-z_j}^{\frac {2}{n(n-1)}} \]

\paragraph{}
The lower bound is reached since $K$ is compact, hence the $\min$ in the optimization formula. The points where this minimum is reached are called the \emph{Fekete points} :

\begin{defi}[Fekete points]\label{defi-PointsdeFekete}

Let $K$ be a compact subset of $\C$. Let us define
\[\delta_n(K)=\max_{z_1,\dots,z_n\in K} \prod_{i < j}\abs{z_i-z_j}^{2/n(n-1)}\]

This maximum value is reached at the \emph{Fekete points} (of degree $n$).
\end{defi}
\begin{rmq}
It matches the definition of the usual diameter for $n=2$.
\end{rmq}

\begin{lmm}\label{decroissanceDesDnK}
let $f:K\times K \to \R\cup\{-\infty\}$ be a function and for all $n\in\N^*$, let us consider
$$
m_n^f(K)=\sup_{x_1,\dots,x_n\in K}\frac{1}{n(n-1)}\sum_{i\neq j}f(x_i,x_j).
$$
The sequence $(m_n^f(K))_n$ is decreasing.
\end{lmm}

\begin{proof}
Let $n\geq 1$, and $x_1,\dots,x_{n+1}\in K$ (not necessarily distinct). We obtain the following equality:
$$
\frac{1}{n(n+1)}\sum_{i\neq j}f(x_i,x_j)=
\frac{1}{n+1}\sum_{k=1}^{n+1}\left(\frac{1}{n(n-1)}\sum_{i\neq j, i\neq k, j\neq k}f(x_i,x_j)\right);
$$
indeed, it is easily verified by calculating the coefficients of $f(x_i,x_j)$ on each side: the term $f(x_i,x_j)$ on the right is only affected by the $k\in [\![1,n+1]\!]\backslash\{i,j\}$ so there are $n-1$ choices. Given a $k$, the sum $\sum_{i\neq j, i\neq k, j\neq k}$ on the right can be considered as $\sum_{i\neq j}$ where $i,j\in [\![1,n+1]\!]\backslash\{k\}$, which is lower or equal to $m^f_n(K)$ by definition. Then, as we consider the supremum on $x_1,\dots,x_n\in K$, we obtain that : $$m_{n+1}^f(K)\leqslant\displaystyle\sup_{x_1,\dots,x_n\in K} \frac 1 {n+1} \sum^{n+1}_{k=1}m_n^f(K)=m_n^f(K).$$ 
\end{proof}

\begin{defi}[Transfinite diameter]
The sequence $(\delta_n(K))_{n\geqslant 2}$ is non-negative and decreasing. The limit \[\tau(K) = \lim_{n\to\infty} \delta_n(K) \] is called the \emph{transfinite diameter}.
\end{defi}

\begin{proof}
We deduce the fact that $(\delta_n(K))_{n\geqslant 2}$ is decreasing from lemma \ref{decroissanceDesDnK}.    
\end{proof}
\paragraph{}
\clearpage
We have given our first definition of the notion of capacity : the transfinite diameter. Let us now prove a few elementary properties and give a few examples as well.
\newline
\begin{prop}\label{propTau}
Let $K$ be a compact subset of $\C$, then
\begin{enumerate}
    \item if $a,b\in \C$, then $\tau(a K +b) = \abs{a}\tau(K)$;
    \item $\delta_n(K)=\delta_n(\partial K)$ and then $\tau(K)=\tau(\partial K)$.
\end{enumerate}
\end{prop}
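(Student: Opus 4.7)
The plan is to treat the two statements in turn at the level of $\delta_n$ and then pass to the limit for $\tau$.

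For (1), the key observation is that the affine map $\varphi:z\mapsto az+b$ is a bijection from $K$ onto $aK+b$ scaling all pairwise distances by $|a|$, since $|\varphi(z_i)-\varphi(z_j)|=|a|\,|z_i-z_j|$. Substituting $w_i=\varphi(z_i)$ in the definition of $\delta_n(aK+b)$ factors out a global $|a|^{\alpha}$ with exponent $\alpha=\binom{n}{2}\cdot\frac{2}{n(n-1)}=1$, so $\delta_n(aK+b)=|a|\,\delta_n(K)$, and letting $n\to\infty$ gives $\tau(aK+b)=|a|\,\tau(K)$.

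For (2), the inclusion $\partial K\subset K$ immediately yields $\delta_n(\partial K)\leq\delta_n(K)$, so the content is the reverse inequality. I plan to show that there always exists a Fekete configuration of degree $n$ lying entirely in $\partial K$, by applying the maximum modulus principle one coordinate at a time. Starting from any Fekete points $z_1,\dots,z_n\in K$ and freezing all but the $i$-th coordinate, the function $w\mapsto\prod_{j\neq i}|w-z_j|=|p_i(w)|$ is the modulus of a polynomial in $w$; the maximum modulus principle then provides a point $\tilde z_i\in\partial K$ at which $|p_i|$ attains its maximum on $K$. Since the only factors of $\prod_{j<k}|z_j-z_k|$ involving $z_i$ are precisely those collected in $p_i$, replacing $z_i$ by $\tilde z_i$ preserves the maximum of the whole product. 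Iterating for $i=1,\dots,n$ produces a Fekete configuration entirely in $\partial K$, so $\delta_n(K)\leq\delta_n(\partial K)$, and the conclusion for $\tau$ follows by passing to the limit.

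The main technical point I expect is the application of the maximum modulus principle to the compact (but generally not open) set $K$: if $K^\circ=\emptyset$, then $\partial K=K$ and (2) is trivial; otherwise one has to invoke the principle separately on each connected component of $K^\circ$ and observe that its topological boundary lies in $\partial K$, which guarantees that the maximum of $|p_i|$ on $K$ is indeed attained somewhere in $\partial K$.
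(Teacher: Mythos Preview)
Your proof is correct and follows essentially the same route as the paper. For (1) the paper simply says it is immediate from the definition, which is exactly your scaling computation; for (2) the paper views $h(z_1,\dots,z_n)=\prod_{i<j}(z_i-z_j)$ as holomorphic in each variable and asserts $\|h\|_{K^n}=\|h\|_{(\partial K)^n}$ via the one-variable maximum modulus principle, which is precisely your coordinate-by-coordinate replacement argument spelled out in full (including the care you take with the components of $K^\circ$).
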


\begin{proof}
(a) can be easily deduced from the definition.
\medskip\par (b) Let us consider $n\geq2$ and $h(z_1,\dots,z_n)=\prod_{1\leq i<j\leq n}(z_i-z_j)$. It is a holomorphic function of $n$ complex variables on $\C^n$, therefore $\|h\|_{K^n}=\|h\|_{(\partial K)^n}$ (we can use the maximum modulus principle of a holomorphic function of one variable), where $\delta_n(K)=\delta_n(\partial K)$.
\end{proof}

\begin{prop}[Unit circle]\label{propTauCercleUnit}
The Fekete points on $\U$ are the $n$-th roots of unity, up to a rotation. The transfinite diameter of the unit circle equals $1$.
\end{prop}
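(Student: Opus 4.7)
My plan is to reduce the optimization problem to Hadamard's inequality applied to a Vandermonde matrix.

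First, I would rewrite the product $\prod_{i<j}|z_i-z_j|$ as a Vandermonde determinant: the classical identity $\det(z_i^{j-1})_{1\le i,j\le n} = \prod_{i<j}(z_j - z_i)$ gives, after taking squared moduli,
\[
\prod_{i<j}|z_i - z_j|^2 \;=\; |\det V|^2 \;=\; \det(V V^*),
\]
where $V = (z_i^{j-1})$ and $V^*$ denotes its conjugate transpose. This turns the geometric optimization into a problem about the determinant of a Hermitian positive semidefinite matrix.

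Next, I would compute $VV^*$ explicitly. Its $(i,k)$-entry is the geometric sum $\sum_{m=0}^{n-1}(z_i \bar z_k)^m$; the constraint $z_i \in \U$ makes each diagonal entry equal to $n$. The crucial step is then Hadamard's inequality for Hermitian PSD matrices,
\[
\det(V V^*) \;\le\; \prod_{i=1}^{n}(VV^*)_{ii} \;=\; n^n,
\]
which gives $\prod_{i<j}|z_i-z_j|^2 \le n^n$, hence $\delta_n(\U) \le n^{1/(n-1)}$.

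The identification of the Fekete points comes from the equality case of Hadamard's inequality, which forces $VV^*$ to be diagonal. Using the closed form of a geometric sum, the vanishing of the off-diagonal entries translates into $(z_i/z_k)^n = 1$ with $z_i \ne z_k$ for every $i \ne k$; equivalently, all the ratios $z_i/z_1$ are distinct $n$-th roots of unity and therefore exhaust them, so $\{z_1,\dots,z_n\}$ is a rotation of the group of $n$-th roots of unity. A direct check at $z_k = e^{2i\pi k/n}$ (for instance by computing $\prod_k|P'(z_k)|$ with $P(z)=z^n-1$, which equals $n^n$) confirms that this bound is attained. Passing to the limit, $\delta_n(\U) = n^{1/(n-1)} \to 1$, so $\tau(\U) = 1$.

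The main obstacle, though modest, is the equality analysis of Hadamard's inequality: one has to carefully convert the algebraic condition on the vanishing geometric sums into the geometric statement that the Fekete configurations on $\U$ are exactly the rotated copies of the $n$-th roots of unity, and to rule out degeneracies using the distinctness of the points (otherwise the product vanishes and cannot realize the maximum).
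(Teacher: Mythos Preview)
Your proposal is correct and is essentially the same proof as the paper's: both rewrite $\prod_{i<j}|z_i-z_j|$ as a Vandermonde determinant, apply Hadamard's inequality (you phrase it as $\det(VV^*)\le\prod_i(VV^*)_{ii}$ for PSD matrices, the paper as $|\det V|\le\prod_j\|\text{column }j\|_2$, which are equivalent here), and identify the equality case via the vanishing of the geometric sums $\sum_{m=0}^{n-1}(z_i\bar z_k)^m$ to conclude that the Fekete configurations are rotated $n$-th roots of unity and $\delta_n(\U)=n^{1/(n-1)}\to 1$.
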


\begin{proof}
Let us consider $n\geqslant 2$ and $x_1,\dots,x_n\in \U$, let us denote by $D(x_1,\dots,x_n)$ the determinant of the Vandermonde matrix $D(x_1,\dots,x_n)=\det ((x_i^{j-1})_{1\leq i,j\leq n})=\prod_{i<j} (x_j-x_i)$. \\
We have $\delta_n(\U) = \displaystyle\max_{x_1,\dots,x_n\in \U} |D(x_1,\dots,x_n)|^{\frac 2 {n(n-1)}}$. 
Hadamard's inequality gives us $$ |D(x_1,\dots,x_n)| \leq \left|\left|\!\begin{pmatrix} 1\\ \vdots \\ 1
\end{pmatrix}\!\right|\right|_2 \, \left|\left|\!\begin{pmatrix} x_1\\ \vdots \\ x_n
\end{pmatrix}\!\right|\right|_2 
\dots\,\left|\left|\! \begin{pmatrix} x_1^{n-1}\\ \vdots \\ x_n^{n-1}
\end{pmatrix}\!\right|\right|_2 = n^\frac n 2.$$ 
Therefore, $\delta_n(\U)\leq n^{\frac 1 {n-1}}$.

The equality is reached when the row vectors form a family of orthogonal vectors that are linearly independent or contains a null vector, which can be written as follows :
\begin{align*}
    D(x_1,\dots,x_n) = n^\frac n 2
    &\Leftrightarrow \forall i<j \in [\![1,n]\!], \begin{pmatrix} x_i^0\\ \vdots \\ x_i^{n-1}
\end{pmatrix} \cdot \begin{pmatrix} x_j^0\\ \vdots \\ x_j^{n-1}
\end{pmatrix} = 0 \\
    &\Leftrightarrow \forall i<j \in [\![1,n]\!],  \sum_{k=0}^{n-1} (x_j\overline{x_i})^k=0 \\
    &\Leftrightarrow \forall i<j \in [\![1,n]\!], x_j\neq x_i \text{ et }  \frac {1-(x_j\overline{x_i})^n}{1-x_j\overline{x_i}}=0 \\
                        & \Leftrightarrow \forall i<j \in [\![1,n]\!],  x_j\overline{x_i}\in \U_n-\{1\} \\
                        & \Leftrightarrow \{x_1,\dots,x_n\} = x_1\U_n
\end{align*}
Therefore $\delta_n(\U) = n^{\frac 1 {n-1}}$, the Fekete points on $\U$ are the $n$-th roots of unity, up to a rotation, and then $\tau (\U)=\displaystyle\lim_{n\rightarrow\infty} n^\frac 1 {n-1}=1$.
\end{proof}

\begin{cor}\label{capaDisqueUnite}
   We have $\tau(\overline{B}(0,\rho))=\rho$.
\end{cor}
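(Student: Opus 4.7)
The plan is to chain together the three ingredients already assembled: the explicit value $\tau(\mathbb{U})=1$ from Proposition~\ref{propTauCercleUnit}, and the two structural properties of $\tau$ listed in Proposition~\ref{propTau}, namely the scaling/translation rule $\tau(aK+b)=|a|\tau(K)$ and the boundary rule $\tau(K)=\tau(\partial K)$.

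First I would write $\overline{B}(0,\rho) = \rho\cdot\overline{B}(0,1)$, which is immediate, and apply the scaling rule with $a=\rho$, $b=0$ to get $\tau(\overline{B}(0,\rho)) = \rho\,\tau(\overline{B}(0,1))$. Next I would use the boundary rule to replace $\overline{B}(0,1)$ by its topological boundary $\partial \overline{B}(0,1) = \mathbb{U}$, yielding $\tau(\overline{B}(0,1)) = \tau(\mathbb{U})$. Finally, invoking Proposition~\ref{propTauCercleUnit}, which identifies $\tau(\mathbb{U})=1$, gives $\tau(\overline{B}(0,\rho)) = \rho$.

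There is essentially no obstacle here, since each step is an immediate appeal to a result already proved in the subsection. The only mildly delicate point is the use of $\tau(K)=\tau(\partial K)$, whose justification in Proposition~\ref{propTau} rests on the maximum modulus principle applied to the Vandermonde-type function $h(z_1,\dots,z_n)=\prod_{i<j}(z_i-z_j)$; this is what makes the reduction from the closed disk to its bounding circle legitimate. Everything else is purely formal.
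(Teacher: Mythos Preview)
Your proof is correct and matches the paper's own argument essentially step for step: the paper also chains the scaling rule and the boundary rule from Proposition~\ref{propTau} with the value $\tau(\mathbb{U})=1$ from Proposition~\ref{propTauCercleUnit}, writing $\tau(\overline{B}(0,\rho))=\rho\,\tau(\overline{B}(0,1))=\rho\,\tau(\partial\overline{B}(0,1))=\rho\,\tau(\mathbb{U})=\rho$.
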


\begin{proof}
    It follows from Prop. \ref{propTau} and Prop. \ref{propTauCercleUnit} that
    $$\tau(\overline{B}(0,\rho))=\rho\,\tau(B(0,1))=\rho\,\tau(\partial B(0,1))=\rho\,\tau(\U)=\rho.$$
\end{proof}

\begin{prop}
Let us consider $K = \{0\}\cup \{1, \frac 12, \frac 13, \dots\}$. We have $\tau(K)=0$.
\end{prop}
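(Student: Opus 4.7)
The plan is to prove $\tau(K) = 0$ by bounding $\delta_n(K)$ from above directly and then letting $n \to \infty$. The key combinatorial observation is that $K$ has exactly $N$ elements in the interval $[1/N, 1]$, namely $\{1, 1/2, \dots, 1/N\}$, while every other element of $K$ (that is, $0$ and $1/k$ for $k > N$) lies in $[0, 1/N)$. Consequently, for any $n > N$ and any choice of $n$ distinct points $z_1, \dots, z_n \in K$, at least $n - N$ of them must lie in $[0, 1/N)$.

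Next, I would split the $\binom{n}{2}$ pairs of indices into two groups. The pairs $(i,j)$ for which both $z_i$ and $z_j$ lie in $[0, 1/N)$ account for at least $\binom{n-N}{2}$ pairs, and each such pair contributes a factor at most $1/N$ to the Vandermonde product, since $[0, 1/N)$ has diameter at most $1/N$. The remaining pairs contribute factors at most $1$, since $K \subset [0, 1]$ has diameter $1$. This yields
$$\prod_{i < j} |z_i - z_j| \leq \left(\frac{1}{N}\right)^{\binom{n-N}{2}}.$$

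Taking the supremum over $(z_1,\dots,z_n) \in K^n$ and raising to the power $2/(n(n-1))$, I obtain
$$\delta_n(K) \leq \left(\frac{1}{N}\right)^{(n-N)(n-N-1)/(n(n-1))}.$$
Letting $n \to \infty$ while keeping $N$ fixed, the exponent tends to $1$, so $\tau(K) \leq 1/N$. Since $N$ was arbitrary, this forces $\tau(K) = 0$. The argument is essentially a counting estimate combined with the trivial diameter bound; the only subtlety is the order of the limits, namely that one must send $n \to \infty$ before $N \to \infty$, so that the exponent of $1/N$ stabilizes near $1$.
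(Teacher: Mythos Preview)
Your proof is correct and considerably simpler than the paper's. The paper proceeds by a recursive estimate: it picks Fekete points $x_n\leq y\leq x_{n-1}\leq\dots\leq x_1$, separates out the factors involving the smallest point $y$, and bounds
\[
\delta_{n+1}(K)^{\frac{(n+1)n}{2}}\leq (y-x_n)\prod_{i=1}^{n-1}(x_i-y)\cdot\delta_n(K)^{\frac{n(n-1)}{2}}\leq \frac{1}{n!}\,\delta_n(K)^{\frac{n(n-1)}{2}},
\]
using that distinct points of $K$ force $x_i\leq 1/i$. Iterating gives $\delta_{n+1}(K)\leq\bigl(\prod_{i=2}^n i!\bigr)^{-2/(n(n+1))}$, and a final growth estimate $\log\prod_{i=2}^n i!=\Theta(n^2\log n)$ is needed to conclude.

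Your pigeonhole argument bypasses the recursion entirely: fixing $N$, at most $N$ of any $n$ distinct points can lie in $[1/N,1]$, so at least $\binom{n-N}{2}$ pairwise distances are bounded by $1/N$, giving $\tau(K)\leq 1/N$ after sending $n\to\infty$. This is more elementary and the order-of-limits issue you flag is handled cleanly. The paper's route yields a sharper explicit decay rate for $\delta_n(K)$, but for the bare conclusion $\tau(K)=0$ your approach is both shorter and more transparent; it also generalizes immediately to any compact set whose intersection with $\{|z|\geq\epsilon\}$ is finite for every $\epsilon>0$.
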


\medskip\begin{proof}
    Let us consider $n\geq 2$. Let $x_n\leq y\leq x_{n-1}\leq\dots\leq x_1$ be a choice of Fekete points in $K\subset[0,1]$, then
	\begin{align*}
		\delta_{n+1}(K)^\frac{(n+1)n}{2}&=(y-x_n)\left(\prod_{i=1}^{n-1}(x_i-y)\right)\left(\prod_{1\leq i<j\leq n}(x_i-x_j)\right)\\
		&\leq(y-x_n)\left(\prod_{i=1}^{n-1}(x_i-y)\right)\delta_n(K)^\frac{n(n-1)}{2}
	\end{align*}
	We can obtain an upper bound using $x_n \geqslant 0$ and since all points are distinct, $x_i\leq\frac{1}{i}$ for $i\in[\![1,n-1]\!]$ and $y\leq\frac{1}{n}$, therefore
	\[(y-x_n)\left(\prod_{i=1}^{n-1}(x_i-y)\right)\leq\left(\frac{1}{n}-0\right)\prod_{i=1}^{n-1}\left(\frac{1}{i}-0\right)=\frac{1}{n\,!}\]
	Hence
	\[\delta_{n+1}(K)^\frac{(n+1)n}{2}\leq\frac{1}{n\,!}\delta_n(K)^\frac{n(n-1)}{2}\]
	Therefore we obtain by induction, using $\delta_2(K)=1$, the following inequality
	\[\delta_{n+1}(K)\leq\left(\prod_{i=2}^ni\,!\right)^{-\frac{2}{(n+1)n}}\]
	Finally, let us notice that this last term tends to 0 when $n\to\infty$. Indeed, we obtain for $n\geq8$
	\begin{align*}
	\log\left(\prod_{i=2}^n i\,!\right)&=\sum_{i=2}^n\sum_{j=2}^i\log j=\sum_{j=2}^n (n+1-j)\log j\\
	&\geq\sum_{j=[\frac{n}{4}]}^{3[\frac{n}{4}]}\dots\geq\sum_{j=[\frac{n}{4}]}^{3[\frac{n}{4}]}\frac{n}{4}\log\left[\frac{n}{4}\right]=\Theta(n^2\log n),\quad n\to\infty
	\end{align*}
\end{proof}

We will be able to give another proof later (cf. Ex. \ref{exCapaSetCountable}, Thm. \ref{thmTauEgalCapa}).

\clearpage

\subsection{Passage from discrete to continuous : equilibrium measure and logarithmic capacity}\label{ssec-capacitelogarithmique}
\paragraph{}
In this section, we will use probability measures on a compact set and a few results on measures. Notions from the theory of measures are provided in the appendix \ref{appendixmeasure}.

\medskip

Fekete points describe an equilibrium when dealing with a finite number of particles. The transfinite diameter describes the ability of a compact set to have electric charges (like in a capacitor). When the number of charges tends to infinity, their distribution becomes continuous. 

\medskip

This passage to the limit can be formalized using the concept of weak convergence-$*$ of measures. Let us remind ourselves of the definition here:

\begin{defi}[Weak-$*$ convergence]
    Let us consider $D\subset\C$ and $(\mu_n)_{n\in\N}$ a sequence of measures on $D$. Let $\mu$ be a measure on $D$. We say that $(\mu_n)_{n\in\N}$ converges weakly to $\mu$, written $\mu_{n} \overset{*}{\longrightarrow} \mu  $, if and only if 
    $$\forall f\in C_c^0(D,\R),\ \lim_{n\to\infty}\int_D f\dif \mu_n = \int_D f\dif\mu$$.
\end{defi}

\begin{ex}[Riemann integral]
The Riemann integral can be considered as the limit measure of a sequence of counting measures: indeed, if $f:[0,1]\to\R$, we have $$\lim_{n\to\infty}\frac 1n \displaystyle\sum_{k=0}^n f\left(\frac kn\right) = \int_0^1 f\ \dif x$$ If we denote by $\nu_n$ the counting measure with respect to the points $\{\frac kn\ | \ k=0,\dots,n\}$, the continuous linear form $\displaystyle\int_0^1$ is the limit measure of the sequence $(\nu_n)$ for the weak convergence-$*$.
\end{ex}

\begin{ex}
If $\nu_n$ is the counting measure with respect to the set of the $n$-th roots of unity $\{e^{\frac{2\pi i k}{n}}\ |\ k=0,\dots,n-1\}$, then 
$$\lim_{n\to\infty}\int f\dif \nu_n = \frac{1}{2\pi}\int_0^{2\pi}f(e^{i\theta})\dif \theta$$
\end{ex}

The space of probability measures on a compact set can be equipped with a topology associated with this notion of convergence: the weak-* topology. The main result is that this space is sequentially compact. 

\begin{thm}[Banach-Alaoglu-Bourbaki]\label{thmBanachAlaoglu}
Let $K$ be a compact subset of $\C$. Let us denote by $ \mathcal{P}(K) $ the set of probability measures on $K$. $\mathcal{P}(K) $ is sequentially a compact set for the weak topology-*.

\medskip

In other words, for any sequence of probability measures $(\mu_n)_n$, there exists a sub-sequence $(\mu_{\varphi(n)})_n$ and a probability measure $\mu$ such that $\mu_{\varphi(n)} \overset{*}{\longrightarrow} \mu  $, \emph{i.e.} 
$$
\forall \phi \in C(X), \int_X \phi ~ \dif \mu_{\varphi(n)} {\longrightarrow}   \int_X \phi ~ \dif \mu
$$
\end{thm}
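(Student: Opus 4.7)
The plan is to reduce the statement to the classical sequential Banach–Alaoglu theorem for the dual of a separable Banach space, via the Riesz representation theorem. Since $K\subset\C$ is compact metric, the Banach space $C(K,\R)$ of continuous real-valued functions on $K$ equipped with the uniform norm is separable (one can take a countable dense family from polynomials in $z$ and $\bar z$ by Stone–Weierstrass, for instance). Each probability measure $\mu\in\mathcal{P}(K)$ defines a positive linear form $\Lambda_\mu\colon f\mapsto \int_K f\dif\mu$ of norm $\Lambda_\mu(1)=1$, so $\mathcal{P}(K)$ sits inside the closed unit ball of the topological dual $C(K,\R)^{*}$, and the weak-$*$ topology of the statement is exactly the restriction of the weak-$*$ topology on that dual.

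Given a sequence $(\mu_n)$ in $\mathcal{P}(K)$, I would fix a countable dense subset $(f_k)_{k\geq 1}$ of $C(K,\R)$. For each fixed $k$, the sequence $\bigl(\int_K f_k\dif\mu_n\bigr)_n$ is bounded by $\|f_k\|_\infty$, hence relatively compact in $\R$ by Bolzano–Weierstrass. A standard diagonal extraction then produces a single subsequence $(\mu_{\varphi(n)})$ such that for every $k$ the real sequence $\int_K f_k\dif\mu_{\varphi(n)}$ converges; call the limit $\Lambda(f_k)$. The next step is to promote $\Lambda$ from the countable dense set to the whole space: for an arbitrary $f\in C(K,\R)$, approximate $f$ uniformly by $f_k$'s and use the uniform bound $\bigl|\int_K(f-f_k)\dif\mu_{\varphi(n)}\bigr|\leq \|f-f_k\|_\infty$, together with a Cauchy argument, to show that $\int_K f\dif\mu_{\varphi(n)}$ also converges. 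This defines a linear form $\Lambda$ on $C(K,\R)$ which is automatically positive (as a limit of positive forms) and satisfies $\Lambda(1)=1$.

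Finally, I would invoke the Riesz representation theorem: every positive linear form on $C(K,\R)$ on a compact metric space is represented by a unique Borel regular measure $\mu$, and the condition $\Lambda(1)=1$ combined with positivity forces $\mu$ to be a probability measure on $K$. By construction $\int_K f\dif\mu=\Lambda(f)=\lim_n\int_K f\dif\mu_{\varphi(n)}$ for every $f\in C(K,\R)$, which is precisely $\mu_{\varphi(n)}\overset{*}{\longrightarrow}\mu$ (and since continuous compactly supported functions on $K$ are the same as continuous functions, this matches the definition of weak-$*$ convergence given above).

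The main obstacle, and the step most worth isolating carefully, is the passage from a countable dense family to all of $C(K,\R)$ followed by the recognition of the resulting functional as an integration against a bona fide probability measure. Diagonal extraction is routine, but without separability of $C(K)$ one cannot extract a single subsequence working for all test functions simultaneously, and without Riesz representation one only obtains a continuous linear functional rather than a measure. Both of these ingredients rely on $K$ being a compact subset of $\C$ (hence compact metric), which is precisely the hypothesis of the theorem.
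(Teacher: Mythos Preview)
Your proof is correct and follows essentially the same route as the paper: separability of $C(K,\R)$, diagonal extraction on a countable dense family, extension to all continuous functions by density, and then the Riesz--Markov--Kakutani representation theorem to identify the limiting positive linear form $\Lambda$ with $\Lambda(1)=1$ as integration against a probability measure. The only cosmetic difference is that you frame the argument explicitly as sequential Banach--Alaoglu for the dual of a separable Banach space, whereas the paper performs the diagonal extraction directly without naming that theorem.
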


The proof of this claim is given in the appendix \ref{appendixmeasure}.

\medskip

Let $K\subset\mathbb{C}$ be a compact set and use again the electrostatic model introduced in sub-subsection~\ref{ssec-diamTrans}. We can consider a probability measure $\mu\in\mathcal{P}(K)$ as a distribution of positive electric charges in $K$. For example, a punctual charge at point $z$ can be modelled by $\delta_z$ a Dirac measure centered on $z$, and a discrete distribution can be modelled by a counting measure.

\begin{defi}[Potential, energy]
Let $K\subset\C$ be a compact set. Let us consider $\mu\in\mathcal{P}(K)$ a probability measure on $K$.

  Let us define \emph{the potential of $\mu$ in $z$}
\[U^\mu(z)=\int_K\log\frac{1}{\abs{z-t}}\dif\mu(t)\]

    and the energy of the compact set $K$, with respect to $\mu$
\[I(\mu)=\int_K U^\mu(z)\dif \mu(z)=\iint_{K^2}\log\frac{1}{\abs{z-t}}\dif \mu(z)\dif \mu(t)\]
\end{defi}

At equilibrium, the distribution of the charges tends to minimize the overall energy. We therefore have an energy minimization problem on the space of measures on $K$.
\begin{defi}[Logarithmic capacity, equilibrium measure]

Let us define the \emph{Robin constant}
\[V_K=\inf_{\mu\in\mathcal{P}(K)}I(\mu)\]
and the \emph{logarithmic capacity}
\[\capa(K)=e^{-V_K}.\]

If the infimum $V_K=\inf_{\mu\in\mathcal{P}(K)}I(\mu)$ is reached for a measure $\mu_K$, this measure is called \emph{equilibrium measure}. In that case, we have $\capa(K)=e^{-I(\mu_K)}$.

The potential $U^{\mu_K}$ associated with the equilibrium measure is called \emph{the equilibrium potential}.
\end{defi}

\begin{defi}
	The capacity of a borel set $B\subset\C$ is defined as \[\capa(B)=\sup\left\{\capa(K):K\subset B, K\textrm{ compact}\right\}\]
\end{defi}

\begin{defi}["quasi-almost everywhere"]
Let $K$ be a compact set.
We write that \emph{some property holds quasi-almost everywhere (q-a. e.) in a set $K$}, if there exists a compact subset $S$ of capacity zero such that the property holds for all $\zeta\in K\backslash S$.     
\end{defi}

\begin{ex}
Let us consider $\mu\in\mathcal{P}(K)$; if there exists a point $z\in K$ such that $\mu(\{z\})>0$, then $I(\mu)=+\infty$ by definition. Furthermore, if for all $\mu\in\mathcal{P}(K)$ there exists such a point, then $\capa(K)=0$.
\end{ex}

\begin{ex}\label{exCapaSetCountable}
A countable set has a capacity of zero. In fact, a probability measure on such a set is atomic. The previous example gives us the expected result.
\end{ex}

\begin{rmq}[Heuristic approach]
If we consider $\nu_F$ the averaged counting measure with respect to $F=\{z_1,\dots,z_n\}$, by neglecting the divergent terms, we find the discrete definition given in sub-section~\ref{ssec-diamTrans}
$$I(\nu_F)=-\frac{1}{n(n-1)}\sum_{i\neq j} \log \abs{z_i-z_j} = - \log\prod_{i\neq j} \abs{z_i-z_j}^{\frac{2}{n(n-1)}}$$
To determine the measure that minimizes the energy, it becomes natural to consider the limit measure of the counting measures with respect to the Fekete points.

\medskip

This intuition will be justified later.
\end{rmq}

\begin{lmm}\label{lmmPrincpDesc}
Let $(\mu_n)$ be a sequence of measures of $\mathcal{P}(K)$ such that $\mu_n\stackrel{*}{\to}\mu$, then for all $z\in\C$,
\[U^\mu(z)\leqslant\liminf_{n\to\infty}U^{\mu_n}(z)\]
hence
\[I(\mu)\leqslant\liminf_{n\to\infty}I(\mu_n)\]
\end{lmm}

\begin{proof}
This lemma is a consequence of proposition~\ref{prop-smi-cvf} applied to $t\mapsto\log\frac{1}{\abs{z-t}}$ which is l.s.c. : 
$$\int_K \log\frac{1}{\abs{z-t}}\dif\mu(t) \leqslant \liminf_{n\to\infty}\int_K\log\frac{1}{\abs{z-t}}\dif\mu_n(t)$$
for all $z\in K$, hence the first inequality. The second inequality follows by integrating with respect to $\!\dif \mu(z)$ and by applying Fatou's lemma.
\end{proof}

\begin{thm}[Equilibrium measure]\label{thmExistUniqMesrEqlbr}
There always exists an equilibrium measure. Moreover, if $V_K<+\infty$ (or equivalently $\capa(K)>0$), the equilibrium measure is unique. 
\end{thm}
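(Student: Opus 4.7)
My plan is to handle existence by a direct minimization combining Banach--Alaoglu with the lower semi-continuity lemma already at our disposal, and uniqueness by a strict convexity argument; the genuine obstacle will be proving that the logarithmic energy form is positive-definite on mean-zero signed measures.

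\textbf{Existence.} I would pick a minimizing sequence $(\mu_n)_n \subset \mathcal{P}(K)$ with $I(\mu_n) \to V_K$. By Theorem~\ref{thmBanachAlaoglu}, after extraction we may assume $\mu_n \overset{*}{\longrightarrow} \mu$ for some $\mu \in \mathcal{P}(K)$. Lemma~\ref{lmmPrincpDesc} then gives $I(\mu) \le \liminf_n I(\mu_n) = V_K$, while the reverse inequality $V_K \le I(\mu)$ is automatic from $\mu \in \mathcal{P}(K)$, so $\mu$ attains the infimum. (If $V_K = +\infty$, every probability measure on $K$ is trivially an equilibrium measure, so the existence assertion holds unconditionally.)

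\textbf{Uniqueness.} Suppose $V_K < +\infty$ and let $\mu_1, \mu_2 \in \mathcal{P}(K)$ both satisfy $I(\mu_i) = V_K$. I would extend $I$ to a symmetric bilinear form on compactly supported signed measures by
\[
I(\mu, \nu) = \iint \log \tfrac{1}{|z-t|}\, d\mu(z)\, d\nu(t),
\]
so that $I(\mu) = I(\mu, \mu)$, and record the algebraic identity
\[
I\!\left(\tfrac{\mu_1 + \mu_2}{2}\right) = \tfrac{1}{2}\bigl(I(\mu_1) + I(\mu_2)\bigr) - \tfrac{1}{4}\, I(\mu_1 - \mu_2, \mu_1 - \mu_2).
\]
Since $(\mu_1+\mu_2)/2 \in \mathcal{P}(K)$, the left-hand side is at least $V_K$, while the first summand on the right equals $V_K$ by hypothesis; hence $I(\mu_1 - \mu_2, \mu_1 - \mu_2) \leq 0$. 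It therefore suffices to show that $I(\sigma, \sigma) \geq 0$ for every compactly supported signed measure $\sigma$ with $\sigma(\mathbb{C}) = 0$, strict unless $\sigma = 0$.

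\textbf{The main obstacle} is precisely this positive-definiteness, which forces the use of a representation of $\log \tfrac{1}{|z-w|}$ as an integral of manifestly positive-definite kernels. I would apply Frullani's identity
\[
\log \tfrac{1}{|z-w|} = \tfrac{1}{2}\int_0^\infty \bigl(e^{-s|z-w|^2} - e^{-s}\bigr)\, \tfrac{ds}{s},
\]
integrate against $d\sigma(z)\, d\sigma(w)$, and note that the $e^{-s}$ term contributes $\sigma(\mathbb{C})^2 = 0$ by Fubini. What remains is $\tfrac{1}{2}\int_0^\infty \bigl(\iint e^{-s|z-w|^2}\, d\sigma(z)\, d\sigma(w)\bigr)\tfrac{ds}{s}$, in which each inner integral is nonnegative because the Gaussian kernel on $\mathbb{R}^2 \cong \mathbb{C}$ is positive-definite (its Fourier transform being a positive Gaussian), with strict positivity for $\sigma \neq 0$ by Fourier injectivity. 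Applied to $\sigma = \mu_1 - \mu_2$, this forces $\mu_1 = \mu_2$ and finishes the proof.
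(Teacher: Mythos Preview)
Your existence argument is exactly the paper's: minimizing sequence, Banach--Alaoglu extraction (Theorem~\ref{thmBanachAlaoglu}), then lower semi-continuity via Lemma~\ref{lmmPrincpDesc}.

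For uniqueness, the paper gives no self-contained argument: it simply invokes the strict convexity of $\mu\mapsto I(\mu)$ on $\{\mu\in\mathcal{P}(K):I(\mu)<\infty\}$ by citing \cite{saff2013logarithmic}, Chap.~I, Thm.~1.3(b) and Lemma~1.8. Your parallelogram-plus-positive-definiteness sketch is precisely the content of that cited lemma, so you are filling in what the paper outsources, and the Frullani/Gaussian route is indeed one of the standard ways to do it.

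There is, however, a genuine technical gap. Both the parallelogram identity and the Fubini interchange you invoke presuppose that the cross-energy $I(\mu_1,\mu_2)$ is finite, equivalently that $\iint\lvert\log\lvert z-w\rvert\rvert\,d\lvert\sigma\rvert(z)\,d\lvert\sigma\rvert(w)<\infty$ for $\sigma=\mu_1-\mu_2$. This is \emph{not} an immediate consequence of $I(\mu_1),I(\mu_2)<\infty$, and your ``by Fubini'' hides exactly this point: the absolute triple integral you need to control equals $2\iint\lvert\log\lvert z-w\rvert\rvert\,d\lvert\sigma\rvert^2$, whose finiteness is equivalent to $I(\mu_1+\mu_2)<\infty$, which is what is in question. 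The standard repair is a mollification step: set $\mu_i^\epsilon=\mu_i*\phi_\epsilon$ for a radial approximate identity $\phi_\epsilon$, so that all energies are manifestly finite and your Frullani/Gaussian computation is rigorous; this yields $I(\mu_1^\epsilon,\mu_2^\epsilon)\le\tfrac12\bigl(I(\mu_1^\epsilon)+I(\mu_2^\epsilon)\bigr)\le\tfrac12\bigl(I(\mu_1)+I(\mu_2)\bigr)$, and one passes to the limit $\epsilon\to0$ using that $U^{\mu^\epsilon}\nearrow U^\mu$ by superharmonicity of the logarithmic potential. With that approximation step inserted, your argument is complete.
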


\begin{proof}
The uniqueness follows directly from the fact that $\mu\mapsto I(\mu)$ is strictly convex on $\{\mu\in\mathcal{P}(K):I(\mu)<\infty\}$  (\cite{saff2013logarithmic}, Chap. I, Thm. 1.3(b), Lem. 1.8). 
The existence results from theorem \ref{thmBanachAlaoglu} which states that the space of probability measures on a compact set is a compact set. In fact, 
$$V_K=\inf_{\mu\in\mathcal{P}(K)}I(\mu)$$ 
Let $(\mu_n)$ be a sequence of measures such that $I(\mu_n)\to V_K$. By compactness, there exists $\mu\in\mathcal{P}(K)$ and a sub-sequence $(\mu_{n_k})$ such that $\mu_{n_k}\cvf\mu$. By lemma \ref{lmmPrincpDesc} :
$$I(\mu)\leqslant\liminf_{k\to +\infty}I(\mu_{n_k})=V_K\leqslant I(\mu)$$ since $V_K$ is the infimum. Hence $V_K=I(\mu)$.
\end{proof}
\clearpage
\paragraph{}
We now have all the keys to prove the equivalence between the two definitions of the capacity which were previously defined.
\begin{thm}\label{thmTauEgalCapa}
Let $K$ be a compact subset of $\C$. We have $$\tau(K)=\capa(K).$$
Moreover, if $\capa(K)>0$ and we denote by $\mu_K$ the equilibrium measure of $K$ and by $\nu_n$ the counting measures with respect to the Fekete points, then 
$$\nu_n \cvf \mu_K$$
The Fekete points are said to be equidistributed with respect to the equilibrium measure of $K$.
\end{thm}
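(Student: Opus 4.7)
I would prove the two inequalities $\tau(K)\geq \capa(K)$ and $\tau(K)\leq\capa(K)$ separately, and then derive the equidistribution statement as a corollary of the second one plus the uniqueness part of Theorem~\ref{thmExistUniqMesrEqlbr}. Throughout, I denote by $z_1^{(n)},\dots,z_n^{(n)}$ a choice of Fekete points of degree $n$ and by $\nu_n=\frac{1}{n}\sum_{i=1}^n \delta_{z_i^{(n)}}$ the associated counting measure.

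\textbf{Step 1: $\tau(K)\geq\capa(K)$.} This is the easy direction. By definition of Fekete points, for any $w_1,\dots,w_n\in K$,
$$\sum_{i<j}\log\frac{1}{|w_i-w_j|}\;\geq\; -\frac{n(n-1)}{2}\log\delta_n(K).$$
Integrate this inequality against $\mu^{\otimes n}$ for an arbitrary $\mu\in\mathcal{P}(K)$; by Fubini, each term $\log\frac{1}{|w_i-w_j|}$ contributes $I(\mu)$, giving $\frac{n(n-1)}{2}I(\mu)\geq -\frac{n(n-1)}{2}\log\delta_n(K)$, hence $-\log\delta_n(K)\leq I(\mu)$. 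Taking the infimum over $\mu$ and then the limit in $n$, I get $-\log\tau(K)\leq V_K$, i.e.\ $\tau(K)\geq\capa(K)$.

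\textbf{Step 2: $\tau(K)\leq\capa(K)$.} This is the main obstacle, because $I(\nu_n)=+\infty$ due to the diagonal, so Lemma~\ref{lmmPrincpDesc} cannot be applied directly to the $\nu_n$'s. The standard workaround is a truncation: for $M>0$, set
$$k_M(z,t):=\min\!\Bigl(M,\;\log\tfrac{1}{|z-t|}\Bigr),$$
which is continuous and bounded on $K\times K$. Computing the double integral with respect to $\nu_n\otimes\nu_n$ and separating the diagonal (where $k_M=M$) from the off-diagonal, I obtain
$$\iint_{K\times K} k_M\,\dif\nu_n\dif\nu_n \;\leq\; \frac{1}{n^2}\sum_{i\neq j}\log\frac{1}{|z_i^{(n)}-z_j^{(n)}|}+\frac{M}{n}\;=\;-\frac{n-1}{n}\log\delta_n(K)+\frac{M}{n}.$$
By Banach-Alaoglu (Thm.~\ref{thmBanachAlaoglu}), extract a subsequence $\nu_{n_k}\overset{*}{\to}\mu$ with $\mu\in\mathcal{P}(K)$; since $k_M$ is continuous, $\nu_{n_k}\otimes\nu_{n_k}\overset{*}{\to}\mu\otimes\mu$ on $K\times K$ (this needs a quick justification, but it is standard), so passing to the limit in $k$:
$$\iint_{K\times K} k_M\,\dif\mu\dif\mu \;\leq\; -\log\tau(K).$$
Now let $M\to+\infty$: the kernels $k_M$ increase pointwise to $\log\tfrac{1}{|z-t|}$, so monotone convergence yields $I(\mu)\leq -\log\tau(K)$, whence $V_K\leq -\log\tau(K)$ and $\capa(K)\geq\tau(K)$.

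\textbf{Step 3: Equidistribution.} Assume $\capa(K)>0$, so $V_K<+\infty$ and the equilibrium measure $\mu_K$ is unique (Thm.~\ref{thmExistUniqMesrEqlbr}). Step~2 showed that every weak-$*$ subsequential limit $\mu$ of $(\nu_n)$ satisfies $I(\mu)\leq V_K$, hence $I(\mu)=V_K$, and uniqueness forces $\mu=\mu_K$. Since $(\nu_n)$ lies in the sequentially compact space $\mathcal{P}(K)$ and all its cluster points coincide with $\mu_K$, the whole sequence converges: $\nu_n\overset{*}{\to}\mu_K$.

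The main technical hurdle is Step~2, specifically the need to control the logarithmic singularity of the kernel on the diagonal; the truncation-and-monotone-convergence device is the standard remedy, and the rest of the argument is bookkeeping. A minor point that deserves explicit verification is the weak-$*$ convergence of the product measures $\nu_{n_k}\otimes\nu_{n_k}$ to $\mu\otimes\mu$, which follows by approximating continuous functions on $K\times K$ by finite sums of tensor products via Stone-Weierstrass.
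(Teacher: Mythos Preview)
Your proof is correct and follows essentially the same route as the paper: the first inequality is obtained by integrating the Fekete-point estimate against a product measure, the second by extracting a weak-$*$ convergent subsequence of Fekete counting measures and handling the diagonal singularity via the truncated kernel $\min(M,\log\frac{1}{|z-t|})$ with monotone convergence, and equidistribution then follows from uniqueness of the equilibrium measure and sequential compactness. The paper presents the same computations with only cosmetic differences (it integrates against $\mu_K^{\otimes n}$ directly rather than an arbitrary $\mu$, and it does not flag the product-measure convergence point that you rightly note).
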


\begin{proof}
Let us rather compare $V_K =\log\frac{1}{\capa(K)}$ and $\log\frac{1}{\tau(K)}$. First of all, let us show that $ \log\frac{1}{\capa(K)} \geqslant \log\frac{1}{\tau(K)}$.
\medskip

Let us define for $z_1,\dots,z_n \in K$ :
\[F(z_1,\dots,z_n) := \sum_{i<j}\log \frac{1}{\abs{z_i-z_j}}\]
Let us remind ourselves of the definition of $\delta_n(K)$ in definition~\ref{defi-PointsdeFekete} : 
\[\delta_n(K)=\max_{z_1,\dots,z_n\in K} \prod_{i < j}\abs{z_i-z_j}^{2/n(n-1)}\]
and that $\tau(K)=\lim \delta_n(K)$. Let us also define the minimal energy associated with a $n$-point configuration:
$$\mathcal{E}_n :=\min_{z_1,\dots,z_n\in K} F(z_1,\dots,z_n)=\frac{n(n-1)}{2}\log \frac{1}{\delta_n(K)} $$
Let $\mu_K$ be an equilibrium measure on $K$.  Let us consider
\begin{align*} 
 J &:= \int\cdots\int F(z_1,\dots,z_n) \dif\mu_K(z_1)\dots\dif\mu_K(z_n) \\
  &= \sum_{ i<j} \int\cdots\int \log \frac{1}{\abs{z_i-z_j}}\dif\mu_K(z_1)\dots\dif\mu_K(z_n)  \\ 
  &= \sum_{ i<j} \int \int \log \frac{1}{\abs{z_i-z_j}}\dif\mu_K(z_i)\dif\mu_K(z_j)\\
  & = \sum_{1 \leqslant i<j \leqslant n} V_K \\
  & =  \frac{n(n-1)}{2}V_K
\end{align*}

On the other hand 
\begin{align*}
    J &\geqslant \int\cdots\int \mathcal{E}_n \dif\mu_K(z_1)\dots\dif\mu_K(z_n) \\
      &= \mathcal{E}_n \\
      &= \frac{n(n-1)}{2}\log \frac{1}{\delta_n(K)}
\end{align*}
Hence $ V_K =\log\frac{1}{\capa(K)}\geqslant \log\frac{1}{\tau(K)}$.

\vspace{1em}
Let us denote by $\nu_n$ the counting measures with respect to the Fekete points $\{z_1,\dots,z_n\}$. By compactness, there exists a sub-sequence $(\nu_{n_k})$ and $\mu\in\mathcal{P}(K)$ so that $\nu_{n_k}\cvf \mu$. Then
\begin{align*}
    I(\mu) &= \iint \log\frac{1}{\abs{z-t}}\dif\mu(z)\dif\mu(t) \\
           &= \lim_{M\to +\infty} \iint \min[M,\log\frac{1}{\abs{z-t}}]\dif\mu(z)\dif\mu(t) \\
           &= \lim_{M\to +\infty}\lim_{k\to +\infty}  \iint \min[M,\log\frac{1}{\abs{z-t}}]\dif\nu_{n_k}(z)\dif\nu_{n_k}(t) \\
           &= \lim_{M\to +\infty}\lim_{k\to +\infty} \frac{1}{n_k^2}\sum_{1\leqslant i,j\leqslant n_k}\min[M,\log\frac{1}{\abs{z_i-z_j}}] \\
           &\leqslant \lim_{M\to +\infty}\lim_{k\to +\infty}\frac{1}{n_k^2} (\sum_{i=1}^{n_k} M +
           2\sum_{1\leqslant i< j\leqslant n_k}\log\frac{1}{\abs{z_i-z_j}})\\
            &\leqslant \lim_{M\to +\infty}\lim_{k\to +\infty}\frac{1}{n_k^2} (n_k M +
           n_k(n_k-1)\log \frac{1}{\delta_{n_k}(K)})\\
           &= \log\frac{1}{\tau(K)}
\end{align*}
Then
$$ I(\mu) \leqslant  \log\frac{1}{\tau(K)} \leqslant  \log\frac{1}{\capa(K)} = V_K \leqslant I(\mu)  $$
Therefore, $\tau(K) = \capa(K)$.

Furthermore, $I(\mu)=V_K=I(\mu_K)$. In the case where $\capa(K)>0$, $\mu = \mu_K$ the unique equilibrium measure on $K$. $(\nu_n)$ is a sequence of elements of a compact set with  $\mu_K$ as its unique accumulation point, therefore
$$\nu_n\cvf\mu_K$$

\end{proof}

\begin{ex}\label{propMesPotentCercleUnit}
The equilibrium measures $\mu_{\U}$ and $\mu_{\overline{B}}$ of the unit circle $\U$ and the unit disk $\overline{B}$ are both $\frac{\mathrm{d}\theta}{2\pi}$ on $\U$, and the corresponding equilibrium potential is \[U^\mu(z)=-\log^+\abs{z}=-\max(0,\log\abs{z})\]
\end{ex}
\begin{figure}[h!]
    \centering
    \includegraphics[width = 13cm]{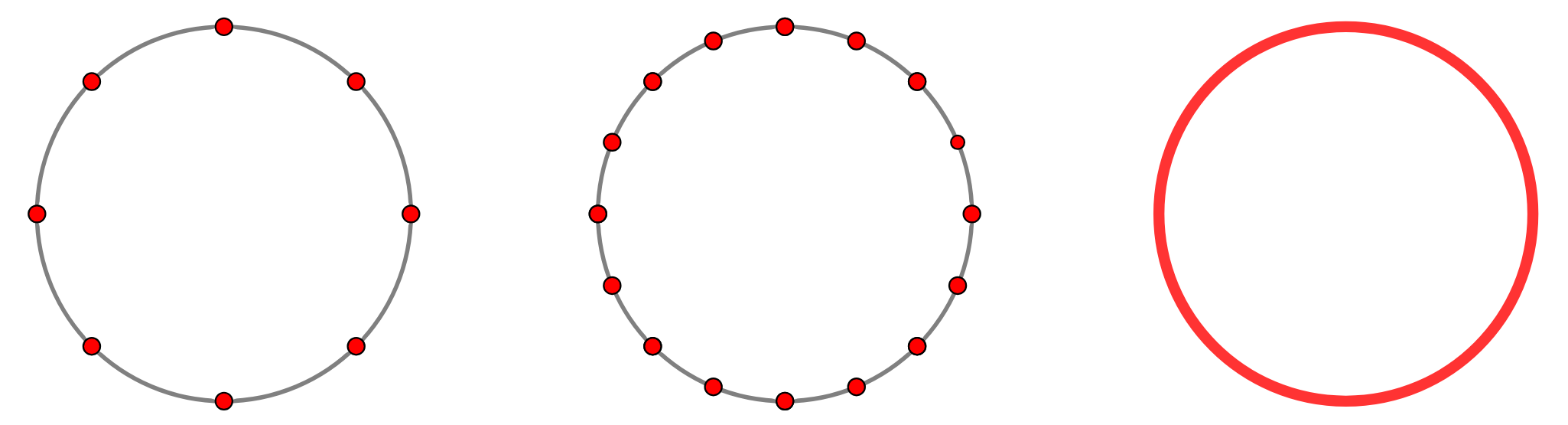}
    \caption{Fekete points for $n\in\{8,16\}$ and equilibrium measure with $K=\U$}
    \label{fig:racineUniteVersCercle}
\end{figure}

\medskip
\vspace{5em}

\begin{proof}
\begin{itemize}[label=$\bullet$]
    \item   Let $f$ be a continuous function. By applying the convergence theorem of Riemann sums to $\theta \mapsto f(e^{i \theta})$, we obtain :
    $$\nu(\U_n)(f)=\frac 1 n \sum^{n-1}_{k=0} f(\exp{\frac{2ik\pi}{n}})\underset{n \rightarrow \infty}{\rightarrow}\int^{2\pi}_0 f(e^{i\theta})\frac{\text{d}\theta}{2\pi}$$
    But $\U_n$ is a set of Fekete points of $\U$ (Prop. \ref{propTauCercleUnit}) therefore $\nu(\U_n)\cvf \mu_\U$ (Thm. \ref{thmTauEgalCapa}). Hence $\boxed{\mu_\U=\frac{\text{d}\theta}{2\pi}}$.
    
   \item    By making the substitution $t:=\theta+\phi$ :
    \[U^{\mu_\U}(z)=-\int^{2\pi}_0 \log(\abs{z-e^{i\theta}})\frac{\text{d}\theta}{2\pi}=-\int^{2\pi}_0\log(\abs{ze^{i\phi}-e^{it}})\frac{\text{d}t}{2\pi} = U^{\mu_\U}(\abs{z})\]
    Let us consider $ r\in\R_+$
    \[
        U^{\nu(\U_n)}(r) = -\frac 1 n \log\abs{\prod^{n-1}_{k=0} (r-e^{\frac{2ik\pi}{n}})}
        = -\frac 1 n \log\abs{r^n-1}
    \]
    \begin{itemize}
        \item  If $r<1$, $\abs{r^n-1}\rightarrow 1$, therefore $U^{\nu(\U_n)}(r)=-\frac 1 n \log\abs{r^n-1}\underset{n \rightarrow \infty}{\rightarrow} 0$
    \item If $r>1$, $U^{\nu(\U_n)}(r)=-\frac 1 n \log\abs{r^n-1} = -\log(r)-\frac 1 n \log\abs{\frac 1 r ^n-1}\underset{n \rightarrow \infty}{\rightarrow}-\log(r)$
    \end{itemize}
    When $r\neq 1$, the function $t\in \U \mapsto -\log\abs{z-t}\in \C$ is continuous, therefore \\ $U^{\mu_\U}(r)=\displaystyle\lim_{n \rightarrow \infty} U^{\nu(\U_n)}(r)=\begin{cases}
    0 \text{ if } r<1 \\ \log\frac 1 r \text{ if } r>1
    \end{cases}$\\
    When $r=1$, by using \ref{propTauCercleUnit} and \ref{thmTauEgalCapa} we obtain :  $0=-\log(\tau(\U))=V_\U=\int_\U U^{\mu_\U}(z)\dif\mu_\U(z)=U^{\mu_\U}(1)$\\
    Hence $\boxed{U^{\mu_\mathbb{U}}(z)=-\log^+\abs{z}}$
    \item From the proposition \ref{propCapa}(c), it follows that $\capa(\U)=\capa(\overline{B})$ and $V_{\overline{B}}=I(\mu_{\U})\neq +\infty$ thanks to the uniqueness of the equilibrium measure, hence $\mu_{\overline{B}}=\mu_{\U}$ and $U^{\mu_{\overline{B}}}=U^{\mu_{\U}}$.
    \end{itemize}
    \end{proof}

\paragraph{}
Now that we have unified the two notions of capacity previously defined, let us introduce a few properties of our capacity.

\begin{prop}\label{propCapa}
Let us consider $(K_n)$ compact subsets of $\C$.
\begin{enumerate}[label=(\alph*)]
    \item Let us consider $K_1\subset K_2$, then $\capa(K_1)\leq\capa(K_2)$.
    \item Let us consider $\alpha,\beta\in\C$, then $\capa(\alpha K+\beta)=\abs{\alpha}\capa(K)$.
    \item $\capa(K)=\capa(\partial K)$.
    \item Let us consider $(K_n)_{n}$ a decreasing sequence and let us consider $K=\bigcap_n K_n$, then $\capa(K)=\lim_n\capa(K_n)$.
    \item Let us consider $(K_n)_{n}$ an increasing sequence and let us assume that $K=\bigcup_n K_n$ is a compact set, then $\capa(K)=\lim_n\capa(K_n)$.
    \item Assume that $\capa(K)=0$. Then $\nu(K)=0$, for all finite measures with compact support $\nu$ on $\C$ such that $I(\nu)<+\infty$. In particular, Lebesgue measure $m_2(K)=0$.
\end{enumerate}
\end{prop}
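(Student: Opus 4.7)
The six claims split by difficulty. Items (a), (b), (c) are essentially formal consequences of what we have; items (d), (e), (f) all rest on the weak-$*$ compactness of $\mathcal{P}(K)$ (Thm.~\ref{thmBanachAlaoglu}) and on the lower semi-continuity of the energy (Lemma~\ref{lmmPrincpDesc}), together with careful handling of the sign of the logarithmic kernel. For (a), any $\mu \in \mathcal{P}(K_1)$ also lies in $\mathcal{P}(K_2)$, so the infimum defining $V_{K_2}$ is taken over a larger class, whence $V_{K_2} \leqslant V_{K_1}$ and $\capa(K_1) \leqslant \capa(K_2)$. Items (b) and (c) are, via Theorem~\ref{thmTauEgalCapa}, exactly the corresponding assertions for the transfinite diameter $\tau$ already proved in Proposition~\ref{propTau}.

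For (d), monotonicity from (a) gives $\lim_n \capa(K_n) \geqslant \capa(K)$. For the reverse inequality, let $\mu_n$ be the equilibrium measure of $K_n$ (Thm.~\ref{thmExistUniqMesrEqlbr}); each $\mu_n$ lies in $\mathcal{P}(K_1)$, so Banach--Alaoglu yields a subsequence $\mu_{n_k} \cvf \mu \in \mathcal{P}(K_1)$. To check that $\mu \in \mathcal{P}(K)$: given $x \notin K = \bigcap_n K_n$, pick $n_0$ with $x \notin K_{n_0}$ and $r > 0$ with $B(x,r) \cap K_{n_0} = \emptyset$; for any continuous function $\phi$ supported in $B(x,r)$ we have $\int \phi \dif\mu_n = 0$ for $n \geqslant n_0$, so $\int \phi \dif\mu = 0$ by weak convergence, showing that $\mu$ vanishes on a neighborhood of $x$. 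Hence $\mu \in \mathcal{P}(K)$, and Lemma~\ref{lmmPrincpDesc} gives
\[
V_K \leqslant I(\mu) \leqslant \liminf_k I(\mu_{n_k}) = \lim_n V_{K_n}.
\]

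For (e), monotonicity gives $\lim_n \capa(K_n) \leqslant \capa(K)$; we may assume $\capa(K) > 0$. Setting $\mu := \mu_K$, since $\mu(K_n) \uparrow 1$, the normalized restrictions $\mu_n := \mu|_{K_n}/\mu(K_n) \in \mathcal{P}(K_n)$ are well defined for $n$ large, and $V_{K_n} \leqslant I(\mu_n)$. Then $I(\mu_n) \to I(\mu) = V_K$ by dominated convergence: $K$ is bounded so $\log^+ |z-t|$ is bounded on $K \times K$, and $I(\mu) < \infty$ makes $\log^+ \frac{1}{|z-t|}$ integrable against $\mu \otimes \mu$, providing an integrable dominant. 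Hence $\limsup V_{K_n} \leqslant V_K$, giving equality. For (f), suppose for contradiction $\capa(K) = 0$ yet $\nu(K) > 0$, and set $\nu_K := \nu|_K/\nu(K) \in \mathcal{P}(K)$. Since $\mathrm{supp}(\nu)$ is compact, there is $C \in \R$ with $\log \frac{1}{|z-t|} \geqslant C$ on $\mathrm{supp}(\nu)^2$, whence
\[
\iint_{K^2} \Big(\log \tfrac{1}{|z-t|} - C\Big) \dif\nu \dif\nu \;\leqslant\; I(\nu) - C\,\nu(\C)^2 \;<\; +\infty,
\]
which forces $I(\nu_K) < +\infty$ and contradicts $V_K = +\infty$. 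For Lebesgue measure, take $\nu = m_2|_B$ with $B \supset K$ a closed ball: since the logarithmic kernel is locally integrable in $\R^2$, $I(\nu) < +\infty$, and the argument above yields $m_2(K) = 0$.

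The main obstacle throughout (d)--(f) is that $\log \frac{1}{|z-t|}$ is unbounded on the diagonal and changes sign, so the usual continuity results for weak-$*$ convergence do not apply directly. In (d) this is absorbed by the lower semi-continuity lemma; in (e) and (f) it is handled by shifting the kernel by a constant, which is legitimate because all sets involved are bounded, allowing standard dominated convergence and direct comparison to take over.
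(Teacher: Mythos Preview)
Your proof is correct and follows essentially the same approach as the paper's throughout. The only minor technical variation is in (e), where you use dominated convergence (via the $\log^+/\log^-$ splitting) while the paper uses monotone convergence after shifting the kernel by $-\log\delta_2(K)$ to make it nonnegative; both arguments are valid.
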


\begin{proof}
(a) and (b) are immediate consequences of the definition.
\medskip\par (c) results from theorem \ref{thmTauEgalCapa} and proposition \ref{propTau}.
    \medskip\par (d) According to (a), it is enough to show that $\capa(K)\geq\lim_n\capa(K_n)$, and moreover we can assume that $\capa(K_n)>0$ for all $n$. Let $\mu_{K_n}$ be the equilibrium measure of $K_n$, then $\mu_{E_n}\in\mathcal{P}(K_0)$. According to \hyperref[thmBanachAlaoglu]{Banach-Alaoglu-Bourbaki} theorem, we can extract a sub-sequence $(\mu_{E_{\varphi(n)}})_n$ which converges weakly-* to a measure $\mu^\star\in\mathcal{P}(K_0)$. Then lemma \hyperref[lmmPrincpDesc]{2.2} gives us
\begin{align*}
    I(\mu^\star)\leq\liminf_nI(\mu_{\varphi(n)})=\liminf_n\left(-\log\capa(K_{\varphi(n)})\right)=-\lim_n\log\capa(K_n)
\end{align*}
On the other hand, Prop. \ref{propSuppDecroi} implies that 
\[\supp\mu^\star\subset\bigcap_n\supp\mu_{E_{\varphi(n)}}\subset\bigcap_nK_{\varphi(n)}=\bigcap_nK_n=K\]
Therefore $\mu^\star\in\mathcal{P}(K)$, then
\[\capa(K)\geq e^{-I(\mu^\star)}\geq \exp\left(\lim_n\log\capa(K_n)\right)=\lim_n\capa(K_n)\]
\medskip\par (e) According to (a), it is enough to show that $\capa(K)\leq\lim_n\capa(K_n)$, and moreover we can assume that $\capa(K)>0$. Let us consider $\mu\in\mathcal{P}(K)$. Since $\lim_n\mu(K_n)=\mu(K)=1$, we have $\mu(K_n)>0$ for a sufficiently large $n$ . Then for such a $n$, we consider $\frac{\mu|_{K_n}}{\mu(K_n)}\in\mathcal{P}(K_n)$. We have
\[-\log\capa(K_n)\leq I\left(\frac{\mu|_{K_n}}{\mu(K_n)}\right)=\frac{1}{\mu(K_n)^2}\int_{K_n}\int_{K_n}\log\frac{1}{\abs{z-t}}\dif\mu(t)\dif\mu(z)\]
Since the domain of integration $K_n\times K_n$ increases with $n$ and tends to $K\times K$, because $(K_n)$ is increasing and tends to $K$, we obtain that
\[-\log\left(\lim_n\capa(K_n)\right)\leq \frac{1}{\lim_n\mu(K_n)^2}\int_{K}\int_{K}\log\frac{1}{\abs{z-t}}\dif\mu(t)\dif\mu(z)=I(\mu)\]
where we apply the monotone convergence theorem, after verifying that on $K\times K$, $\log\frac{1}{\abs{z-t}}$ is lower bounded by $\log(1/\delta_2(K))$. In particular, when $\mu=\mu_K$ the equilibrium measure of $K$, $I(\mu_K)=-\log\capa(K)$, we obtain that $\capa(K)\leq\lim_n\capa(K_n)$.
\medskip\par (f) Let $\nu$ be a measure verifying the theorem's assumptions. If $\nu(K)>0$, then $\frac{\nu|_K}{\nu(K)}\in\mathcal{P}(K)$, therefore since $\capa(K)=0$, we have
\[I\left(\frac{\nu|_K}{\nu(K)}\right)=\frac{1}{\nu(K)^2}\int_K\int_K\log\frac{1}{\abs{z-t}}\dif\nu(t)\dif\nu(z)=+\infty\]
Then
\begin{align*}
    I(\nu)&=\iint\log\frac{1}{\abs{z-t}}\dif\nu(t)\dif\nu(z)\\
    &=\left(\int\int_{(K\times K)^c}+\int_K\int_K\right)\log\frac{1}{\abs{z-t}}\dif\nu(t)\dif\nu(z)\\
    &\geqslant \left(1-\nu(K)^2\right)\log\frac{1}{\delta_2(\supp(\nu))}+\nu(K)^2I\left(\frac{\nu|_K}{\nu(K)}\right)=+\infty
\end{align*}
which is contradicted by $I(\nu)<+\infty$.
\medskip\par If $K\subset B(0,r)$, we have $m_2|_{B(0,r)}(\C)=\pi r^2$ and $I(m_2|_{B(0,r)})<+\infty$. In fact, for all $z\in B(0,r)$,
\begin{align*}
    \int_{B(0,r)}\log\frac{1}{\abs{z-t}}\dif m_2(t)&\leq\int_{B(z,2r)}\log\frac{1}{\abs{z-t}}\dif m_2(t)\\
    &=\int_{0}^{2\pi}\int_{0}^{2r}\left(\log\frac{1}{\rho}\right)\rho\dif\rho
    \leq 4\pi r\left\|\rho\log\rho\right\|_{L^\infty(]0,r])}<+\infty
\end{align*}
Therefore we can apply what we have just proved to obtain that $m_2(K)=0$.
\end{proof}

\begin{cor}\label{cor-continutecapacitesegment}
    Let $a=(a_1 < \dots < a_{2n})$ be a set of strictly increasing real numbers. Let us consider $E_a = \displaystyle\bigcup_{k=0}^{n-1} [a_{2k+1},a_{2k+2}]$. The capacity of $E_a$ continuously varies with $a$.
\end{cor}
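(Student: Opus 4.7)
The plan is to sandwich $E_a$ between nested inner and outer perturbations of itself whose capacities are controlled via Prop.~\ref{propCapa}. Suppose $a^{(m)}\to a$ coordinate-wise within the open set $\{a_1<\dots<a_{2n}\}$. For $\epsilon>0$ small enough that the shrunken intervals remain non-degenerate, set
\[
E_a^{+\epsilon} := \bigcup_{k=0}^{n-1}[a_{2k+1}-\epsilon,\, a_{2k+2}+\epsilon], \qquad E_a^{-\epsilon} := \bigcup_{k=0}^{n-1}[a_{2k+1}+\epsilon,\, a_{2k+2}-\epsilon].
\]
For $m$ large enough that $\max_j|a_j^{(m)}-a_j|<\epsilon$ we have $E_a^{-\epsilon}\subset E_{a^{(m)}}\subset E_a^{+\epsilon}$, so Prop.~\ref{propCapa}(a) sandwiches $\capa(E_{a^{(m)}})$ between $\capa(E_a^{-\epsilon})$ and $\capa(E_a^{+\epsilon})$. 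It therefore suffices to show that both tend to $\capa(E_a)$ as $\epsilon\to 0^+$.

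The upper bound is immediate: taking $\epsilon_m\searrow 0$, the sets $E_a^{+\epsilon_m}$ form a decreasing sequence of compact sets with intersection $E_a$, so Prop.~\ref{propCapa}(d) gives $\capa(E_a^{+\epsilon_m})\to\capa(E_a)$. The case $\capa(E_a)=0$ is then trivial via this upper bound, so we may henceforth assume $\capa(E_a)>0$.

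The lower bound is the main obstacle, because Prop.~\ref{propCapa}(e) cannot be applied directly: $\bigcup_\epsilon E_a^{-\epsilon}$ equals the interior $E_a\setminus\{a_1,\dots,a_{2n}\}$, which is not compact. The idea is instead to use the equilibrium measure $\mu:=\mu_{E_a}$ itself, which exists uniquely by Thm.~\ref{thmExistUniqMesrEqlbr}, as a source of test measures on $E_a^{-\epsilon}$. Crucially $\mu$ has no atoms: an atom at $z_0$ would make $U^\mu(z_0)=+\infty$ and hence $I(\mu)=+\infty$, contradicting $\capa(E_a)>0$. Therefore $\mu(\{a_1,\dots,a_{2n}\})=0$, so $\mu(E_a^{-\epsilon})\to 1$ as $\epsilon\to 0^+$, and the normalized restriction $\nu_\epsilon:=\mu|_{E_a^{-\epsilon}}/\mu(E_a^{-\epsilon})$ is a probability measure on $E_a^{-\epsilon}$.

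By definition of the logarithmic capacity,
\[
-\log\capa(E_a^{-\epsilon}) \;\leq\; I(\nu_\epsilon) \;=\; \frac{1}{\mu(E_a^{-\epsilon})^2}\iint_{E_a^{-\epsilon}\times E_a^{-\epsilon}}\log\frac{1}{|z-t|}\,d\mu(z)\,d\mu(t).
\]
Setting $C:=\log\operatorname{diam}(E_a)$, the integrand $\log\frac{1}{|z-t|}+C$ is non-negative on $E_a\times E_a$ with finite integral $I(\mu)+C$ against $\mu\otimes\mu$. Since $\mathbf{1}_{E_a^{-\epsilon}\times E_a^{-\epsilon}}$ increases to $\mathbf{1}_{E_a\times E_a}$ off the $\mu\otimes\mu$-null set where a coordinate equals an endpoint, monotone convergence gives
\[
\iint_{E_a^{-\epsilon}\times E_a^{-\epsilon}}\log\frac{1}{|z-t|}\,d\mu(z)\,d\mu(t)\;\longrightarrow\;I(\mu)=-\log\capa(E_a).
\]
Combined with $\mu(E_a^{-\epsilon})\to 1$ this yields $I(\nu_\epsilon)\to -\log\capa(E_a)$, hence $\liminf_{\epsilon\to 0^+}\capa(E_a^{-\epsilon})\geq\capa(E_a)$. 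Since Prop.~\ref{propCapa}(a) supplies the reverse bound $\capa(E_a^{-\epsilon})\leq\capa(E_a)$, we conclude $\capa(E_a^{-\epsilon})\to\capa(E_a)$, which together with the upper bound finishes the proof.
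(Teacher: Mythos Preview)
Your proof is correct and follows the same sandwich strategy as the paper, which simply appeals to Prop.~\ref{propCapa}(d) and (e) for the outer and inner approximations $E_a^{\pm\epsilon}$. You are right to flag that (e) does not apply verbatim, since $\bigcup_\epsilon E_a^{-\epsilon}$ misses the $2n$ endpoints and is therefore not compact; your workaround via the non-atomicity of $\mu_{E_a}$ and monotone convergence is exactly the argument behind the proof of (e) itself, adapted to account for a $\mu_{E_a}$-null set, so in effect you have made rigorous a step the paper leaves implicit.
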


\begin{proof}
    Let us consider $d=\min_i\{a_{i+1}-a_i\}$ and $\varepsilon\in]0, d/2[$. Let
    \[a_{\varepsilon,+} = (a_1-\varepsilon, a_2+\varepsilon,\dots a_{2n-1}-\varepsilon, a_{2n}+\varepsilon),\ a_{\varepsilon,-} = (a_1+\varepsilon, a_2-\varepsilon,\dots a_{2n-1}+\varepsilon, a_{2n}-\varepsilon), \]
    We have $E_{a_{\varepsilon,-}}\subset E_a \subset E_{a_{\varepsilon,+}}$. 
    \medskip
    
    The left (resp. right) continuity of $\capa(E_a)$ with respect to $a$ is equivalent to 
    $\capa(E_a) = \sup \capa(E_{a_{\varepsilon,-}})$ 
    (resp. $\capa(E_a) = \inf \capa(E_{a_{\varepsilon,+}})$). These two conditions are insured by points (d) and (e) of the previous proposition.
    
\end{proof}

\begin{defi}\label{defEssentialClosure}
    Let $K\subset\C$ be a compact subset. Let us denote by $\Omega_K$ the unbounded connected component of $\C\backslash K$.
    Let us define \emph{the external boundary} of a compact set $K\subset\C$ as the boundary of $\Omega_K$, subset of $\C$:
    \[\partial_\infty K:=\partial\Omega_K\]
    and \emph{the essential closure} as the complement of $\Omega_K$ with respect to $\C$:
    \[K^{\ce}:=\C\backslash\Omega_K\]
\end{defi}

\begin{cor}\label{corPropCapa}
Let $K\subset\C$ be a compact set, then:
\begin{enumerate}[label=(\alph*)]
    \item If $\mathring{K}$ is non-empty, then $\capa(K)>0$; in other words, $\capa(K)=0$ implies that $K=\partial K$;
    \item $\Omega_{K}=\Omega_{\partial K}=\Omega_{\partial_\infty K}=\Omega_{K^{\ce}}$;
    \item $\partial_\infty K=\partial K^{\ce}$, $K^{\ce}\supset K\supset\partial_\infty K$;
    \item $\capa(K^{ce})=\capa(\partial_\infty K)=\capa(K)=\capa(\partial K)$.
    \item Let $\mu_K$ be the equilibrium measure of $K$. If $\capa(K)>0$, then $\supp(\mu_K)\subset\partial_\infty K$ and $\mu_K=\mu_{K^{\ce}}=\mu_{\partial_\infty K}$.
    \item Let $(K_n)$ be an increasing sequence of compact subsets of $\C$, and $B=\bigcup_nK_n$. Then $\capa(B)=\lim_n\capa(K_n)$.
\end{enumerate}
\end{cor}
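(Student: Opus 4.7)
\medskip

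\noindent\textbf{Proof plan.} The plan is to treat the six assertions in an order that lets each feed the next: the topological content of (b), (c) is established first, then used to deduce the capacity equalities of (d) from Prop.~\ref{propCapa}(c), then (a) and (e) follow as short corollaries, and (f) is obtained by reducing to Prop.~\ref{propCapa}(e).

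\smallskip

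For (a), any $z_0\in\mathring{K}$ admits a closed disk $\overline{B}(z_0,r)\subset K$ with $r>0$; combining Prop.~\ref{propCapa}(a), Prop.~\ref{propCapa}(b) and Cor.~\ref{capaDisqueUnite} yields $\capa(K)\geq\capa(\overline{B}(z_0,r))=r>0$. Contrapositively, $\capa(K)=0$ forces $\mathring K=\emptyset$, hence $K=\overline K\setminus\mathring K=\partial K$. For (b) and (c) I argue by pure point-set topology. Since $K^{ce}=\C\setminus\Omega_K$ by definition, one has $\C\setminus K^{ce}=\Omega_K$, which is open, connected and unbounded, whence $\Omega_{K^{ce}}=\Omega_K$. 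The equality $\partial K^{ce}=\partial\Omega_K=\partial_\infty K$ follows since a set and its complement have the same boundary. For any $z\in\partial_\infty K=\partial\Omega_K$, the point $z$ is not in the open set $\Omega_K$; if $z$ belonged to some other connected component $C$ of $\C\setminus K$, then $C$ would be an open neighborhood of $z$ disjoint from $\Omega_K$, contradicting $z\in\partial\Omega_K$; hence $\partial_\infty K\subset K$. The inclusions $K\subset K^{ce}$ and $\partial K\supset\partial_\infty K$ are then immediate, and finally $\Omega_{\partial K}=\Omega_{\partial_\infty K}=\Omega_K$ is obtained by noting that each unbounded component is, by connectedness and by avoiding the relevant boundary set, forced to lie in $\Omega_K$ and to contain $\Omega_K$.

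\smallskip

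For (d), assemble the inclusions $\partial_\infty K\subset\partial K\subset K\subset K^{ce}$ provided by (c). Monotonicity (Prop.~\ref{propCapa}(a)) gives the chain $\capa(\partial_\infty K)\leq\capa(\partial K)\leq\capa(K)\leq\capa(K^{ce})$. Prop.~\ref{propCapa}(c) applied to $K$ gives $\capa(K)=\capa(\partial K)$; applied to $K^{ce}$ together with $\partial K^{ce}=\partial_\infty K$ from (c), it gives $\capa(K^{ce})=\capa(\partial_\infty K)$. The chain collapses to a single value. For (e), assume $\capa(K)>0$; by (d) the Robin constants $V_{\partial_\infty K}=V_K=V_{K^{ce}}$ coincide. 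Let $\mu^\star$ be the unique equilibrium measure of $\partial_\infty K$ (Thm.~\ref{thmExistUniqMesrEqlbr}). Since $\partial_\infty K\subset K\subset K^{ce}$, one has $\mu^\star\in\mathcal{P}(K)\subset\mathcal{P}(K^{ce})$, and $I(\mu^\star)=V_{\partial_\infty K}=V_K=V_{K^{ce}}$, so $\mu^\star$ achieves the infimum defining $\mu_K$ and $\mu_{K^{ce}}$ as well; uniqueness forces $\mu_K=\mu_{K^{ce}}=\mu^\star$, and therefore $\supp(\mu_K)=\supp(\mu^\star)\subset\partial_\infty K$.

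\smallskip

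For (f), the inequality $\lim_n\capa(K_n)\leq\capa(B)$ is immediate since each $K_n\subset B$ is a compact subset, so the definition of $\capa(B)$ as a supremum yields $\capa(K_n)\leq\capa(B)$ for every $n$. For the reverse, take an arbitrary compact $K\subset B$ and define $K^{(n)}:=K\cap K_n$. Each $K^{(n)}$ is compact, the sequence is increasing, and $\bigcup_nK^{(n)}=K\cap\bigcup_nK_n=K\cap B=K$ because $K\subset B$. Since the limit $K$ is itself compact, Prop.~\ref{propCapa}(e) applies and gives $\capa(K)=\lim_n\capa(K^{(n)})\leq\lim_n\capa(K_n)$. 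Taking the supremum over all compact $K\subset B$ produces $\capa(B)\leq\lim_n\capa(K_n)$, completing the proof.

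\smallskip

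\noindent\textbf{Main obstacle.} The only non-bookkeeping step is (e): it is tempting to expect a direct description of $\supp(\mu_K)$ via a maximum-principle argument on the equilibrium potential, but the clean route is rather to invoke (d) together with the uniqueness of the equilibrium measure. The crucial point is that (d) upgrades the elementary equality $\capa(K)=\capa(\partial K)$ from Prop.~\ref{propCapa}(c) to the finer equality with $\capa(\partial_\infty K)$, which is what allows the candidate $\mu^\star$ sitting on the outer boundary to compete for minimality on the larger sets $K$ and $K^{ce}$.
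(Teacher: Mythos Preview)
Your proof is correct and follows essentially the same route as the paper: (b) and (c) by point-set topology (which the paper dismisses as ``from the definition'' while you spell it out), (d) from the inclusion chain together with Prop.~\ref{propCapa}(a),(c), (e) from (d) plus uniqueness of the equilibrium measure, and (f) by exactly the same reduction to Prop.~\ref{propCapa}(e) via $K\cap K_n$. The only genuine variation is in (a): the paper invokes Prop.~\ref{propCapa}(f) with Lebesgue measure to conclude that a set of zero capacity has empty interior, whereas you argue more directly by embedding a closed disk in $\mathring K$ and using the explicit value $\capa(\overline{B}(z_0,r))=r$ from Cor.~\ref{capaDisqueUnite}; both are valid, and your route is arguably more elementary.
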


\begin{proof}
(a) results from Prop. \ref{propCapa} (f) using Lebesgue measure.
\medskip\par (b) and (c) result from the definition.
\medskip\par (d) can be deduced from (c) and Prop. \ref{propCapa} (a),(c). And (e) results from it, given the uniqueness (Thm. \ref{thmExistUniqMesrEqlbr}).
\medskip\par (f) Let us suppose $K_n$ of finite capacity. Let $K\subset B$ be a compact. Then $(K\cap K_n)$ is an increasing sequence that converges to $K$, then we have
\[\capa(K)=\lim_n\capa(K\cap K_n)\]
hence $\capa(B)\leq\lim_n\capa(K_n)$. The other inequality is trivial.
\end{proof}

\subsection{Potentials and semi harmonic functions}\label{ssec-potentielsemiharmonic}
\paragraph{}
For some probability measure $\mu$, the associated potential $U^\mu$ is a function of a complex variable belonging to a specific family of functions: \emph{semi-harmonic functions}. These are semi-continuous functions which verify the maximum/minimum modulus principle. Notions on these functions are provided in the appendix \ref{appendixeSemiharmonic}. 

\medskip

By using properties of these functions, we will prove several important results on potentials and capacities. First, we will prove Frostman's theorem (Thm. \ref{thmFrostman}) which states that the equilibrium potential of a compact set $K$ has the shape of a platter on $K$. Secondly we will prove theorem \ref{thmAppliHolom} which states that under a few assumptions of regularity, a holomorphic function transforms the capacity of a compact set according to its monomial of highest degree. This theorem and its two corollaries \ref{cor-biholo}, \ref{corCapaciteImageRecipPolynome}, provide a useful tool to calculate some capacities. 

\medskip

A few technical proofs are gathered in the appendix \ref{appendixthmPrincpMaxPotent}.

\subsubsection{Frostman's theorem}

To begin with, let us remind ourselves of the definition of semi-harmonic functions and give two important examples of super-harmonic functions.
\begin{defi}[(super-,sub-)harmonic functions]
    Let $D\subset\C$ be an open set. A function $f:D\to\R$ is called \emph{harmonic} (\resp. \emph{super-harmonic, sub-harmonic}) if it is continuous (\resp. lower semi-continuous, upper semi-continuous) and if it verifies the \emph{mean value property} (\resp. \emph{super-mean, sub-mean}): for all $z\in D$, if the disk $\{\abs{\zeta-z}\leq r\}\subset D$, we have
    \[f(z)=\frac{1}{2\pi}\int_0^{2\pi}f(z+re^{i\theta})\dif\theta\quad(\resp.\;\geq,\leq)\]
\end{defi}

\begin{ex}\label{exLogSuperharmonic}
$g(z)=\log\frac{1}{\abs{z-t}}$ is superharmonic and harmonic at points $z\neq t$. \qed
\end{ex}

\begin{ex}\label{exPotentialSuperharmonic}
Let $\mu$ be a positive measure with compact support $K$, then the potential
\[U^\mu(z)=\int\log\frac{1}{\abs{z-t}}\dif\mu(t)\]
is superharmonic on $\C$ and harmonic on $\C\backslash K$.
\end{ex}

\begin{proof}
    $U^\mu$ is lower semi-continuous since $\log\frac{1}{\abs{z-t}}$ is for all $t$. By using Fubini-Tonelli theorem, we obtain
    \[\frac{1}{2\pi}\int_0^{2\pi}U^\mu(z+re^{i\theta})\dif\theta=\int\frac{1}{2\pi}\int_0^{2\pi}\log\frac{1}{\abs{z+re^{i\theta}-t}}\dif\theta\dif\mu(t).\]
    The example \ref{exLogSuperharmonic} gives us that this last integral is smaller than $\displaystyle\int\log\frac{1}{\abs{z-t}}\dif\mu(t)=U^\mu(z)$. Therefore, $U^\mu$ is superharmonic on $\C$. By considering $z\notin K$ and $r<\dist(z,K)$, it follows from example \ref{exLogSuperharmonic} that this last integral is equal to $U^\mu(z)$, hence the harmonicity.
\end{proof}

\begin{thm}[Maximum principle for potentials]\label{thmPrincpMaxPotent}
    Let $\mu$ be a finite positive measure with compact support. If $U^\mu(z)\leq M$ for all $z\in\supp(\mu)$, then the same goes for all $z\in\C$.
\end{thm}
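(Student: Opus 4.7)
The plan is to argue by contradiction. Suppose there exists $z_0\in\C$ with $U^\mu(z_0)>M$, and introduce the super-level set
\[
W=\{z\in\C : U^\mu(z)>M\}.
\]
Since $U^\mu$ is lower semi-continuous (being a potential of a positive measure, cf.\ Ex.~\ref{exPotentialSuperharmonic}), $W$ is open; by the hypothesis $W\cap\supp(\mu)=\emptyset$, and the same example guarantees that $U^\mu$ is harmonic on $W$. From the asymptotic $U^\mu(z)=-\mu(\C)\log|z|+o(1)$ as $|z|\to\infty$ (assuming $\mu\neq 0$, the null case being trivial), $U^\mu$ tends to $-\infty$ at infinity, so $W$ is bounded. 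Let $A$ denote the connected component of $W$ containing $z_0$: it is a bounded domain on which $U^\mu$ is harmonic, and because $A$ is a component of the open set $W$ we have $\partial A\subset\{U^\mu\leq M\}$.

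The strategy is then to apply the classical maximum principle for harmonic functions on the bounded domain $A$: if one can show $\limsup_{A\ni z\to z^*}U^\mu(z)\leq M$ for every $z^*\in\partial A$, then $U^\mu\leq M$ on $A$, contradicting $U^\mu(z_0)>M$. At boundary points $z^*\in\partial A\setminus\supp(\mu)$ this condition is immediate, since $U^\mu$ is continuous (indeed harmonic) on a neighborhood of $z^*$, and the value $U^\mu(z^*)\leq M$ is already known.

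The main obstacle is justifying the boundary condition at points $z^*\in\partial A\cap\supp(\mu)$: only $U^\mu(z^*)\leq M$ is given, and because $U^\mu$ is merely lower semi-continuous, the $\limsup$ from inside $A$ could a priori exceed $M$. The plan is to bypass this via approximation: replace $\mu$ by a sequence of mollified measures $\mu_n=\mu*\rho_{1/n}$ (or suitable truncations of the logarithm) whose potentials $U^{\mu_n}$ are continuous on $\C$, so that the classical maximum principle applies without difficulty to each $U^{\mu_n}$ on the bounded components of its own super-level set. Combined with the convergence $U^{\mu_n}\to U^\mu$ and monotone/dominated convergence arguments (together with a slight enlargement of $\supp(\mu)$ to contain $\supp(\mu_n)$, controlled by the hypothesis as $n\to\infty$), this recovers the contradiction. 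Equivalently, one may invoke a refined potential-theoretic form of the maximum principle tolerating polar exceptional sets on $\partial A$, using that the set of points where $U^\mu$ fails to equal its upper semi-continuous regularization has capacity zero.
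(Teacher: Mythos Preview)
Your setup is correct and essentially equivalent to the paper's: you work on a connected component $A$ of the super-level set $\{U^\mu>M\}$ and aim for the maximum principle, while the paper works on $\C\setminus K$ (with $K=\supp(\mu)$) and applies the minimum principle of Thm.~\ref{thmMinFnctSurharmnq} to $f=-U^\mu$. Both reductions isolate the same obstacle, namely establishing
\[
\limsup_{z\to\zeta_0}U^\mu(z)\leq M\qquad\text{for every }\zeta_0\in\partial K,
\]
at points where only lower semi-continuity is available a priori.

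The paper resolves this obstacle by a direct elementary estimate rather than by mollification. Given $\zeta_0\in\partial K$ and $z\notin K$, one picks $\zeta=\zeta(z)\in K$ a nearest point of $K$ to $z$, so that $|\zeta-z|\leq|z-t|$ for every $t\in K$; the triangle inequality then gives
\[
\frac{|\zeta-t|}{|z-t|}\leq\frac{|\zeta-z|+|z-t|}{|z-t|}\leq 2\qquad(t\in K).
\]
Splitting $U^\mu(z)-U^\mu(\zeta)=\int_K\log\frac{|\zeta-t|}{|z-t|}\,d\mu(t)$ over $K\cap B(\zeta_0,r)$ and $K\setminus B(\zeta_0,r)$, the first piece is bounded by $(\log 2)\,\mu(B(\zeta_0,r))$ uniformly in $z$, while the second tends to $0$ as $z\to\zeta_0$. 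Since $U^\mu(\zeta)\leq M$ by hypothesis and $\mu(\{\zeta_0\})=0$ (the bound $U^\mu\leq M<\infty$ on $K$ forbids atoms), letting $z\to\zeta_0$ and then $r\to0$ yields the desired $\limsup$ inequality, and the ordinary minimum principle finishes the proof.

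By contrast, your mollification route is not complete as written. With $\mu_n=\mu*\rho_{1/n}$ one has $U^{\mu_n}\leq U^\mu$ by superharmonicity, hence $U^{\mu_n}\leq M$ on $K$; but $\supp(\mu_n)$ strictly contains $K$, and nothing prevents $U^{\mu_n}>M$ on $\supp(\mu_n)\setminus K$. Thus the super-level set $\{U^{\mu_n}>M\}$ need not avoid $\supp(\mu_n)$, and you cannot apply the classical maximum principle to $U^{\mu_n}$ on its components without a further argument---which in practice amounts to a boundary estimate of exactly the type the paper carries out directly. Your alternative, invoking a maximum principle with polar exceptional sets, is essentially Thm.~\ref{thmGenerlPrincpMin}, which the paper quotes only later and without proof; it is logically acceptable but heavier than the self-contained nearest-point computation above.
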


The proof is given in the appendix \ref{appendixeSemiharmonic}, Corollary \ref{appendixthmPrincpMaxPotent}.

\medskip

Let us state and prove an important theorem on equilibrium potentials : Frostman's theorem, which claims that the equilibrium potential has the shape of a platter and is upper bounded every where by Robin's constant (cf. Figure \ref{fig-frost}).

\begin{thm}[Frostman]\label{thmFrostman}
Let $K$ be a compact subset of $\C$ such that $\capa(K)>0$, then
\begin{enumerate}
    \item $ U^{\mu_K}(z)\leqslant V_K$ for all $z\in\C$
    \item $U^{\mu_K}(z) = V_K$ for all $z\in K-S$ where $\capa(S)=0$
    \item $U^{\mu_K}(z) < V_K$ for all $z\in\Omega$ where $\Omega$ is the non bounded connected component of $\C-K$
\end{enumerate}
\end{thm}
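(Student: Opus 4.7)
I would prove the three assertions in the order (1), (2), (3), with the crux being a variational argument that $U^{\mu_K} \geq V_K$ quasi-almost everywhere on $K$. Let me set $E := \{z\in K : U^{\mu_K}(z) < V_K\}$ and aim first at showing $\capa(E)=0$. Suppose for contradiction that $\capa(E)>0$; then, by theorem \ref{thmExistUniqMesrEqlbr} applied to a suitable compact subset of $E$, there exists a probability measure $\sigma$ supported on $E$ with $I(\sigma)<+\infty$. For $t\in[0,1]$ the convex combination $\mu_t := (1-t)\mu_K + t\sigma$ lies in $\mathcal{P}(K)$, and expanding the bilinear form that defines $I$ gives
\[
I(\mu_t) = (1-t)^2 V_K + 2t(1-t)\int U^{\mu_K}\,\dif\sigma + t^2 I(\sigma),
\]
whose right derivative at $t=0$ equals $2\bigl(\int U^{\mu_K}\dif\sigma - V_K\bigr)$. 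Since $\sigma$ is carried by $E$, where by definition $U^{\mu_K}<V_K$, this derivative is strictly negative, so $I(\mu_t)<V_K$ for $t$ small, contradicting the minimality of $\mu_K$.

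Next I would deduce (1). By Prop.~\ref{propCapa}(f) the measure $\mu_K$, which has finite energy $V_K$, gives no mass to sets of capacity zero; hence the quasi-almost-everywhere bound just obtained upgrades to $U^{\mu_K}\geq V_K$ $\mu_K$-almost everywhere on $K$. Integrating against $\mu_K$ and using $\int U^{\mu_K}\dif\mu_K = I(\mu_K) = V_K = V_K\cdot\mu_K(K)$ forces equality $\mu_K$-a.e., so the open set $F := \{z\in\C : U^{\mu_K}(z) > V_K\}$ (open because $U^{\mu_K}$ is lower semi-continuous) satisfies $\mu_K(F)=0$, which means $F\cap\supp(\mu_K)=\emptyset$ by definition of support. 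Thus $U^{\mu_K}\leq V_K$ on $\supp(\mu_K)$, and the maximum principle for potentials (theorem \ref{thmPrincpMaxPotent}) propagates this bound to all of $\C$, giving (1). Combining with the quasi-a.e.\ lower bound yields (2): $U^{\mu_K}=V_K$ on $K$ outside a set $S$ with $\capa(S)=0$.

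For (3), example \ref{exPotentialSuperharmonic} tells us that $U^{\mu_K}$ is harmonic on $\C\setminus K$, in particular on the connected open set $\Omega$. Moreover $U^{\mu_K}(z)\to -\infty$ as $\abs{z}\to\infty$, since $\log\frac{1}{\abs{z-t}} = -\log\abs{z}+o(1)$ uniformly in $t\in K$. If some $z_0\in\Omega$ satisfied $U^{\mu_K}(z_0)=V_K$, then by (1) this would be an interior maximum of the harmonic function $U^{\mu_K}$ on $\Omega$, forcing $U^{\mu_K}\equiv V_K$ throughout $\Omega$, in contradiction with the behaviour at infinity. Hence the inequality is strict on $\Omega$.

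The main obstacle is the variational step: one needs the fact that a compact of positive capacity carries a probability measure of finite energy, and one must keep track of the distinction between quasi-almost-everywhere statements and $\mu_K$-almost-everywhere statements, bridged by Prop.~\ref{propCapa}(f). The upgrade from $\mu_K$-a.e.\ bound on $\supp(\mu_K)$ to the pointwise bound on $\supp(\mu_K)$ is also delicate, but here the lower semi-continuity of $U^{\mu_K}$ (making $F$ open) saves the day before the maximum principle is invoked.
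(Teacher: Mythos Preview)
Your proof is correct and follows the same overall strategy as the paper: a variational perturbation of $\mu_K$ to obtain $U^{\mu_K}\geq V_K$ quasi-almost everywhere on $K$, followed by the maximum principle for potentials to get the global upper bound, and finally the strict maximum principle for harmonic functions on $\Omega$.

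The execution differs in two places. For the lower bound, the paper works with the compact levels $K_n=\{z\in K:U^{\mu_K}\leq V_K-1/n\}$ and perturbs by a \emph{signed} measure that removes mass from a small ball where the potential is already high and transfers it to $K_n$; you instead take a convex combination $\mu_t=(1-t)\mu_K+t\sigma$ with $\sigma$ carried by a compact subset of $E$, which is a bit cleaner since it avoids locating that ball. For the upper bound on $\supp(\mu_K)$, the paper argues directly that the sets $L_n=\{z\in\supp(\mu_K):U^{\mu_K}(z)>V_K+1/n\}$ are empty via an integral inequality, whereas you deduce $U^{\mu_K}=V_K$ $\mu_K$-a.e.\ from Prop.~\ref{propCapa}(f) together with $\int U^{\mu_K}\dif\mu_K=V_K$, and then use lower semi-continuity to pass from $\mu_K$-a.e.\ to everywhere on the support. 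Both routes are standard; yours trades the explicit $L_n$ argument for an appeal to the already-established Prop.~\ref{propCapa}(f).
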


\begin{figure}[h!]
    \centering
    \includegraphics[scale = 0.8]{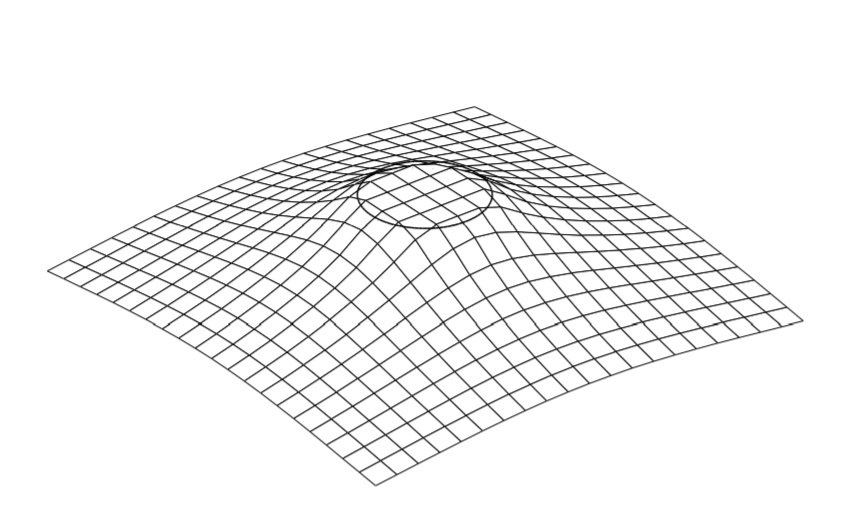}
    \caption{Frostman's theorem}
    \label{fig-frost}
\end{figure}

\begin{proof}
The formal argument proceeds in three steps. First of all, let us define for $n \geq  1 $ the following sets: $ K_{n}=\left\{z \in K | U^{\mu_{K}} \leq V_{K}-1 / n\right\} $ and $ L_{n}=\left\{z \in \operatorname{support}\left(\mu_{K}\right) | U^{\mu_{K}}(z)>V_{K}+1 / n\right\}$.
We will first show that the $K_n$ have capacity zero, then that the $L_n$ are empty, and we will finally conclude, using the maximum principle for potentials (\ref{thmPrincpMaxPotent}).
\\

Let us argue by contradiction to prove that $\operatorname{Cap}(K_n) = 0$. Assume there exists $n \geq 1$ such that $\operatorname{Cap}(K_n) > 0$. We remind that $
V_{K}=I\left(\mu_{K}\right)=\int U^{\mu_{K}} d \mu_{K}$, therefore, there exists some $z_0$ in the support of $\mu_K$ verifying $U^{\mu_{K}}\left(z_{0}\right) \geq V_{K}$. By using the lower semi-continuity of $U^{\mu_{K}}$, there exists a closed ball denoted by $B$ with radius $r>0$ and center $z_0$, on which we have $U^{\mu_{K}}(z)>V_{K}-1 / 2 n$. Then, we have $B \cap K_{n}=\emptyset$, and $\mu_K(B) = a > 0$ because $z_0$ is in the support of $\mu_K$. To get a contradiction, we want to build a measure that contradicts the fact that $V_K$ is minimal. To do so, we use the assumption that $\operatorname{Cap}(K_n) > 0$. In fact, we can consider a measure $\mu$ on $K_n$ such that $I(\mu)$ is finite. We then build the following measure: 
$$
\sigma=\left\{\begin{array}{ll}{\mu,} & {\text { on } K_{n}} \\ {-\mu_{K} / a,} & {\text { on } B} \\ {0,} & {\text { on the rest }}\end{array}\right.
$$

Let us then consider the family of probability measures $\mu_{t}=\mu_{K}+t \sigma $ for $t \in ]0;a[$. Given how we built $B$, we obtain : $$ I\left(\mu_{K}\right)-I\left(\mu_{t}\right) \geq 2 t\left(V_{K}-1 / 2 n-V_{K}+1 / n\right)+O\left(t^{2}\right) $$ Therefore for all $t$ sufficiently close to $0$ (we remind that here $n$ is fixed), we have $I\left(\mu_{K}\right) = V_K \geq I\left(\mu_{t}\right) $, which contradicts the fact that $\mu_K$ is minimal since here , $\mu_t \neq \mu_K$.
\\

Secondly, let us show by contradiction that $L_n = \emptyset$. If one of the $L_n$ is non-empty, using the same argument as earlier, there exists a closed ball $\tilde{B}$ on which $U^{\mu_{K}}(z)>V_{K}+1 / n$. As we did earlier, we consider $\tilde{a} = \mu_K(\tilde{B}) > 0$. According to the previous point, we obtain for all $n$, $\mu_K(K_n) = 0$, therefore, $U^{\mu_{K}}(z) \geq V_{K}$ except on a set with measure zero. We then have :
$$
\begin{aligned} V_{K}=I\left(\mu_{K}\right) &=\int_{K} U^{\mu_{K}} d \mu_{K} \\ &=\int_{\tilde{B}} U^{\mu_{K}} d \mu_{K}+\int_{K \backslash \tilde{B}}  U^{\mu_{K}} d \mu_{K} \\ & \geq\left(V_{K}+\frac{1}{n}\right) \tilde{a}+V_{K}(1-\tilde{a}) >V_{K} \end{aligned}
$$ We obtain a contradiction, and the $L_n$ are empty.
\\

These two facts allow us to prove the three claims of the theorem : since all the $L_n$ are empty, we have the inequality of the first claim on the support of $\mu_K$, therefore, on $\mathbb{C}$, thanks to the maximum principle. The second claim results from the fact that $\operatorname{Cap}(\bigcup_{n} K_{n}) = 0$ which gives us that $U^{\mu_{K}}(z) \geq V_{K}$ on $K \backslash \bigcup_{n} K_{n}$, and from the first claim which states that $U^{\mu_{K}}(z) \leq V_{K}$. Finally, the last claim results from the harmonicity of $U^{\mu_K}$ on $\Omega$.
 
\end{proof}

\begin{ex}
When $K=\U$ (Prop. \ref{propMesPotentCercleUnit}), the potential $U^{\mu_\U}(z)=
\begin{cases}
0=V_\U \text{ if }\abs{z}\leqslant 1 \\
\log(\frac{1}{ \abs{z}}) \text{ if } \abs{z} >1
\end{cases}$ verifies Frostman's theorem with $S=\emptyset$.
\end{ex}
\begin{cor}\label{corValeurInt(K)}
Let $K\subset\C$ be a compact set with $\capa(K)>0$ and $\mu_K$ its equilibrium measure. Then $U^{\mu_K}\equiv V_K$ in the interior of $K$. Moreover, since $\mu_K=\mu_{K^{\ce}}$ (Cor. \ref{corPropCapa}(e)), we have  $U^{\mu_K}\equiv V_K$ in the interior of $K^{\ce}$.
\end{cor}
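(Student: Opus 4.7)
The plan is to combine Frostman's theorem (Thm. \ref{thmFrostman}) with the super-mean value property of the superharmonic potential $U^{\mu_K}$ (Ex. \ref{exPotentialSuperharmonic}). Frostman tells us that $U^{\mu_K} \leq V_K$ everywhere on $\C$, with equality on $K$ outside some exceptional set $S \subset K$ of capacity zero. So the task reduces to ruling out exceptional points inside $\mathrm{int}(K)$: I want to upgrade the ``quasi-everywhere'' equality to a pointwise equality on the interior.

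Fix $z_0 \in \mathrm{int}(K)$, and choose $\rho > 0$ with $\overline{B}(z_0, \rho) \subset \mathrm{int}(K)$. The main ingredient is that $\capa(S) = 0$ implies Lebesgue measure $m_2(S) = 0$ by Prop.~\ref{propCapa}(f). Passing to polar coordinates centered at $z_0$ and applying Fubini to $\mathbf{1}_S$, for almost every $r \in (0, \rho)$ the circle $C_r = \{|z - z_0| = r\}$ meets $S$ in a set of arclength zero. Fix such a radius $r$. Since $C_r \subset \mathrm{int}(K) \subset K$, on $C_r$ we have $U^{\mu_K} = V_K$ outside a set of arclength zero, while $U^{\mu_K} \leq V_K$ everywhere; hence
\[
\frac{1}{2\pi}\int_0^{2\pi} U^{\mu_K}(z_0 + re^{i\theta})\,\dif\theta = V_K.
\]
Invoking the super-mean inequality for the superharmonic function $U^{\mu_K}$,
\[
U^{\mu_K}(z_0) \geq \frac{1}{2\pi}\int_0^{2\pi} U^{\mu_K}(z_0 + re^{i\theta})\,\dif\theta = V_K,
\]
which combined with Frostman's upper bound yields $U^{\mu_K}(z_0) = V_K$, as desired.

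For the second claim, Cor.~\ref{corPropCapa}(e) gives $\mu_K = \mu_{K^{\ce}}$, hence $U^{\mu_K} = U^{\mu_{K^{\ce}}}$; and Cor.~\ref{corPropCapa}(d) gives $V_K = V_{K^{\ce}}$. Applying what we just proved to the compact set $K^{\ce}$ (whose equilibrium potential is $U^{\mu_K}$ and whose Robin constant is $V_K$) delivers $U^{\mu_K} \equiv V_K$ on $\mathrm{int}(K^{\ce})$.

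The only subtle point is the Fubini step that produces a ``good'' radius $r$: it is what allows us to translate a quasi-everywhere equality on $K$ (controlled by capacity) into an almost-everywhere equality on a one-dimensional circle (controlled by arclength). This passage from 2D capacity-negligibility to 1D arclength-negligibility is the real engine of the argument, and it relies crucially on the implication $\capa(S) = 0 \Rightarrow m_2(S) = 0$ provided by Prop.~\ref{propCapa}(f). Once this is secured, the lower semi-continuity built into the super-mean inequality takes care of the rest automatically.
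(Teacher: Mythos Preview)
Your proof is correct, but it takes a genuinely different route from the paper's. The paper argues via the generalized minimum principle (Thm.~\ref{thmGenerlPrincpMin}): on each connected component $U$ of $\mathring{K}$, the superharmonic function $U^{\mu_K}$ is bounded below and satisfies $\liminf_{z\to\zeta}U^{\mu_K}(z)\geq V_K$ q-a.e.\ on $\partial U\subset\partial K$ by Frostman, so the minimum principle forces $U^{\mu_K}\geq V_K$ on $U$; the upper bound from Frostman then gives equality. Your argument instead bypasses the minimum principle entirely: you convert the capacity-null exceptional set $S$ into a Lebesgue-null set via Prop.~\ref{propCapa}(f), use Fubini in polar coordinates to locate a radius $r$ on which $S$ has zero arclength, and then invoke only the elementary super-mean inequality at $z_0$.

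Your approach is more self-contained within the paper: Thm.~\ref{thmGenerlPrincpMin} is quoted from \cite{ransford1995potential} without proof, whereas every ingredient you use (Frostman, superharmonicity of $U^{\mu_K}$, and Prop.~\ref{propCapa}(f)) is established in the text. The paper's approach, on the other hand, is the conceptually natural one---it treats the problem as a boundary-value problem and applies the standard potential-theoretic machinery---and would generalize more readily to situations where the exceptional set need not be Lebesgue-measurable or where the ambient dimension changes. One minor point worth making explicit in your write-up: the Fubini step requires $S$ to be Borel, which holds here because $S=\bigcup_n\{z\in K:U^{\mu_K}(z)\leq V_K-1/n\}$ is an $F_\sigma$ set (each level set is closed by lower semi-continuity of $U^{\mu_K}$).
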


\begin{proof}
	\hyperref[thmFrostman]{Frostman's theorem} implies that
	\[U^{\mu_K}(z)=V_K,\quad q-a.e. \text{~in~} \partial K\]
	On the other hand, $U^{\mu_K}$ is superharmonic and lower bounded by $\log(1/d_2(K))$ in $K$. Then the \hyperref[thmGenerlPrincpMin]{maximum principle} can be applied to $U^{\mu_K}$ on each connected component of $\mathring{K}$ to obtain $U^{\mu_K}\geq V_K$ in $\mathring{K}$. However, \hyperref[thmFrostman]{Frostman's theorem} already gives us an upper bound, $V_K$. Therefore $U^{\mu_K}= V_K$ in $\mathring{K}$.
	
\end{proof}

\subsubsection{Calculation of capacities}
\paragraph{}
The goal of this section is theorem \ref{thmAppliHolom}, which states that a holomorphic function transforms the capacity according to its monomial of highest degree, and two corollaries which serve as tools to calculate some capacities.

\medskip

The main tool of this proof is Green's function with respect to a compact set.

\begin{thm}\label{thmGreen}
Let $K\subset\C$ be a compact set with $\capa(K)>0$ and $\Omega_K$ the non bounded connected component of $\C\backslash K$. Then there exists a \emph{unique} function $g_K(\cdot,\infty):\Omega_K\to\R$ \emph{characterized} by the following properties:
\begin{enumerate}
    \item $g_K(\cdot,\infty)$ is harmonic on $\Omega_K$ and bounded outside of all the neighborhoods of $\infty$;
    \item $g_K(\cdot,\infty)-\log\abs{\cdot}$ is bounded on a neighborhood of $\infty$;
    \item $\lim_{z\to\zeta}g_K(z,\infty)=0$ \emph{q-a.e.} in $\partial\Omega_K$.
\end{enumerate}
\end{thm}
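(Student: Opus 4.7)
The natural candidate is $g_K(z,\infty) := V_K - U^{\mu_K}(z)$, where $\mu_K$ is the equilibrium measure of $K$ and $V_K = -\log\capa(K)$ is Robin's constant. The plan is to verify that this function satisfies the three listed properties, and then to establish uniqueness by applying a generalized maximum principle to the difference of two candidates.

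For property (i), harmonicity on $\Omega_K$ follows from Example \ref{exPotentialSuperharmonic} together with Corollary \ref{corPropCapa}(e), which gives $\supp(\mu_K)\subset\partial_\infty K \subset K^{\ce}$, hence disjoint from $\Omega_K$. Boundedness on sets of the form $\Omega_K\cap\{|z|\leq R\}$ comes from combining Frostman's theorem \ref{thmFrostman}(i) (which yields $U^{\mu_K}\leq V_K$, hence $g_K\geq 0$) with the elementary lower bound $U^{\mu_K}(z)\geq -\log D$ on bounded sets, where $D$ is a diameter containing $\{|z|\leq R\}\cup\supp(\mu_K)$. For (ii), the expansion $\log\tfrac{1}{|z-t|}=-\log|z|+O(1/|z|)$ uniformly in $t\in\supp(\mu_K)$ as $z\to\infty$, integrated against the probability measure $\mu_K$, gives $U^{\mu_K}(z)=-\log|z|+O(1/|z|)$, so $g_K(z,\infty)-\log|z|=V_K+O(1/|z|)$ is bounded near $\infty$. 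Property (iii) is exactly Frostman's theorem \ref{thmFrostman}(ii): $U^{\mu_K}=V_K$ on $K\setminus S$ with $\capa(S)=0$, and since $\partial\Omega_K=\partial_\infty K\subset K$, we obtain $g_K\to 0$ quasi-everywhere on $\partial\Omega_K$.

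For uniqueness, suppose $\tilde g$ is another function satisfying (i)-(iii) and set $h:=g_K(\cdot,\infty)-\tilde g$. Then $h$ is harmonic on $\Omega_K$; by (ii) the two logarithmic singularities at infinity cancel, so $h$ is bounded in some neighborhood of $\infty$; combined with (i), $h$ is bounded on all of $\Omega_K$. Furthermore (iii) applied to both candidates gives $h(z)\to 0$ quasi-everywhere as $z$ approaches $\partial\Omega_K$. A generalized maximum principle then forces $h\equiv 0$: a bounded harmonic function on an open set whose boundary values vanish outside an exceptional set of capacity zero is identically zero.

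The principal obstacle is precisely this last uniqueness step. The classical maximum principle requires the boundary condition to hold everywhere, whereas here we only control the limit outside a set of capacity zero; justifying that such a set is negligible for bounded harmonic functions is the whole point of introducing the capacity framework. The key input is Proposition \ref{propCapa}(f), which says that sets of capacity zero cannot support any finite measure of finite energy; as a consequence, the harmonic measure of $\Omega_K$ does not charge such sets, and the maximum principle extends to allow the quasi-everywhere boundary hypothesis. This is the nontrivial ingredient that distinguishes the present argument from elementary harmonic-function arguments, and its detailed justification belongs to the more technical material of Section \ref{ssec-potentielsemiharmonic}.
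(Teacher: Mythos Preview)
Your proof is correct and follows essentially the same route as the paper: the same candidate $g_K=V_K-U^{\mu_K}$ for existence (the paper simply cites Frostman and Example~\ref{exPotentialSuperharmonic} without spelling out the verifications you give), and for uniqueness the same reduction to a bounded harmonic difference $h$ with q.a.e.\ zero boundary values, dispatched by the generalized minimum principle (Theorem~\ref{thmGenerlPrincpMin}). The only cosmetic difference is that the paper, inside its uniqueness argument, first pauses to show any candidate $g$ is positive on $\Omega_K$ (this is how Corollary~\ref{corPositiveFonctGreen} is obtained), whereas you go straight to $h$; and the paper appeals to Theorem~\ref{thmGenerlPrincpMin} as a black box rather than sketching the harmonic-measure heuristic you give in your last paragraph.
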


\begin{proof}
The existence is given by \hyperref[thmFrostman]{Frostman's theorem} and example \ref{exPotentialSuperharmonic} if we consider
\begin{equation}\label{relationPotentielG_K}
    g_K(z,\infty)=V_K-U^{\mu_K}(z).
\end{equation}
\par Let $g, g'$ be two functions verifying these three properties, then $g-g'$ is harmonic and bounded on $\Omega_K$, and therefore can be extended to a harmonic function on $\Omega_K\cup\{\infty\}$. According to the second point, we have $g(z)>0$ (\resp. $g'(z)>0$) for $R\gg0$ so that $K\subset B(0,R)$; the \hyperref[thmGenerlPrincpMin]{minimum principle} applied on $\Omega_K\cap B(0,R)$ gives us that $g(z)>0$ (\resp. $g'(z)>0$) on $\Omega_K\cap B(0,R)$, and then on $\Omega_K$. The uniqueness results from the third property and the \hyperref[thmGenerlPrincpMin]{minimum principle} applied twice on $\Omega_K\cup\{\infty\}$ to $g(z)-g'(z)$ and $g'(z)-g(z)$ respectively.
\end{proof}

\begin{defi}
    Let us denote by $g_K(\cdot,\infty):\Omega_K\to\R$ \emph{the Green's function with respect to $K$, with a pole at infinity}.
\end{defi}

\begin{cor}\label{corPositiveFonctGreen}
    Let $K\subset\C$ be a compact set such that $\capa(K)>0$. We then have $g_K(\cdot,\infty)>0$ on $\Omega_K$.
\end{cor}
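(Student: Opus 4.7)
The plan is to exploit the explicit expression for $g_K$ in terms of the equilibrium potential, namely equation (\ref{relationPotentielG_K}):
\[g_K(z,\infty)=V_K-U^{\mu_K}(z),\quad z\in\Omega_K.\]
With this formula in hand, the corollary reduces to a statement about the equilibrium potential on $\Omega_K$, and Frostman's theorem (\ref{thmFrostman}) gives us exactly what we need.

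More precisely, I would first recall that by definition $\Omega_K$ is the unbounded connected component of $\C\backslash K$. Point (3) of Frostman's theorem asserts that $U^{\mu_K}(z)<V_K$ for every $z$ in this unbounded component. Substituting into the displayed formula yields immediately
\[g_K(z,\infty)=V_K-U^{\mu_K}(z)>0\quad\text{for all }z\in\Omega_K,\]
which is the claim.

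As an alternative route (useful if one does not wish to invoke Frostman's full strength), I could argue via the minimum principle for harmonic functions. The function $g_K(\cdot,\infty)$ is harmonic on $\Omega_K$ by the first property characterizing it in Theorem \ref{thmGreen}. Near infinity, property (2) gives $g_K(z,\infty)=\log|z|+O(1)$, so $g_K(z,\infty)\to+\infty$ as $z\to\infty$; in particular $g_K$ is not identically zero. Property (3) gives $\lim_{z\to\zeta}g_K(z,\infty)=0$ quasi-almost everywhere on $\partial\Omega_K$, and the minimum principle applied on $\Omega_K\cup\{\infty\}$ (exactly as in the uniqueness part of the proof of Theorem \ref{thmGreen}) then forces $g_K\geq 0$ on $\Omega_K$. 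The strict inequality follows from the strong minimum principle: if $g_K$ vanished at some interior point of $\Omega_K$, it would be identically zero on the connected open set $\Omega_K$, contradicting the behavior at infinity.

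I do not anticipate a genuine obstacle here; the content of the corollary is already packaged inside Frostman's theorem through the identity $(\ref{relationPotentielG_K})$. The only mild subtlety is that $\Omega_K$ is unbounded, so one must either pass via Frostman directly (which applies on all of $\Omega$) or compactify by adjoining $\infty$ before invoking the minimum principle, as was done in the uniqueness argument of Theorem \ref{thmGreen}.
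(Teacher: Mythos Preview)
Your proposal is correct and matches the paper. The corollary is stated without an explicit proof block; the positivity via the minimum principle is exactly the argument embedded in the uniqueness part of Theorem~\ref{thmGreen}, and the Frostman route you give first is precisely what the paper invokes a few lines later in Theorem~\ref{thmPotent&Green}(2), so you have reproduced both of the paper's justifications.
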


\begin{cor}\label{corGreenDependOnClotureEss}
If two compact sets $K_1,K_2\subset\C$ are such that $\capa(K_1)>0$ or $\capa(K_2)>0$, and that $\Omega_{K_1}=\Omega_{K_2}$, then
\[g_{K_1}(\cdot,\infty)=g_{K_2}(\cdot,\infty).\]
In particular, let $K\subset\C$ be a compact set with $\capa(K)>0$, then
\[g_{K}(\cdot,\infty)=g_{\partial K}(\cdot,\infty)=g_{\partial_\infty K}(\cdot,\infty)=g_{K^{\ce}}(\cdot,\infty).\]
\end{cor}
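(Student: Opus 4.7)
The plan is to invoke the uniqueness clause of Theorem \ref{thmGreen} directly. Before doing so, I would first observe that the hypothesis ``$\capa(K_1) > 0$ or $\capa(K_2) > 0$'' actually forces \emph{both} capacities to be positive, so that the two Green's functions are simultaneously defined. Indeed, from the definition $K^{\ce} = \C\setminus\Omega_K$ the identity $\Omega_{K_1}=\Omega_{K_2}$ is equivalent to $K_1^{\ce}=K_2^{\ce}$; combining this with Corollary \ref{corPropCapa}(d), which says $\capa(K_i)=\capa(K_i^{\ce})$, one gets $\capa(K_1)=\capa(K_2)$.

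Next I would verify that the function $g_{K_2}(\cdot,\infty)$ satisfies the three characterizing properties of $g_{K_1}(\cdot,\infty)$ listed in Theorem \ref{thmGreen}. Properties (1) and (2) are statements about the behavior on $\Omega_{K_1}=\Omega_{K_2}$ and in a neighborhood of infinity, so they transfer immediately. For property (3), one needs that $\lim_{z\to\zeta}g_{K_2}(z,\infty)=0$ quasi-almost everywhere on $\partial\Omega_{K_1}$; but since $\partial\Omega_{K_1}=\partial\Omega_{K_2}$, this is exactly the defining property of $g_{K_2}(\cdot,\infty)$. Applying the uniqueness part of Theorem \ref{thmGreen} then yields $g_{K_1}(\cdot,\infty)=g_{K_2}(\cdot,\infty)$.

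For the ``in particular'' statement, I would apply the general result to the four compact sets $K$, $\partial K$, $\partial_\infty K$, $K^{\ce}$: by Corollary \ref{corPropCapa}(b) they all share the same unbounded component $\Omega_K$, and by Corollary \ref{corPropCapa}(d) they all have the same positive capacity $\capa(K)$, so the four Green's functions are simultaneously defined and coincide. I do not expect any real obstacle here: the proof amounts to repackaging the uniqueness clause of Theorem \ref{thmGreen} together with the content of Corollary \ref{corPropCapa}, which is precisely that the objects relevant to the Green's function depend only on the unbounded connected component of the complement. The one point that deserves care is checking that the exceptional set of capacity zero appearing in property (3) lives entirely in the common boundary $\partial\Omega_{K_1}=\partial\Omega_{K_2}$, but this is immediate from the definition of that exceptional set.
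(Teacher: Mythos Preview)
Your proposal is correct and follows essentially the same approach as the paper: both argue that the three characterizing properties in Theorem~\ref{thmGreen} depend only on $\Omega_K$, invoke uniqueness, and then appeal to Corollary~\ref{corPropCapa} for the ``in particular'' clause. Your version is in fact more explicit---you spell out why both capacities must be positive and check each property individually---whereas the paper compresses the whole argument into a single sentence.
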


\begin{proof}
Note that the three claims in Thm. \ref{thmGreen} only depend on $\Omega_K$, the non bounded connected component of $\C\backslash K$ ; therefore the result immediately follows from the definition and Cor. \ref{corPropCapa}. 
\end{proof}

\begin{thm}\label{thmPotent&Green}
Let $K\subset\C$ be a compact set with $\capa(K)>0$ and $g_K(\cdot,\infty)$ Green's function with respect to $K$, with a pole at infinity. Then
\begin{enumerate}
    \item $g_K(z,\infty)=\log\abs{z}+V_K+o_{z\to\infty}(1)$;
    \item The equilibrium potential of $K$ is $U^{\mu_K}=V_K-g_K(\cdot,\infty)$; then (Thm. \ref{thmFrostman}) $g_K(\cdot,\infty)>0$ on $\Omega_K$.
\end{enumerate}
\end{thm}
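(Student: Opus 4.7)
The plan is to leverage directly the identification \eqref{relationPotentielG_K} already used in the proof of Theorem \ref{thmGreen}, and read off both claims from Frostman's theorem together with a simple asymptotic expansion of $U^{\mu_K}$ at infinity. I would handle point~(2) first, since it is essentially tautological given how Green's function was built, and use it to reduce point~(1) to an asymptotic for the equilibrium potential.

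For point~(2), I would argue as follows. By example \ref{exPotentialSuperharmonic}, the function $V_K - U^{\mu_K}$ is harmonic on $\Omega_K$ (the potential is harmonic outside $\supp(\mu_K) \subset K$). It is bounded outside neighborhoods of $\infty$ since $U^{\mu_K}$ is lower semi-continuous on $\C$ and behaves like $-\log|z|$ at infinity (see below). By Frostman's theorem, $U^{\mu_K}(z) = V_K$ q-a.e.\ on $\partial K$, so in particular q-a.e.\ on $\partial \Omega_K \subset \partial K$, which gives the boundary condition $\lim_{z\to\zeta}(V_K - U^{\mu_K}(z)) = 0$ q-a.e.\ on $\partial\Omega_K$. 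The function $V_K - U^{\mu_K}$ therefore satisfies the three characterizing properties of Theorem \ref{thmGreen} (modulo the asymptotic condition at $\infty$, which I will establish next), and the uniqueness statement in that theorem forces $g_K(\cdot,\infty) = V_K - U^{\mu_K}$. The positivity $g_K(\cdot, \infty) > 0$ on $\Omega_K$ then follows directly from point (3) of Frostman's theorem: $U^{\mu_K}(z) < V_K$ on the unbounded connected component of $\C \setminus K$.

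For point~(1), the key computation is the asymptotic of $U^{\mu_K}(z)$ as $z \to \infty$. For $|z|$ large enough that $K \subset B(0, |z|/2)$, I would write
\[
U^{\mu_K}(z) = -\int_K \log|z - t|\, \dif\mu_K(t) = -\log|z|\,\mu_K(K) - \int_K \log\left|1 - \tfrac{t}{z}\right|\dif\mu_K(t).
\]
Since $\mu_K \in \mathcal{P}(K)$, the first term is $-\log|z|$. For the second, $|t/z| \leq 1/2$ uniformly in $t \in K$, so $\bigl|\log|1 - t/z|\bigr| \leq C|t/z| \leq C'/|z|$, and the integral is $O(1/|z|) = o(1)$ as $z \to \infty$. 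Combined with the identity from point~(2), this yields
\[
g_K(z,\infty) = V_K - U^{\mu_K}(z) = \log|z| + V_K + o_{z\to\infty}(1),
\]
which is the claim.

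There is no real obstacle here: the only subtle point is noticing that the existence part of the proof of Theorem \ref{thmGreen} already contains formula \eqref{relationPotentielG_K}, so once one has uniqueness of Green's function and Frostman's theorem in hand, the only genuine computation left is the elementary expansion of $-\log|z-t|$ at infinity, which is justified uniformly in $t\in K$ by compactness of $K$.
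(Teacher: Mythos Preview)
Your proposal is correct and follows essentially the same route as the paper, which simply invokes formula \eqref{relationPotentielG_K} (established in the existence proof of Theorem~\ref{thmGreen}) together with the definition of $U^{\mu_K}$. Your write-up is just a more detailed unpacking of that one-line argument: the re-verification of the three characterizing properties for $V_K-U^{\mu_K}$ in your treatment of~(2) is exactly what was done in the existence part of Theorem~\ref{thmGreen}, so you could shorten it by citing \eqref{relationPotentielG_K} directly, as you yourself note at the end.
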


\begin{proof}
    It immediately results from the formula (\ref{relationPotentielG_K}) given in the proof of theorem \ref{thmGreen} and in the definition of the potential $U^{\mu_K}$.
\end{proof}

Let us define the notion of regularity of a compact set. We will see later that it is the right assumption to link the capacity of two compact sets and thanks to Prop \ref{propKepsilonConditionCone}, it is a special case that will be useful in a more general approach.
\begin{defi}
	Let $K\subset\C$ be a compact set such that $\capa(K)>0$. We call $\zeta_0\in\partial\Omega_K$ a \emph{regular point} if
	\[\lim_{\Omega_K\ni z\to\zeta_0}U^{\mu_K}(z)=V_K ;\]
	otherwise, we write that $\zeta_0$ is an \emph{irregular point}.
	We write that $\Omega_K$ is a \emph{regular set} if all the points of $\partial\Omega_K$ are regular.
\end{defi}

According to the relation between the potential and Green's function (Thm. \ref{thmPotent&Green}), we have the following property :

\begin{cor}
    A point $\zeta_0\in\partial\Omega_K$ is regular if and only if $\lim_{z\to\zeta_0}g_K(z,\infty)=0$.
\end{cor}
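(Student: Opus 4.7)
The plan is to observe that this corollary is essentially a direct rewriting of the definition of a regular point, using the identity $U^{\mu_K}=V_K-g_K(\cdot,\infty)$ provided by Theorem \ref{thmPotent&Green}. There is no substantive analytic content to produce; the entire content is the substitution of one equivalent expression into the limit defining regularity.

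Concretely, I would start from the definition: $\zeta_0\in\partial\Omega_K$ is regular if and only if
\[\lim_{\Omega_K\ni z\to\zeta_0}U^{\mu_K}(z)=V_K.\]
Then I would invoke Theorem \ref{thmPotent&Green}(2), which gives $U^{\mu_K}(z)=V_K-g_K(z,\infty)$ for $z\in\Omega_K$. Substituting, the regularity condition becomes
\[\lim_{\Omega_K\ni z\to\zeta_0}\bigl(V_K-g_K(z,\infty)\bigr)=V_K,\]
and since $V_K$ is a finite constant (we have assumed $\capa(K)>0$, i.e.\ $V_K<+\infty$), this is equivalent to $\lim_{z\to\zeta_0}g_K(z,\infty)=0$, which is the claimed characterization.

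There is no real obstacle in this argument, so the only thing to watch for is to make sure the limit is taken in $\Omega_K$ (the domain where $g_K(\cdot,\infty)$ is defined) and to remark that the finiteness of $V_K$, which is guaranteed by the hypothesis $\capa(K)>0$ already built into the definition of regularity (and of Green's function), is what allows us to pass the constant $V_K$ through the limit. Everything else is a formal reformulation.
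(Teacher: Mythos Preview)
Your proposal is correct and matches the paper's approach exactly: the paper presents this corollary as an immediate consequence of the relation $U^{\mu_K}=V_K-g_K(\cdot,\infty)$ from Theorem~\ref{thmPotent&Green}, without writing out any further details. Your substitution argument is precisely the intended one-line justification.
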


\begin{prop}\label{propConditEquivRegulier}
	Let $K\subset\C$ be a compact set such that $\capa(K)>0$. Then $U^{\mu_K}(\zeta_0)=V_K$ implies the continuity of $U^{\mu_K}$ at point $\zeta_0$. The reciprocal implication is true if $\zeta_0\in\supp(\mu_K)$.
\end{prop}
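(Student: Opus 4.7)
The two directions are both handled by combining Frostman's theorem (\ref{thmFrostman}) with basic semicontinuity/support arguments; the first direction is almost immediate, the second requires extracting a suitable approximating sequence.

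For the forward direction, assume $U^{\mu_K}(\zeta_0)=V_K$. Since $U^{\mu_K}$ is the potential of a positive compactly supported measure, it is lower semicontinuous on $\C$ (as noted after Example~\ref{exPotentialSuperharmonic}), so
\[\liminf_{z\to\zeta_0}U^{\mu_K}(z)\geq U^{\mu_K}(\zeta_0)=V_K.\]
On the other hand, Frostman's theorem (part~1) gives the global upper bound $U^{\mu_K}(z)\leq V_K$ for every $z\in\C$, whence $\limsup_{z\to\zeta_0}U^{\mu_K}(z)\leq V_K$. Both bounds together yield $\lim_{z\to\zeta_0}U^{\mu_K}(z)=V_K=U^{\mu_K}(\zeta_0)$, which is precisely continuity at $\zeta_0$.

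For the converse, assume $\zeta_0\in\supp(\mu_K)$ and that $U^{\mu_K}$ is continuous at $\zeta_0$. By Frostman (part~2), there exists a set $S\subset K$ with $\capa(S)=0$ such that $U^{\mu_K}(z)=V_K$ on $K\setminus S$. The plan is to find a sequence $z_n\to\zeta_0$ with $U^{\mu_K}(z_n)=V_K$, and then invoke continuity to conclude.

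The key observation is that $\mu_K(S)=0$: indeed, since $\capa(K)>0$ we have $I(\mu_K)=V_K<+\infty$, so by Proposition~\ref{propCapa}(f) any Borel set of capacity zero is $\mu_K$-null. Since $\zeta_0\in\supp(\mu_K)$, every ball $B(\zeta_0,1/n)$ satisfies $\mu_K(B(\zeta_0,1/n))>0$, hence $\mu_K(B(\zeta_0,1/n)\cap(K\setminus S))>0$ as well. In particular we can pick $z_n\in K\setminus S$ with $|z_n-\zeta_0|<1/n$, and for each such $z_n$, $U^{\mu_K}(z_n)=V_K$. Continuity of $U^{\mu_K}$ at $\zeta_0$ then gives $U^{\mu_K}(\zeta_0)=\lim_n U^{\mu_K}(z_n)=V_K$, as required.

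The only subtle point is the step $\mu_K(S)=0$ for the exceptional set $S$ coming from Frostman; everything else is semicontinuity and a direct sequential argument. I expect no real obstacle beyond citing the right earlier result at this juncture.
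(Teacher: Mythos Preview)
Your proof is correct and uses essentially the same ingredients as the paper's: the forward direction is identical, and for the converse both arguments hinge on the fact that the Frostman exceptional set $S$ is $\mu_K$-null via Proposition~\ref{propCapa}(f). The only difference is organizational: the paper argues the converse by contrapositive (continuity together with $U^{\mu_K}(\zeta_0)<V_K$ gives a ball on which $U^{\mu_K}<V_K$, hence $\overline{B(\zeta_0,r)}\cap K\subset S$ has $\mu_K$-measure zero, so $\zeta_0\notin\supp(\mu_K)$), whereas you argue directly by extracting a sequence $z_n\to\zeta_0$ in $K\setminus S$; both routes are equivalent and equally short.
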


\begin{proof}
	It follows from the semi-continuity of $U^{\mu_K}$ and \hyperref[thmFrostman]{Frostman's theorem} that:
	\[U^{\mu_K}(\zeta_0)\leq\liminf_{z\to\zeta_0}U^{\mu_K}(z)\leq\limsup_{z\to\zeta_0}U^{\mu_K}(z)\leq V_K.\]
	Therefore if $U^{\mu_K}(\zeta_0)=V_K$, $U^{\mu_K}$ is continuous at point $\zeta_0$.
	\medskip\par Conversely, if $U^{\mu_K}$ is continuous at point $\zeta_0$ and $U^{\mu_K}(\zeta_0)<V_K$, then there exists $r>0$ such that $U^{\mu_K}<V_K$ on $\overline{B(\zeta_0,r)}$, therefore according to \hyperref[thmFrostman]{Frostman's theorem}, $\capa(\overline{B(\zeta_0,r)})\cap K)\leq\capa(S)=0$, then according to Prop. \ref{propCapa}(f), we have
	\[\mu_K(\overline{B(\zeta_0,r)}\cap K)=0.\]
	Therefore $\mu_K(\overline{B(\zeta_0,r)})=0$ and we deduce from it that $\zeta_0\notin\supp(\mu_K)$.
\end{proof}

\begin{cor}\label{corSuppCapaNonNullLocally}
	Let us consider $\zeta_0\in\supp(\mu_K)$ then for all $r>0$, $\capa(B(\zeta_0,r)\cap K)>0$.
\end{cor}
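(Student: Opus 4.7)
The plan is to prove the contrapositive: if there exists some $r>0$ such that $\capa(B(\zeta_0,r)\cap K)=0$, then $\zeta_0\notin\supp(\mu_K)$. The idea is to use Prop.~\ref{propCapa}(f), which allows us to pass from a vanishing capacity to a vanishing measure, as soon as we have a measure of finite energy at our disposal. The equilibrium measure $\mu_K$ is exactly such a measure, since $I(\mu_K)=V_K=-\log\capa(K)<+\infty$ (note that $\capa(K)>0$ is implicit in the hypothesis: the equilibrium measure, and hence its support, is defined only in that case, by Thm.~\ref{thmExistUniqMesrEqlbr}).

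So, first I would assume by contradiction that there exists $r>0$ with $\capa(B(\zeta_0,r)\cap K)=0$. Taking a slightly smaller closed ball $\overline{B(\zeta_0,r')}\subset B(\zeta_0,r)$, the compact set $\overline{B(\zeta_0,r')}\cap K$ also has capacity zero by monotonicity (Prop.~\ref{propCapa}(a)). Applying Prop.~\ref{propCapa}(f) to the finite compactly supported measure $\mu_K$, which satisfies $I(\mu_K)<+\infty$, I conclude that
\[
\mu_K\bigl(\overline{B(\zeta_0,r')}\cap K\bigr)=0.
\]

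Next, since $\supp(\mu_K)\subset K$, the measure $\mu_K$ gives no mass outside $K$, so
\[
\mu_K\bigl(\overline{B(\zeta_0,r')}\bigr)=\mu_K\bigl(\overline{B(\zeta_0,r')}\cap K\bigr)=0.
\]
In particular $\mu_K(B(\zeta_0,r'))=0$, meaning that $\zeta_0$ has a neighborhood of zero $\mu_K$-measure, which contradicts $\zeta_0\in\supp(\mu_K)$ (by the very definition of the support of a measure, every open neighborhood of a point of the support has positive mass).

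There is no serious obstacle here; the only point to watch is the implicit assumption $\capa(K)>0$, which is needed to invoke the equilibrium measure, but is guaranteed by the hypothesis $\supp(\mu_K)\ne\emptyset$. The argument is essentially a mirror of the one already used at the end of the proof of Prop.~\ref{propConditEquivRegulier}, where the same chain \emph{capacity zero $\Rightarrow$ $\mu_K$-measure zero $\Rightarrow$ outside the support} was used to rule out the case $U^{\mu_K}(\zeta_0)<V_K$ at a point of $\supp(\mu_K)$.
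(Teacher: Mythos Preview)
Your proof is correct and follows essentially the same route as the paper: the paper's proof simply states that the corollary is the contrapositive of the last argument in the proof of Prop.~\ref{propConditEquivRegulier}, which is exactly the chain \emph{capacity zero $\Rightarrow$ $\mu_K$-measure zero (via Prop.~\ref{propCapa}(f)) $\Rightarrow$ outside the support} that you spell out. Your only addition is the harmless shrinking from $r$ to $r'$ to work with a closed ball, and you even identify the parallel with Prop.~\ref{propConditEquivRegulier} yourself.
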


\begin{proof}
    It is indeed the contrapositive of the last argument of the previous proof.
\end{proof}

\begin{cor}
	The set of all irregular points has capacity zero.
\end{cor}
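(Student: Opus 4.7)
The plan is to show that the set of irregular points is contained in a set of capacity zero, then invoke monotonicity of capacity.

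First, I would combine the preceding proposition with Frostman's theorem to pin down exactly which points of $\partial\Omega_K$ can fail to be regular. By Proposition \ref{propConditEquivRegulier}, if $\zeta_0 \in \partial\Omega_K$ satisfies $U^{\mu_K}(\zeta_0) = V_K$, then $U^{\mu_K}$ is continuous at $\zeta_0$, so
\[
\lim_{\Omega_K \ni z \to \zeta_0} U^{\mu_K}(z) = U^{\mu_K}(\zeta_0) = V_K,
\]
i.e., $\zeta_0$ is automatically regular. Consequently, every irregular point $\zeta_0 \in \partial\Omega_K$ must satisfy $U^{\mu_K}(\zeta_0) \neq V_K$, and in view of the universal upper bound from Frostman's theorem (part 1), this forces $U^{\mu_K}(\zeta_0) < V_K$.

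Second, I would note that $\partial\Omega_K \subset K^{\ce}$ (Corollary \ref{corPropCapa}(c)), and, since the equilibrium measure satisfies $\mu_K = \mu_{K^{\ce}}$ (Corollary \ref{corPropCapa}(e)), the equilibrium potential is the same. Therefore applying part 2 of Frostman's theorem to $K^{\ce}$ gives a set $S$ with $\capa(S) = 0$ such that $U^{\mu_K}(z) = V_K$ for all $z \in K^{\ce} \setminus S$. The argument of the previous paragraph shows that the set of irregular points is contained in $\{\zeta_0 \in \partial\Omega_K : U^{\mu_K}(\zeta_0) < V_K\} \subset S$.

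Finally, I would conclude by the monotonicity of capacity built into the definition $\capa(B) = \sup\{\capa(K') : K' \subset B, \, K' \text{ compact}\}$: any compact subset of the set of irregular points is a compact subset of $S$, hence has capacity at most $\capa(S) = 0$. There is no serious obstacle here; the only subtlety is making sure to invoke Proposition \ref{propConditEquivRegulier} in the correct direction (continuity follows from attaining $V_K$, not the other way round) and to note that irregularity is strictly a boundary issue on $\partial\Omega_K$, so that Frostman's quasi-everywhere statement on $K$ (or $K^{\ce}$) suffices.
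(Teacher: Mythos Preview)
Your proof is correct and follows the same route as the paper, which simply says the result follows from Frostman's theorem (assuming $\capa(K)>0$); you have merely spelled out the implicit steps. Note that the detour through $K^{\ce}$ is unnecessary: by Corollary~\ref{corPropCapa}(c) one already has $\partial\Omega_K=\partial_\infty K\subset K$, so Frostman's theorem applied directly to $K$ gives the exceptional set $S$ containing all irregular points.
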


\begin{proof}
    Let us assume that $\capa(K)>0$. Then the corollary results from Frostman's theorem.
\end{proof}

To get a sufficient condition of regularity, let us introduce the following notion:

\begin{defi}\label{defConditionCone}
Let us consider $\Omega\subset\C$. We write that $\Omega$ verifies the \emph{cone condition} if for all $\zeta\in\partial\Omega$, there exists $\zeta_1\neq\zeta$ such that the segment $[\zeta,\zeta_1]\subset\C\backslash\Omega$.
\end{defi}

\begin{prop}\label{propConditionConeReg}
	Let $K\subset\C$ be a compact set with $\capa(K)>0$. If $\Omega_K$ verifies the cone condition, then $\Omega_K$ is a regular set.
\end{prop}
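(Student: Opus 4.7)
The plan is to reduce regularity at an arbitrary boundary point $\zeta_0$ to the regularity of a line segment ending at $\zeta_0$, the latter being checkable by an explicit conformal map. First, apply the cone condition at $\zeta_0\in\partial\Omega_K$ to obtain $\zeta_1\neq\zeta_0$ with $L:=[\zeta_0,\zeta_1]\subset\C\setminus\Omega_K$, so that $\Omega_K\subset\C\setminus L=\Omega_L$. A nondegenerate line segment has positive capacity (for instance $\capa([a,b])=|b-a|/4$, which will be computed via Chebyshev polynomials in section~\ref{ssec-constcheb}), so both Green's functions $g_K(\cdot,\infty)$ and $g_L(\cdot,\infty)$ are defined on $\Omega_K$, and the aim is to sandwich $g_K(\cdot,\infty)$ between $0$ and $g_L(\cdot,\infty)$ near $\zeta_0$.

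The key step is the monotonicity comparison $g_K(\cdot,\infty)\leq g_L(\cdot,\infty)$ on $\Omega_K$. Set $h:=g_L(\cdot,\infty)-g_K(\cdot,\infty)$. Both terms are harmonic on $\Omega_K$ (for $g_L$, because $\Omega_K\subset\Omega_L$ where $g_L$ is harmonic), and they share the same logarithmic singularity $\log|z|$ at infinity by Theorem~\ref{thmPotent&Green}, so $h$ extends harmonically to $\Omega_K\cup\{\infty\}$ with $h(\infty)=V_L-V_K$. Since $L\subset\C\setminus\Omega_K=K^{\ce}$, Proposition~\ref{propCapa}(a) combined with Corollary~\ref{corPropCapa}(d) gives $\capa(L)\leq\capa(K^{\ce})=\capa(K)$, hence $h(\infty)\geq 0$. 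On $\partial\Omega_K$, the characterizing property (iii) of Theorem~\ref{thmGreen} makes $g_K(\cdot,\infty)\to 0$ quasi-almost everywhere, while Corollary~\ref{corPositiveFonctGreen} gives $g_L(\cdot,\infty)\geq 0$ on all of $\Omega_L\supset\Omega_K$, so $\liminf_{z\to\zeta}h(z)\geq 0$ q-a.e.\ on $\partial\Omega_K$. Invoking the generalized minimum principle for bounded harmonic functions with quasi-almost everywhere non-negative boundary values (to be drawn from appendix~\ref{appendixeSemiharmonic}) then yields $h\geq 0$ throughout $\Omega_K$.

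The last ingredient is to verify that $g_L(z,\infty)\to 0$ as $z\to\zeta_0$. After an affine change of variable (reducing $L$ to $[-1,1]$; this is legitimate since the axioms of Theorem~\ref{thmGreen} transform accordingly), the Joukowski-type map $\phi(z)=z+\sqrt{z^2-1}$, with the branch making $|\phi(z)|>1$ outside $[-1,1]$, sends $\C\setminus[-1,1]$ conformally onto $\{|w|>1\}$; a direct check of the three axioms of Theorem~\ref{thmGreen} identifies $g_L(z,\infty)$ with $\log|\phi(z)|$, and $|\phi(z)|\to 1$ as $z$ approaches any point of $[-1,1]$, so in particular $g_L(z,\infty)\to 0$ as $z\to\zeta_0$. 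Combined with the monotonicity, $0\leq g_K(z,\infty)\leq g_L(z,\infty)\to 0$, so $\zeta_0$ is regular; as $\zeta_0\in\partial\Omega_K$ was arbitrary, $\Omega_K$ is regular. The most delicate step will be the minimum principle argument, since the boundary values of $h$ are controlled only up to a set of capacity zero rather than pointwise — this is the recurring subtlety when translating classical potential-theoretic arguments into the \emph{quasi-almost everywhere} framework used throughout this paper.
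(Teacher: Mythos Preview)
Your proof is correct, but it follows a genuinely different route from the paper's. The paper uses the classical barrier method: for each $\zeta_0\in\partial\Omega_K$ it constructs a local super-harmonic barrier $\chi(z)=\Re\bigl(1/\log\frac{z-\zeta_0}{z-\zeta_1}\bigr)$ on a punctured neighbourhood, extends it to a bounded barrier $\tilde\chi$ on all of $\Omega_K$, and then applies the generalized minimum principle to $M\tilde\chi-g_K(\cdot,\infty)$ on a large ball. Your argument replaces this ad hoc barrier by the Green's function $g_L(\cdot,\infty)$ of the segment $L=[\zeta_0,\zeta_1]$ itself, establishes the monotonicity $g_K\leq g_L$ via the generalized minimum principle applied globally on $\Omega_K\cup\{\infty\}$, and then reads off regularity of $L$ from the explicit Joukowski formula. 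Conceptually your approach is cleaner --- it isolates the principle ``larger domain $\Rightarrow$ larger Green's function'' and reduces regularity to the single explicitly computable case of a segment --- whereas the paper's barrier method is more self-contained (no forward reference to $\capa([a,b])>0$ is needed) and more flexible, since the same template works for any boundary point admitting \emph{any} barrier, not only those lying on a segment contained in the complement. Both proofs ultimately hinge on the same delicate point you flag at the end: the generalized minimum principle (Theorem~\ref{thmGenerlPrincpMin}) with boundary control only quasi-almost everywhere.
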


The proof is given in appendix \ref{propConditionConeRegProof}.
\medskip

The following proposition will be useful in order to consider the domain regular.

\begin{prop}\label{propKepsilonConditionCone}
    Let $K\subset\C$ be a non-empty compact set and let $K^\epsilon=\{z\in\C:\dist(z,K)\leq\epsilon\}$. Then $\capa(K^\epsilon)>0$ and $\Omega_{K^\epsilon}$ verifies the cone condition, therefore $\Omega_{K^\epsilon}$ is regular.
\end{prop}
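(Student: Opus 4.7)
The plan is to prove the three pieces in order: the capacity lower bound is immediate from inclusion, the cone condition is the geometric heart of the argument, and regularity will drop out by citing Proposition \ref{propConditionConeReg}.

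First I would establish $\capa(K^\epsilon)>0$. Since $K$ is non-empty, pick any $z_0\in K$; the closed disk $\overline{B}(z_0,\epsilon)$ lies entirely in $K^\epsilon$. Corollary \ref{capaDisqueUnite} gives $\capa(\overline{B}(z_0,\epsilon))=\epsilon$, and the monotonicity of capacity (Prop.~\ref{propCapa}(a)) then yields $\capa(K^\epsilon)\geq\epsilon>0$.

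The substantive step is to verify Definition \ref{defConditionCone} for $\Omega_{K^\epsilon}$. The guiding geometric idea is that any boundary point of $\Omega_{K^\epsilon}$ sits at distance exactly $\epsilon$ from $K$, and the straight segment from such a point to a nearest point of $K$ stays inside $K^\epsilon$. Concretely, let $\zeta\in\partial\Omega_{K^\epsilon}$. Since $\Omega_{K^\epsilon}$ is an open connected component of the open set $\C\setminus K^\epsilon$, it is also closed in $\C\setminus K^\epsilon$, so $\partial\Omega_{K^\epsilon}\subset K^\epsilon$, whence $\dist(\zeta,K)\leq\epsilon$. Conversely $\zeta\in\overline{\Omega_{K^\epsilon}}$, so there is a sequence $\zeta_n\to\zeta$ with $\dist(\zeta_n,K)>\epsilon$, and by continuity of $\dist(\cdot,K)$ we get $\dist(\zeta,K)\geq\epsilon$, hence $\dist(\zeta,K)=\epsilon$. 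By compactness of $K$ I choose $z_0\in K$ realizing $|\zeta-z_0|=\epsilon$, and set $\zeta_1:=z_0\neq\zeta$. For $w=(1-t)\zeta+tz_0$ with $t\in[0,1]$, the bound $|w-z_0|=(1-t)\epsilon\leq\epsilon$ gives $\dist(w,K)\leq\epsilon$, so $w\in K^\epsilon\subset\C\setminus\Omega_{K^\epsilon}$. Thus $[\zeta,\zeta_1]\subset\C\setminus\Omega_{K^\epsilon}$, verifying the cone condition.

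Having checked both hypotheses, Proposition~\ref{propConditionConeReg} applied to $K^\epsilon$ immediately gives that $\Omega_{K^\epsilon}$ is a regular set. The only point requiring any care is the topological inclusion $\partial\Omega_{K^\epsilon}\subset K^\epsilon$ (coming from components of an open set being relatively clopen) together with the continuity argument that pins the distance to $K$ at exactly $\epsilon$; once these are in place, the explicit segment $[\zeta,z_0]$ makes the cone condition transparent, and no further work is needed.
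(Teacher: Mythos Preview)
Your proof is correct and follows essentially the same route as the paper: establish $\capa(K^\epsilon)>0$ from the non-empty interior (you do it via an explicit disk and monotonicity, the paper just invokes Cor.~\ref{corPropCapa}(a)), then observe that any $\zeta\in\partial\Omega_{K^\epsilon}$ lies at distance exactly $\epsilon$ from $K$ and that the segment to a nearest point of $K$ stays in $K^\epsilon\subset\C\setminus\Omega_{K^\epsilon}$. Your version is simply more explicit about the topological step $\partial\Omega_{K^\epsilon}\subset K^\epsilon$ and the two-sided distance estimate, which the paper leaves implicit.
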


\begin{proof}
    Since the interior of $K^\epsilon$ is non-empty, we have $\capa(K^\epsilon)>0$. Moreover, by definition, all points $\zeta$ in $\partial K^\epsilon$ are exactly at a distance of $\epsilon $ from $K$, therefore there exists $\zeta_1\in K$ such that $[\zeta,\zeta_1]\subset K^\epsilon\subset\C\backslash\Omega_K$.
\end{proof}

\clearpage

We will now prove a few results that allow us to compare the capacity of two compact sets thanks to holomorphic functions and the notion of regularity previously defined.
\begin{lmm}
	Let $f$ be a non locally constant holomorphic function defined at $\infty$ by $f(\infty)=\infty$. Then there exists some integer $n\in\N^*$ and some $A\neq0$ such that
	\[f(z)\sim Az^n,\quad z\to\infty\]
	\[\log\abs{f(z)}=n\log\abs{z}+\log\abs{A}+o(1),\quad z\to\infty\]
	Let us define $A_f=A,n_f=\ord_\infty(f)=n$. \qed
\end{lmm}

\begin{thm}\label{thmAppliHolom}
	Let $K_1,K_2\subset\C$ be two non-empty compact sets. Let $f:\Omega_{K_1}\cup\{\infty\}\to\Omega_{K_2}\cup\{\infty\}$ be a non constant holomorphic function such that $f(\infty)=\infty$.
	Then,
	\begin{enumerate}[label=(\alph*)]
		\item $\abs{A_f}\capa(K_1)^{n_f}\geq\capa(K_2)$.
		\item Moreover assume that :
			\begin{enumerate}[label=(\roman*)]
				\item $\capa(K_2)>0$;
				\item $f^{-1}(\infty)=\{\infty\}$;

				\item $\Omega_{K_2}$ is regular;
				\item $f$ can be continuously extended to the boundaries $\partial\Omega_{K_1}\to\partial\Omega_{K_2}.$
			\end{enumerate}
			Then, $\capa(K_1)$>0, $\Omega_{K_1}$ is a regular set and
			\[g_{K_1}(\cdot,\infty)=\frac{1}{n_f}g_{K_2}(f(\cdot),\infty)\]
			\[\abs{A_f}\capa(K_1)^{n_f}=\capa(K_2).\]
	\end{enumerate} 
\end{thm}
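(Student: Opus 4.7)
\textbf{Proof plan for Theorem~\ref{thmAppliHolom}.}

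The central object throughout is the pullback
\[
G(z) := \frac{1}{n_f}\, g_{K_2}(f(z),\infty),
\]
defined on $\Omega_{K_1}$, possibly with singularities at points of $f^{-1}(\infty) \cap \Omega_{K_1}$. From the asymptotic $g_{K_2}(w,\infty) = \log|w| + V_{K_2} + o(1)$ at infinity (Thm.~\ref{thmPotent&Green}) combined with $\log|f(z)| = n_f \log|z| + \log|A_f| + o(1)$, one reads off
\[
G(z) = \log|z| + \frac{V_{K_2} + \log|A_f|}{n_f} + o(1), \qquad z \to \infty.
\]
Near a pole $p \in \Omega_{K_1}$ of $f$ of order $m_p$, $G$ behaves like $-\frac{m_p}{n_f}\log|z-p| + O(1) \to +\infty$, so $G$ is superharmonic on $\Omega_{K_1}$ (a harmonic function plus a non-negative combination of superharmonic kernels of the form $-\log|\cdot - p|$). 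Moreover $G \geq 0$ on $\Omega_{K_1}$ since $g_{K_2} \geq 0$ on $\Omega_{K_2}$.

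For (a), I may assume $\capa(K_2) > 0$, since otherwise the inequality is trivial. Assuming also $\capa(K_1) > 0$ for the moment, form $w := g_{K_1}(\cdot,\infty) - G$ on $\Omega_{K_1}$. This is subharmonic (harmonic minus superharmonic), with $w(z) \to V_{K_1} - \frac{V_{K_2} + \log|A_f|}{n_f}$ at infinity; on $\partial\Omega_{K_1}$, the defining property of Green's function yields $g_{K_1}(z,\infty) \to 0$ quasi-almost everywhere, and $G \geq 0$ gives $\limsup_{z\to\zeta} w(z) \leq 0$ q-a.e. Viewing $\Omega_{K_1}\cup\{\infty\}$ as a domain on the Riemann sphere (via $z \mapsto 1/z$ near infinity), the extended maximum principle for subharmonic functions—allowing a polar exceptional boundary set—forces $w \leq 0$ throughout, and evaluating at infinity gives $V_{K_1} \leq (V_{K_2} + \log|A_f|)/n_f$, i.e.\ $\capa(K_2) \leq |A_f|\,\capa(K_1)^{n_f}$. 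The assumption $\capa(K_1) > 0$ is not a loss: if $\capa(K_1) = 0$ then $K_1$ is polar, $G$ is bounded above outside a neighborhood of infinity and extends across $K_1$ as a subharmonic function on all of $\C$ (removability of polar singularities), contradicting the $\log|z|$ growth at infinity whenever $\capa(K_2) > 0$.

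For (b), I verify that $G$ \emph{is} the Green function $g_{K_1}(\cdot,\infty)$ by checking the three characterizing properties of Thm.~\ref{thmGreen}. First, with $f^{-1}(\infty) = \{\infty\}$, $f$ is finite-valued on $\Omega_{K_1}$, so $G$ is harmonic there. Second, the asymptotic above shows $G - \log|\cdot|$ is bounded near infinity. Third, for $\zeta \in \partial\Omega_{K_1}$, the continuous extension of $f$ sends $\zeta$ to a point of $\partial\Omega_{K_2}$; regularity of $\Omega_{K_2}$ gives $g_{K_2}(w,\infty) \to 0$ as $w \to f(\zeta)$, hence $G(z) \to 0$ as $z \to \zeta$, and this now holds for \emph{every} boundary point, not just quasi-almost every. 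Part (a) already forces $\capa(K_1) > 0$ (since $\capa(K_2) > 0$), so $g_{K_1}(\cdot,\infty)$ exists; by the uniqueness clause of Thm.~\ref{thmGreen}, $G = g_{K_1}(\cdot,\infty)$. Matching constants in $g_{K_1}(z,\infty) = \log|z| + V_{K_1} + o(1)$ and the expansion of $G$ gives $V_{K_1} = (V_{K_2} + \log|A_f|)/n_f$, i.e.\ $\capa(K_2) = |A_f|\,\capa(K_1)^{n_f}$; and the third property, now holding pointwise on the boundary, establishes regularity of $\Omega_{K_1}$.

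\textbf{The main obstacle} is the extended maximum principle used in (a): the control on $w$ holds only quasi-almost everywhere on $\partial\Omega_{K_1}$, and the domain is unbounded with a priori irregular boundary, so one must invoke a removability result for polar exceptional sets applicable to bounded-above subharmonic functions—a nontrivial piece of potential theory for which the paper's framework on capacity zero sets is designed. A secondary technical point is the careful justification that $G$ is superharmonic through its $+\infty$ singularities at poles of $f$, which rests on recognising $G$ as a sum of a harmonic function and positive multiples of the superharmonic kernels $-\log|\cdot - p|$.
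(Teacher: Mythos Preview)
Your approach is essentially the paper's: form the difference $h = g_{K_2}(f(\cdot),\infty) - n_f\, g_{K_1}(\cdot,\infty)$ (your $-n_f w$), note it is superharmonic with the right behaviour at $\infty$ and on $\partial\Omega_{K_1}$ q-a.e., and invoke the generalized minimum principle (Thm.~\ref{thmGenerlPrincpMin}). Two points of divergence are worth noting. For (b) you verify the three defining properties of Thm.~\ref{thmGreen} and invoke uniqueness, whereas the paper applies the minimum principle to both $h$ and $-h$; your route is cleaner, but you should not skip the boundedness clause in property~1 of Thm.~\ref{thmGreen}, which requires (ii) and (iv) together (so that $f$ maps $\overline{\Omega_{K_1}}\setminus U$ into a bounded part of $\overline{\Omega_{K_2}}$). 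For the case $\capa(K_1)=0$ in (a), the paper instead approximates by $K_1^\epsilon$ and passes to the limit via Prop.~\ref{propCapa}(d); your removability-plus-Liouville idea also works, but as written it is backwards: $G$ is \emph{super}harmonic and bounded \emph{below} (by $0$), hence extends superharmonically across the polar set $K_1$, and then a nonnegative superharmonic function on all of $\C$ must be constant, contradicting $G(z)\sim\log|z|$. Your ``bounded above / subharmonic'' is the wrong orientation --- and in fact $G$ is \emph{not} bounded above near finite poles of $f$, which are allowed in part~(a).
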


The complete proof is provided in the appendix \ref{appendixthmPrincpMaxPotent}. 

\medskip

To understand how to apply this theorem, let us see when the conditions in $(b)$ are verified:
	\medskip\par (ii) holds when $f$ is biholomorphic, or when $f$ is a polynomial;
	\par (iii) holds when $\Omega_K$ verifies the cone condition (Prop. \ref{propConditionConeReg}).

\noindent We therefore have the two following corollaries (by applying the same technique as in Prop. \ref{propKepsilonConditionCone} if necessary):

\begin{cor}\label{cor-biholo}
	If $f$ is biholomorphic, we have $n_f=n_{f^{-1}}=1$ and $A_fA_{f^{-1}}=1$; Let us apply (a) twice to obtain (with notation $f'(\infty)=A_f$)
	\[\abs{f'(\infty)}\capa(K_1)=\capa(K_2).\]
	\qed
\end{cor}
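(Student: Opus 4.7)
The plan is to apply part (a) of Theorem \ref{thmAppliHolom} twice, once to $f$ and once to its inverse, and to derive equality from the product of the two resulting inequalities. The biholomorphic assumption ensures both $f$ and $f^{-1}$ satisfy the hypotheses of the theorem (non-constant holomorphic maps between the two exterior domains, both sending $\infty$ to $\infty$), so part (a) is directly applicable in each direction.

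First, I would justify that $n_f=n_{f^{-1}}=1$. Since $f$ is biholomorphic and extends continuously through $\infty$ with $f(\infty)=\infty$, the expansion $f(z)=A_fz^{n_f}(1+o(1))$ exhibits $f$ as an $n_f$-to-$1$ branched cover of a neighborhood of $\infty$ onto a neighborhood of $\infty$; a bijective holomorphic map must have local degree $1$ at every point, so $n_f=1$ and similarly $n_{f^{-1}}=1$. Substituting the two expansions $f(z)\sim A_fz$ and $f^{-1}(w)\sim A_{f^{-1}}w$ into the identity $f^{-1}\circ f=\mathrm{id}$ and letting $z\to\infty$ yields $A_fA_{f^{-1}}=1$.

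Next, I would invoke Theorem \ref{thmAppliHolom}(a) for $f$ and for $f^{-1}$, obtaining
\[\abs{A_f}\capa(K_1)\geq\capa(K_2),\qquad \abs{A_{f^{-1}}}\capa(K_2)\geq\capa(K_1).\]
Multiplying these two inequalities and using $\abs{A_fA_{f^{-1}}}=1$ gives $\capa(K_1)\capa(K_2)\geq\capa(K_1)\capa(K_2)$, so equality must hold throughout. In particular $\abs{A_f}\capa(K_1)=\capa(K_2)$, which is the claim after setting $f'(\infty):=A_f$. The argument degenerates harmlessly in the zero-capacity case: if either capacity vanishes, the two inequalities immediately force the other to vanish as well, and the equality $\abs{f'(\infty)}\capa(K_1)=\capa(K_2)$ reads $0=0$.

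The only genuinely subtle point is the first step (establishing $n_f=1$); everything after that is a short formal manipulation of inequalities. One could instead sidestep the local-degree argument by observing that any biholomorphism of an open set of $\widehat{\C}$ containing $\infty$ onto another such set must, in the chart $w=1/z$, be a holomorphic bijection near $0$, hence have a nonzero derivative there, which is equivalent to $n_f=1$.
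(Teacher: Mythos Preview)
Your proposal is correct and follows exactly the approach the paper sketches in the statement itself: apply Theorem~\ref{thmAppliHolom}(a) to $f$ and to $f^{-1}$, use $n_f=n_{f^{-1}}=1$ and $A_fA_{f^{-1}}=1$, and combine the two inequalities into an equality. Your write-up simply fills in the details the paper leaves implicit; the only cosmetic remark is that rather than multiplying the two inequalities, it is slightly more direct to substitute $\abs{A_{f^{-1}}}=1/\abs{A_f}$ into the second inequality to get $\capa(K_2)\geq\abs{A_f}\capa(K_1)$ and pair it with the first.
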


\begin{cor}\label{corCapaciteImageRecipPolynome}
	Assume that $f$ is a polynomial function of degree $n$:
	$$f(z)=\sum_{i=0}^na_ix^i,\quad a_n\neq0.$$ Let $K\subset\C$ be a compact set. Then
	\[\abs{a_n}\capa(f^{-1}(K))^{n}=\capa(K).\]
\end{cor}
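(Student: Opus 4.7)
The plan is to apply Theorem \ref{thmAppliHolom} to $f$ with $K_1=f^{-1}(K)$ and $K_2=K$. Since $f(z)=\sum_{i=0}^n a_ix^i\sim a_nz^n$ at infinity, the lemma preceding the theorem identifies $n_f=n$ and $A_f=a_n$; and $f^{-1}(K)$ is compact because $f$ is a proper polynomial map. First I would check that $f$ really sends $\Omega_{f^{-1}(K)}\cup\{\infty\}$ into $\Omega_{K}\cup\{\infty\}$: for $z\in \Omega_{f^{-1}(K)}$ one has $z\notin f^{-1}(K)$, so $f(z)\notin K$; the image of the connected open set $\Omega_{f^{-1}(K)}$ under the continuous map $f$ is connected, disjoint from $K$, and meets a neighborhood of $\infty$, hence lies in $\Omega_K$. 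Part (a) of the theorem then gives the easy direction
\[\abs{a_n}\capa(f^{-1}(K))^{n}\geq\capa(K).\]

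For the reverse inequality, I cannot directly invoke part (b) because neither $\capa(K)>0$ nor the regularity of $\Omega_K$ is assumed. The natural fix is to approximate by the $\varepsilon$-thickenings $K^\varepsilon=\{z:\dist(z,K)\leq\varepsilon\}$. By Proposition \ref{propKepsilonConditionCone}, $\capa(K^\varepsilon)>0$ and $\Omega_{K^\varepsilon}$ is regular, so hypotheses (i) and (iii) of part (b) are satisfied. Hypothesis (ii), $f^{-1}(\infty)=\{\infty\}$, holds because $f$ is a polynomial; and hypothesis (iv), continuous extension to the boundary, is automatic because $f$ is continuous on all of $\C$. Part (b) therefore yields the exact equality
\[\abs{a_n}\capa(f^{-1}(K^\varepsilon))^{n}=\capa(K^\varepsilon).\]

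To conclude, I would let $\varepsilon_k\searrow 0$. The sequence $(K^{\varepsilon_k})$ is a decreasing sequence of compact sets with $\bigcap_k K^{\varepsilon_k}=K$, and, by continuity and properness of $f$, $(f^{-1}(K^{\varepsilon_k}))$ is a decreasing sequence of compact sets with $\bigcap_k f^{-1}(K^{\varepsilon_k})=f^{-1}(K)$. Proposition \ref{propCapa}(d) then gives $\capa(K^{\varepsilon_k})\to\capa(K)$ and $\capa(f^{-1}(K^{\varepsilon_k}))\to\capa(f^{-1}(K))$, and passing to the limit in the displayed equality yields the claim, including the degenerate case $\capa(K)=0$ (which forces $\capa(f^{-1}(K))=0$).

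The main obstacle is organizational rather than conceptual: it is ensuring that the setup of Theorem \ref{thmAppliHolom} is genuinely valid at each thickened stage (well-definedness of $f$ between the complements, identification of $A_f$ and $n_f$), and that the limiting procedure legitimately transfers the equality from $K^\varepsilon$ to $K$. A possible alternative would be to bypass approximation by separately proving that $\capa(K)=0$ implies $\capa(f^{-1}(K))=0$, but this argument is essentially the same thickening trick in disguise, so the approximation route seems cleanest.
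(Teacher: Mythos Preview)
Your proof is correct and follows essentially the same route as the paper: thicken $K$ to $K^\varepsilon$, apply part (b) of Theorem \ref{thmAppliHolom} to the pair $(f^{-1}(K^\varepsilon),K^\varepsilon)$, then let $\varepsilon\to 0$ via Proposition \ref{propCapa}(d). The only place where you are a bit quick is condition (iv): continuity of $f$ on $\C$ gives a continuous extension to $\partial\Omega_{f^{-1}(K^\varepsilon)}$, but one must also check that this extension lands in $\partial\Omega_{K^\varepsilon}$; the paper verifies this explicitly by combining $f(\Omega_{f^{-1}(K)})\subset\Omega_K$ (which you proved) with $f(\partial\Omega_{f^{-1}(K)})\subset f(f^{-1}(K))\subset K$, so that $f(\partial\Omega_{f^{-1}(K)})\subset\overline{\Omega_K}\cap K=\partial\Omega_K$.
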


\begin{proof}
    Assume that $K\neq\emptyset$. Let us try to apply theorem \ref{thmAppliHolom} (b).
    Since $f$ is a polynomial function, (ii) is verified for all compacts in $\C$.
    For $\epsilon>0$, let us consider $K^\epsilon=\{z\in\C:\dist(z,K)\leq\epsilon\}$. Let us show that the conditions (i),(iii),(iv) of the theorem are verified for the compact sets $f^{-1}(K^\epsilon)$ and $K^\epsilon$ and that $f(\Omega_{f^{-1}(K^\epsilon)})\subset\Omega_{K^\epsilon}$; It will result in
	\[\abs{a_n}\capa(f^{-1}(K^\epsilon))^{n}=\capa(K^\epsilon).\]
	When $\epsilon\to0$, it allows us to conclude.
	\medskip\par (i) $K^\epsilon$ has a non-empty interior, therefore $\capa(K^\epsilon)>0$ (Cor. \ref{corPropCapa}).
	\medskip\par (iii) results from Prop. \ref{propConditionConeReg} and Prop. \ref{propKepsilonConditionCone}.
	\medskip\par (iv) We will actually prove this claim for all compact sets $K$ (instead of $K^\epsilon$). A topological analysis shows that $f(\partial f^{-1}(K))=\partial K$. We have left to prove that $f(\partial\Omega_{f^{-1}(K)})\subset\partial\Omega_{K}$.		
	We have $f(\infty)=\infty$ and $K$ a compact set, therefore $f^{-1}(K)$ is a compact set; likewise, the inverse image of each (open) bounded connected component of $\C\backslash K$ is bounded, therefore included in one of the bounded component of $\ f^{-1}(K)$. Therefore $\Omega_{f^{-1}(K)}\subset f^{-1}(\Omega_K)$, then \[f(\Omega_{f^{-1}(K)})\subset\Omega_K\]
	\[f(\partial\Omega_{f^{-1}(K)})\subset\overline{\Omega_K}.\]
	We finally obtain that
	\[f(\partial\Omega_{f^{-1}(K)})\subset\overline{\Omega_{K}}\cap\partial K=\partial\Omega_K,\] which allows us to apply theorem \ref{thmAppliHolom} (b) to obtain the final result.
\end{proof}

\subsection{Chebyshev constant}\label{ssec-constcheb}

\paragraph{}
Let us now give a third equivalent definition of the capacity, defined thanks to Chebyshev polynomials. This point of view is very useful to calculate the capacity of segments of $\R$.

\medskip

First, let us state the equioscillation theorem : 

\begin{thm}[Equioscillation]\label{thm-equioscillation}
Let us consider $n\in\N$. Let $f$ be a continuous function on $[a,b]$. Let us consider $P\in\R_n[X]$. $P$ minimizes $||f-P||_{\infty,[a,b]}$ if and only if there exists $n+2$ points $a\leqslant x_0 < x_1 < \dots x_{n+1} \leqslant b$ such that $f(x_i)-P(x_i)=\pm (-1)^i||f-P||_{\infty,[a,b]}$
\end{thm}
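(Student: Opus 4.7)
I prove the two implications separately. Write $E:=\|f-P\|_{\infty,[a,b]}$; the case $E=0$ is vacuous ($f=P\in\R_n[X]$ already, and any $n+2$ points work trivially), so assume $E>0$. For the \emph{sufficiency} direction, suppose $P$ equioscillates on $x_0<\dots<x_{n+1}$ and, for contradiction, that some $Q\in\R_n[X]$ satisfies $\|f-Q\|_\infty<E$. The polynomial $R:=P-Q=(f-Q)-(f-P)\in\R_n[X]$ has $R(x_i)$ with the opposite alternating sign of $(f-P)(x_i)$: at $x_i$ the first term $(f-Q)(x_i)$ has absolute value strictly less than $E$, while $(f-P)(x_i)=\pm(-1)^iE$. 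By the intermediate value theorem, $R$ has at least $n+1$ zeros in $[a,b]$, so the bound $\deg R\leq n$ forces $R\equiv0$, contradicting $\|f-Q\|_\infty<\|f-P\|_\infty$.

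For the \emph{necessity} direction, assume $P$ is a minimizer and let $k$ be the largest integer such that there exist $y_0<\dots<y_k$ in $[a,b]$ with $(f-P)(y_j)=\varepsilon(-1)^jE$ for some fixed sign $\varepsilon\in\{\pm1\}$. I want to show $k\geq n+1$; suppose instead $k\leq n$. The two extremal sets $F_+:=\{x:(f-P)(x)=E\}$ and $F_-:=\{x:(f-P)(x)=-E\}$ are disjoint closed subsets of $[a,b]$. By maximality of $k$, one can partition $[a,b]$ into $m\leq k+1\leq n+1$ consecutive closed subintervals $J_1,\dots,J_m$, each meeting only one of $F_\pm$, with the sign alternating across consecutive $J_\ell$. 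The separators $s_1<\dots<s_{m-1}$ between these blocks can be chosen so that $|f(s_\ell)-P(s_\ell)|<E$, because continuity forces $f-P$ to cross every intermediate value (in particular $0$) between a block in $F_\varepsilon$ and the next block in $F_{-\varepsilon}$.

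Set $Q(x):=\sigma\prod_{\ell=1}^{m-1}(x-s_\ell)$, with $\sigma\in\{\pm1\}$ chosen so that $Q$ has the same sign as $f-P$ on every extremal point of each $J_\ell$; this is possible because $Q$ itself alternates sign across the $s_\ell$. Then $\deg Q\leq m-1\leq n$, and I claim that $\|f-P-\lambda Q\|_\infty<E$ for all sufficiently small $\lambda>0$, contradicting the minimality of $P$. Indeed, on a sufficiently small open neighborhood $U$ of $F_+\cup F_-$, $Q$ and $f-P$ share signs and $Q$ stays away from $0$, so $|(f-P)(x)-\lambda Q(x)|<E$ for small $\lambda$; off $U$, the continuous function $|f-P|$ attains a maximum $E'<E$ by compactness, so the perturbation $|(f-P)(x)-\lambda Q(x)|\leq E'+\lambda\|Q\|_\infty$ remains below $E$ for $\lambda$ small enough.

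The delicate step is the partition construction: one must verify that the maximality hypothesis on $k$ really implies that all of $F_+\cup F_-$ can be separated by at most $n$ points lying off the extremal set, and that the separators can be placed with $|f-P|<E$. This relies on closedness of $F_\pm$, compactness of $[a,b]$, and the intermediate value theorem applied to $f-P$ between successive sign-blocks. Once the separators are in place, the construction of $Q$ and the perturbation estimate are routine.
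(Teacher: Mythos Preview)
The paper does not actually supply a proof of this theorem: it explicitly defers to the reference \cite{equioscillation}, remarking that the proof ``is not that hard but is quite long.'' So there is no in-paper argument to compare against.

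Your proof is correct and is precisely the classical argument. The sufficiency direction (equioscillation $\Rightarrow$ optimality) via the sign-alternation of $R=P-Q$ forcing $n+1$ zeros in a degree-$\leq n$ polynomial is the standard one-paragraph proof. The necessity direction (optimality $\Rightarrow$ equioscillation) via grouping the extremal set into $m\leq k+1$ sign-blocks, placing separators $s_\ell$ where $|f-P|<E$, and perturbing by $\lambda Q$ with $Q(x)=\sigma\prod(x-s_\ell)$ is exactly the de~la~Vall\'ee~Poussin construction. Your acknowledgement that the ``delicate step'' is the block decomposition is apt: the key observation you are implicitly using is that for a sequence of signs, the length of the longest alternating subsequence equals one plus the number of sign changes, so maximality of $k$ gives exactly $k+1\leq n+1$ blocks and hence $\deg Q\leq n$. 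The compactness/IVT justification for the perturbation estimate is fine.

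One cosmetic point: in the $E=0$ case you say ``any $n+2$ points work trivially,'' which is true since $f-P\equiv 0$ satisfies the equioscillation condition vacuously with value $0$; but note also that $P$ is then the \emph{unique} element of $\R_n[X]$ achieving $\|f-P\|_\infty=0$, so optimality is immediate as well.
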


The proof of this theorem is not that hard but is quite long. In order not to overfill this paper, we advise the reader to read article \cite{equioscillation} for a complete and illustrated proof.

\begin{defi}[The Chebyshev constant]
    Let us consider some $n\in\N$ and $K$ a compact subset of $\C$, let us denote by $||\cdot||_K$ the uniform norm on $K$ and :
    $$t_n(K) = \displaystyle\inf_{P\in\C_{n-1} [X]} || X^n+P||_K.$$
    If $K$ contains an infinite number of points, there exists a unique monic polynomial $T_n\in\R_n[X]$, called Chebyshev polynomial such that $t_n(K)=||T_n||_K$. \\
    
    Let us define the Chebyshev constant as follows :
    $$\cheb(K)=\lim_{n\rightarrow+\infty} t_n(K)^{\frac 1 n}.$$
    \end{defi}
    
\begin{proof}
    \begin{itemize}
        \item If $K$ is an infinite set, $||\cdot||_K$ is a norm on $\C[X]$. The existence of $T_n$ follows from the fact that the distance to a closed vector subspace is reached. The uniqueness follows from the reciprocal implication of the equioscillation theorem (\ref{thm-equioscillation}).

        \item Let us consider $n,m\in \N$. Since $T_nT_m$ is a monic polynomial of degree $n+m$, we have \[||T_{n+m}||_K=\displaystyle\inf_{\substack{P\in \C_{n+m}[X] \\ \text{monic}}} ||P||_K \leqslant ||T_nT_m||_K \leqslant ||T_n||_K||T_m||_K\] 
        Therefore $t_{n+m}(K)\leqslant t_n(K)t_m(K)$ and $(\log(t_n(K)))_{n\in \N}$ is sub-additive. It follows from the sub-additivity lemma that $ (\frac 1 n \log(t_n(K)))_{n\in \N}$ converges, which proves that $\cheb (K)$ is well defined.
    \end{itemize}
\end{proof}

The Chebyshev constant is equivalent to the logarithmic capacity and the transfinite diameter :

\begin{thm}\label{chebEgalCapaEgalTau}
    Let us consider $K$ a compact subset of $\C$, then 
    \[\mathrm{\tau}(K)=\capa (K) = \cheb (K).\]
    Let $\mathcal{F}_n=\{z_1^{(n)},\dots,z_n^{(n)}\}$ be a set of $n$ Fekete points and $F_n(X)=\displaystyle\prod_{i=1}^n (X-z_n^{(i)})$ the associated Fekete polynomial. We have
    \[\lim_{n\rightarrow + \infty} ||F_n||_K^{1 / n}=\cheb (K).\]
    Moreover, if $\capa(K)>0$, noting $\mu_K$ the equilibrium measure of $K$, we have 
    \[\forall z\in\C\backslash K, \lim_{n\to+\infty}F_n(z)^{\frac 1n} = \exp(-U^{\mu_K}(z))\] The convergence is uniform on all compact subsets of $\C\backslash K$.

\end{thm}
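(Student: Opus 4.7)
The plan is to show the two inequalities $\cheb(K)\leq \tau(K)$ and $\tau(K)\leq \cheb(K)$ (the equality $\tau(K)=\capa(K)$ being Thm.~\ref{thmTauEgalCapa}), and then to use the equidistribution $\nu_n\cvf\mu_K$ already proved there to deduce the asymptotic behaviour of the Fekete polynomials.

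\textbf{Upper bound $\cheb(K)\leq\tau(K)$.} I would estimate $\|F_n\|_K$ via the extremality of Fekete points: for any $z\in K$, the $(n+1)$-tuple $(z,z_1^{(n)},\dots,z_n^{(n)})$ is a candidate in the definition of $\delta_{n+1}(K)$, which gives
\[
\delta_{n+1}(K)^{n(n+1)/2}\;\geq\;\bigl|F_n(z)\bigr|\cdot\delta_n(K)^{n(n-1)/2}.
\]
Hence $\|F_n\|_K^{1/n}\leq \delta_{n+1}(K)^{(n+1)/2}\,\delta_n(K)^{-(n-1)/2}$. Writing this as $\delta_{n+1}(K)\cdot(\delta_{n+1}(K)/\delta_n(K))^{(n-1)/2}$ and using that $(\delta_n(K))$ is decreasing with limit $\tau(K)$, the second factor is $\leq 1$ and the first tends to $\tau(K)$, so $\limsup_n\|F_n\|_K^{1/n}\leq\tau(K)$. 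Since $t_n(K)\leq\|F_n\|_K$ by definition of $t_n(K)$, this yields $\cheb(K)\leq\tau(K)$.

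\textbf{Lower bound $\tau(K)\leq\cheb(K)$.} For each $i\in\{1,\dots,n\}$, the monic polynomial $P_i(z)=\prod_{j\neq i}(z-z_j^{(n)})$ has degree $n-1$, so $\|P_i\|_K\geq t_{n-1}(K)$. By the Fekete extremality applied to the configuration obtained by replacing $z_i^{(n)}$ by an arbitrary $z\in K$, one gets $|P_i(z)|\leq|P_i(z_i^{(n)})|$, hence $\|P_i\|_K=\prod_{j\neq i}|z_i^{(n)}-z_j^{(n)}|$. Multiplying over $i$ gives
\[
\prod_{i=1}^{n}\|P_i\|_K=\prod_{i\neq j}|z_i^{(n)}-z_j^{(n)}|=\delta_n(K)^{\,n(n-1)},
\]
so $\delta_n(K)^{n-1}\geq t_{n-1}(K)$. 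Taking $n$-th roots and passing to the limit yields $\tau(K)\geq\cheb(K)$. Combined with the upper bound, $\tau(K)=\cheb(K)$ and moreover $\|F_n\|_K^{1/n}\to\cheb(K)$, since it is squeezed between $t_n(K)^{1/n}$ and the bound from Step~1.

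\textbf{Pointwise convergence off $K$.} Letting $\nu_n=\tfrac{1}{n}\sum_{i}\delta_{z_i^{(n)}}$ be the counting measure on the Fekete points, one has
\[
-\tfrac{1}{n}\log|F_n(z)|=\int_K\log\tfrac{1}{|z-t|}\,\dif\nu_n(t)=U^{\nu_n}(z).
\]
For $z\in\C\setminus K$ fixed, $t\mapsto\log\tfrac{1}{|z-t|}$ is continuous on $K$, so the weak-$*$ convergence $\nu_n\cvf\mu_K$ (Thm.~\ref{thmTauEgalCapa}, valid under the assumption $\capa(K)>0$) gives $U^{\nu_n}(z)\to U^{\mu_K}(z)$, equivalently $|F_n(z)|^{1/n}\to\exp(-U^{\mu_K}(z))$.

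\textbf{Uniform convergence on compact subsets of $\C\setminus K$.} Let $C\subset\C\setminus K$ be compact. Since $\operatorname{dist}(C,K)>0$, the function $(z,t)\mapsto\log\tfrac{1}{|z-t|}$ is continuous, hence uniformly continuous, on the compact set $C\times K$. Thus the family $\{f_z:t\mapsto\log\tfrac{1}{|z-t|}\}_{z\in C}$ is equicontinuous and uniformly bounded on $K$. A standard argument (extract a subsequence realizing the oscillation; by compactness of $C$ extract a convergent $z_{n_k}\to z_*$; equicontinuity reduces to a single fixed integrand, for which weak-$*$ convergence applies) shows $\int f_z\,\dif\nu_n\to\int f_z\,\dif\mu_K$ uniformly in $z\in C$, giving the claimed uniform convergence. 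I expect the main obstacle to be precisely this uniformity step — the pointwise argument is trivial but upgrading to uniform convergence requires the equicontinuity trick above; fortunately the distance to $K$ being positive on a compact subset saves the day.
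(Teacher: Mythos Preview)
Your Step~2 does not prove what its header claims. From $\prod_i\|P_i\|_K=\delta_n(K)^{n(n-1)}$ and $\|P_i\|_K\geq t_{n-1}(K)$ you correctly derive $\delta_n(K)^{n-1}\geq t_{n-1}(K)$, but this gives $\tau(K)\geq\cheb(K)$, which is the \emph{same} inequality as Step~1 ($\cheb(K)\leq\tau(K)$). You have proved one direction twice and the other direction not at all; the sentence ``Combined with the upper bound, $\tau(K)=\cheb(K)$'' is therefore unjustified.

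The missing inequality $\cheb(K)\geq\tau(K)$ is genuinely the harder one, and the paper obtains it via potential theory. One writes
\[
\frac{1}{n}\log\frac{1}{t_n(K)}=\inf_{z\in K}U^{\nu(T_n)}(z)\leq\int_K U^{\nu(T_n)}\dif\mu_K=\int U^{\mu_K}\dif\nu(T_n)\leq V_K,
\]
where the last step is Frostman's theorem (Thm.~\ref{thmFrostman}); hence $t_n(K)^{1/n}\geq e^{-V_K}=\capa(K)=\tau(K)$. There is no way to extract this direction from Fekete extremality alone, since any comparison of Fekete configurations with other configurations produces inequalities going the other way. Your remaining steps (pointwise limit via $\nu_n\cvf\mu_K$, and the uniform upgrade by equicontinuity on $C\times K$) are fine; the paper does the uniform part differently, invoking a Vitali-type normal-families argument, but your equicontinuity route is equally valid once the equality $\tau(K)=\cheb(K)$ is in hand.
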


\begin{proof}
     We argue using theorem \ref{thmTauEgalCapa}.
    \begin{itemize}
        \item Let us consider $n\in\N$, $z\in \C$ and $\mathcal{F}_n=\{z_1^{(n)},\dots,z_n^{(n)}\}$ a set of $n$ Fekete points. \\
        Let us consider $\{y_1,\dots,y_{n+1}\}=\{z_1^{(n)},\dots,z_n^{(n)},z\}$, then
        \[\delta_{n+1}(K)^{n(n+1)/2} = \max_{\{y_1,\dots,y_{n+1}\}\subset K} \prod_{1\leqslant i< j \leqslant n+1} |y_i-y_j| \geqslant \prod_{i=1}^n |z-z_i^{(n)}| \prod_{1\leqslant i< j \leqslant n} |z_i^{(n)}-z_j^{(n)}| \]
         Let $F_n=\prod_{i=1}^n(X-z^{(n)}_i)$. Since $\mathcal{F}_n$ is a set of Fekete points : 
         \[\delta_{n+1}(K)^{n(n+1)/2}\geqslant |F_n(z)| \delta_n(K)^{(n-1)n/2}.\]
         By considering $\sup_{z\in K}$, we obtain $||F_n||_K\leqslant \left(\frac{\delta_{n+1}(K)}{\delta_n(K)}\right)^{(n-1)n/2}\delta_{n+1}(
         K)^n$. However, the $\delta_n(K)$ are decreasing, therefore $||F_n||_K^{1/n}\leqslant \delta_{n+1}(K)$. Moreover by definition, $t_n(K)\leqslant ||F_n||_K^{1/n}\leqslant \delta_{n+1}(K)$. Finally, by considering the limit, we have
         \[\tau(K)\geqslant \limsup_{n\rightarrow \infty} ||F_n||_K^{1/n} \geqslant \liminf_{n\rightarrow \infty} ||F_n||_K^{1/n} \geqslant \cheb (K).\]
         
         \item To conclude, it is enough to show that $\cheb (K) \geqslant \mathrm{\tau (K)}$. If $\tau (K)=0$, it is trivial, otherwise $K$ contains an infinite number of points and we can consider $T_n=\displaystyle\prod_{i=1}^n (X-x_i)$, the $n$-th Chebyshev polynomial with respect to $K$ and  $\nu(T_n)$ the counting measure with respect to $x_1,\dots,x_n$. 
         \\Since $x\in\R_+^*\mapsto\log\frac 1 x$ is a decreasing function, 
         \[\frac 1 n \log\frac 1 {t_n(K)} = \inf_{z\in K} \frac 1 n \log\frac 1 {|T_n(z)|} = \inf_{z\in K} \frac 1 n \sum_{i=1}^n \log\frac 1 {|z-x_i|} = \inf_{z\in K} U^{\nu(T_n)}(z).\]
         It follows from prop \ref{propIntegralSciPositive} that:
         \[\inf_{z\in K} U^{\nu(T_n)}(z)=\mu_K(\inf_{z\in K} U^{\nu(T_n)}(z))\leqslant \mu_K( U^{\nu(T_n)}).\]
         Then,
         \[\mu_K( U^{\nu(T_n)})= \mu_K(\frac 1 n \sum_{i=1}^n \log\frac 1 {|z-x_i|})=\frac 1 n \sum_{i=1}^n \mu_K(\log\frac 1 {|z-x_i|}) = \nu(T_n)(U^{\mu_K})\]
         Frostman's theorem \ref{thmFrostman} gives us that  
         \[\frac 1 n \log\frac 1 {t_n(K)}\leqslant \nu(T_n)(U^{\mu_K})\leqslant V_K.\]
         Finally, passing to the limit $n\rightarrow +\infty$ and composing by $x\mapsto e^{-x}$, we obtain :
         \[ \cheb (K)\geqslant\capa (K) = \tau (K),\]
         which concludes the first claim of the theorem.
         \item Let $f_n(z)=\log(|F_n(z)|^{1/n}) = \frac{1}{n}\sum_{k=1}^n \log|z-t_k|$ where $t_1,\dots,t_n$ are the roots of $F_n$. Let $L$ be a compact subset of $\C - K$, then :
         \[\forall t\in K, \forall z \in L, \log d(L,K)\leqslant \log|z-t|  \leqslant \log \sup_{z'\in L, t'\in K} |z'-t'|.\]
         However, by compactness of $L$ and $K$, $d(L,K)\geqslant 0$ and $\sup_{z'\in L, t'\in K} |z'-t'| < \infty$. \newline
         Moreover, we have
         \begin{align*}
         \forall n\in \N, \forall z\in L , \,|f_n(z)| &\leqslant  \frac{1}{n}\sum_{k=1}^n \left|\log\left(\frac 1 {d(L,K)}+ \sup_{z'\in L, t'\in K} |z'-t'|\right)\right| \\ 
         &\leqslant \left|\log\left(\frac 1 {d(L,K)}+ \sup_{z'\in L, t'\in K} |z'-t'|\right)\right|<\infty.
         \end{align*}
         therefore the $f_n$ are uniformly bounded on $L$.
         \medskip
         Therefore, $f_n$ is a sequence of holomorphic functions, uniformly bounded on all compact sets, which converges pointwise on $\C \backslash K$. It follows from Vitali theorem that the convergence is uniform on all compact sets.
         
    \end{itemize}
\end{proof}

This new definition of capacity allows us to calculate the capacity of a segment in a simple manner.

\begin{ex}[Capacity of a segment]\label{capasegment}
The capacity of a segment with length $2l$ is equal to $\frac{l}{2}$.
\end{ex}

\begin{proof}
\begin{itemize}

\item[$\bullet$] Let us first prove the claim when the segment can be written as  $[-l,l]$.\\
Let us denote by $\tilde{T}_{n}$ the unique polynomial verifying:
\[ \forall \theta \in \mathbb{R}, \tilde{T}_n(l\cos(\theta))=lcos(n\theta) \] 
The uniform norm of $\tilde{T}_n$ is obviously equal to $l$. And like in the well-known case $[-1,1]$, we show easily that our polynomial reaches alternatively  $\pm l $ at $n+1$ distinct points. (1) \\
Using De Moivre's formula, we show that the leading coefficient of $\tilde{T}_n$ is equal to $c:=(\frac{2}{l})^{n-1}$. \\
Minimizing $\Vert P \Vert_{[-l,l]}$, for $P\in \C_n[X]$ a monic polynomial, is equivalent to minimizing $\Vert  X^n - Q \Vert_{[-l,l]}$ for $Q$ living in $\mathbb{C}_{n-1}[X]$. In other words, we are trying to calculate the distance of $X^n$ to $\mathbb{C}_{n-1}[X]$. Let us consider $Q^{\star}:=X^n-\frac{1}{c}\tilde{T}_n(X)$. We obviously have that $Q^{\star} \in \mathbb{C}_{n-1}[X]$. Moreover, $X^n-Q^{\star}=\frac{1}{c}\tilde{T}_n(X)$, which \textit{equioscillates} at $n+1$ points according to (1). Therefore, it follows from lemma 2.1 that $Q^{\star}$ minimizes the distance of $X^n$ from polynomials of $\mathbb{C}_{n-1}[X]$. Therefore $\frac 1 c \tilde{T}_n$ is the Chebyshev polynomial with respect to $[-l,l]$.\\
Hence $t_n([-l,l])=\Vert \frac{1}{c} \tilde{T}_n(X) \Vert = \frac{l}{c}= l (\frac{l}{2})^{n-1}$.\\
Therefore, $\cheb ([-l,l])=\lim\limits_{n\rightarrow \infty} t_n([-l,l])^{\frac{1}{n}}=\frac{l}{2}$.

\vspace{1em}
\item[$\bullet$] In the general case of a segment of the form $[a,b]$, with length $2l$, it is enough to consider the following polynomial function: $\tilde{T}_n(X-(a+l))$ which, on $[a,b]$, has the same behavior as $\tilde{T}_n$ on $[-l,l]$. Therefore, the capacity is not changed by the translation. Hence $\cheb ([a,b])=\frac{l}{2}$.
\end{itemize}
\end{proof}
\vspace{2em}

\paragraph{}
Let us now calculate the capacity of the union of two segments, which is symmetric with respect to the origin $0$.
\begin{ex}
    Let us consider $0\leqslant b<a$. Then $\cheb ([-a,-b]\cup[b,a])=\frac{\sqrt{a^2-b^2}} 2$.
\end{ex}

\begin{proof}
Let us consider $F$ an infinite compact subset of $\C$ and denote by $T_{n}^{F}$ the Chebyshev polynomial of degree $n$ with respect to $F$. Let us consider $E = [-a,-b]\cup[b,a]$.
    \begin{itemize}
        \item Let us consider $ k\in \N$. Let us show that $T_{2k}^{E}$ is even.
        \[\left\Vert \frac{T_{2k}^{E}(X)+T_{2k}^{E}(-X)}{2} \right\Vert_E\leqslant \frac 1 2 (\left\Vert T_{2k}^{E}(X) \right\Vert_E+\left\Vert T_{2k}^{E}(-X) \right\Vert_E)\]
        But $E$ is symmetric with respect to the origin, therefore 
        \[\left\Vert T_{2k}^{E}(-X) \right\Vert_E = \sup_{z\in E} \left|T_{2k}^{E}(-z)\right|=\sup_{z\in E} \left|T_{2k}^{E}(z)\right|=\left\Vert T_{2k}^{E}(-X) \right\Vert_E.\]
        Thus $\left\Vert \frac{T_{2k}^{E}(X)+T_{2k}^{E}(-X)}{2} \right\Vert_E \leqslant \left\Vert T_{2k}^{E} \right\Vert_E$ and by the uniqueness of the Chebyshev polynomial of degree $2k$, $ T_{2k}^{E}= \frac{T_{2k}^{E}(X)+T_{2k}^{E}(-X)}{2}$. 
        Therefore $T_{2k}^{E}$ is even.
        \item We deduce from it that $T_{2k}^{E}$ can be written as follows: $q(X^2)$ with $q\in \mathcal{P}_k$, the set of monic polynomials of degree $k$. As a result :
        \begin{align*}
            t_{2k} (E) &= \inf_{q\in \mathcal{P}_k} \sup_{x\in E} |q(x^2)| \\
            &= \inf_{q\in \mathcal{P}_k} \sup_{x^2\in [b^2,a^2]} |q(x^2)|\\
            &= \inf_{q\in \mathcal{P}_k} \sup_{y\in [b^2,a^2]} |q(y)|.
        \end{align*} 
        Therefore $T_{2k}^{E} = T_k^{[b^2,a^2]}(X^2)$. But in the example \ref{capasegment} 
        we saw that $(\frac 2 {l})^{k-1} T_k^{[-l,l]}(l\cos(\theta))=l\cos(k\theta)$. 
        Therefore, by considering $x=l\cos(\theta)$, we have :$$T_k^{[-l,l]}(x) = \frac{l^k}{2^{k-1}}\cos\left(k\arccos\left(\frac x l\right)\right).$$ Then by translation we obtain :
        \[T_k^{[b^2,a^2]}(x) = \frac{(a^2-b^2)^k}{2^{2k-1}} \cos \left(k\arccos \left(2\frac {x-b^2}{a^2-b^2}-1\right)\right).\]
        And finally $T_{2k}^{E}(x) = \frac{(a^2-b^2)^k}{2^{2k-1}} \cos \left(k\arccos \left(2\frac {x^2-b^2}{a^2-b^2}-1\right)\right)$.
        \item $t_{2k}(E)=\left\Vert T_{2k}^{E}\right\Vert_E  = 2 \left(\frac{\sqrt{a^2-b^2}}{2}\right)^{2k}$. Hence 
        \[\cheb (E) = \lim_{k\rightarrow\infty }t_{2k}(E)^{\frac 1 {2k}}=\lim_{k\rightarrow\infty } \left(2 \left(\frac{\sqrt{a^2-b^2}}{2}\right)^{2k}\right)^{\frac 1 {2k}} = \frac{\sqrt{a^2-b^2}}{2} .\]
    \end{itemize}
\end{proof}

\clearpage

\section{Fekete's theorem. Fekete-Szegö's theorem}\label{sectionthFEk}

\paragraph{}
Now that we have introduced the notion of capacity and showed the equivalence between the three definitions (transfinite diameter/potential/Chebyshev polynomials), let us now focus on how this notion gives us information on the fact that there is (or not) an infinite number of algebraic integers totally in a given compact set. The goal of this section is to prove Fekete's theorem (Thm. \ref{thmFekete}) and Fekete-Szegö's theorem (Thm. \ref{thmFeketeSzego}) : the former states that if the capacity of a compact set is strictly smaller than 1, there is a finite number of algebraic integers totally in it; the latter states that if the capacity of the compact set is greater than or equal to 1, we can find an infinite number of algebraic integers totally in any neighborhood (with respect to $\C$) of the compact set.

\medskip

\subsection{Fekete's theorem}

This section is dedicated to the proof of Fekete's theorem. 

\begin{thm}[Fekete]\label{thmFekete}
    Let $K\subset\C$ be a compact set with $\capa(K)<1$. Then there exists an open neighborhood $U$ of $K$ such that the set of the algebraic integers totally in $U$ is finite. In particular, there is a finite number of algebraic integers totally in $K$.
\end{thm}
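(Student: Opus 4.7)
The plan is to bound the degree of any algebraic integer totally in a suitable neighborhood of $K$, and then invoke Remark \ref{rmqBornes} to conclude that there are only finitely many such integers.

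First, I would construct the neighborhood $U$ using the continuity of capacity for decreasing sequences of compacts. Setting $K^\epsilon = \{z\in\C:\dist(z,K)\leq\epsilon\}$, we have $K^\epsilon \downarrow K$ as $\epsilon \to 0$, so by Prop.~\ref{propCapa}(d), $\capa(K^\epsilon)\to\capa(K)<1$. Fix $\epsilon_0>0$ small enough that $\capa(K^{\epsilon_0})<1$, and let $U$ be the interior of $K^{\epsilon_0}$; then $\overline{U}\subset K^{\epsilon_0}$, hence $\capa(\overline{U})<1$.

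Next comes the arithmetic input, which is really the heart of the argument. Let $\alpha$ be an algebraic integer totally in $U$, with minimal polynomial $P\in\Z[X]$ of degree $d$ and conjugates $\alpha_1,\dots,\alpha_d\in\overline{U}$. Since $P$ is irreducible over $\Q$ (and we are in characteristic zero), $P$ is separable, so its discriminant
\[
\disc(P)=\prod_{i<j}(\alpha_i-\alpha_j)^2
\]
is a nonzero rational integer, giving the key lower bound
\[
\prod_{i<j}|\alpha_i-\alpha_j|\;\geq\;1.
\]
On the other hand, since $(\alpha_1,\dots,\alpha_d)$ is a $d$-tuple in $\overline{U}$, the definition of the transfinite diameter gives
\[
\prod_{i<j}|\alpha_i-\alpha_j|\;\leq\;\delta_d(\overline{U})^{d(d-1)/2}.
\]
Combining the two inequalities, we get $\delta_d(\overline{U})\geq 1$ for every such $\alpha$ of degree $d\geq 2$.

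Now by Theorem~\ref{thmTauEgalCapa}, $\delta_d(\overline{U})\to\tau(\overline{U})=\capa(\overline{U})<1$, so there exists $D\in\N$ such that $\delta_d(\overline{U})<1$ for all $d\geq D$. Hence no algebraic integer totally in $U$ can have degree $\geq D$; in other words, the degree of any such algebraic integer is bounded by some $D-1$. By Remark~\ref{rmqBornes}, at each fixed degree only finitely many algebraic integers are totally in the compact set $\overline{U}$, so the total count is finite; a fortiori the same holds for $K\subset U$. The main (and really only) conceptual step is recognizing that the arithmetic constraint ``$\disc(P)$ is a nonzero integer'' pairs perfectly with the geometric definition of transfinite diameter; once this is in hand, the proof is a two-line comparison followed by the capacity continuity argument to pass from $K$ to an open neighborhood.
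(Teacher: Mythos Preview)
Your proof is correct and follows essentially the same approach as the paper's: both use that the discriminant of a monic integer polynomial is a nonzero integer to force $\delta_d\geq 1$, then contradict $\tau(K)=\capa(K)<1$. The only cosmetic difference is that the paper argues by contradiction on an infinite sequence of increasing degrees, whereas you directly bound the degree and invoke Remark~\ref{rmqBornes}.
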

    The proof mainly comes from the fact that $\tau(K)=\capa(K)$.
\begin{proof}
    It is enough to prove the last claim of the theorem; in fact, since $\capa(K)<1$, we have $\capa(K^\epsilon)<1$ for some $\epsilon>0$ sufficiently small, where $K^\epsilon=\{z\in\C:\dist(z,K)\leq\epsilon\}$; for such a $\epsilon$, $U=\overset{\circ}{K^\epsilon}$ verifies the assumption of the theorem. Assume that $E_K$, the set of algebraic integers totally in $K$, is infinite and let us argue by contradiction.
    \par\medskip Let us start with the case where the degrees of all the minimal polynomials $p$ of $\zeta\in E_K$ are bounded by an integer $N>0$. Then the coefficient before $X^{\deg(p)-i}$ is bounded by
    \[\binom{\deg(p)}{i}\|K\|^i_\infty\leq\binom{N}{i}\|K\|^i_\infty,\quad\textrm{where }\|K\|_\infty=\max_{z\in K}\abs{z}.\]
    There is only a finite number of such minimal polynomials (we remind that the coefficients are integer), hence $\#E_K<\infty$, which contradicts our initial assumption. 
    
    \par\medskip Otherwise, there exists a sequence $\zeta_n\in E_K$ such that the sequence $d_n=\deg(p_n)$ is strictly increasing, where $p_n\in\Z[X]$ is the (monic) minimal polynomial of $\zeta_n$.

    Let us remind ourselves of the notion of $n$-distance (Definition~\ref{defi-PointsdeFekete}) : 
    \[\delta_n(K)=\max_{z_1,\dots,z_n\in K} \prod_{i < j}\abs{z_i-z_j}^{2/n(n-1)}\]
    And the capacity as the transfinite diameter : 
    \[\tau(K) = \lim_{n\to+\infty}\delta_n(K)\]
   
    Let us notice that
    \[\delta_{d_n}(K)^{d_n(d_n-1)}\geq\abs{\prod_{\xi\neq\eta\in\gal(\zeta_n)}(\xi-\eta)}=\abs{\prod_{\xi\in\gal(\zeta_n)}p_n'(\xi)}\geqslant1\]
    
    The equality in the middle results from the fact that $p_n$ only has simple roots. Moreover, as a symmetric combination with integer coefficients of the $\xi$ (the roots of $p_n$), $\prod_{\xi\in\gal(\zeta_n)}p_n'(\xi)$ is an integer, according to remark \ref{rmqCombineSymmEntier}, non-zero since the roots are simple, therefore greater than 1. 
    
    Therefore $\delta_{d_n}(K)\geq1$, hence $\tau(K)\geq 1$ by passing to the limit $n\to+\infty$, which leads to a contradiction because $\tau(K)=\capa(K)<1$.
\end{proof}

\subsection{Fekete-Szegö's theorem}
\paragraph{}
This section is dedicated to the proof of Fekete-Szegö's theorem \ref{thmFeketeSzego}. Let us first define a set which plays a major part in the proof, the \emph{Hilbert lemniscate} : 

\begin{defi}
 Let $p\in\C[X]$ be a \emph{monic} polynomial of degree $d>0$ and $\rho\in\R_{+}^{\star}$ a real constant. The \emph{lemniscate of polynomial $p$ and constant $\rho$} is the following set
    \[L=L_{p,\rho}=p^{-1}\big(\overline{B}(0,\rho^d)\big)=\{z\in\C | \abs{p(z)}\leq\rho^d\}.\]
\end{defi}

\begin{rmq}
     According to corollary \ref{corCapaciteImageRecipPolynome}, we have $\capa(L_{p,\rho})=\capa\big(\overline{B}(0,\rho^d)\big)^{1/d}$, which is equal to $\rho$ thanks to corollary \ref{capaDisqueUnite} and theorem \ref{thmTauEgalCapa}.
\end{rmq}

The idea of the proof of Fekete-Szegö's theorem is to find an infinite number of algebraic integers totally in a set. Hilbert lemniscates provide us with examples of such sets : 

\begin{prop}\label{prop-lemniscateContientEAT}
Let $P\in\Z[X]$ be a monic polynomial. The lemniscate $L_{P,1}=\{z\in\C\  | \ \abs{P(z)}\leqslant 1\}$ contains an infinite number of algebraic integers totally in it.
\end{prop}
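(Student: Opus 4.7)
My plan is to exhibit a simple explicit family of monic integer polynomials whose roots are forced, by construction, to lie in $L_{P,1}$, and then to show that this family supplies infinitely many algebraic integers. The natural candidate, dictated by the defining inequality $\abs{P(z)}\leqslant 1$, is
\[Q_n(X)=P(X)^n-1\in\Z[X],\qquad n\geq 1.\]
Writing $d=\deg P\geq 1$, each $Q_n$ is monic of degree $dn$, so every root of $Q_n$ is an algebraic integer. Moreover, if $Q_n(\alpha)=0$ then $P(\alpha)^n=1$, whence $\abs{P(\alpha)}=1$ and $\alpha\in L_{P,1}$.

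The ``totally in'' part of the statement then comes almost for free. If $\alpha$ is any root of $Q_n$ and $m_\alpha\in\Z[X]$ denotes its minimal polynomial, then $m_\alpha$ is monic and divides $Q_n$ in $\Q[X]$, hence in $\Z[X]$ by Gauss's lemma. Consequently every $\Q$-conjugate of $\alpha$ is itself a root of $Q_n$, and in particular belongs to $L_{P,1}$. Thus each root of each $Q_n$ is an algebraic integer totally in $L_{P,1}$.

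The step that deserves the most care, and is really the only possible obstacle, is checking that the set $E:=\bigcup_{n\geq 1}\{\alpha\in\C:Q_n(\alpha)=0\}$ is truly infinite, rather than a bounded set of algebraic integers reappearing as roots of the various $Q_n$. I would argue as follows: the map $\alpha\mapsto P(\alpha)$ sends $E$ onto $\bigcup_{n\geq 1}\U_n$, the set of all roots of unity, because for every root of unity $\zeta$ the equation $P(X)=\zeta$ admits exactly $d$ complex solutions by the fundamental theorem of algebra. The fibres of $P$ over distinct values of $\zeta$ are pairwise disjoint, so $E$ contains at least one preimage for each root of unity; since there are infinitely many roots of unity, $E$ is infinite, and the proposition follows.
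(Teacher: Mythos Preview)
Your proof is correct and follows essentially the same approach as the paper: both consider the set of roots of the polynomials $P(X)^n-1$ and argue that these are algebraic integers totally in $L_{P,1}$, and that there are infinitely many of them because $P$ maps this set onto the (infinite) set of roots of unity. Your write-up is somewhat more detailed (invoking Gauss's lemma for the divisibility of the minimal polynomial, and spelling out the fibre argument), but the idea is identical.
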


\begin{proof}
Let us consider $S=\bigcup_{n\in\N^*}\{z\in\C\ |\ P(z)^n=1\}$. The function $
   \left \{
   \begin{array}{r c l}
      S  & \rightarrow & \U \\
      z  & \mapsto & P(z) \\
   \end{array}
   \right .
$
is well defined, surjective since $\C$ is algebraically closed. Therefore $S$ is infinite. Moreover, the elements of $S$ are algebraic integers totally in $S$ (since they are the roots of $P^n-1\in\Z[X]$ monic, for some $n$). We conclude by noticing that $S\subset L_{P,1}$.

\end{proof}

The main step of the proof is the following theorem which states that, under some assumptions, we can always find a Hilbert lemniscate in any neighborhood of the compact set.

\begin{thm}[Hilbert's lemniscate]\label{thmLemniscHilbert}
    Let $K\subset\C$ be a compact set with $\capa(K)>0$ and $U\supset K$ an open neighborhood of $K$ such that $\C\backslash U$ is connected. Then there exists a monic polynomial $p\in\C[X]$ of degree $d>0$ and a constant $\rho>\capa(K)$ such that
    \[K\subset L_{p,\rho}\subset U.\]
\end{thm}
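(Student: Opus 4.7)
The strategy is to take $p$ to be a Fekete polynomial $F_n$ of $K$ for a suitably large $n$. Theorem~\ref{chebEgalCapaEgalTau} provides the two key ingredients: $\|F_n\|_K^{1/n} \to \capa(K)$ and $|F_n|^{1/n} \to \exp(-U^{\mu_K})$ uniformly on every compact subset of $\C \setminus K$. The plan is to pick $\rho$ strictly between $\capa(K)$ and the infimum of $\exp(-U^{\mu_K})$ on $\C \setminus U$, so that these convergences force $K \subset L_{F_n,\rho}$ while $L_{F_n,\rho}$ avoids $\C \setminus U$.

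The first step is topological: the connectedness hypothesis yields $\C \setminus U \subset \Omega_K$, where $\Omega_K$ is the unbounded connected component of $\C \setminus K$. Indeed $\C \setminus U$ is a closed connected subset of $\C \setminus K$; viewed in the Riemann sphere $\hat{\C}$, the set $\C \setminus U$ together with $\infty$ is still connected and contains $\infty$, so it lies in the component of $\hat{\C} \setminus K$ containing $\infty$, namely $\Omega_K \cup \{\infty\}$. On $\Omega_K$ the equilibrium potential $U^{\mu_K}$ is harmonic, hence continuous, and by Frostman's theorem satisfies $U^{\mu_K}(z) < V_K$ pointwise. Moreover by Theorem~\ref{thmPotent&Green}, $U^{\mu_K}(z) = V_K - g_K(z,\infty) = -\log|z| + o(1)$ as $|z| \to \infty$, so $\exp(-U^{\mu_K}(z)) \to +\infty$ at infinity. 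It follows that
\[\beta := \inf_{z \in \C \setminus U} \exp(-U^{\mu_K}(z))\]
is attained on a bounded subset of $\C \setminus U$, and the strict inequality in Frostman combined with continuity gives $\beta > \capa(K)$.

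Fix $\rho \in (\capa(K), \beta)$, let $R_K := \max_{w \in K} |w|$, and choose $R > \rho + R_K$. For $n$ sufficiently large, three estimates then combine:
\begin{itemize}
    \item On $K$: since $\|F_n\|_K^{1/n} \to \capa(K) < \rho$, eventually $\|F_n\|_K \leq \rho^n$, hence $K \subset L_{F_n,\rho}$.
    \item On the compact set $C := (\C \setminus U) \cap \overline{B(0,R)} \subset \Omega_K$: uniform convergence of $|F_n|^{1/n}$ to $\exp(-U^{\mu_K}) \geq \beta > \rho$ on $C$ yields $|F_n(z)|^{1/n} > \rho$ for every $z \in C$ once $n$ is large.
    \item On $(\C \setminus U) \cap \{|z| > R\}$: all the zeros $z_i^{(n)}$ of $F_n$ lie in $K \subset \overline{B(0,R_K)}$, so for $|z| > R$, $|z - z_i^{(n)}| \geq R - R_K > \rho$, whence $|F_n(z)|^{1/n} \geq R - R_K > \rho$ for every $n$.
\end{itemize}
Combining the last two estimates gives $|F_n(z)|^{1/n} > \rho$ throughout $\C \setminus U$, i.e., $L_{F_n,\rho} \cap (\C \setminus U) = \emptyset$, so $L_{F_n,\rho} \subset U$. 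Taking $p = F_n$ completes the proof.

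The main obstacle is the topological step placing $\C \setminus U$ inside $\Omega_K$: the connectedness hypothesis is exactly what prevents $\C \setminus U$ from meeting a bounded component of $\C \setminus K$, where by Corollary~\ref{corValeurInt(K)} one would have $U^{\mu_K} \equiv V_K$ on the interior and no positive gap would be available. Once this reduction is secured, Frostman's strict inequality on $\Omega_K$ together with the two convergences of Theorem~\ref{chebEgalCapaEgalTau} do the rest; the trivial lower bound on the ``far'' part of $\C \setminus U$ handles the fact that $\C \setminus U$ may be unbounded, since the uniform convergence of $|F_n|^{1/n}$ is only guaranteed on compact sets.
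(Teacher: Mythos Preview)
Your proof is correct and follows essentially the same route as the paper's: Fekete polynomials, Frostman's strict inequality on $\Omega_K$, and the two convergences of Theorem~\ref{chebEgalCapaEgalTau}. The only difference is how the unbounded part of $\C\setminus U$ is controlled---the paper first shrinks $U$ to a bounded set and then invokes the maximum modulus principle for $1/F_n$ on $\C\setminus U$, while you split off the far region $\{|z|>R\}$ and bound $|F_n|$ there directly via $|z-z_i^{(n)}|\geq R-R_K>\rho$. One small caveat: your assertion that $(\C\setminus U)\cup\{\infty\}$ is connected in $\hat\C$ only holds when $\C\setminus U$ is unbounded; the paper's device of restricting $U$ to be bounded secures this automatically.
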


\begin{proof}
    This proof relies on theorem \ref{chebEgalCapaEgalTau} (Chebyshev constant) and theorem \ref{thmFrostman} (Frostman).
    
    Even if it means restricting $U$, we can assume that $U$ is bounded. Let us consider $R>0$ such that 
    \[K\subset U\subset B(0,R)\]
    It follows from Frostman's theorem that $U^{\mu_K}(z)<V_K$ for all $z\in \overline{B}(0,R)\backslash U$. Therefore there exists $\varepsilon>0$ such that
    \[\forall z\in \overline{B}(0,R)\backslash U, \  U^{\mu_K}(z)\leqslant V_K-\varepsilon\]
    This upper bound is uniform since $\overline{B}(0,R)\backslash U$ is compact.

    According to the last claim of theorem \ref{chebEgalCapaEgalTau}, $\log\frac{1}{F_n^{\frac{1}{n}}}$ uniformly converges to $U^{\mu_K}$ : there exists $n_0\in \N$ such that for all $n\geqslant n_0$, 
    
    \[\forall z\in \overline{B}(0,R)\backslash U, \ 
    \abs{
    \log
    \frac{1}{ \abs{F_n(z)}^{\frac 1n} }-U^{\mu_K}(z)}<\varepsilon/2
    \]
  
    which implies that 
    \[\forall z\in \overline{B}(0,R)\backslash U,   \log
    \frac{1}{ \abs{F_n(z)}^{\frac 1n}} < V_K -\varepsilon/2 \]
    
    in other words,
    
    \[\forall z\in \overline{B}(0,R)\backslash U,\ 
    \abs{F_n(z)}^{\frac 1n} > e^{-V_K+\varepsilon/2} = \capa(K) e^{\varepsilon/2}
    \]

    But $z\mapsto \frac{1}{ F_n(z)}$ is holomorphic on $\C \backslash U$, therefore it follows from the maximum modulus principle that the previous inequality holds for all $z\in \C\backslash U$.
    Let us consider $\rho :=  \capa(K) e^{\varepsilon/2} $, we then have $L_{F_n,\rho}\subset U$ for all $n\geqslant n_0$.
    \medskip
     On the other hand, according to theorem \ref{chebEgalCapaEgalTau}, there exists $n_1\in \N$ such that for all $n\geqslant n_1$,
     \[
     ||F_n||_K^{\frac 1n} < \capa(K)e^{\varepsilon/2} = \rho
     \]
     By considering $d =\max(n_0,n_1)$, we have $K\subset L_{F_d,\rho}\subset U$.
     
\end{proof}

Let us now prove Fekete-Szegö's theorem :

\begin{thm}[Fekete-Szeg\"o]\label{thmFeketeSzego}
    Let $K\subset\C$ be a compact set that is symmetric with respect to complex conjugation and such that $\capa(K)\geq1$. If $U$ is an open set containing $K$ such that $C\backslash U$ is connected, then $U$ contains an infinite number of algebraic integers totally in $U$.
\end{thm}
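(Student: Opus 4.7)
The plan is to construct a monic integer polynomial $P\in\Z[X]$ whose lemniscate $L_{P,1}$ is contained in $U$, and then conclude via Proposition \ref{prop-lemniscateContientEAT}, which yields infinitely many algebraic integers totally in $L_{P,1}\subset U$.

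The starting point is Hilbert's lemniscate theorem (Theorem \ref{thmLemniscHilbert}): since $\capa(K)\geq 1>0$ and $\C\setminus U$ is connected, it supplies a monic $p\in\C[X]$ of some degree $d$ and a constant $\rho>\capa(K)\geq 1$ with $K\subset L_{p,\rho}\subset U$. The crucial feature is that $\rho>1$ strictly, which provides the slack needed to absorb the error of an integer approximation (contrast this with the hypothesis of Fekete's theorem, where $\capa(K)<1$ has the opposite effect). To exploit the conjugation-symmetry of $K$, I would first arrange $p\in\R[X]$ by revisiting the proof of Theorem \ref{thmLemniscHilbert}: it builds $p$ as a Fekete polynomial $F_n$, and because $K=\bar K$ the Fekete energy is invariant under simultaneous conjugation of the $n$-tuple, so one may select a conjugation-symmetric configuration of Fekete points (taking $n$ even if $K\cap\R=\emptyset$), yielding $F_n\in\R[X]$ and a lemniscate $L_{p,\rho}$ that is itself symmetric under conjugation.

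The main technical step, and the principal obstacle of the proof, is to pass from such a real monic $p$ satisfying $K\subset L_{p,\rho}\subset U$ with $\rho>1$ to a monic $P\in\Z[X]$ with $L_{P,1}\subset U$. My approach is to fix some $\rho_1\in(1,\rho)$ so that $L_{p,\rho_1}\subset L_{p,\rho}\subset U$, take a large integer $N$, and seek $P\in\Z[X]$ monic of degree $dN$ such that $L_{P,1}\subset L_{p,\rho_1}$, i.e.\ such that $|p(z)|>\rho_1^d$ implies $|P(z)|>1$. The natural candidate is obtained by rounding each non-leading coefficient of $p^N\in\R[X]$ to the nearest integer, so that $R:=P-p^N$ has degree $<dN$ and coefficients in $[-1/2,1/2]$. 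The needed inequality $|P(z)|>1$ on $\{|p|>\rho_1^d\}$ is then approached through the maximum modulus principle applied to the meromorphic function $R/p^N$ on the unbounded component of $\{|p|>\rho_1^d\}\cup\{\infty\}$: this function is holomorphic there (the poles lie at the zeros of $p^N$, which sit inside $L_{p,\rho_1}$) and vanishes at infinity, so its supremum is controlled by its values on the compact boundary $\{|p|=\rho_1^d\}$, where $|p^N|=\rho_1^{dN}$ grows exponentially in $N$ while $R$ does not. The genuine difficulty is making this last estimate uniform: the boundary of $L_{p,\rho_1}$ may extend to points where $|z|$ is comparable to $\rho_1$, so the naive coefficient-wise bound on $R$ is tight, and a careful choice of the integer approximation of $p^N$ (and possibly an auxiliary scaling of $p$) is required to push the estimate through. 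Once $P$ is constructed, Proposition \ref{prop-lemniscateContientEAT} immediately yields infinitely many algebraic integers totally in $L_{P,1}\subset U$, completing the proof.
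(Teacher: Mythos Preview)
Your overall architecture matches the paper's: apply Hilbert's lemniscate theorem, massage the resulting polynomial into $\Z[X]$, then invoke Proposition~\ref{prop-lemniscateContientEAT}. Two points deserve comment.

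\textbf{Real coefficients.} Your claim that one can ``select a conjugation-symmetric configuration of Fekete points'' is not justified: invariance of the energy under conjugation tells you that the conjugate of a maximizer is again a maximizer, not that some maximizer is itself symmetric. The paper sidesteps this entirely: after symmetrizing $U$, it simply replaces the complex polynomial $p$ by $p\,\overline p\in\R[X]$ and observes that $L_{p\overline p,\rho}\subset U$ still holds. This is cleaner and requires no reopening of the lemniscate proof.

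\textbf{Integer coefficients.} This is where your proposal has a genuine gap, and you correctly flag it without resolving it. Rounding $p^N$ coefficientwise in the monomial basis gives $R$ of degree $dN-1$ with $|R(z)|$ bounded only by $\tfrac12\sum_{k<dN}|z|^k$. On $\partial L_{p,\rho_1}$ one has $|z|$ roughly of size $\rho_1$ (not strictly smaller), so $|R|$ is of order $\rho_1^{dN}$ and cannot be absorbed by $|p^N|=\rho_1^{dN}$; no amount of ``careful rounding'' in the monomial basis fixes this, because the basis itself is wrong. The paper's remedy has two ingredients you are missing. First, pass from $\R[X]$ to $\Q[X]$ by a small perturbation (density of $\Q$, continuity of roots), so that $p=X^d+\tfrac1n\gamma$ with $\gamma\in\Z[X]$ and a fixed denominator $n$. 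Second, and this is the key idea, for a suitably divisible exponent $\nu$ the top block of coefficients of $p^\nu$ is automatically integral, and the remaining low-degree part is rounded not in the monomial basis but in the basis $\{p^l(X)X^k:0\le k<d,\ 0\le l<\nu-\mu\}$ (Lemmas~\ref{lmmlmmFeketeSzego}--\ref{lmmFeketeSzego}). The error then takes the form $r=\sum_{l<\nu-\mu}p^l q_l$ with $\deg q_l<d$ and coefficients in $[0,1)$; on $\{|p|=\rho^d\}$ this gives $|r|/|p^\nu|\le M\sum_{l<\nu-\mu}\rho^{(l-\nu)d}$, a tail of a convergent geometric series with $M$ independent of $\nu$. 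That change of basis is precisely the missing idea that makes the estimate uniform.
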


\begin{proof}
   The idea is to find a known set included in $U$, which contains an infinite number of algebraic integers totally in it. According to proposition \ref{prop-lemniscateContientEAT}, the lemniscate of a monic polynomial with integer coefficients seems to be a good contender. The theorem \ref{thmLemniscHilbert} gives us a lemniscate $L_{p,\rho}$ included in $U$, but for a polynomial $P$ with complex coefficients (and $\rho>1$). We must now bring its coefficients to $\Z$.
   
   \medskip
   
   Even if it means considering $U\cup U^{\star}$ (where $U^\star$ is the set of the complex conjugates of the elements of $U$), we can assume that $U$ is symmetric with respect to the real axis. Under this assumption, we have 
   \[\forall z\in \C\backslash U,\  \abs{p(z)}>\rho^d>1 ,\]
   then
    \[\forall z\in \C\backslash U,\  \abs{p(z)\,\overline{p}(z)}>\rho^{2d}>1 ,\]
   Therefore $L_{p\,\overline{p},\rho}\subset U$ and $p\,\overline{p}\in\R[X]$. We keep on denoting $p$ this polynomial with real coefficients.
   
   \medskip
   
   Then assume that $U\subset \overline{B}(0,R)$, even if it means restricting the open set $U$. We have 
      \[\forall z\in \overline{B}(0,R) \backslash U,\  \abs{\frac{1}{p(z)}}<\rho^{-d}.\]
   It follows from the density of $\Q$ in $\R$ and the continuity of the roots (since $\abs{z}\leqslant R$) that we can choose a polynomial $q\in\Q[X]$ whose roots are still all in $U$ and such that
     \[\forall z\in \overline{B}(0,R) \backslash U,\  \abs{\frac{1}{q(z)}}<\rho^{-d}.\]
    And since $\frac 1q$ is holomorphic on $\C \backslash U$, the maximum modulus principle gives: 
 \[\forall z\in \C \backslash U,\  \abs{\frac{1}{q(z)}}<\rho^{-d},\]
   which implies that $L_{q,\rho}\subset U$.
   
   \medskip
   
   We now have to bring the coefficients of $q$ to $\Z$, which is the goal of the two following lemmas (\ref{lmmlmmFeketeSzego}, \ref{lmmFeketeSzego}). 
\end{proof}

\begin{lmm}\label{lmmlmmFeketeSzego}
    Let us consider $p\in\Q[X]$ of degree $d\geq1$ and $n\in\N^*$ such that \[p(X)=X^d+\frac{1}{n}\gamma(x),\quad\gamma(X)\in\Z[X].\]
    Let us consider $\mu\in\N^*$, $\sigma=\mu\,d$ and $\nu=\sigma!\,n^\sigma$. Then there exists a monic polynomial $\Gamma(X)\in\Z[X]$ of degree $\nu\,d=\deg(p^\nu)$ such that $r(X):=p^\nu(X)-\Gamma(X)$ is a polynomial which can be written as follows:
    \[r(X)=\sum_{l=0}^{\nu-\mu-1}p^{l}(X)q_l(X)\]
    where the $(q_l)_{0\leq l\leq\nu-\mu-1}$ are polynomials of degree not greater than $d-1$, with coefficients in $\Q\cap[0,1[$.
\end{lmm}

\begin{proof}
    Let us consider the following decomposition:
    \[p^\nu(X)=E(X)+R(X)\]
    where
    \[E(X)=\sum_{i=0}^\sigma\binom{\nu}{i}\frac{1}{n^i}X^{(\nu-i)d}\gamma^{i}(X)\quad\textrm{(monic)}\]
    \[R(X)=\sum_{i=\sigma+1}^\nu \binom{\nu}{i}\frac{1}{n^i}X^{(\nu-i)d}\gamma^{i}(X).\]
    
   Let us study the coefficients of $E(X)$: for $0\leq i\leq\sigma$, since $i!\,n^i$ divides $\sigma!\,n^\sigma=\nu$, then $\binom{\nu}{i}\frac{1}{n^i}\in\Z$. Therefore $E(X)\in\Z[X]$.

   Let us now take a look at $R(X)$: for $i\geq\sigma+1$, we have
    $\deg({X^{(\nu-i)d}\gamma^i(X)})\leq (\nu-i)d+i(d-1)\leq\nu d-\sigma-1=(\nu-\mu)d-1$, therefore $\deg(R)\leq(\nu-\mu)d-1$.
    
    Since $\{p^l(X)X^k\ |\ 0\leq l\leq\nu-\mu-1,0\leq k\leq d-1\}$ forms a basis of $\Q_{d(\nu-\mu)-1}[X]$, $R(X)$ is a  $\Q$-linear combination of this basis. Moreover, $p^l(X)X^k$ are monic polynomials which are, pairwise, of different degrees, therefore we can find  by induction $c_{l,k}\in\Q\cap[0,1[$ in the order $k=d-1,\ldots,0$ and $l=\nu-\mu-1,\ldots,0$, such that
    \[p^\nu(X)-\sum^{l,k}c_{l,k}p^l(X)X^k\in\Z[X].\]
    Let us finally consider
    \[q_l=\sum_{k=0}^{d-1}c_{l,k}X^k\]
    \[r(X)=\sum_{l=0}^{\nu-\mu-1}p^{l}(X)q_l(X).\]
    Then $\Gamma(X):=p^\nu(X)-r(X)\in\Z[X]$ is the monic polynomial we were looking for.

\end{proof}

\begin{lmm}\label{lmmFeketeSzego}
    Let $L$ be a lemniscate of polynomial $p\in\Q[X]$ of degree $d\geq1$ and constant $\rho>1$, then there exists a lemniscate of polynomial $\Gamma\in\Z[X]$ and constant $1$ included in $L$.
\end{lmm}

\begin{proof}
    It is obvious that $\partial L =\{z\in\C \ | \ \abs{p(z)}=\rho^d\}$.
    
    Let us consider $M=\sup_{z\in\partial L}(1+\abs{z}+\cdots+\abs{z}^{d-1})$, 
    $\mu\in\N^*$ such that $\frac{M}{\rho^{\mu d}(\rho^d-1)}\leq1/2$ and $\frac{\rho^{\mu d}}{2}>1$. Let $\Gamma$ be the polynomial in the previous lemma, which corresponds to our choice of $\mu$, and let us use the same notations as in the lemma.
    
    \medskip

    For all $z\in\partial L$, we have
    \[
    \frac{\abs{p^{\nu}(z)-\Gamma(z)}}{\abs{p^{\nu}(z)}}
    =
    \frac{\abs{r(z)}}{\rho^{\nu d}}
    =
    \frac{\displaystyle\abs{\sum_{l=0}^{\nu-\mu-1}p^{l}(z)q_l(z)}}{\rho^{\nu d}}
    \leqslant
    \frac{M}{\rho^{\nu d}}\sum_{l=0}^{\nu-\mu-1}\rho^{ld}\leqslant\frac{M(\rho^{(\nu-\mu)d}-1)}{\rho^{\nu d}(\rho^d-1)}\leq\frac{M}{\rho^{\mu d}(\rho^d-1)}\leq\frac{1}{2}.\]

    It follows from Rouché's theorem that $\Gamma^{-1}(0)\subset L$. On the other hand, this inequality implies that for all $z\in\partial L$
    \[\abs{\Gamma(z)}\geq\abs{p^\nu(z)}-\abs{r(z)}\geq\frac{1}{2}\abs{p^\nu(z)}\geq\frac{\rho^{\mu d}}{2}>1.\]
    
    But $\frac{1}{\Gamma}$ is holomorphic on $\overline{C}\backslash L$ because $\Gamma^{-1}(\{0\})\subset L$, therefore, according to the maximum modulus principle, $\abs{\Gamma(z)}>1$ for all $z\notin L$.
    
    Therefore $L_{\Gamma,1}\subset L$.
\end{proof}

\clearpage

\section{Robinson's theorem}\label{section3}

\paragraph{}
In this section, we shall state and prove the main theorem of this article: Robinson's theorem.
Let us consider some segment $K$ of $\R$. The second section of this piece (\ref{capasegment}) allows us to calculate its capacity. If it is strictly smaller than 1, Fekete's theorem \ref{thmFekete} gives us that the number of algebraic integers totally in $K$ is finite. On the contrary, when the capacity is greater than or equal to 1, Fekete-Szeg\"o's theorem \ref{thmFeketeSzego} gives us that there is an infinite number of algebraic integers arbitrarily close to $K$ with respect to $\C$. However, when $K$ is a segment of $\R$, this property is not enough to show that there is an infinite number of algebraic integers totally in $K$. Hence the potential theory is not enough to fully understand the case of compact subsets of $\R$. To prove Robinson's theorem, we will have to use algebraic curves.
\medskip

In section \ref{section-courbes algébriques}, we will first define a few notions on algebraic curves: regular/rational functions on a curve, group of divisors... Then we will build a smooth completion of a hyperelliptic curve. Finally, section \ref{section-robinson} is dedicated to the proof of Robinson's theorem. We will cleverly use hyperelliptic curves to prove a theorem which, at first sight, has nothing to do with them. 

\medskip
We shall also use charts, holomorphic and meromorphic forms on a Riemann surface. An introduction to this notions is provided in \cite{Bost1992}, in particular in sections I.1 and B.2.

\subsection{A few notions on algebraic curves}
\label{section-courbes algébriques}
\paragraph{}
An \emph{algebraic curve} is an object of dimension 1 locally defined by an \emph{algebraic} (\emph{i.e.} polynomial) equation. Since all polynomials are analytic functions, the algebraic approach gives us less information than the analytic approach.
\medskip

We assume that the field $\mathbb{K}$ is $\C$, but we shall use $\mathbb{K}$ or $\C$ according to the approach (algebraic or analytic). Several definitions or results where the field is referred as $\mathbb{K}$ can be generalized, but we will not focus on this here.

\medskip

We will give a visual example at the end of section \ref{sectionExVisuel}, which can be useful to understand the paragraphs that precede.

\clearpage

\subsubsection{Algebraic curve: affine case}

\begin{defi}
    Let $F\in \mathbb{K}[X,Y]$ be a non-constant polynomial in two variables which is \emph{irreducible}, in other words which cannot be written as the product of two non-constant polynomials of $\mathbb{K}[X,Y]$.
    An \emph{algebraic curve} $C$ in the affine space $\mathbb{A}^2_\mathbb{K}$ (which is isomorphic to $\mathbb{K}^2$) is the set of all the points $(x,y)\in K^2$ verifying the equation $F(x,y)=0$. It is represented as follows:
    \[C: F(x,y)=0.\]
\end{defi}
\vspace{2em}

$\bullet$ \textbf{Algebraic approach}:

\medskip \emph{The ring of regular functions} on curve $C$ is by definition the ring
\[A(C) := \mathbb{K}[X,Y]/(F).\]

\medskip

In other words, it is the ring of polynomials \emph{modulo an equivalence relation}, for which two polynomials are equivalent if and only if they are equal on the points of curve $C$. $A(C)$ is an \emph{integral domain} since $F$ is \emph{irreducible}; its field of fractions is called \emph{the field of rational functions} of the curve and is referred as $\mathscr{R}(C)$.

\medskip $\bullet$ \textbf{Analytic approach} (more intuitive):

\medskip It follows from the implicit function theorem that, if $\nabla F=\left(\partial_xF,\partial_yF\right)\neq\mathbf{0}$ at a point $(x_0,y_0)\in C(\C)$, then in a neighborhood of $(x_0,y_0)$, the curve is locally the graph of some holomorphic function $y=y(x)$ or $x=x(y)$.
The curve has a good behavior in the neighborhood of such a point $(x_0,y_0)$ : we say that $(x_0,y_0)$ is a \emph{smooth point}.
More precisely, the curve has a 1-dimensional complex manifold structure in the neighborhood of $(x_0,y_0)$. That is why it is called a curve (1-dimensional). On the contrary, some point of $C$ with $\nabla F=\mathbf{0}$ is called a \emph{singular point}. A curve without any singular point is called \emph{smooth}. A smooth curve has the natural structure of a Riemann surface.

\medskip
Let us consider (affine) hyperelliptic curves, whose definition is given in a restricted manner in order to fit our problem.
\begin{defi}
    An \emph{hyperelliptic curve} is an algebraic curve
    \[C: y^2=D(x)\]
    where $D\in\C[X]$ is a \emph{monic} polynomial in $X$ of degree $2g+2$ ($g\geq 0$) whose roots are \emph{distinct}.
    We write that $C$ is a \emph{real} hyperelliptic curve if $D\in\R[X]$.
\end{defi}

With this definition, we check that a hyperelliptic curve is \emph{smooth}.

\subsubsection{Divisors}
\paragraph{}
Let us consider $F(X,Y)\in\C[X,Y]$ non constant and irreducible, as well as the complex algebraic curve associated with $C$. The \emph{group of divisors} on $C$, referred as $\Div(C)$, is the free abelian group generated by the basis $\{P\}_{P\in C}$. More precisely, a \emph{divisor} $D$ on $C$ is a formal linear combination with integer coefficients of a finite number of points $P\in C$:
\[D=\sum_{i=1}^nn_iP_i, \quad\textrm{where~} n\in\N,n_i\in\Z,P_i\in C.\]

Different points are considered linearly independent. The group of divisors $\Div(C)$ refers to the set of divisors on $C$ equipped with a structure of group with the addition of coefficients (point by point). We have a natural group homomorphism called \emph{degree of a divisor} :
\[\deg:\;\Div(C)\longrightarrow\Z,\quad\sum n_iP_i\longmapsto\sum n_i.\]

Assume curve $C$ is \emph{smooth}. According to the implicit function theorem, $C$ has a 1-dimensional complex manifold structure. Let us consider $f\in\mathscr{R}(C)^\times$. $f$ can be seen as a \emph{meromorphic function} on $C$ and therefore, $\ord_P(f)$ is well defined for all $P\in C$ as the order of the meromorphic function $f$ at point $P$. Let us define the \emph{principal divisor} associated with $f\in\mathscr{R}(C)$ as
\[\Div(f):=\sum \ord_P(f)P.\]
\paragraph{}
We have $\Div(f)\in\Div(C)$ since there is a finite number of non-zero coefficients. We verify that we have just defined a group homomorphism
\[\Div:\left(\mathscr{R}(C)^\times,\cdot\right)\longrightarrow\left(\Div(C),+\right).\]
\paragraph{}
Let us define the \emph{jacobian variety} $J(C)$ \emph{of the (smooth) curve $C$} as the cokernel of the homomorphism above, i.e.
\[J(C):=\Div(C)/\Div\mathscr{R}(C)^\times.\]
\paragraph{}
We say that two divisors $D_1$ and $D_2$ are \emph{linearly equivalent} if $D_1-D_2$ is a principal divisor, in other words, if $D_1=D_2$ in $J(C)$.

\medskip

Notice that the smoothness assumption was necessary to define the notions above. That is why we will look for a \emph{smooth} completion of the hyperelliptic curve in the following sections.

\subsubsection{A naive compactification}\label{sectionCompactificationNaive}
\paragraph{}
When we talk about the compactness of a curve, we talk about the subjacent set of the curve equipped with the induced topology of $\C^2$. A compact curve has interesting properties which motivates us to compactify curves. Moreover, we want the compactified curve to be \emph{smooth} in order to have a Riemann surface structure.

\medskip The hyperelliptic curve $y^2=D(x)$ is not compact : in fact, intuitively, there is (are) a point(s) at infinity which is (are) not on the affine curve. 
A first possible approach is to use the projective plane $\mathbb{P}^2_\mathbb{K}$, which compactifies the affine plane $\mathbb{A}^2_\mathbb{K}=\mathbb{K}^2$ by adding a projective line at infinity. The compactification induced on the curve is the closure of $C$ in $\mathbb{P}^2_\mathbb{K}$. More precisely, the steps of this compactification are : 
\begin{itemize}
    \item Write the homogeneous form of the equation of the curve $z^{2g}y^2=z^{2g+2}D(x/z)$;
    \item Define the compactification as the algebraic curve defined on $\mathbb{P}^2_\mathbb{K}$ by this equation. 
\end{itemize}

\paragraph{}
We will not detail this approach. However, we state that this compactified curve \emph{is not} smooth at point $\infty = {[0 : 1 : 0]}\in\mathbb{P}^2_\mathbb{K}$ when $g>0$. Therefore we need another compactification.

\subsubsection{Interlude: adjunction of two affine algebraic curves}

\paragraph{}
Before constructing a smooth completion, let us focus on the adjunction of two affine algebraic curves.
\[C_1: F(x,y)=0,\quad C_2: G(u,v)=0\]
on the affine spaces $\mathbb{K}^2_{x,y}$ and $\mathbb{K}^2_{u,v}$ respectively (with coordinates $x,y$ or $u,v$).
\begin{defi}
    Let $C: F(x,y)=0$ be an affine algebraic curve. A \emph{closed algebraic} subset of $C$ is the set of the roots \emph{in $C$} of a finite number of polynomials $H_i\in \mathbb{K}[X,Y]$ for $i\in I$ where $I$ is a finite set. We call it $Z_C(H_i,i\in I)$, or simply  $Z(H_i,i\in I)$, if there is no confusion.
\end{defi}

\begin{defi}
    Let $C_1$ (\resp. $C_2$) be an algebraic curve on $\mathbb{K}^2_{x,y}$ (\resp. $\mathbb{K}^2_{u,v}$).
    A function $\varphi: C_1\to C_2$ is called a \emph{rational homomorphism} if $\varphi=(u(x,y),v(x,y))$ where $u,v\in \mathscr{R}(C_1)$.
\end{defi}

In other words, $\varphi$ is rational if it can be written as $u(x,y)=H_{11}/H_{12},v(x,y)=H_{21}/H_{22}$ where $H_{ij}\in \mathbb{K}[X,Y]$ and $H_{12}, H_{22}$ are non zero on $C_1$.
Let us notice that such a $\varphi$ is <<\`well defined>> (in terms of subjacent function) on $C_1\backslash Z(H_{12},H_{22})$.

\begin{defi}
    An \emph{Adjunction} between two Riemann surfaces $C_1$ and $C_2$ is given by:
    \begin{itemize}
        \item[(i)] two closed algebraic subsets $\Sigma_1$ and $\Sigma_2$ respectively of $C_1$ and $C_2$;
        \item[(ii)] a couple of rational homomorphisms
        \[\begin{tikzcd}
        C_1 \arrow[r, shift left=0.5ex,"\varphi"] & C_2 \arrow[l, shift left=0.5ex,"\psi"]
        \end{tikzcd}\]
        inducing a well-defined isomorphism
        \[\begin{tikzcd}
        \varphi:\;C_1\backslash\Sigma_1 \arrow[r, shift left=0.5ex,"\sim"] & C_2\backslash\Sigma_2\;:\psi \arrow[l, shift left=0.5ex]
        \end{tikzcd}\]
    \end{itemize}
\end{defi}

With the notion of adjunction, one can obtain a new algebraic curve $C'$ by considering the disjoint union $C_1\dot\cup C_2$ and identifying the points of $C_1\backslash\Sigma_1$ and $C_2\backslash\Sigma_2$ using $\varphi$ and $\psi$. When considering a point coming from $C_i$, we can look at its neighborhood in $C'$ which can be identified to $C_i$ with the corresponding homomorphism. However, the new curve is not necessarily an affine curve.

\medskip

Notions of field of rational functions, smoothness, divisors, principal divisors, \emph{etc.} can be generalized to the case of curves obtained by the adjunction of two affine curves, by considering the definitions on affine curves $C_1$ and $C_2$. For example, $f$ is a rational function on $C'$ if $f|_{C_1}$ and $f|_{C_2}$ are both rational functions, on $C_1$ and $C_2$ respectively. We will not get into more details here.

\subsubsection{Smooth completion}\label{sectionCompactificationLisse}
\paragraph{}
Let $D\in \mathbb{K}[x]$ be a monic polynomial of degree $2g+2$ whose roots are all distinct. We have an affine hyperelliptic curve
\[C:y^2=D(x).\]
\paragraph{}
Its compactification, referred as $\widetilde{C}$ for now, can be obtained by adding two points at infinity $\infty_+$ and $\infty_-$. It is the curve that we will study in the following sections.

\medskip This compactification can be built as follows : $\widetilde{C}$ $=U_0\cup U_\infty$, where 

$$U_0: y^2=D(x),\quad U_\infty:v^2=u^{2g+2}D(1/u),$$
the adjunction of this two affine curves being given by

\begin{equation}\label{recollementCourbeHyperellip}
    \begin{tikzcd}[row sep=tiny]
        \qquad U_0\qquad \arrow[r, shift left=0.5ex] & \qquad U_\infty\qquad \arrow[l, shift left=0.5ex]\\
        U_0\backslash\{x=0\} \arrow[r, shift left=0.5ex,"\sim"] & U_\infty\backslash\{u=0\} \arrow[l, shift left=0.5ex] \\
        (x,y) \arrow[r, mapsto] & (1/x,y/x^{g+1}) \\
        (1/u,v/u^{g+1})   & (u,v). \arrow[l, mapsto] \\
    \end{tikzcd}
\end{equation}

The points $\infty\pm$ correspond to $(0,\pm1)$ on $U_\infty$, \emph{i.e.} on the curve $v^2=u^{2g+2}D(1/u)$. By restricting our study to $U_0$ or $U_\infty$, we check immediately that the obtained curve $\widetilde{C}$ is smooth ; it is equipped with a natural complex manifold structure. Moreover, we have

\begin{prop}\label{genus=g}
    $\widetilde{C}$ is a compact Riemann surface of genus equals to $g$. \qed
\end{prop}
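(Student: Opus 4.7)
The plan is to exhibit $\widetilde{C}$ as a degree-two branched cover of the Riemann sphere via the projection $\pi:\widetilde{C}\to\mathbb{P}^1(\mathbb{C})$ sending $(x,y)\in U_0$ to $x$ and $(u,v)\in U_\infty$ to $1/u$. The gluing (\ref{recollementCourbeHyperellip}) is precisely what makes $\pi$ globally well defined, continuous, and holomorphic, and the two chart equations $y^2=D(x)$, $v^2=u^{2g+2}D(1/u)$ are smooth; everything else then follows from $\pi$.

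First I would make the Riemann surface structure precise. On $U_0$, where $y\neq0$ the coordinate $x$ is a local parameter (since $\partial_yF=2y\neq0$), while at the points $y=0$ we have $D(x)=0$ with simple roots, so $D'(x)\neq0$ and $y$ is a local parameter. On $U_\infty$, the polynomial $\tilde{D}(u):=u^{2g+2}D(1/u)$ satisfies $\tilde{D}(0)=1\neq0$, hence on a neighbourhood of $u=0$ the equation $v^2=\tilde D(u)$ splits into two disjoint graphs $v=\pm\sqrt{\tilde D(u)}$, and $u$ is a local parameter at each of $\infty_\pm=(0,\pm1)$. Hausdorffness is immediate away from infinity, and at $\infty_\pm$ it follows because the two points live in the same chart $U_\infty$ and are separated by the signs of $v$.

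Second, for compactness I would use that $\pi$ is proper with fibres of cardinality at most two. Covering $\mathbb{P}^1(\mathbb{C})$ by the two closed disks $\overline{B}(0,2)$ and $\{|x|\geq 1/2\}\cup\{\infty\}$, each preimage under $\pi$ is closed in $\widetilde{C}$ and is a finite branched cover of a compact set, hence compact. Writing $\widetilde{C}$ as the union of these two compacts gives compactness.

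Finally, I would compute the genus by Riemann--Hurwitz applied to $\pi$. The ramification points in $U_0$ are exactly those with $y=0$, namely the $2g+2$ points $(x_i,0)$ with $D(x_i)=0$; the local equation $y^2\sim D'(x_i)(x-x_i)$ gives ramification index $2$ at each. Over $\infty\in\mathbb{P}^1$, the fibre is $\{\infty_+,\infty_-\}$; since $u$ is a local parameter at each and $\pi=1/u$ is a local biholomorphism of $\mathbb{P}^1$ at $u=0$, $\pi$ is unramified over infinity. Hence the total ramification contribution is $2g+2$, and Riemann--Hurwitz gives
\[2\,g_{\widetilde{C}}-2\;=\;2\cdot(2\cdot 0-2)\;+\;(2g+2)\;=\;2g-2,\]
so $g_{\widetilde{C}}=g$. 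The main obstacle is the behaviour at infinity: one must verify that the two branches of $\sqrt{D(x)}$ unfold into two distinct unramified points, which is where the fact that $D$ is \emph{monic} of \emph{even} degree $2g+2$ enters crucially, via $\tilde D(0)=1\neq0$. Without this (for instance if $\deg D$ were odd), there would be a single ramified point at infinity and the genus would change.
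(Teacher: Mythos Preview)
Your argument is correct. Note, however, that the paper does not actually supply a proof of this proposition: it is stated with a bare \qed, treating it as a well-known fact about hyperelliptic curves. So there is no ``paper's own proof'' to compare against; you have filled in precisely the standard argument (Riemann--Hurwitz for the double cover $\pi:\widetilde{C}\to\mathbb{P}^1$, with $2g+2$ simple branch points over the roots of $D$ and no ramification over $\infty$ because $\deg D=2g+2$ is even and $D$ is monic). Your remark that the parity and monicity of $D$ are what guarantee two distinct unramified points at infinity is exactly the point, and it is consistent with how the paper sets up the two charts.

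One small stylistic comment: in your compactness step, the phrase ``finite branched cover of a compact set, hence compact'' is a little telegraphic. What makes it work is that $\pi^{-1}(\overline{B}(0,R))\subset U_0$ sits as a closed bounded subset of $\mathbb{C}^2$ (bounded because $|y|^2=|D(x)|$ is bounded for $|x|\leq R$), and likewise for the piece near infinity in the $(u,v)$-chart. Making that explicit would remove any appearance of circularity. Otherwise the write-up is clean.
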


Once we have the compactification $\widetilde{C}$ , we will refer to it as $C$ with no risk of confusion.

\begin{prop}
    On a (smooth) compactified hyperelliptic curve, the degree of all the principal divisors is zero. In other words, we have $(\deg\circ\Div) \mathscr{R}(C)^\times=0$.
\end{prop}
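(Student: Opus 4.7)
The key tool is the hyperelliptic involution $\sigma:C\to C$ defined by $(x,y)\mapsto(x,-y)$ on $U_0$ and $(u,v)\mapsto(u,-v)$ on $U_\infty$; the gluing formulas \eqref{recollementCourbeHyperellip} show these definitions agree on the overlap, so $\sigma$ is a well-defined order-$2$ automorphism of $C$ that exchanges $\infty_+\leftrightarrow\infty_-$ and fixes the $2g+2$ Weierstrass points. The projection $\pi:C\to\mathbb{P}^1$, $(x,y)\mapsto x$ on $U_0$ and $\infty_\pm\mapsto\infty$, is then a double cover realizing $\mathbb{K}(x)$ as the $\sigma$-fixed subfield of $\mathscr{R}(C)$, and every $f\in\mathscr{R}(C)$ can be uniquely written $f=u(x)+y\,v(x)$ with $u,v\in\mathbb{K}(x)$.

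Given $f\in\mathscr{R}(C)^\times$, the conjugate $\bar f := f\circ\sigma$ satisfies $\Div(\bar f)=\sigma_{*}\Div(f)$, so $\deg\Div(\bar f)=\deg\Div(f)$. The norm $N(f):=f\bar f = u^2-Dv^2$ is $\sigma$-invariant, hence of the form $\pi^* g$ for some $g\in\mathbb{K}(x)^\times$. Therefore
\[ 2\,\deg\Div(f)=\deg\Div(f)+\deg\Div(\bar f)=\deg\Div(f\bar f)=\deg\Div_C(\pi^* g), \]
and it suffices to prove $\deg\Div_C(\pi^* g)=0$.

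The next step is the identity $\deg\Div_C(\pi^* g)=2\,\deg\Div_{\mathbb{P}^1}(g)$, established by a local ramification analysis: at an unramified point $Q\in\mathbb{P}^1$ the two preimages each contribute $\ord_Q(g)$ to $\deg\Div_C(\pi^* g)$; at a Weierstrass point $P=(\alpha,0)$ with $D(\alpha)=0$, the local uniformizer is $y$ rather than $x-\alpha=y^2/\prod_{\beta\neq\alpha}(x-\beta)$, so the unique preimage contributes $\ord_P(\pi^* g)=2\,\ord_\alpha(g)$; at $\infty\in\mathbb{P}^1$, working in the chart $U_\infty$ and using that $\deg D=2g+2$ is even, one verifies that $\pi$ is unramified at both $\infty_\pm$, so each contributes $\ord_\infty(g)$. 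Combining with the elementary genus-zero fact $\deg\Div_{\mathbb{P}^1}(g)=0$ (immediate from $g(x)=c\prod_i(x-\alpha_i)^{m_i}$ having order $-\sum_i m_i$ at infinity), the conclusion $\deg\Div(f)=0$ follows.

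The main obstacle is the ramification analysis at the $2g+2$ Weierstrass points and the two points at infinity, because the local uniformizers there are not $x-\alpha$ or $1/x$: one must pass to the chart $U_\infty$ with coordinates $(u,v)=(1/x,y/x^{g+1})$ and use the parity of $\deg D$ to see that $\infty_+$ and $\infty_-$ remain distinct and unramified over $\infty$, and one must use the defining relation $y^2=D(x)$ to identify $y$ as the uniformizer at each Weierstrass point. Once these two cases are checked, the remainder of the argument is purely formal.
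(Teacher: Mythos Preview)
Your proof is correct but takes a genuinely different route from the paper's. The paper argues analytically: on a compact Riemann surface the sum of residues of any meromorphic $1$-form vanishes; applying this to $\dif f/f$ gives $\sum_P \ord_P(f)=0$, i.e.\ $\deg\Div(f)=0$. This is the standard argument-principle proof, valid for \emph{any} compact Riemann surface, and it is a two-line appeal to a cited result.

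Your approach is purely algebraic and specific to the hyperelliptic situation: you use the degree-$2$ cover $\pi:C\to\mathbb{P}^1$ and the involution $\sigma$ to reduce to the elementary genus-zero statement $\deg\Div_{\mathbb{P}^1}(g)=0$. The norm trick $N(f)=f\cdot(f\circ\sigma)\in\mathbb{K}(x)^\times$ together with $\deg\Div(\bar f)=\deg\Div(f)$ and the pull-back identity $\deg\Div_C(\pi^*g)=2\deg\Div_{\mathbb{P}^1}(g)$ is exactly right; your ramification analysis at the Weierstrass points and at $\infty_\pm$ is accurate. What you gain is a self-contained argument that avoids the residue theorem entirely (useful if one wants to stay in a purely algebraic setting or work over fields other than $\C$); what you lose is brevity and generality, since the paper's proof handles any compact Riemann surface in one stroke whereas yours is tailored to the double-cover structure.
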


\begin{proof}
    It follows from the residue theorem on a compact Riemann surface (\cite{Bost1992}, Prop. B.2.2) that the sum of residues of some meromorphic form is zero. Let $f$ be a rational function, by applying the previous result to the form $\dif f/f$, the argument principle gives us that $f$ has as many zeros as it has poles, counted with multiplicity.
\end{proof}

\medskip The compactified curve is equipped with an involution $\iota$ induced by the function $(x,y)\mapsto(x,-y)$ on the affine space, which  swaps $\infty_+$ and $\infty_-$. We call $\iota$ the \emph{conjugation} of this hyperelliptic curve. This conjugation induces an involution on the group of divisors on the curve: $\sum n_iP_i\mapsto\sum n_i\iota(P_i)$.

\begin{ex}[Calculation of a principal divisor on $C$]  Let us denote by $\alpha_1,\ldots,\alpha_{2g+2}$ the roots of $D(x)$ (which are assumed to be distinct) and by $P_i\in C$ their corresponding points, then
\begin{equation}\label{CalculDiv1/y^2}
    \Div({1}/{y^2})=-2\sum_{i=1}^{2g+2}P_i+(2g+2)\left(\infty_++\infty_-\right).
\end{equation}
Its degree is zero, as expected.
\end{ex}

\begin{proof}
Let us first define the function $1/y^2$: It is defined as $1/y^2$ on $U_0$ and $u^{2g+2}/v^2$ on $U_\infty$; We check that they coincide on $U_0\cap U_\infty$  by using the substitution (\ref{recollementCourbeHyperellip}), therefore this rational function is well defined.

Then to simplify, let us consider the case where $g=1,D(x)=x(x^3-1)$ and calculate the multiplicity from a "complex analysis" point of view (i.e with charts, etc.). The equation of the curve on $U_\infty$ becomes $v^2=1-u^3$.

Let us calculate its multiplicity at point $\infty\pm$. Let us consider $U_\infty$. Then $\infty_+=(0,1)$, and in a neighborhood of $\infty_+$, we can use the chart ($V_1$ being some neighborhood of  0)
$$\varphi_1:\C\supset V_1\longrightarrow C,\quad z\longmapsto (u,v)=(z,\sqrt{1-z^3}).$$
For this chart, $\varphi_1(0)=\infty_+$ and
$$1/y^2=u^4/v^2=z^4/(1-z^3),$$
therefore
$$\ord_{\infty_+}(1/y^2)=4=2g+2.$$
Likewise we have
$$\ord_{\infty_-}(1/y^2)=4=2g+2.$$

Let us calculate the multiplicity of $1/y^2$ at point $P_1=(x,y)=(0,0)\in U_0$, for example.
Previously $u$ was a local (holomorphic) parameter of $C$ near $\infty_\pm$. \emph{Now, the variable $y$ will be used as a local parameter of $C$ near $P_1$.} Therefore
$$\ord_{P_1}(1/y^2)=-2.$$
More precisely, since $D(x)=x(x^3-1)$ has a simple root at $x=0$, the holomorphic function $\psi:x\mapsto w=x(x^3-1)$ is locally biholomorphic at points $x=0$ and $w=0$, in other words, on sufficiently small neighborhoods of $x=0$ and $w=0$, $\psi$ has a holomorphic inverse function which we will call $\psi^{-1}(w)$. Therefore, by considering the following chart ($V_2$ being some neighborhood of 0):
$$\varphi_2:\C\supset V_2\longrightarrow C,\quad z\longmapsto (x,y)=(\psi^{-1}(z^2),z),$$
we have 
$$1/y^2=1/z^2,$$
therefore
$$\ord_{P_1}(1/y^2)=-2.$$
The same goes for all the $P_i=(\alpha_i,0)\in U_0$ where $\alpha_i$ is a root of $D(x)$. Note that $1/y^2$ has no other pole nor root, hence the formula (\ref{CalculDiv1/y^2}).
\end{proof}

\begin{ex}
Let $A\in\R[x]$ be a monic polynomial and $\eta_A=\frac{A(x)\dif x}{y}$ the differential form associated with $A$. If $\deg A=g$, then $\eta_A$ has simple poles at points $\infty_+$ and $\infty_-$,with residues $-1$ and $+1$ respectively; if $\deg A<g$, then $\eta_A$ is a holomorphic form.
\end{ex}

\begin{proof}
    It is enough to make the calculations using the two charts, like in the previous example.
\end{proof}

\subsubsection{Hyperelliptic curves and Pell-Abel equation}

\paragraph{}
Let us prove an important result : A link between the existence of solutions to Pell-Abel equation and hyperelliptic curves. 

\begin{defi}
Let $D\in \mathbb{K}[X]$ be a monic polynomial of degree $2g+2$ with distinct roots. 
We call \emph{Pell-Abel equation} the equation with unknowns $(P,Q)\in \mathbb{K}[x]^2$ of the form :
\[ P^2-DQ^2=c\text{, with } c\in \mathbb{K}^*.\]
The degree of a solution $(P,Q)$ is, by definition, the degree of $P$.
\end{defi}

\begin{thm} \label{thm-PellianEquivautTorsion}
The following claims are equivalent:
\begin{enumerate}
    \item The Pell-Abel equation has a solution of degree $r$ in $\mathbb{K}[x]$.
    \item The divisor $r((\infty_-) - (\infty_+))$ on the curve $C$ (cf. section \ref{sectionCompactificationLisse}) is linearly equivalent to $0$.
\end{enumerate}
\end{thm}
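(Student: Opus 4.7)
The plan is to exploit the hyperelliptic involution $\iota:(x,y)\mapsto(x,-y)$, which swaps $\infty_+$ and $\infty_-$ and acts on $\mathscr{R}(C)$ by pullback $f\mapsto\iota^\ast f$. The key observation is that if $f\in\mathscr{R}(C)^\times$ is such that $f\cdot\iota^\ast f$ is a nonzero constant, then writing $f=A(x)+yB(x)$ gives $A^2-DB^2=\text{const}$, which is exactly a Pell-Abel relation.

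For $(1)\Rightarrow(2)$: given $(P,Q)$ with $P^2-DQ^2=c\neq 0$ and $\deg P=r$, I would set $f:=P(x)-yQ(x)\in\mathscr{R}(C)$. Then $f\cdot\iota^\ast f=P^2-y^2Q^2=P^2-DQ^2=c$. This identity first shows $f$ cannot vanish at any affine point of $C$ (a zero would force $c=0$), and $f$ is clearly regular on the affine chart $U_0$, so $\Div(f)$ is supported in $\{\infty_+,\infty_-\}$. Since $\Div(f)+\iota^\ast\Div(f)=\Div(c)=0$ and $\iota$ swaps the two points at infinity, we get $\Div(f)=m\bigl((\infty_+)-(\infty_-)\bigr)$ for some $m\in\Z$. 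To identify $m$, I would use the local parameter $u=1/x$ at $\infty_\pm$: the relation $P^2-DQ^2=c$ forces $r=g+1+\deg Q$ with matching leading coefficients, and expanding $y=\pm x^{g+1}(1+O(1/x))$ at $\infty_\pm$ yields $\ord_{\infty_+}(f)=-r$ (possibly after replacing $Q$ by $-Q$, which swaps the roles of $\infty_+$ and $\infty_-$). Hence $\Div(f)=r\bigl((\infty_-)-(\infty_+)\bigr)^{\pm 1}$, giving $(2)$.

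For $(2)\Rightarrow(1)$: suppose $f\in\mathscr{R}(C)^\times$ has $\Div(f)=r\bigl((\infty_-)-(\infty_+)\bigr)$. Define
\[A:=\tfrac{1}{2}(f+\iota^\ast f),\qquad B:=\tfrac{1}{2y}(f-\iota^\ast f).\]
Both are $\iota$-invariant elements of $\mathscr{R}(C)$, hence lie in the subfield $\mathbb{K}(x)$. I would then argue that they are in fact polynomials: $f$ and $\iota^\ast f$ have no affine poles by hypothesis, so $A$ is regular on $U_0$, and since $A\in\mathbb{K}(x)$ descends through the two-to-one projection $x:U_0\to\mathbb{A}^1$ it must be a polynomial. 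For $B$, regularity away from the Weierstrass points $(\alpha_i,0)$ is automatic since $y$ is nonzero there; at a Weierstrass point $y$ is a local uniformizer and $f-\iota^\ast f$ is $\iota$-anti-invariant, so its local expansion in $y$ contains only odd powers and in particular vanishes to order $\geq 1$, making $B$ regular there as well. Hence $B\in\mathbb{K}[x]$. The product $A^2-DB^2=f\cdot\iota^\ast f$ has divisor $\Div(f)+\iota^\ast\Div(f)=0$, so it is a constant $c\in\mathbb{K}^\times$.

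It remains to check $\deg A=r$. Near $\infty_+$ one has $\ord_{\infty_+}(f)=-r$ and $\ord_{\infty_+}(\iota^\ast f)=\ord_{\infty_-}(f)=+r$, so the two Laurent expansions do not interact at top order and $\ord_{\infty_+}(A)=-r$; since $A\in\mathbb{K}[x]$ this gives $\deg A=r$, producing a Pell-Abel solution of the required degree. The main obstacle in this proof is the local analysis at infinity needed to pin down orders and leading coefficients (both to compute $\Div(f)$ in the forward direction and to identify $\deg A$ in the converse), together with the subtle verification that $B$ is genuinely polynomial, which hinges on the parity of local expansions at the Weierstrass points.
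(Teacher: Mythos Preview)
Your proof is correct and mirrors the paper's approach closely: both use $f=P\pm yQ$, observe that $f\cdot\iota^\ast f=c$ forces $\Div(f)$ to be supported at $\{\infty_+,\infty_-\}$ and antisymmetric under $\iota$, and identify the coefficient via the degree of $P$. The only real difference is in the converse direction: the paper invokes the $\mathbb{K}[x]$-module basis $\{1,y\}$ of the affine coordinate ring $A(C)$ to write any regular $\psi$ directly as $P+yQ$ with $P,Q\in\mathbb{K}[x]$, which bypasses your explicit averaging construction $A=\tfrac{1}{2}(f+\iota^\ast f)$, $B=\tfrac{1}{2y}(f-\iota^\ast f)$ and the accompanying local verification at the Weierstrass points that $B$ is polynomial.
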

\begin{proof}
Let $C$ be the curve defined by $y^2=D(x)$. The ring of regular functions on $C$ is a $\mathbb{K}[x]$-module with basis $\{1,y\}$, with $y^2=D(x)$. 

\medskip

Assume there exists $P,Q\in \mathbb{K}[x]$, $c\in \mathbb{K}^*$ such that $P^2-DQ^2=c$. Let us consider $\varphi_+ = P+yQ$, $\varphi_- = P-yQ$. Since $\varphi_+$ and $\varphi_-$ are regular functions on the affine hyperelliptic curve, their poles are in the set $\{\infty_+, \infty_-\}$. However $\varphi_+ \varphi_- = P^2-DQ^2 = c$, therefore their zeros are also in the set $\{\infty_+, \infty_-\}$. Therefore $\Div(\varphi_+) = a(\infty_-)+b(\infty_+)$. However $\deg(\Div(\varphi_+))=0$, therefore $b=-a$ and the degree of $P$ gives us that $a=r$. Therefore $ r\left((\infty_-)-(\infty_+)\right)$ is principal (therefore linearly equivalent to $0$).

\medskip
Conversely, assume that $ r((\infty_-)-(\infty_+))$ is linearly equivalent to $0$. Let $\psi$ be a rational function such that $\Div(\psi) =  r((\infty_-)-(\infty_+))$. Since $\psi$ has no pole on the affine space, $\psi$ is regular. Therefore $\psi$ can be written as $\psi = P+yQ$ with $P,Q \in \mathbb{K}[x]$, and $\deg P = r$. Its conjugate is  $\overline{\psi} = P-yQ$. However conjugation is involutory and swaps $\infty_+$ and $\infty_-$, therefore $\Div(\overline{\psi}) = (-r)((\infty_-)-(\infty_+))$, hence $\Div(\psi\overline{\psi})=0$. Therefore $P^2-DQ^2 = \psi\overline{\psi}$ is a non-zero constant of $\mathbb{K}$. 
\end{proof}

\subsubsection{A visual example}\label{sectionExVisuel}

\paragraph{}
Let us introduce the hyperelliptic curve that we will use in the proof of Robinson's theorem. Let $$a_0<b_0<a_1<\dots<a_g<b_g$$ be distinct real numbers. Let us consider $E_j =[a_j,b_j]$, $E= \displaystyle\bigcup_{j=0}^g E_j$ and the polynomial $$D = \displaystyle\prod^g_{j=0} (X-a_j)(X-b_j)$$ and let us call $C$ the compactified hyperelliptic curve associated with $D$ (cf. section \ref{sectionCompactificationLisse}).

\medskip

Let us provide an illustration of this curve in the case where $g=2$.

\begin{figure}[ht!]
    \centering
    \includegraphics[scale = 0.1]{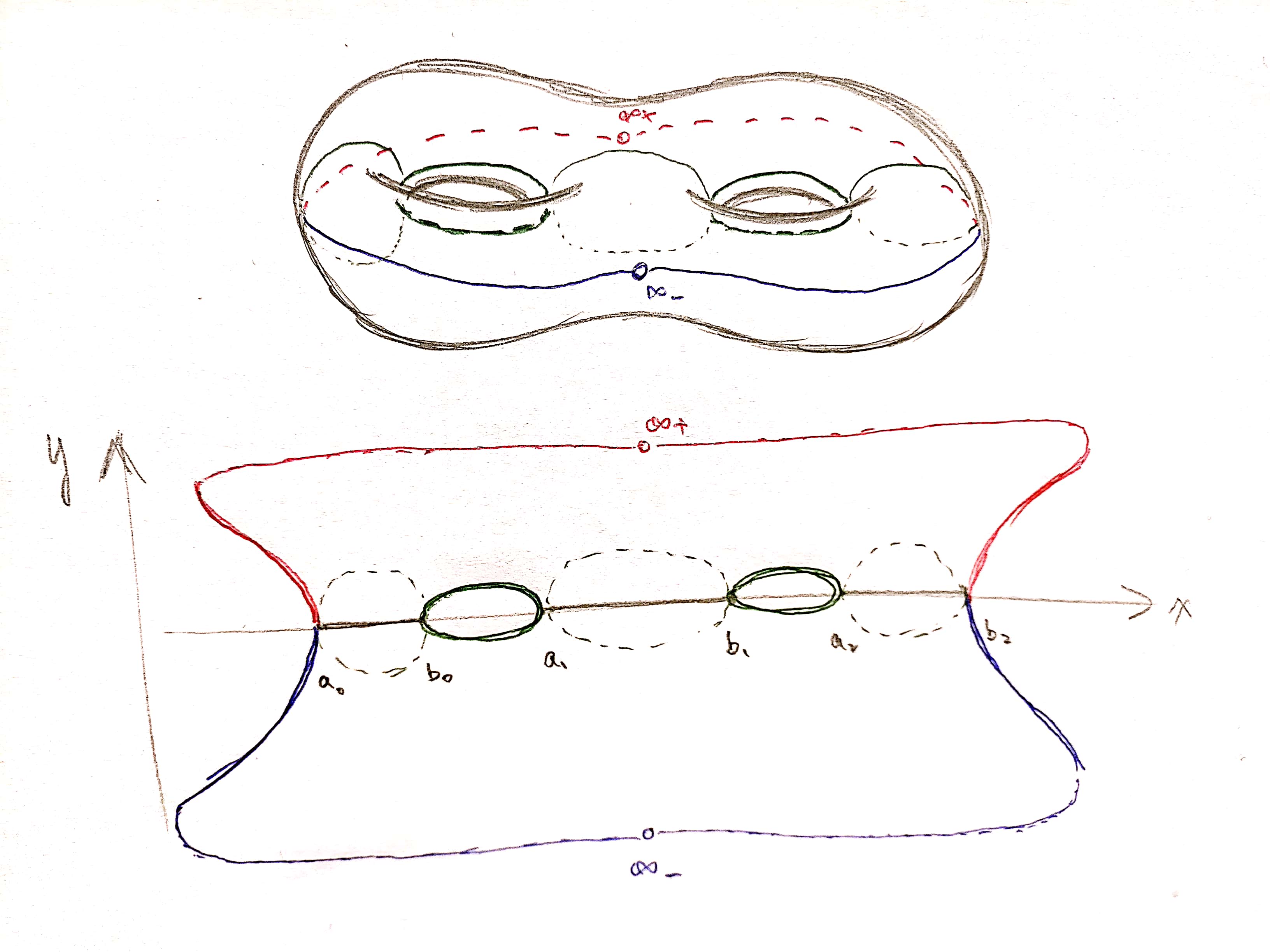}
    \caption{Case $g=2$: $C$ as a Riemann surface (above) or a curve on the affine plane (bellow)}.
    \label{ExempleVisuel1}
\end{figure}

Figure \ref{ExempleVisuel1} shows the connection between these two representations of curve $C$.
The figure below, on the affine plane (solid lines represent real points) can be obtained by intersecting the surface above with a horizontal plane (and by removing points " at infinity " $\infty\pm$). 
We clearly see the position of $\infty\pm$, the two points at infinity added to the affine hyperelliptic curve.
We also understand the singularity of the naive compactification (cf. section \ref{sectionCompactificationNaive}) : it can be found again by attaching the two points $\infty\pm$ in the figure above !

\clearpage

 Let us denote by $\alpha_j$ the cycle on the curve which covers the interval $[b_{j-1},a_j]$.
Then there exists a cycle $\beta_j$ crossing the $j$-th <<hole>> of the surface, which only intersects cycle $\alpha_j$ and with multiplicity $1$.
We say that $\{\alpha_1,\ldots,\alpha_g,\beta_1,\ldots,\beta_g\}$ forms a \emph{symplectic basis of the singular homology} $H_1(C,\Z)$.
This basis is illustrated, when $g=2$, in Figure \ref{ExempleVisuel2}.

\begin{figure}[ht!]
    \centering
    \includegraphics[scale = 0.1]{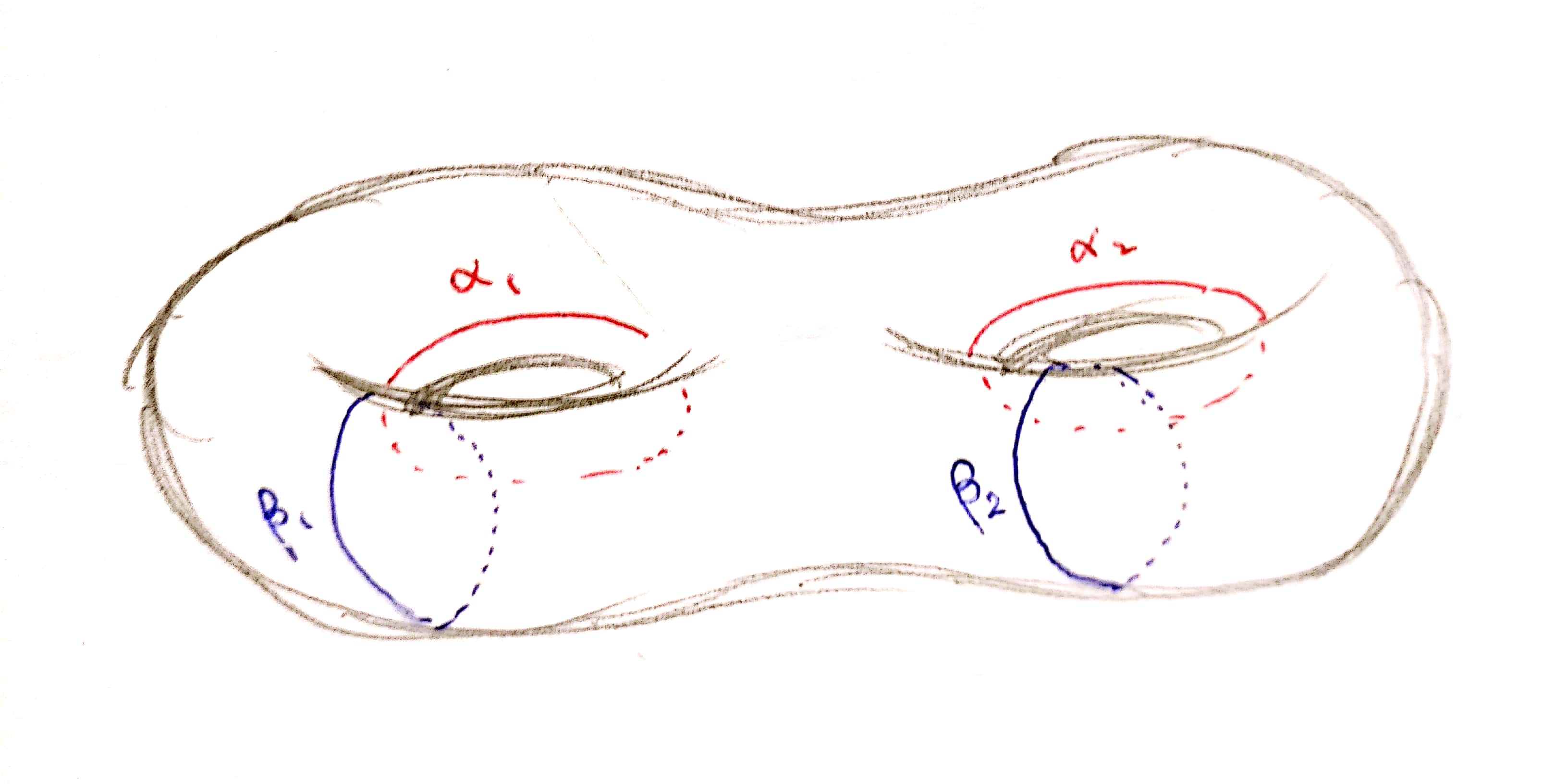}
    \caption{Case $g=2$: a symplectic basis of $H_1(C;\Z)$}.
    \label{ExempleVisuel2}
\end{figure}

The genus $g$, which intuitively corresponds to the number of holes in the surface, also corresponds to the dimension of the space of holomorphic forms.

\begin{prop}
    The complex dimension of the space of holomorphic 1-forms on some Riemann surface of genus $g$ is equal to $g$.
\end{prop}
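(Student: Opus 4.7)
The plan is to prove the result in the hyperelliptic case that the paper actually needs, exhibiting an explicit basis of holomorphic $1$-forms on the compactified curve $C$ of genus $g$ built in Section~\ref{sectionCompactificationLisse}. The general statement relies on the Hodge decomposition (or Riemann--Roch), which we do not develop here.

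First, I would produce $g$ linearly independent holomorphic $1$-forms. For $k = 0, 1, \ldots, g-1$, set
\[\omega_k := \frac{x^k\,dx}{y}.\]
The example preceding the proposition, applied with $A(X) = X^k$ (which has degree $k < g$), shows that each $\omega_k$ is holomorphic on $C$. They are linearly independent over $\C$ because $dx/y$ is nonzero as a meromorphic form and the monomials $1, x, \ldots, x^{g-1}$ are linearly independent.

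Second, I would show that every holomorphic $1$-form on $C$ lies in $\C\omega_0 \oplus \cdots \oplus \C\omega_{g-1}$. The field of meromorphic functions on $C$ is $\mathscr{R}(C) = \C(x)[y]/(y^2-D)$, and $dx/y$ is a nonzero meromorphic $1$-form; hence every meromorphic $1$-form $\omega$ admits a unique decomposition
\[\omega = P(x)\,\frac{dx}{y} + Q(x)\,dx, \qquad P, Q \in \C(x).\]
The hyperelliptic conjugation $\iota:(x,y)\mapsto(x,-y)$ acts as $\iota^*\omega = -P(x)\,dx/y + Q(x)\,dx$, so if $\omega$ is holomorphic then both summands $P(x)\,dx/y$ and $Q(x)\,dx$ are separately holomorphic.

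The summand $Q(x)\,dx$ is the pullback, under the degree-two map $\pi:C\to\mathbb{P}^1$, $(x,y)\mapsto x$, of $Q(x)\,dx$ on $\mathbb{P}^1$; a chart computation (using $y$ as a local parameter at each branch point, where $2y\,dy = D'(x)\,dx$ with $D'\neq 0$, and using $u=1/x$ at infinity) shows that the pullback is holomorphic on $C$ if and only if $Q(x)\,dx$ is holomorphic on $\mathbb{P}^1$. Since $\mathbb{P}^1$ has genus $0$ and therefore carries no nonzero holomorphic $1$-form, $Q \equiv 0$. For the remaining summand $P(x)\,dx/y$, the substitution $(u,v)=(1/x, y/x^{g+1})$ from (\ref{recollementCourbeHyperellip}) gives $dx/y = -u^{g-1}\,du/v$, and $v$ is a unit near $\infty_\pm$; holomorphy there forces $P$ to be a polynomial of degree at most $g-1$, exhibiting $\omega$ as a linear combination of $\omega_0,\ldots,\omega_{g-1}$. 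I expect the main obstacle to be precisely this local analysis at the branch points and at $\infty_\pm$, which must be carried out carefully in the two charts of the smooth compactification, together with the justification that the $\iota$-invariant part really descends to $\mathbb{P}^1$.
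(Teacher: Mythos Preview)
The paper does not prove this proposition: it is stated without argument, in the same spirit as Proposition~\ref{genus=g} (which carries a bare \qed) and Theorem~\ref{thm-accouplementParfait} (for which the paper simply refers to \cite{Bost1992}). Your proposal therefore goes beyond the paper by supplying an explicit proof in the hyperelliptic case, and that is exactly the case needed downstream: in the proof of Theorem~\ref{thm-formecanonique} the paper invokes precisely the basis $\{x^i\,dx/y:0\le i\le g-1\}$ that you construct.

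Your argument is correct, with one small omission. When you write ``holomorphy there forces $P$ to be a polynomial of degree at most $g{-}1$'', the computation at $\infty_\pm$ only bounds the degree \emph{once $P$ is known to be a polynomial}; you should also rule out finite poles of $P$. This is straightforward in the same style: at a non-branch point, $dx/y$ is a holomorphic nonvanishing multiple of $dx$, so a pole of $P$ there survives; at a branch point $\alpha_i$ one has $x-\alpha_i\sim y^2/D'(\alpha_i)$ and $dx/y=(2/D'(x))\,dy$, so a pole of $P$ of order $m$ at $\alpha_i$ yields a pole of order $2m$ in $P(x)\,dx/y$. With that check added, your proof is complete.
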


The integral of a holomorphic form along a cycle provides a \emph{perfect coupling}, as explained below : 

\begin{thm}\label{thm-accouplementParfait}
    Let $\{\alpha_1,\ldots,\alpha_g,\beta_1,\ldots,\beta_g\}$ be a symplectic basis of $H_1(C,\Z)$.
    Let $\{\omega_1,\ldots,\omega_g\}$ be a basis of the space of holomorphic 1-forms on $C$.
    Then the $g$-by-$g$ square matrix with coefficients $A_{i,j}$ defined as follows, is invertible: 
    \[A_{ij}=\int_{\alpha_j}\omega_i.\]
\end{thm}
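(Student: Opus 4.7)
My plan is to prove the theorem by contradiction, exploiting the positive-definiteness of the Hermitian form $(\omega,\eta)\mapsto \frac{i}{2}\int_C \omega\wedge\overline{\eta}$ on the (finite-dimensional) space of holomorphic $1$-forms, combined with the Riemann bilinear relations expressing this form in terms of $\alpha$- and $\beta$-periods.

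Assume, for contradiction, that $A=(A_{ij})$ is singular. Then there exist complex numbers $c_1,\ldots,c_g$, not all zero, such that $\sum_{i=1}^g c_i A_{ij}=0$ for every $j$. Setting $\omega := \sum_{i=1}^g c_i \omega_i$, this reads
\[\int_{\alpha_j} \omega = 0 \quad \text{for every } j\in\{1,\ldots,g\}.\]
Because the $\omega_i$ are linearly independent, $\omega\not\equiv 0$. The goal is therefore to prove that any holomorphic $1$-form on $C$ with vanishing $\alpha$-periods is identically zero; that immediately yields the contradiction.

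The key analytical input is the Riemann bilinear identity: writing $A_j:=\int_{\alpha_j}\omega$ and $B_j:=\int_{\beta_j}\omega$ for the periods of any holomorphic $\omega$ along a symplectic basis, one has
\[\frac{i}{2}\int_C \omega\wedge\overline{\omega} \;=\; \frac{i}{2}\sum_{j=1}^g\bigl(A_j\overline{B_j}-B_j\overline{A_j}\bigr).\]
The left-hand side is strictly positive whenever $\omega\neq 0$: in any local holomorphic chart $\omega=f(z)\,dz$, so that $\frac{i}{2}\omega\wedge\overline{\omega}=|f(z)|^2\,dx\wedge dy$, which is the (non-negative) area density of $|f|^2$ and vanishes only where $f$ does. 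Plugging $A_j=0$ for all $j$ into the identity gives $\int_C \omega\wedge\overline{\omega}=0$, hence $\omega\equiv 0$, contradicting the linear independence of $\omega_1,\ldots,\omega_g$. The matrix $A$ must therefore be invertible.

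The main obstacle is proving the bilinear identity itself. I would establish it in the standard way: cut $C$ along the cycles of the symplectic basis to obtain a simply connected $4g$-gon $P$, on which the closed form $\omega$ admits a holomorphic primitive $F$. By Stokes' theorem,
\[\int_C \omega\wedge\overline{\omega} \;=\; \int_{\partial P} F\,\overline{\omega},\]
and the boundary integral decomposes into pairs of sides of $P$ that are identified in $C$. Using the intersection pattern $\alpha_j\cdot\beta_j=1$ of a symplectic basis, the contributions of each identified pair combine into the $A_j\overline{B_j}-B_j\overline{A_j}$ appearing in the formula. This is the classical argument available in any standard reference on Riemann surfaces (e.g., \cite{Bost1992}, Section B.2), so in the paper I would likely invoke it as a black box rather than reproduce the polygon computation.
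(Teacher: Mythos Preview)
Your argument is correct and is the classical one: a nonzero holomorphic $1$-form with vanishing $\alpha$-periods would contradict the positivity of $\frac{i}{2}\int_C\omega\wedge\overline{\omega}$ via the Riemann bilinear relations.

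Note, however, that the paper does not give any proof of this theorem; it simply refers the reader to \cite{Bost1992}, Theorem III.1.2. So you have actually supplied more than the paper does. Your sketch of the bilinear identity (cutting $C$ into a $4g$-gon, taking a primitive, applying Stokes and pairing identified sides) is exactly the argument one finds in that reference, so there is no genuine methodological difference---you have just unpacked what the paper leaves as a citation.
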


The proof of this theorem is provided in \cite{Bost1992}, Theorem III.1.2.

\clearpage

\subsection{Proof of Robinson's theorem}
\label{section-robinson}

\paragraph{}
The goal of the following section is to prove what follows :

\begin{thm}[Robinson]\label{thmRobinson}
Let $E$ be a finite union of intervals of $\R$ such that $\capa (E)>1$, then there exists an infinite number of algebraic integers totally in $E$.
\end{thm}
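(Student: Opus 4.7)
The plan is to reduce to $E=\bigcup_{j=0}^{g}[a_j,b_j]$, a disjoint union of $g+1$ closed intervals with rational endpoints (a preliminary inward approximation, using Corollary~\ref{cor-continutecapacitesegment} to preserve $\capa(E)>1$, gets us there), and then exploit the hyperelliptic curve $C:y^{2}=D(x)$ of Section~\ref{section-courbes algébriques}, where $D=\prod_{j=0}^{g}(x-a_j)(x-b_j)$. Call such an $E$ \emph{Pellian} if the divisor $(\infty_-)-(\infty_+)$ is torsion in $J(C)$; by Theorem~\ref{thm-PellianEquivautTorsion} this is equivalent to the existence of a non-trivial Pell--Abel solution $P^{2}-DQ^{2}=c$ in $\mathbb{Q}[X]$.

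First I would prove the theorem for Pellian $E$. Starting from a Pell--Abel pair, by raising $P+yQ$ to a sufficiently divisible power (a multiple of the torsion order, which can be arranged to make $c$ a positive integer square) and then clearing denominators using the integrality of $D$ and the monicity of $P$, one produces a monic $T\in\Z[X]$ of some degree $r$ together with $U\in\Z[X]$ satisfying $T^{2}-DU^{2}=c^{2}$ for some integer $c\ge 2$, and moreover $T^{-1}([-c,c])\cap\R=E$, each interior fibre containing exactly $r$ real points (this fibre property uses a minimality choice: the Pellian $T$ coming from the smallest-degree solution sends $E$ surjectively onto $[-c,c]$ without spurious critical values outside $E$). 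For any algebraic integer $\zeta$ totally in $[-2,2]\subset[-c,c]$ (and there are infinitely many of unbounded degree, by the corollary to Proposition~\ref{prop-22}) with integer minimal polynomial $m_\zeta$, the composite $m_\zeta\!\circ T\in\Z[X]$ is monic and every one of its roots lies in $E$. Each $\Z$-irreducible factor is then the minimal polynomial of an algebraic integer totally in $E$, and letting $\deg(\zeta)\to\infty$ yields infinitely many distinct such integers.

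For a general $E$ with $\capa(E)>1$ the plan is to approximate $E$ from the inside by a Pellian $E'\subset E$ with $\capa(E')>1$; any algebraic integer totally in $E'$ is a fortiori totally in $E$, which closes the argument. Continuity of the capacity in the endpoints (Corollary~\ref{cor-continutecapacitesegment}) makes the capacity condition stable under small perturbations. To realise the Pellian condition, I would view the Abel--Jacobi image $\alpha(a,b)=\int_{\infty_+}^{\infty_-}\omega\in J(C)\cong\C^{g}/\Lambda$ of $(\infty_-)-(\infty_+)$ as a function of the endpoints; using the non-degenerate period pairing of Theorem~\ref{thm-accouplementParfait} one shows that its differential has full rank onto the relevant real subtorus of $J(C)$. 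The image then contains an open subset of that subtorus, and since torsion points are dense in any real torus, arbitrarily small inward perturbations of $(a_j,b_j)$ can be chosen to land on a torsion point, producing the desired $E'\subset E$.

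The principal obstacle is this density-plus-submersion step: one must identify the real subtorus of $J(C)$ cut out by the endpoint map, verify via Theorem~\ref{thm-accouplementParfait} that the differential has full rank, and then juggle the freedom of inward perturbations against the openness required to meet torsion while keeping $\capa(E')>1$. Everything else reduces to the clean pullback construction of the Pellian step, which carries the classical $[-2,2]$ theory of Subsection~\ref{subsection13} into the general case via an integer Chebyshev-like polynomial $T$.
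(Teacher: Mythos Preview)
Your global architecture---reduce to a Pellian $E$, produce many integer monic polynomials with all roots in $E$, then pass to the general case by density of torsion in the Jacobian---is exactly the paper's. The density step you sketch (Abel--Jacobi image of $(\infty_-)-(\infty_+)$ as a function of the endpoints, full rank via Theorem~\ref{thm-accouplementParfait}, torsion dense in the real torus) is the same argument the paper indicates, and your reduction to a smaller $E'\subset E$ using Corollary~\ref{cor-continutecapacitesegment} is also what the paper does.

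The gap is in your Pellian step. You claim one can manufacture a \emph{monic} $T\in\Z[X]$ and an integer $c\ge2$ with $T^{-1}([-c,c])\cap\R=E$, and then pull back the algebraic integers of $[-2,2]$ through $T$. But this is impossible as soon as the endpoints $a_j,b_j$ of $E$ are rationals that are not integers: each endpoint is mapped by $T$ to $\pm c$, hence is a root of the monic integer polynomial $T\mp c$, hence an algebraic integer---a contradiction. ``Clearing denominators'' destroys monicity, and ``the integrality of $D$'' is not available since rational endpoints only give $D\in\Q[X]$. In short, there is no integer Chebyshev-like $T$ realising $E$ exactly, and your pullback $m_\zeta\circ T$ never gets off the ground.

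The paper does not try to make the Pell--Abel polynomial integral. It keeps $P\in\Q[X]$ monic (after the rationalisation of Proposition~\ref{prop-reelaurationnel}), forms the Chebyshev-type iterates $P_n(x)=2\lambda^{n}\cos(n\theta(x))\in\Q[X]$ with $\lambda=M/2>1$, and then \emph{perturbs}: one subtracts a small $q_n\in\Q[X]$ of lower degree, built in the basis $\{X^jP_k\}$ with coefficients in $[0,1)$, so that $P_n-q_n\in\Z[X]$. The crucial point is Lemma~\ref{lmm-petiteperturbation}: because $P_n$ equioscillates on $E$ with amplitude $2\lambda^{n}$ and $\|q_n\|_E$ is only of order $\lambda^{n-\ell}$, the perturbation does not dislodge any root from $E$. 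So the condition $\capa(E)>1$ (equivalently $\lambda>1$) is used not to force integrality of $T$, but to create enough oscillatory headroom to absorb the rounding to $\Z[X]$. Replacing your exact pullback by this perturbation argument closes the gap.
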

\begin{rmq}
It is enough to prove the theorem for a union of disjoint segments, since the union of two non-disjoint segments forms a segment, and the capacity of an open interval is defined as the supremum of the capacities of segments included in the interval.
\end{rmq}

Let us consider $a_0<b_0<a_1<\dots<a_n<b_n$, $E_j =[a_j,b_j]$ and $E= \displaystyle\bigcup_{j=0}^g E_j$. 

Let us consider $D = \displaystyle\prod^g_{j=0} (X-a_j)(X-b_j)$. The proof of Robinson's theorem is based on the geometry of the hyperelliptic curve $y^2=D(x)$. We prove it first for the \emph{Pell-Abel case}, then we generalize the result using density.

\subsubsection{Pell-Abel case}
\paragraph{}
Assume the Pell-Abel equation with respect to polynomial $D$ has a solution, \emph{i.e.} there exists $P,Q\in\R[X]$ such that 
\[P^2-DQ^2=c\]
where $c$ is a non-zero real number. Let us consider $r:=\deg(P)$.

\medskip

Note first that $c>0$, in fact $c=P(a_0)^2$ since $a_0$ is a root of $D$. Let us then consider $M = \sqrt{c}$ and write Pell-Abel equation as follows : 

\[P^2-DQ^2 = M^2.\]
Let us notice immediate properties of polynomials $P$ and $Q$: 
\begin{prop}\label{prop-observationP}
Let us consider $x\in\R$,
\begin{enumerate}
    \item $\abs{P(x)}\leqslant M \Leftrightarrow x\in E$ or $Q(x)=0$.
    \item $\abs{P(x)}= M \Leftrightarrow x$ is a root of $Q$ or one of the $a_i$, $b_j$.
\end{enumerate}

\end{prop}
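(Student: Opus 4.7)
The plan is to rewrite the Pell--Abel identity as $M^2 - P(x)^2 = -D(x)Q(x)^2$ and reduce both statements to the sign analysis of $D$ on $\mathbb{R}$. Since $Q(x)^2 \geq 0$ for all real $x$, we get immediately
\[|P(x)| \leq M \iff M^2 - P(x)^2 \geq 0 \iff D(x)Q(x)^2 \leq 0 \iff Q(x)=0 \text{ or } D(x)\leq 0,\]
and similarly $|P(x)|=M$ is equivalent to $D(x)Q(x)^2=0$, i.e.\ $Q(x)=0$ or $D(x)=0$. So both statements follow once I prove the two auxiliary claims: (i) $D(x) \leq 0 \iff x \in E$, and (ii) $D(x)=0 \iff x \in \{a_0,b_0,\dots,a_g,b_g\}$.

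Claim (ii) is immediate from $D = \prod_j (X-a_j)(X-b_j)$ together with the fact that the $a_j, b_j$ are pairwise distinct. For claim (i), I would use the ordering $a_0<b_0<a_1<b_1<\cdots<a_g<b_g$ to count sign changes. Fix $x \in \mathbb{R}$ distinct from all $a_j,b_j$ and consider the product $D(x)=\prod_{j=0}^g(x-a_j)(x-b_j)$. Each pair $(x-a_j)(x-b_j)$ is negative exactly when $x \in (a_j,b_j)$, and positive otherwise. Since the intervals $(a_j,b_j)$ are disjoint, at most one factor is negative, so $D(x)<0$ iff $x$ lies in exactly one open interval $(a_j,b_j)$, and $D(x)>0$ iff $x$ lies in none. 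Combined with (ii), this gives $D(x)\leq 0 \iff x \in \bigcup_j [a_j,b_j]=E$, as desired.

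There is no real obstacle here: everything reduces to elementary sign analysis once one observes that $P^2 - M^2 = DQ^2$ forces $D(x)Q(x)^2$ to have the opposite sign of $M^2 - P^2(x)$. The only care to take is to include the boundary cases ($Q(x)=0$ with $x \notin E$, and the endpoints $a_j,b_j$) properly, which is automatic since the inequalities are non-strict.
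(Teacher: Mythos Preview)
Your proof is correct and follows the same approach as the paper: both deduce the statements directly from the Pell--Abel identity $P^2 - DQ^2 = M^2$ together with the sign analysis $D(x)\leq 0 \Leftrightarrow x\in E$. The only minor difference is that the paper argues the sign of $D$ via its behavior at infinity and its simple sign changes at the $a_i,b_j$, whereas you group the factors pairwise as $(x-a_j)(x-b_j)$; both are equally elementary and yield the same conclusion.
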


\begin{proof}
These two properties directly follow from the Pell-Abel equation, by noticing that $D(x)\leqslant 0 \Leftrightarrow x\in E$. Indeed, $D$ tends to $+\infty$ when $x$ tends to $\infty$, and changes sign at points $a_i$, $b_j$.
\end{proof}

\medskip
\clearpage
The key to the proof of Robinson's theorem is based on the following property of the roots of polynomials $P$ and $Q$, illustrated in figure \ref{fig:P_genre1}.

\begin{prop} \label{prop-cle}
 Let us denote by $r_j$ the number of roots of $P$ in $E_j$.
\begin{enumerate}
\item The roots of $P$ and $Q$ are simple, interlaced, and all belong to $\overset{\circ}{E} =\bigcup_{j=0}^g ]a_j, b_j[ $.
\item The roots of $Q$ in $E_j$ divide $E_j$ into $r_j$ sub-intervals; in each of them, the polynomial $P$ is either strictly increasing, or strictly decreasing, with extreme values $M$ and $-M$.
\end{enumerate}
\end{prop}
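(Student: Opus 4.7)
The plan is to introduce an auxiliary polynomial $T\in\R[X]$ of degree $g$ characterized by the identity $2P'=QT$, localize its $g$ zeros one per gap $]b_{j-1},a_j[$, and derive both parts of the proposition from this single fact. Since $P^2-DQ^2=M^2$ is a nonzero constant, any common factor of $P$ and $Q$ would divide $M^2$, so $\gcd(P,Q)=1$; differentiating Pell-Abel yields $2PP'=Q(D'Q+2DQ')$, whence $Q\mid 2PP'$, and coprimality gives $Q\mid 2P'$. Comparing leading coefficients in Pell-Abel forces $\deg Q=r-g-1$ (with $Q$ monic up to sign), so that $T:=2P'/Q$ is well defined with $\deg T=g$.

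The central step is to locate the zeros of $T$. On each open gap $]b_{j-1},a_j[$ (with $1\le j\le g$), $D>0$, and Pell-Abel gives $P^2=M^2+DQ^2>M^2$ strictly, since $Q$ cannot vanish on the gap by Proposition~\ref{prop-observationP}. Hence $P$ never vanishes on the gap and keeps constant sign, so the boundary values $P(b_{j-1}),P(a_j)\in\{\pm M\}$ must coincide; Rolle's theorem then produces a zero of $P'$ inside the gap, and since $Q\ne 0$ there, $T$ must vanish. This yields $g$ zeros of $T$, one per gap; as $\deg T=g$, these are all the zeros of $T$ (each simple), and in particular $T$ does not vanish on $E$.

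Part~(1) now follows by multiplicity bookkeeping. If $P(x_0)=0$, Pell-Abel gives $D(x_0)Q(x_0)^2=-M^2<0$, so $D(x_0)<0$ (hence $x_0\in\overset{\circ}{E}$) and $Q(x_0)\ne 0$; since $T(x_0)\ne 0$ as well, $2P'=QT$ forces $P'(x_0)\ne 0$ and $x_0$ is simple. A zero $x_0$ of $Q$ satisfies $|P(x_0)|=M$, so $x_0\in E$. To exclude $x_0\in\{a_j,b_j\}$, suppose $Q$ vanishes to order $k\ge 1$ at $a_j$; then $(P-M)(P+M)=DQ^2$ has a zero of odd order $1+2k$ at $a_j$, and by coprimality of $P-M$ and $P+M$, this order is concentrated in one factor (say $P-M$), making $P'$ vanish to order $2k$ at $a_j$. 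But $2P'=QT$ with $T(a_j)\ne 0$ gives order $k$, whence $k=0$, a contradiction. The same count at an interior root $x_0\in\overset{\circ}{E}$ of $Q$ (no contribution from $D$) gives order $2k$ in one of $P\pm M$, hence order $2k-1$ for $P'$, matched with order $k$ from $QT$, forcing $k=1$.

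For the interlacing in~(1) and for~(2), observe that $(P+iQ\sqrt{-D})(P-iQ\sqrt{-D})=M^2$ on $E$. On each $E_j$ we may therefore write $P+iQ\sqrt{-D}=Me^{i\theta(x)}$ with $\theta\colon[a_j,b_j]\to\R$ continuous (the argument lifts because $[a_j,b_j]$ is simply connected), so that $P=M\cos\theta$ and $Q\sqrt{-D}=M\sin\theta$. Differentiating the first identity and substituting $2P'=QT$ yields
\[\theta'(x)=-\frac{T(x)}{2\sqrt{-D(x)}}\qquad\text{on }\;]a_j,b_j[,\]
which has constant nonzero sign since $T$ has no zero on $E_j$. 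Hence $\theta$ is strictly monotonic on $[a_j,b_j]$, and because $\sqrt{-D}$ vanishes only at the endpoints, $\theta(a_j),\theta(b_j)\in\pi\Z$, so $\theta$ sweeps a closed interval of length $r_j\pi$ with $r_j\ge 1$. The $r_j$ zeros of $P=M\cos\theta$ in $]a_j,b_j[$ (crossings $\theta=\pi/2+n\pi$) strictly alternate with the $r_j-1$ interior zeros of $Q$ (crossings $\theta=n\pi$), yielding the interlacing; and between two consecutive zeros of $Q$ in $E_j$, $\theta$ traverses monotonically an interval of length $\pi$ between consecutive multiples of $\pi$, so $P=M\cos\theta$ is strictly monotonic from $\pm M$ to $\mp M$. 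The main obstacle is the localization of $T$: once its $g$ zeros are correctly placed, one per gap, the multiplicity counts and the $\theta$-monotonicity are essentially forced.
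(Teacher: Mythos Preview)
Your approach is correct and considerably more elementary than the paper's. Both arguments hinge on a degree-$g$ polynomial whose zeros lie one per gap $]b_{j-1},a_j[$ and hence nowhere on $E$, forcing the phase $\theta$ to be strictly monotonic on each $E_j$. The paper constructs this polynomial as the numerator $R$ of the third-kind differential $\eta=R(x)\,dx/y$ on the compactified hyperelliptic curve $y^2=D(x)$, characterised by the period conditions $\int_{b_{j-1}}^{a_j}R/\sqrt{D}\,dx=0$, and then proves $df/f=r\eta$ via divisor computations on the curve. You instead produce $T=2P'/Q$ by pure polynomial algebra (differentiate Pell--Abel, use coprimality) and locate its zeros by Rolle. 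In fact $T=\pm 2rR$, so the two objects coincide up to a constant; your route simply bypasses the Riemann-surface machinery. The paper's approach does yield the extra formula $r_j=r|\eta_j|/\pi$, needed later in the paper but not for this proposition.

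One step needs tightening. Your citation of Proposition~\ref{prop-observationP} for the claim that $Q$ cannot vanish on a gap is unjustified: that proposition says no such thing, and a priori $Q$ might vanish there, in which case the Rolle point of $P$ need not be a zero of $T$. The clean fix is to note that $|P|$ attains a maximum strictly greater than $M$ at some interior point $x_1$ of the closed gap (otherwise $P\equiv\pm M$, impossible); at $x_1$ one has $P'(x_1)=0$, while $|P(x_1)|>M$ forces $Q(x_1)\ne 0$, whence $T(x_1)=0$. Similarly, the inference ``$|P(x_0)|=M$, so $x_0\in E$'' for a zero of $Q$ is premature and does not follow from Proposition~\ref{prop-observationP}; simply drop it and let the $\theta$-argument do the work: it already locates $r_j-1$ interior zeros of $Q$ in each $E_j$, totalling $\sum_j(r_j-1)=r-(g+1)=\deg Q$, so these are \emph{all} the zeros of $Q$, and your separate multiplicity bookkeeping at the endpoints becomes redundant.
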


These properties are illustrated in figure \ref{fig:P_genre1} (case $g=1$, $r_1=4$, $r_2=6$). The $p_i$ are the roots of $P$ and $q_i$ are the roots of $Q$.

\begin{figure}[h!]
    \centering
    \includegraphics[scale=0.2]{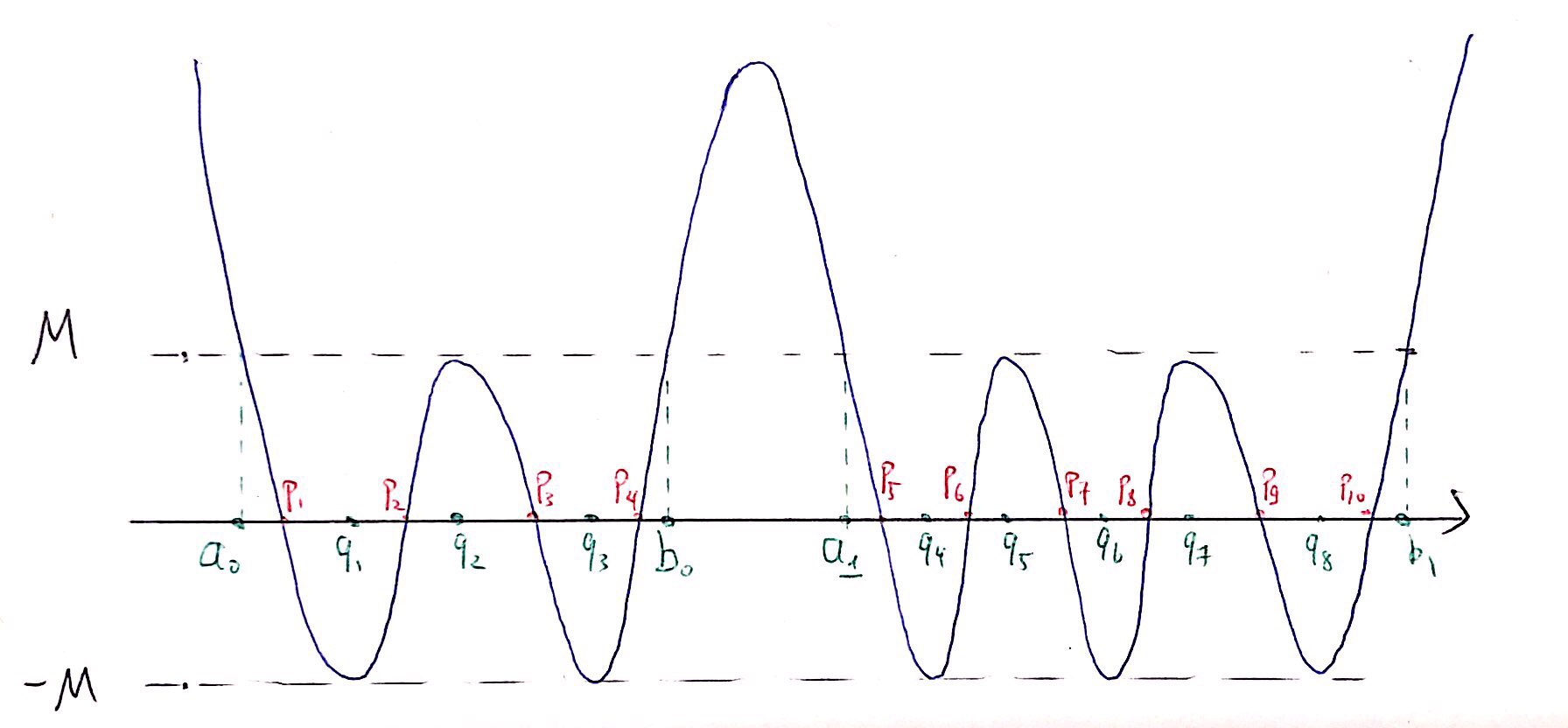}
    \caption{Shape of $P$, case $g=1$, $r_1=4$, $r_2 =6$}
    \label{fig:P_genre1}
\end{figure}

This proposition is the very key to the proof of Robinson's theorem. Because of its importance, it is very enlightening to prove it in the case $g=0$ (in other words, $E=[a,b]$ is a single segment). It allows to show the major steps of the proof, by dissociating the difficulties. 

\subsubsection{Proof of proposition \ref{prop-cle} in the case $g=0$}

We focus here on the case where $E=[a,b]$ is a segment of $\R$, therefore $g=0$ and $D(X)=(X-a)(X-b)$.
\medskip
Let us consider $f=P+yQ$, regular function on the curve $y^2=D(x)$ completed as in the beginning of the chapter (the curve is not hyperelliptic anymore but only quadratic). Let us start with the calculation of $\dif f/f$ : 

\begin{prop}\label{prop-df/f}
We have $\dif f/f = r\dif x/y$, where $r=\deg P$.
\end{prop}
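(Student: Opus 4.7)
The strategy is to promote the claimed equality from a computation with polynomials in $x$ to an identity of meromorphic $1$-forms on the compact curve $C$, and then to exploit the fact that for $g=0$ the Riemann surface $C$ has no nonzero holomorphic $1$-forms.

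First I would invoke Theorem~\ref{thm-PellianEquivautTorsion} (whose proof produces the divisor explicitly) to obtain
\[\Div(f)=r\,(\infty_-)-r\,(\infty_+),\]
so that $f=P+yQ$ has a zero of order $r$ at $\infty_-$, a pole of order $r$ at $\infty_+$, and is regular and nonvanishing elsewhere on $C$. Writing $f=h\,t^k$ locally, with $t$ a local parameter and $h$ a local unit, one finds that $\dif f/f=(k/t)\,\dif t+\dif h/h$, so that $\dif f/f$ is holomorphic on $C\setminus\{\infty_+,\infty_-\}$ and has simple poles with residues $+r$ at $\infty_-$ and $-r$ at $\infty_+$.

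Next I would compute the divisor of $\dif x/y$. On the affine chart $U_0$ the form is manifestly holomorphic where $y\neq 0$, and at the two Weierstrass points $(a,0)$ and $(b,0)$ the relation $y^2=D(x)$ gives $\dif x=(2y/D'(a))\,\dif y+O(y^2)\,\dif y$ (and likewise at $(b,0)$), so that $\dif x/y$ extends there holomorphically and without vanishing. At the points at infinity I would pass to the chart $(u,v)=(1/x,y/x)$ coming from~(\ref{recollementCourbeHyperellip}): the equation of the curve becomes $v^2=(1-au)(1-bu)$, whose two branches $v(0)=\pm 1$ correspond to $\infty_\pm$. Since $\dif x=-u^{-2}\,\dif u$ and $y=v/u$, one obtains
\[\frac{\dif x}{y}=-\frac{\dif u}{u\,v},\]
a simple pole of residue $-1/v(0)=\mp 1$ at $\infty_\pm$. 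Multiplying by $r$ yields a $1$-form with exactly the same principal parts as $\dif f/f$ computed above.

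The difference $\omega:=\dif f/f-r\,\dif x/y$ is therefore a meromorphic $1$-form on $C$ whose poles all cancel, hence a holomorphic $1$-form on the compact Riemann surface $C$. Since by Proposition~\ref{genus=g} the surface $C$ has genus $g=0$, and the space of holomorphic $1$-forms on a compact Riemann surface of genus $0$ has dimension $0$, we conclude $\omega\equiv 0$, which is the claimed identity. The only genuinely delicate step in this plan is the local computation at $\infty_\pm$: one must match the two branches $v(0)=\pm 1$ with $\infty_\pm$ consistently, since it is precisely the antisymmetry of the residues under the hyperelliptic involution that makes the two forms cancel point by point rather than merely up to a sign.
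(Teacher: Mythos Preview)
Your proof is correct and the computations of $\Div(f)$ and of the poles and residues of $\dif x/y$ mirror the paper's exactly. The only difference is in the concluding step: the paper forms the \emph{ratio} $(\dif f/f)/(\dif x/y)$, observes that it is a meromorphic function on $C$ without poles (hence constant by Liouville on a compact Riemann surface), and reads off the constant $r$ from the residues at $\infty_\pm$; you instead form the \emph{difference} $\dif f/f - r\,\dif x/y$, observe that it is a holomorphic $1$-form, and kill it using $\dim H^0(C,\Omega^1)=g=0$.

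Both arguments are of the same depth, but yours has a slight conceptual advantage: it is precisely the method the paper itself adopts in the general case $g>0$ (there one writes $\dif f/f - r\eta$ as a holomorphic form and then must check that its $\alpha$-periods vanish). So your version of the $g=0$ case already points toward the general proof, whereas the paper's ratio trick does not generalize, since for $g>0$ one cannot conclude that a nowhere-vanishing holomorphic function on the affine curve is constant on $C$ without further work.
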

\begin{proof}
  We know that $\Div(f)=r((\infty_-)-(\infty_+))$ (\emph{cf.} the proof of theorem \ref{thm-PellianEquivautTorsion}), therefore, $\infty_-$ is a zero of $f$ with multiplicity $r$ and $\infty_+$ is a pole with the same multiplicity. Hence, $\dif f/f$ has two simple poles at points $(\infty_+)$, $(\infty_-)$ with residues $-r$ and $r$. Let us calculate the poles of the form $\dif x/y$ and their multiplicity. The only potential poles of $\dif x/y$ are either  $(\infty_+)$, $(\infty_-)$, or the roots of polynomial $D$.
\medskip
\begin{itemize}
    \item If $\alpha$ is a root of $D$ in the affine space, since the roots of $D$ are simple, we can write $D(X)=(X-\alpha)H(X)$, with $H(\alpha)\neq 0$. By differentiating $y^2=(X-\alpha)H(X)$, we obtain in a neighborhood of $\alpha$ :
    $\dif x \sim \frac{2y}{H(\alpha)}\dif y$. 
    
    Therefore $\dif x/y = \frac{2}{H(\alpha)} \dif y $ has no pole at point $\alpha$.
    \item At points $\infty\pm$, with coordinates $(u=\frac 1x, v= \frac yx)$, $\infty_+ = (0,1)$, and the equation of the curve is $v^2=u^2D(\frac 1u)$. Therefore $\frac{\dif x}{y} = - \frac{u}{v}\frac{\dif u}{u^2} = -\frac{\dif u}{\pm\sqrt{u^2D(\frac 1u)}u}\sim \mp\frac{\dif u}{u}$ since the constant coefficient of $u^2D(\frac 1u)$ is $1$. Hence $\infty_+$ is a simple pole of $\dif x/y$, and the same goes for $\infty_-$.

\end{itemize}

Therefore $\frac{\dif f/f}{\dif x/y}$ has no zero nor pole neither on the affine space, nor at infinity : It is a constant function. But according to the previous calculation, the residues of $\dif x/y$ at $\infty\pm$ are $\mp1$, hence the constant is equal to $r$.
\end{proof}

\medskip

Let us now prove proposition \ref{prop-cle} in the case where $E$ is a segment, illustrated in figure \ref{fig:P_genre0} :

\begin{prop}\label{prop-clesegment}
\begin{enumerate}
\item The roots of $P$ and $Q$ are simple, interlaced, and all belong to $\overset{\circ}{E}$.
\item The roots of $Q$ divide $E$ into $r$ sub-intervals; in each of them, the polynomial $P$ is either strictly increasing, or strictly decreasing, with extreme values $M$ and $-M$.
\end{enumerate}
\end{prop}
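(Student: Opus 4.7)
The plan is to reduce Proposition \ref{prop-clesegment} to an explicit Chebyshev-like formula for $P$ on $[a,b]$, which I would extract from the identity $df/f = r\, dx/y$ of Proposition \ref{prop-df/f}. The key is first to convert this identity of meromorphic forms into a purely algebraic relation between $P$ and $Q$, and then to integrate it along the lift of $[a,b]$ to the curve.

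The first step is to derive $P' = rQ$. Writing $f = P + yQ$, $\bar f = P - yQ$ and $f\bar f = P^2 - DQ^2 = M^2$, differentiation of the product gives $d\bar f/\bar f = -df/f$. Since $df + d\bar f = 2\, dP$ and $f - \bar f = 2yQ$, adding $df/f = r\, dx/y$ and $d\bar f/\bar f = -r\, dx/y$ yields $2\, dP = 2rQ\, dx$, hence $P' = rQ$. In particular $\deg Q = r-1$ and the zeros of $Q$ are exactly the critical points of $P$.

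Next I would exhibit the Chebyshev structure of $P$. On the lift of $[a,b]$ to the sheet $y = i\sqrt{|D(x)|}$ one has $|f|^2 = P^2 + |D|Q^2 = P^2 - DQ^2 = M^2$, so $f$ takes values on the circle of radius $M$; moreover $f$ cannot vanish there (otherwise $f\bar f = 0 \neq M^2$), so we may set $f = M e^{i\phi(x)}$ for a real $\phi$. The identity $df/f = r\, dx/y$ becomes $i\, d\phi = -ir\, dx/\sqrt{|D(x)|}$. Substituting $x = \tfrac{a+b}{2} + \tfrac{b-a}{2}\cos\theta$ with $\theta \in [0,\pi]$, for which $dx/\sqrt{|D(x)|} = -d\theta$, one obtains $d\phi = r\, d\theta$, hence $\phi = r\theta + C$ for some constant $C$, and $P(b) = \pm M$ forces $C \in \{0,\pi\}$. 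Taking real and imaginary parts,
\begin{equation*}
    P(x) = M\cos(r\theta + C), \qquad Q(x)\sqrt{|D(x)|} = M\sin(r\theta + C).
\end{equation*}

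From this explicit form both claims are immediate. The equation $\cos(r\theta + C) = \pm 1$ has $r+1$ solutions $\theta_k = (k\pi - C)/r$, $k=0,\ldots,r$, in $[0,\pi]$; the two endpoints correspond to $x = a, b$, while the $r-1$ interior ones give zeros of $Q$ lying in $\overset{\circ}{E}$, which account for all of its zeros since $\deg Q = r-1$, and which are simple because $\sin$ has simple zeros at multiples of $\pi$. Between two consecutive such $\theta_k$, $\cos(r\theta + C)$ is strictly monotonic from $\pm M$ to $\mp M$, so via the diffeomorphism $\theta \leftrightarrow x$ the same holds for $P$ on the corresponding sub-interval of $E$, giving claim (2); in particular $P$ vanishes exactly once on each sub-interval, providing $r$ simple zeros of $P$ in $\overset{\circ}{E}$ strictly interlaced with those of $Q$. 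The main obstacle will be the preparatory checks for the parametrization, namely that $f$ never vanishes on the lift of $[a,b]$, that one can pick a global sheet of $y$ and a continuous branch of $\phi$, and that signs are tracked consistently when passing from $d\phi = -r\, dx/\sqrt{|D(x)|}$ to $d\phi = r\, d\theta$; once the Chebyshev expression for $P$ is in hand, the remainder is an elementary computation.
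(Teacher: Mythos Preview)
Your argument is correct and shares its core with the paper's proof: both lift $[a,b]$ to the branch $y=i\sqrt{-D(x)}$, observe $|f|=M$ there, write $f=Me^{i\phi}$, and use $df/f=r\,dx/y$ to control the phase. The differences are twofold. First, you add the relation $P'=rQ$ (from $df/f+d\bar f/\bar f=0$ and $df+d\bar f=2\,dP$), which the paper does not isolate; it is a clean way to identify the zeros of $Q$ with the critical points of $P$, though $\deg Q=r-1$ is already forced by the Pell--Abel equation. Second, and more substantively, you compute the total variation of $\phi$ by the explicit Chebyshev substitution $x=\tfrac{a+b}{2}+\tfrac{b-a}{2}\cos\theta$, turning $d\phi=-r\,dx/\sqrt{|D|}$ into $d\phi=r\,d\theta$ and hence $P(x)=M\cos(r\theta+C)$. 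The paper instead evaluates $\int_a^b df/f$ by the residue of $dx/y$ at $\infty_\pm$, obtaining $|\phi(b)-\phi(a)|=r\pi$ without an explicit antiderivative. Your route is more elementary and yields the Chebyshev formula for free, but it is specific to $g=0$; the paper's residue/period computation is what generalises to $g>0$, where it becomes the period argument of Proposition~\ref{prop-periodeimaginaire}.
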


\begin{figure}[h!]
    \centering
    \includegraphics[scale=0.2]{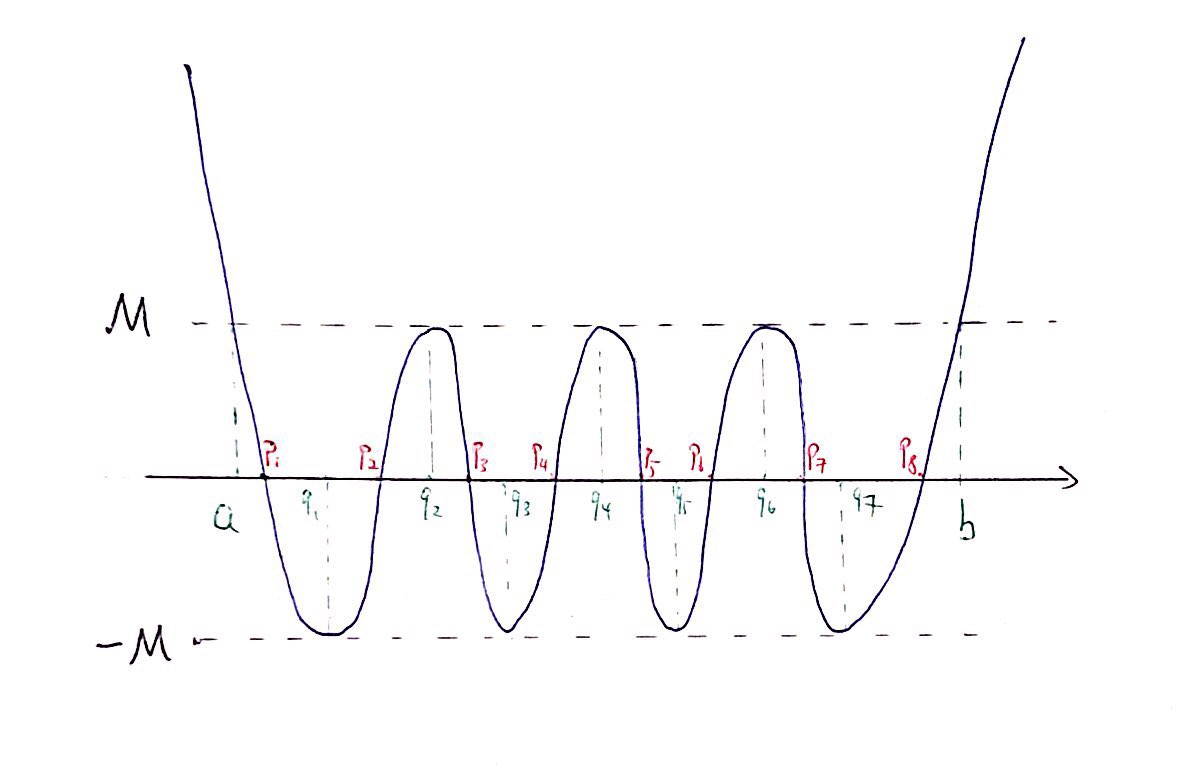}
    \caption{Shape of $P$, in the case $g=0$, $r=8$ }
    \label{fig:P_genre0}
\end{figure}

\begin{proof}
A representative of $f=P+yQ$ in $E$ is $f(x)=P(x)+iy_1(x)Q(x)$, with $y_1(x)=\sqrt{-D(x)}$ since $D(x)\leqslant 0$ for all $x\in E$. The Pell-Abel equation implies that $\abs{f(x)}^2 = P(x)^2-D(x)Q(x)^2 = M^2$, therefore $f$ has a constant magnitude which is equal to $M$ on $E$. 

\medskip

We can then write that $f(x)=M e^{i\theta(x)}$, with $\theta : E \rightarrow \R$ of class $C^1$. Hence, we have $P(x)=M\cos(\theta(x))$, $y_1(x)Q(x)=M\sin(\theta(x))$. And since $f(a)=\pm M$, $f(b)=\pm M$, we have $\theta(a)=c_0 \pi$,  $\theta(a)=c_1 \pi$, where $c_0,c_1\in \Z$. Moreover, we have that $\int_a^b\frac{\dif f}{f} = i\int_a^b \dif \theta = i(c_1-c_0)\pi$. However $\frac{\dif f}{f}=r\frac{\dif x}{y}$, and $\int_a^b \frac{\dif x}{y}= \pm i\pi $ (the residue of $\frac{\dif x}{y}$ at infinity is $\pm 1$ and the curve $y^2=D(x)$ is a covering map of degree $2$ of $E=[a,b]$. The sign $\pm$ depends on the orientation.) We deduce from it that $\abs{c_1-c_0} =r$.

\medskip

However, according to proposition \ref{prop-df/f}, $f'$ is never zero on $E$, therefore $\theta'$ is never zero on $E$. Hence, $\theta$ is strictly monotonic on $E$, taking values in the range $c_0\pi$ to $c_1\pi$, therefore $\cos(\theta(x))$ is equal to zero $\abs{c_1-c_0}=r$ times, and $\sin(\theta(x))$ is equal to zero $\abs{c_1-c_0}-1=r-1$ times (end points not included). Since $\deg P =r$, $\deg Q = r-1$, we found all the roots of $P$ and $Q$. The other claims of the proposition directly follow from the sinusoidal shapes of $P$ and $Q$.
\end{proof}

The case where $E$ is a single segment already allows us to use the main steps of the proof in the general case while omitting a major difficulty : when $E$ is the union of segments $E_j$, we do not know \textit{a priori} how many roots of $P$ are in each $E_j$, and how we can state that we have found all the roots. To solve this problem in the general case, we will use the \emph{periods} of a differential form on the hyperelliptic curve of genus $g>0$.

\subsubsection{Proof of proposition \ref{prop-cle} when $g>0$}
\paragraph{}
Let us consider the general case: $E=\displaystyle\bigcup_{i=0}^g[a_i,b_i]$.

As previously stated, the difficulty is to determine how many roots are in $E_j=[a_j, b_j]$ and to check that we have indeed found all the roots of $P$. To do so, we shall use the periods of a particular differential form on the curve $y^2 =D(x)$, called \emph{canonical form}.

\begin{thm}\label{thm-formecanonique}
There exists a unique polynomial $R\in \R[X]$ of degree $g$ such that, for $j = 1, \dots, g$, 
\[\int_{b_{j-1}}^{a_j}\frac{R(x)}{\sqrt{D(x)}}\dif x = 0.\]
The differential form $\eta = \frac{R(x)\dif x}{y}$ is said to be of \emph{the third kind}.
\end{thm}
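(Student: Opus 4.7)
My plan is to translate the integral condition into a statement about periods of a differential form on the completed hyperelliptic curve $C: y^2 = D(x)$ introduced in section~\ref{sectionCompactificationLisse}, and then apply the perfect pairing theorem~\ref{thm-accouplementParfait}. On each ``gap'' $[b_{j-1}, a_j]$ lying between consecutive components of $E$, one has $D(x) > 0$ (since $D$ changes sign at each root and is negative precisely on $E$), so $\sqrt{D(x)}$ is real there. The cycle $\alpha_j$ of the symplectic basis covers this gap, going from $b_{j-1}$ to $a_j$ along the upper sheet $y = +\sqrt{D(x)}$ and back along the lower sheet $y = -\sqrt{D(x)}$. Consequently, for every $R \in \R[X]$,
\[\int_{\alpha_j}\frac{R(x)\,\dif x}{y} \;=\; 2\int_{b_{j-1}}^{a_j}\frac{R(x)\,\dif x}{\sqrt{D(x)}},\]
so the $g$ vanishing conditions in the theorem are equivalent to the vanishing of the periods of the form $\eta_R = R(x)\,\dif x/y$ along $\alpha_1, \dots, \alpha_g$.

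I would then consider the linear map $\Phi : V_g \to \R^g$, where $V_g$ is the $(g+1)$-dimensional real vector space of polynomials of degree at most $g$, defined by $\Phi(R)_j = \int_{b_{j-1}}^{a_j} R(x)/\sqrt{D(x)}\,\dif x$. By rank-nullity it suffices to show that the restriction $\Phi\big|_{V_{g-1}}$ to polynomials of degree $< g$ is injective, which will simultaneously give surjectivity of $\Phi$ and $\dim\ker\Phi = 1$. For $A \in V_{g-1}$, the example at the end of section~\ref{sectionCompactificationLisse} shows that $\eta_A = A(x)\,\dif x/y$ is a holomorphic $1$-form on $C$. Taking $\omega_i = x^{i-1}\,\dif x/y$ for $i=1,\dots,g$ as a basis of holomorphic $1$-forms (it has the right dimension $g$), the matrix $A_{ij} = \int_{\alpha_j}\omega_i$ is invertible by theorem~\ref{thm-accouplementParfait}. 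Hence $\Phi(A) = 0$ forces $A = 0$, proving injectivity.

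Therefore $\Phi\big|_{V_{g-1}}$ is an isomorphism between spaces of dimension $g$, $\Phi$ is surjective, and $\ker\Phi$ is one-dimensional. Any nonzero $R \in \ker\Phi$ must have degree exactly $g$: otherwise $R \in V_{g-1}$ and the injectivity above would force $R = 0$. Normalizing $R$ to be monic (the convention implicit in the definition of the form of the third kind) pins down the unique polynomial of degree $g$ with the required property.

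The real work is the geometric identification of the cycle $\alpha_j$ with a double cover of the gap $[b_{j-1}, a_j]$ and the careful check of signs and orientations that yields the factor of $2$ above; once this is in place, the existence and uniqueness reduce to the linear algebra of a rank computation combined with the nondegeneracy provided by theorem~\ref{thm-accouplementParfait}. One should also verify that the resulting $R$ indeed has real coefficients, which follows because all the data (the intervals, $D$, and the cycles $\alpha_j$) are real, so $\Phi$ is a real-linear map on a real vector space.
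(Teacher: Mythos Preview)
Your proof is correct and follows essentially the same route as the paper: both identify the integral conditions with periods along the cycles $\alpha_j$, take $\{x^{i}\dif x/y\}_{0\le i\le g-1}$ as a basis of holomorphic forms, and invoke Theorem~\ref{thm-accouplementParfait} to obtain invertibility of the relevant $g\times g$ matrix. The only cosmetic difference is that the paper writes $R=X^g+\sum_{i=0}^{g-1}c_iX^i$ from the start and solves the resulting linear system directly, whereas you phrase the same computation via rank--nullity on $\Phi$.
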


\begin{proof}
 Let us consider the curve $y^2=D(x)$ as a Riemann surface thanks to the covering map $(x,y)\mapsto x$. The dimension of the space of holomorphic forms is $g$. A residue calculation with coordinates $(u=\frac 1x, v= \frac{y}{x^{g+1}})$ gives us that for $i =0, \dots ,g-1$, the $x^i\dif x/y$ are holomorphic forms, therefore form a basis of holomorphic forms. 
 
 \medskip
 
 Let us denote by $\alpha_j$ the cycle on the curve which covers (quadratically) the interval $[b_{j-1},a_j]$. A monic polynomial of degree $g$, $R(X)= X^g + \displaystyle\sum_{j=0}^{g-1} c_j X^j$ verifies the wanted condition if and only if its coefficients $c_0,\dots c_{g-1}$ verify the following system of $g$ equations with $g$ unknowns:
 \[\sum_{i=0}^{g-1}c_i\int_{\alpha_j} \frac{x^i\dif x}{y} = - \int_{\alpha_j} \frac{x^g\dif x}{y},\qquad j= 1,\dots g.\]
 It follows from theorem \ref{thm-accouplementParfait} that the determinant associated with this system is non-zero and therefore the system has a unique solution. 
\end{proof}
\begin{rmq}
    By a calculation with coordinates $(u=1/x, v=y/x^{g+1})$, we can show that $\eta$ is a meromorphic form which has two simple poles at $\infty_+$ and $\infty_-$, with residues $-1$ and $1$ respectively. 
\end{rmq}

\begin{rmq}\label{rmq-Rnesannulepas}
    The relations $\int_{b_{j-1}}^{a_j}\frac{R(x)}{\sqrt{D(x)}}\dif x = 0 $ require that $R$ has at least one zero in each $[b_{j-1}, a_j]$, for $j=1,\dots,g$. Since $R$ is of degree $g$, these are all its roots. In particular, $R$ has a constant sign on $E_j = [a_j, b_j]$.
\end{rmq}

Let us focus on the periods of $\eta$ on the cycles that cover segments $[a_j,b_j]$ (it is a quadratic covering space, ramified in points $a_j$, $b_j$). These periods are equal to $2\eta_j$, with $\eta_j = \displaystyle \int_{a_j}^{b_j} \frac{R(x)}{i\sqrt{-D(x)}}\dif x$.
\newpage
\begin{prop}\label{prop-periodeimaginaire}
Let us note $\eta_j = \displaystyle \int_{a_j}^{b_j} \frac{R(x)}{i\sqrt{-D(x)}}\dif x$, for $j = 0,\dots g$. \\
There exists $(\epsilon_j)_{0\leq j \leq g}\in\{-1,1\}^{g+1}$ such that 
\[\sum_{j=0}^g \epsilon_j \eta_j = i\pi.\]
\end{prop}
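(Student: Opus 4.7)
The plan is to realize $\eta = R(x)\,\dif x/y$ on a single sheet of the hyperelliptic curve as a meromorphic form on $\C\setminus E$, and then apply the residue theorem at infinity. Concretely, I would first fix a holomorphic branch $\widetilde{y}$ of $\sqrt{D(x)}$ on the simply connected domain $\C\setminus E$, normalized so that $\widetilde{y}(x)>0$ for $x>b_g$. Crossing any cut $E_j$ flips the sign of this branch, so its boundary values satisfy $\widetilde{y}_{+}(x)=-\widetilde{y}_{-}(x)=\epsilon_j\, i\sqrt{-D(x)}$ on $E_j$ for some $\epsilon_j\in\{-1,+1\}$. These are the signs that will appear in the conclusion; they alternate with $j$, since each successive cut one encounters when moving leftward from $+\infty$ multiplies the branch by $-1$.

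Next, for each $j$ I would deform a small counterclockwise loop $\gamma_j$ around the cut $E_j$ onto the segment $[a_j,b_j]$ traversed twice (the contributions from the little arcs at the ramification points $a_j,b_j$ are negligible because $R/\widetilde{y}$ is integrable there: $R$ is a polynomial while $\widetilde{y}$ only vanishes to half order). This yields
\[\oint_{\gamma_j}\frac{R(x)}{\widetilde{y}(x)}\,\dif x \;=\; 2\int_{a_j}^{b_j}\frac{R(x)}{\widetilde{y}_{+}(x)}\,\dif x \;=\; 2\epsilon_j\eta_j.\]
Summing over $j$ and deforming $\sum_j \gamma_j$ outward to a large circle $C_r$ enclosing all of $E$ (the integrand is holomorphic on $\C\setminus E$, so the deformation is legitimate), the outer integral can be computed from the Laurent expansion at $\infty$: since $R$ is monic of degree $g$ by construction (Theorem \ref{thm-formecanonique}) and $\widetilde{y}(x)\sim x^{g+1}$ as $x\to+\infty$, one has $R(x)/\widetilde{y}(x) = 1/x + O(1/x^2)$, hence $\oint_{C_r} R/\widetilde{y}\,\dif x = 2\pi i$. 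Combining the two expressions gives $2\sum_{j=0}^g \epsilon_j \eta_j = 2\pi i$, which is exactly the claimed identity.

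The main obstacle, and the step that deserves the most care, is the bookkeeping of the branch $\widetilde{y}$ and the signs $\epsilon_j$: one must verify that the branch chosen on $\C\setminus E$ genuinely corresponds to restricting $\eta$ to the ``upper sheet'' of the smooth completion, and that the jumps $\widetilde{y}_{+}/\widetilde{y}_{-} = -1$ hold consistently across every cut. The statement itself only asserts the existence of signs $(\epsilon_j)$, which is considerably easier than pinning them down explicitly; once the branch is fixed globally, the existence of suitable $\epsilon_j$ is automatic, and the remainder of the argument is a one-line residue computation at infinity driven entirely by the matching degrees of $R$ and $D$.
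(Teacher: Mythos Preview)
Your argument is correct and rests on the same idea as the paper's proof: both compute $\sum_j \epsilon_j \eta_j$ by relating the periods around the cuts $E_j$ to a residue at infinity via Cauchy's theorem. The paper works intrinsically on the compactified hyperelliptic curve $C$: the cycles $\Gamma_j$ covering $[a_j,b_j]$ together bound a hemi-surface containing $\infty_-$, and Cauchy's theorem on $C$ yields $\sum_j\int_{\Gamma_j}\eta = 2\pi i\,\mathrm{Res}_{\infty_-}\eta = 2\pi i$. You instead work on a single sheet, realized concretely as the slit plane $\C\setminus E$ with a fixed branch of $\sqrt{D}$, and invoke the planar residue theorem at $\infty$. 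These are two presentations of the same computation; yours is more elementary in that it bypasses the Riemann-surface language, while the paper's makes the role of the two points $\infty_\pm$ and the connection with the later argument more transparent.

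One small slip: $\C\setminus E$ is \emph{not} simply connected when $g\geq 1$. The existence of a single-valued holomorphic branch of $\sqrt{D}$ there is still true, but for a different reason: each cut $E_j$ encloses exactly two roots of $D$, so the monodromy of $\sqrt{D}$ around any $\gamma_j$ is trivial. You should replace the simple-connectedness justification by this observation.
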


\begin{figure}[ht!]
    \centering
    \includegraphics[scale = 0.1]{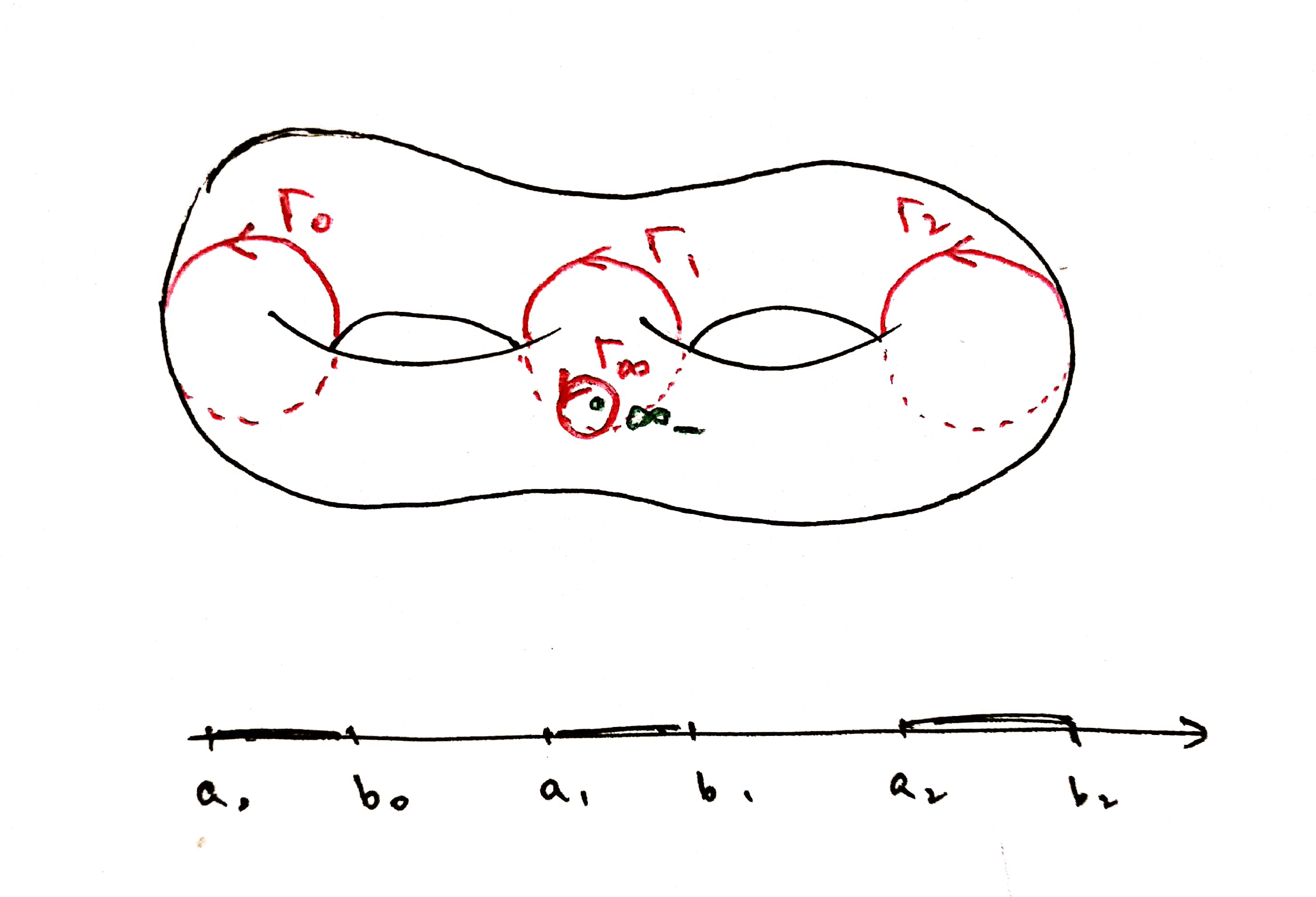}
    \caption{Case where $g=2$: $\bigcup_{j=0}^g\Gamma_j$ is the boundary of the medial hemi-surface.}
    \label{ExempleVisuel3}
\end{figure}

\begin{proof}
    Let us denote by $\Gamma_j$ the cycles which cover the segments $[a_j,b_j]$ (cf. Figure \ref{ExempleVisuel3}), the periods of $\eta$ on these cycles are equal to $\pm2\eta_j$. Let us denote by $\Gamma_\infty$ a cycle around the point $\infty_-$ (cf. Figure \ref{ExempleVisuel3}).

    The form $\eta$ is holomorphic on the domain framed by $\bigcup\Gamma_j$ and $\Gamma_\infty$.
    Therefore, Cauchy's theorem on a Riemann surface gives us that
    \[\sum_{j=0}^g \int_{\Gamma_j}\eta = \int_{\Gamma_\infty}\eta=2i\pi\]
    since the residue of $\eta$ at $\infty_-$ is equals to $1$.
    Hence $\displaystyle\sum_{j=0}^g \epsilon_j\eta_j = i\pi$, the signs depend on the orientation of the cycles.
\end{proof}

Like in the case $g=0$, the calculation of $\dif f/f$ will be useful later : 

\begin{prop}
We have $\dif f/f = r\eta$, where $r=\deg P$ and $\eta$ is the differential form of the third kind previously defined.
\end{prop}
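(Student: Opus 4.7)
The plan is to show that $\omega := \dif f/f - r\eta$ is a holomorphic 1-form on $C$ whose periods along the cycles $\alpha_1,\dots,\alpha_g$ all vanish, after which Theorem \ref{thm-accouplementParfait} forces $\omega = 0$. First, I would match the principal parts of the two forms. From the proof of Theorem \ref{thm-PellianEquivautTorsion} we have $\Div(f) = r((\infty_-) - (\infty_+))$, so $\dif f/f$ is meromorphic on $C$ with simple poles at $\infty_+$ and $\infty_-$ of residues $-r$ and $+r$ respectively, and holomorphic elsewhere. By the remark following Theorem \ref{thm-formecanonique}, $\eta$ has simple poles at $\infty_\pm$ with residues $\mp 1$. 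Subtracting, $\omega = \dif f/f - r\eta$ is a holomorphic 1-form on $C$.

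Next, I would compute the periods of both summands along the cycles $\alpha_j$ that doubly cover $[b_{j-1},a_j]$. By the very definition of $R$, one has $\int_{\alpha_j}\eta = 2\int_{b_{j-1}}^{a_j} R(x)/\sqrt{D(x)}\,\dif x = 0$. To handle $\dif f/f$, observe that the sign of $D$ changes only at the $a_i,b_i$, and by Proposition \ref{prop-observationP} $D\leq 0$ on each $E_i$, so $D\geq 0$ on each interval $[b_{j-1},a_j]$. Thus $y = \pm\sqrt{D(x)}$ is real on $\alpha_j$ and $f = P + yQ$ takes real values there. Since $f$ has no zeros off $\{\infty_\pm\}$, the restriction $f|_{\alpha_j}$ is a continuous, nonvanishing, real-valued function; it therefore stays in a single connected component of $\R\setminus\{0\}$, so a continuous branch of $\log f$ exists along $\alpha_j$, giving $\int_{\alpha_j}\dif f/f = 0$.

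Hence $\omega$ is a holomorphic 1-form with vanishing $\alpha$-periods. Theorem \ref{thm-accouplementParfait} asserts that the $g\times g$ matrix of periods of any basis of holomorphic 1-forms along $(\alpha_j)$ is invertible; equivalently, a holomorphic 1-form is determined by its $\alpha$-periods. Consequently $\omega = 0$, which is the desired identity $\dif f/f = r\eta$. The main obstacle, compared to the genus-zero case treated in Proposition \ref{prop-df/f} (where one simply compares divisors on $\mathbb{P}^1$), is that for $g\geq 1$ the space of holomorphic 1-forms is non-trivial, so matching principal parts is no longer enough; the geometric input that makes the period argument succeed is precisely that the cycles $\alpha_j$ lie over intervals where $D\geq 0$, which is exactly what forces $f$ to be real-valued along them.
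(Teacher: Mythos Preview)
Your proof is correct and follows the same strategy as the paper: show that $\omega=\dif f/f-r\eta$ is holomorphic by matching residues at $\infty_\pm$, then annihilate it via Theorem \ref{thm-accouplementParfait} after checking that all $\alpha_j$-periods vanish. The only cosmetic difference is that the paper computes $\int_{b_{j-1}}^{a_j}\dif f/f=0$ by explicitly evaluating $f$ at the endpoints (noting $f(b_{j-1})=f(a_j)=\varepsilon M$ since $f\bar f=M^2$), whereas you argue more succinctly that $f$ is real-valued and nonvanishing on the connected loop $\alpha_j$ and hence admits a continuous branch of $\log f$.
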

\begin{proof}
    We know that the forms $\dif f/f$ and $r\eta$ have $\infty_+$ and $\infty_-$ as their only poles. These poles are simple, with residues $-r$ and  $r$ respectively. Therefore, $\dif f/f -r\eta$ is a holomorphic form which can be written as a linear combination of $x^j\dif x/y$ for $j=0,\dots g-1$. We know that the periods of $\eta$ on the $[b_{j-1}, a_j]$ are zero. To show that this linear combination is zero, thanks to theorem \ref{thm-accouplementParfait}, it is enough to show that the periods of $\dif f/f$ on $[b_{j-1}, a_j]$ are also zero.
    
    \medskip
     
    Let us consider $j\in\{0,\dots,g\}$. A representative of $y$ on the $[b_{j-1}, a_j]$ is $y=\sqrt{D(x)}$, since $D(x)$ is non-negative. Then $f(x) = P(x) + \sqrt{D(x)}Q(x)$ can be seen as a real function on $[b_{j-1}, a_j]$. We have $\abs{f(b_{j-1})} = \abs{P(b_{j-1})} = M$, $\abs{f(a_j)} = \abs{P(a_j)} = M$. Since $(P+\sqrt{D}Q)(P-\sqrt{D}Q) = M^2$, the function $f$ is never zero, hence has a constant sign $\varepsilon \in \{-1,1\}$ on $[b_{j-1}, a_j]$. Therefore $f(b_{j-1}) = f(a_j) = \varepsilon M$. Thus,
    \[\int_{b_{j-1}}^{a_j} \frac{\dif f }{f} =\int_{b_{j-1}}^{a_j} \dif (\log(\varepsilon f) )= \log(\varepsilon f(a_j)) - \log(\varepsilon f(b_{j-1})) = 0, \]
    hence the final result.
\end{proof}

\medskip

Let us finally prove proposition \ref{prop-cle} in the general case. More precisely we have : 

\begin{prop} Let us consider $\eta_j = \displaystyle \int_{a_j}^{b_j} \frac{R(x)}{i\sqrt{-D(x)}}\dif x$ and $r_j = r\abs{\eta_j}/\pi$ for $j = 0,\dots,g.$
\begin{enumerate}
\item The number of roots of $P$ in $E_j=[a_j,b_j]$ is $r_j$.
\item The roots of $P$ and $Q$ are simple, interlaced, and all belong to $\overset{\circ}{E}$.
\item The roots of $Q$ in $E_j$ divide $E_j$ into $r_j$ sub-intervals; in each of them, the polynomial $P$ is either strictly increasing, or strictly decreasing, with extreme values $M$ and $-M$.
\end{enumerate}
\end{prop}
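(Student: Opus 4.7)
The strategy is to import the genus-zero argument to each interval $E_j$ separately, using the canonical form $\eta$ in place of $\dif x/y$, and then to glue the local counts together via the period identity of Proposition \ref{prop-periodeimaginaire}.

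First I would work on a fixed interval $E_j$. Since $D(x)\leq 0$ on $E_j$, choosing the branch $y=iy_1(x)$ with $y_1(x)=\sqrt{-D(x)}$ gives $f(x)=P(x)+iy_1(x)Q(x)$, and the Pell--Abel identity yields $|f|^2=P^2-DQ^2=M^2$, so $f$ has constant modulus $M$ on $E_j$. The identity $\Div f = r((\infty_-)-(\infty_+))$ shows that $f$ has no affine zero, so I can write $f(x)=Me^{i\theta_j(x)}$ with $\theta_j$ of class $C^1$ on $E_j$. On $E_j$ the chosen branch turns $\dif f/f=r\eta = rR(x)\dif x/y$ into $-irR(x)/y_1(x)\,\dif x$, while $\dif f/f = i\theta_j'(x)\,\dif x$; hence $\theta_j'(x) = -rR(x)/y_1(x)$ on $(a_j,b_j)$. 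By Remark \ref{rmq-Rnesannulepas}, $R$ has fixed sign and no zero on $E_j$, and $y_1>0$ in the interior, so $\theta_j$ is strictly monotone. At the endpoints $y_1$ vanishes, so $f(a_j)=P(a_j)=\pm M$ and likewise at $b_j$, forcing $\theta_j(a_j),\theta_j(b_j)\in\pi\Z$. Integrating, $\theta_j(b_j)-\theta_j(a_j) = -r\int_{a_j}^{b_j}R/y_1\,\dif x$, whose modulus is exactly $r|\eta_j|$; this forces $r_j=r|\eta_j|/\pi$ to be a positive integer, with $|\theta_j(b_j)-\theta_j(a_j)|=r_j\pi$.

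The local zero count is then immediate from $P=M\cos\theta_j$ and $y_1Q=M\sin\theta_j$. As $\theta_j$ strictly sweeps an arc of length $r_j\pi$ with both endpoints in $\pi\Z$, $\cos\theta_j$ vanishes at exactly $r_j$ interior points (giving $r_j$ simple zeros of $P$ in $(a_j,b_j)$), and $\sin\theta_j$ vanishes at exactly $r_j-1$ interior points, which since $y_1>0$ there are precisely the simple zeros of $Q$ in $(a_j,b_j)$. Between two consecutive zeros of $Q$ (or the endpoints of $E_j$), $\theta_j$ runs from $k\pi$ to $(k\pm1)\pi$, so $\sin\theta_j$ keeps a fixed sign while $\cos\theta_j$ runs monotonically from $\pm1$ to $\mp1$; this yields assertion (3) and forces interlacing between the zeros of $P$ and $Q$.

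To finish, I would assemble the global count. Summing, $P$ has at least $\sum_j r_j$ distinct real simple zeros in $\overset{\circ}{E}$, so $\sum_j r_j\leq \deg P = r$. Conversely, Proposition \ref{prop-periodeimaginaire} gives $|\sum_j \epsilon_j\eta_j|=\pi$, whence by the triangle inequality $\sum_j|\eta_j|\geq\pi$, that is, $\sum_j r_j\geq r$. The two bounds coincide, so $\sum_j r_j = r$: all roots of $P$ are real, simple, lie in $\overset{\circ}{E}$, and $E_j$ contains exactly $r_j$ of them; the same holds for $Q$ since $\deg Q = r-g-1 = \sum_j(r_j-1)$. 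The main obstacle is this last step: the purely local analysis on each $E_j$ only yields the inequality $\sum_j r_j\leq r$, and the matching lower bound rests essentially on the period identity of Proposition \ref{prop-periodeimaginaire} --- without it, nothing prevents $P$ from acquiring nonreal roots or some $\theta_j$ from being trivial.
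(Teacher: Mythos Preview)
Your proof is correct and follows essentially the same route as the paper: the local analysis on each $E_j$ via $f=Me^{i\theta_j}$ and the identity $\dif f/f=r\eta$ (using that $R$ does not vanish on $E_j$) to get $r_j$ zeros of $P$ and $r_j-1$ zeros of $Q$ there, followed by the global matching $\sum r_j\leq r$ against the period identity $\sum\epsilon_j\eta_j=i\pi$ to force $\sum r_j=r$. Your use of the triangle inequality to extract $\sum r_j\geq r$ is exactly the content of the paper's line ``$\sum\pm r_j=r$'', and your explicit degree check $\deg Q=r-g-1=\sum(r_j-1)$ is a nice addition.
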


\begin{figure}[h!]
    \centering
    \includegraphics[scale=0.2]{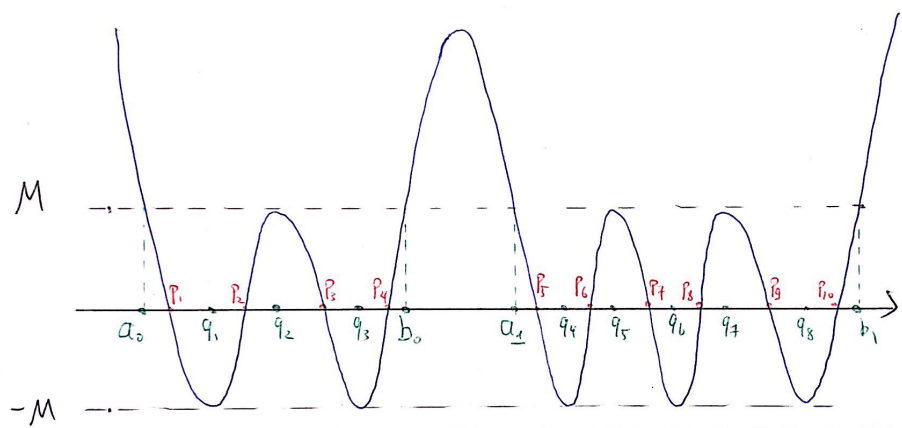}
    \caption{Shape of $P$ in the case $g=1$, $r_0=4$, $r_1 =6$}
    \label{fig:P_genre2}
\end{figure}

\begin{proof}
    The beginning of the proof is identical to the case where $g=0$. Let us consider $j\in\{0,\dots, g\}$ and the segment $E_j=[a_j,b_j]$. A representative of $f$ on $E_j$ is $f(x) = P(x) + i\sqrt{-D(x)}Q(x)$. We have $\abs{f}=M$ constant, therefore $f(x)= Me^{i\theta(x)}$, with $\theta : E_j\rightarrow \R$ continuous, and $\theta(a_j)=c_0\pi, \theta(b_j) = c_1\pi$, with $c_0,c_1 \in \Z$ since $f(a_j), f(b_j)$ are equal to $\pm M$. 
    We therefore have 
\[P(x) = M \cos (\theta(x)),\quad \sqrt{-D(x)}Q(x)=M \sin(\theta(x)).\]
    
Since $\dif f /f = r\eta = r R(x)\dif x/y$ and $R$ is never zero on $E_j$ according to \ref{rmq-Rnesannulepas}, $f'$ is never zero on $E_j$. Therefore $\theta$ is strictly monotonic on $E_j$, and the number of roots of $P$ in $E_j$ is equal to $\abs{c_1-c_0}$, the number of roots of $Q$ in $ E_j$ is equal to $\abs{c_1-c_0}-1$ (since $D$ is only zero at end points).

\medskip

Let us calculate $\abs{c_1-c_0}$. We notice that :
\[(c_1-c_0)i\pi = i (\theta(b_j)-\theta(a_j)) = i \int_{a_j}^{b_j} \dif \theta=   \int_{a_j} ^{b_j}\dif f/f = r\eta_j. \]
As a result, $\abs{c_1-c_0} = r \abs{\eta_j}/\pi = r_j$. Therefore the number of roots of $P$ in $E_j$ is equal to $r_j$. Since $P$ is of degree $r$, we first have that
\[\sum_{j=0}^g r_j \leqslant r.\]
But proposition \ref{prop-periodeimaginaire} shows that there exists some $(\epsilon_j)_{0\leq j \leq g}\in\{-1,1\}^{g+1}$ such that 
\[\displaystyle\sum_{j=0}^g \pm r_j = r.\] 
It follows that $\sum_{j=0}^g  r_j = r$. Hence, the only roots of $P$ are in the $E_j$. The sinusoidal shapes of $P$ and $Q$ give us the claims 2 and 3 of the proposition, since $\theta$ is strictly monotonic.
\end{proof}

\subsubsection{End of the proof of Robinson's theorem}
\paragraph{}
Proposition \ref{prop-cle}, which precisely describes the behavior of polynomials $P$ and $Q$, is the key to the proof of Robinson's theorem. Let us start with some immediate consequences of this result.

\medskip

Let us remind ourselves of the notations : $E=\displaystyle\bigcup_{j=0}^g E_j$, $E_j = [a_j,b_j]$, $D(X) = \displaystyle \prod_{j=0}^g (X-a_j)(X-b_j)$, $P^2-DQ^2 = M^2$, $\deg P = r$.

\begin{prop}
$\capa(E) = (\frac M2)^{1/r}$.
\end{prop}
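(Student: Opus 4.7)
The plan is to exploit the fact that $P$ is a monic polynomial of degree $r$ and apply Corollary \ref{corCapaciteImageRecipPolynome}, which for the polynomial map $f = P$ and a compact set $K \subset \C$ gives $\capa(P^{-1}(K))^r = \capa(K)$. I will take $K = [-M,M]$, for which Example \ref{capasegment} yields $\capa([-M,M]) = M/2$. Everything then reduces to establishing the set-theoretic identity $P^{-1}([-M,M]) = E$, after which I conclude $\capa(E)^r = M/2$.

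For the inclusion $E \subseteq P^{-1}([-M,M])$, I would simply observe that on $E$ one has $D(x) \leq 0$, so the Pell--Abel identity $P^2 = M^2 + D Q^2$ yields $P(x)^2 \leq M^2$ for $x \in E$, hence $P(x) \in [-M,M]$.

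The reverse inclusion $P^{-1}([-M,M]) \subseteq E$ is the main obstacle, because one must rule out \emph{complex} preimages that Proposition \ref{prop-cle} does not directly address. Here I would argue by a root count: fix $c \in (-M,M)$; Proposition \ref{prop-cle}(2) says that the roots of $Q$ partition each $E_j$ into $r_j$ subintervals on which $P$ is strictly monotonic with boundary values $\pm M$, so $P - c$ has exactly one root in each such subinterval. Summing over $j$ gives $\sum_j r_j = r$ real roots in $E$, and since $\deg(P - c) = r$, these exhaust all complex roots. Hence $P^{-1}(c) \subseteq E$ for every $c \in (-M,M)$, so $P^{-1}((-M,M)) \subseteq E$.

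Finally, to handle the endpoints $c = \pm M$, I would use that $P^{-1}([-M,M])$ is closed (as $P$ is continuous) together with $P^{-1}([-M,M]) = \overline{P^{-1}((-M,M))} \cup P^{-1}(\{-M,M\})$, noting that $P^{-1}(\{\pm M\}) \subseteq E$ directly from Proposition \ref{prop-cle}, since the values $\pm M$ are only attained at the endpoints of the subintervals of the $E_j$'s (i.e.\ at the $a_j, b_j$ and at roots of $Q$, all of which lie in $E$). Since $E$ is closed, taking closures preserves the inclusion, giving $P^{-1}([-M,M]) \subseteq E$. Combining both inclusions with Corollary \ref{corCapaciteImageRecipPolynome} and Example \ref{capasegment} yields $\capa(E) = (M/2)^{1/r}$.
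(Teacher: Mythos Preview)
Your approach is exactly the paper's: apply Corollary~\ref{corCapaciteImageRecipPolynome} with $K=[-M,M]$ after identifying $P^{-1}([-M,M])=E$. Your treatment of the open interval $(-M,M)$ via the root count from Proposition~\ref{prop-cle}(2) is correct and in fact more careful than the paper, which simply asserts that Propositions~\ref{prop-observationP} and~\ref{prop-cle} yield the identity.

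There is one small gap. When you handle the endpoints $c=\pm M$, you write that $P^{-1}(\{\pm M\})\subseteq E$ ``directly from Proposition~\ref{prop-cle}'', because $\pm M$ are attained only at the $a_j,b_j$ and the roots of $Q$. But Proposition~\ref{prop-cle} describes the behaviour of $P$ on the \emph{real} line; it does not by itself exclude a complex $z\notin\R$ with $P(z)=\pm M$. You need one more sentence. Two clean fixes:
\begin{itemize}
    \item Algebraic: from the Pell--Abel identity, $(P-M)(P+M)=DQ^2$, so every complex root of $P\mp M$ is a root of $D$ or of $Q$; by Proposition~\ref{prop-cle}(1) all roots of $Q$ lie in $\overset{\circ}{E}$, and the roots of $D$ are the $a_j,b_j\in E$.
    \item Topological: since $P$ is a non-constant polynomial it is an open map, so every $z_0$ with $P(z_0)=\pm M$ is a limit of points $z$ with $P(z)\in(-M,M)$; hence $P^{-1}(\{\pm M\})\subseteq\overline{P^{-1}((-M,M))}\subseteq\overline{E}=E$.
\end{itemize}
Either one completes your argument. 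The rest is fine.
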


\begin{proof}
    It is a consequence of corollary \ref{corCapaciteImageRecipPolynome}. Indeed, propositions \ref{prop-observationP} and \ref{prop-cle} show that \\ $P^{-1}([-M,M])=E$. But $\capa([-M,M]) = \frac M2$ and $P$ is a monic polynomial of degree $r$. Corollary \ref{corCapaciteImageRecipPolynome} gives us that $\capa(E) = P^{-1}([-M,M]) = (\frac M2)^{1/r}$.
\end{proof}

\begin{prop}
$P$ is the Chebyshev polynomial of degree $r$ with respect to $E$.
\end{prop}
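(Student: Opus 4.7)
The goal is to show that $t_r(E)=\|P\|_E$, so that by uniqueness of the Chebyshev polynomial (valid since $E$ is infinite) one can conclude $P=T_r^E$. First, Proposition~\ref{prop-observationP} yields $\|P\|_E=M$: the bound $|P(x)|\le M$ on $E$ is part (1) of that proposition, and the equality is attained for instance at $x=a_0$, since $a_0$ is a root of $D$ and thus $P(a_0)^2=M^2$.

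The plan is then a contradiction argument based on the equioscillation supplied by Proposition~\ref{prop-cle}. Suppose $S\in\R[X]$ is monic of degree $r$ with $\|S\|_E<M$, and set $f:=P-S$; since $P$ and $S$ are both monic of degree $r$, we have $\deg f\le r-1$. For each $j\in\{0,\dots,g\}$, Proposition~\ref{prop-cle}(2) provides $r_j+1$ points
\[a_j=x_0^{(j)}<x_1^{(j)}<\dots<x_{r_j}^{(j)}=b_j\]
in $E_j$ (the endpoints $a_j,b_j$ together with the $r_j-1$ roots of $Q$ lying in $E_j$) at which $P$ takes the values $+M$ and $-M$ strictly alternately. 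Because $|S(x_i^{(j)})|<M$ strictly, the values $f(x_i^{(j)})=P(x_i^{(j)})-S(x_i^{(j)})$ inherit the same strict sign alternation, so the intermediate value theorem produces at least $r_j$ distinct zeros of $f$ inside the open interval $(a_j,b_j)$.

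Since the components $E_j$ are pairwise disjoint, summing over $j$ gives $\sum_{j=0}^{g}r_j=r$ distinct real roots of $f$, which is incompatible with $\deg f\le r-1$ unless $f\equiv0$; but $f\equiv 0$ would force $S=P$ and hence $\|S\|_E=M$, contradicting our assumption. Therefore $t_r(E)\ge M$, and combined with $\|P\|_E=M$ we obtain $t_r(E)=M=\|P\|_E$. The uniqueness statement built into the definition of the Chebyshev polynomial then yields $P=T_r^E$.

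The main subtlety is that the equioscillation of $P$ is guaranteed only \emph{within each connected component} $E_j$, not across the whole of $E$: one has no control on the sign of $f$ in the gaps $(b_{j-1},a_j)$, so the classical single-interval equioscillation argument must be applied separately on each $E_j$ and the resulting root counts aggregated. The reason the aggregation is sharp enough to force $f\equiv 0$ is precisely the identity $\sum_{j=0}^{g} r_j=r$, which was established in the proof of Proposition~\ref{prop-cle} through the periods $\eta_j$ of the canonical form of the third kind and the global relation $\sum_j\varepsilon_j\eta_j=i\pi$ of Proposition~\ref{prop-periodeimaginaire}.
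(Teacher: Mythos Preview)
Your proof is correct. The paper's own proof is a one-liner: Proposition~\ref{prop-cle} shows that the monic degree-$r$ polynomial $P$ attains its extreme values $\pm M$ alternately at $r+1$ points of $E$, and then the equioscillation theorem~\ref{thm-equioscillation} gives the conclusion directly.

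Your route is essentially the same idea unpacked, with one organizational difference worth noting. The equioscillation theorem in the paper is stated only for a single interval $[a,b]$, so invoking it for a disconnected $E$ tacitly requires either extracting a \emph{globally} alternating sequence of $r+1$ points (which one can do: there are $\sum_j(r_j+1)=r+g+1$ extremal points in total, and dropping at most one per gap still leaves $r+1$ alternating ones) or re-running the underlying root-counting argument. You choose the second option, working component by component to produce $r_j$ roots of $P-S$ in each $E_j$ and summing. This is exactly the argument that the paper packages a few lines later as Lemma~\ref{lmm-petiteperturbation}; you have effectively anticipated that lemma and applied it with $q=S$. So your approach and the paper's are the same in substance, and your per-component bookkeeping is a clean way to avoid any worry about sign behavior across the gaps.
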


\begin{proof}
    Proposition \ref{prop-cle} shows that $P$, a monic polynomial of degree $r$, reaches its extreme values, $\pm M$, $r+1$ times on $E$, hence the result using the equioscillation theorem \ref{thm-equioscillation}.
\end{proof}

Let us state a last very simple lemma which plays a major part in the proof : a small perturbation does not change the number of roots of $P$ in $E$.

\begin{lmm}\label{lmm-petiteperturbation}
Let us consider $q\in\R[X]$ such that $\abs{q(x)}<M$ for all $x\in E$. Then $P-q$ has at least $r$ roots in $E$.
\end{lmm}

\begin{proof}
    Let us denote by $q_1< \dots < q_{r-1}$ the roots of $Q$ with the notations $q_0=a$ and $q_r =b$. $P(q_i)=\pm M$ and since $\abs{q(q_i)}<M$, $P(q_i)-q(q_i)$ has the same sign as $P(q_i)$. Therefore $P-q$ changes sign on each interval $[q_i,q_i+1]$, for $i$ in range $0$ to $r-1$, hence the result. 
\end{proof}

\medskip

We now have all the elements to finish the proof of Robinson's theorem. Let us transform the real coefficients into rational ones.

\medskip
\begin{prop}\label{prop-reelaurationnel}
We can replace $M, P, Q,D, E$ by $\tilde{M}, \tilde{P}, \tilde{Q},\tilde{D}, \tilde{E}$, so that proposition \ref{prop-cle} is still verified, but with $\tilde{M}\in \Q$, $\tilde{P}\in \Q[X]$ (still of degree $r$), $\tilde{D}\in \Q[X]$, $\tilde{Q}=1$, $\tilde{E}\subset E$ and $\capa(\tilde{E})$ arbitrarily close to $\capa(E)$.
\end{prop}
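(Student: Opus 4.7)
The strategy is to force $\tilde{Q}=1$ from the start, which is the key simplification: once $\tilde{Q}=1$, the Pell-Abel equation $\tilde{P}^{2}-\tilde{D}=\tilde{M}^{2}$ automatically produces $\tilde{D}:=\tilde{P}^{2}-\tilde{M}^{2}\in\Q[X]$ as soon as $\tilde{P}\in\Q[X]$ and $\tilde{M}\in\Q$. The problem thus reduces to approximating the monic $P\in\R[X]$ of degree $r$ by a rational monic $\tilde{P}\in\Q[X]$ of the same degree, choosing a rational $\tilde{M}<M$ slightly smaller than $M$, and then checking that the geometric structure of $E$ is preserved.

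Concretely, I would fix a small $\delta>0$ and choose $\tilde{M}\in\Q$ with $0<\tilde{M}<M$ and $M-\tilde{M}<\delta$; then by density of $\Q$ in $\R$, pick $\tilde{P}\in\Q[X]$ monic of degree $r$ with $\|\tilde{P}-P\|_{E,\infty}<M-\tilde{M}$. Set $\tilde{D}:=\tilde{P}^{2}-\tilde{M}^{2}$ (monic of degree $2r$) and $\tilde{E}:=\tilde{P}^{-1}([-\tilde{M},\tilde{M}])$. The triangle inequality immediately yields $\tilde{E}\subset E$, since for $x\in\tilde{E}$ one has $|P(x)|\leq|\tilde{P}(x)|+\|\tilde{P}-P\|_{E}<\tilde{M}+(M-\tilde{M})=M$.

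To see that $\tilde{D}$ has $2r$ distinct real roots lying in $E$, I would apply lemma \ref{lmm-petiteperturbation} twice. With $q:=P-\tilde{P}+\tilde{M}$ we get $|q|\leq\|\tilde{P}-P\|_{E}+\tilde{M}<M$ on $E$, so $P-q=\tilde{P}-\tilde{M}$ has at least $r$ distinct real roots in $E$; since $\deg(\tilde{P}-\tilde{M})=r$, these are all of its roots and they are simple. The same argument applied to $q:=P-\tilde{P}-\tilde{M}$ gives $r$ distinct simple real roots of $\tilde{P}+\tilde{M}$ in $E$. Since $\tilde{P}-\tilde{M}$ and $\tilde{P}+\tilde{M}$ share no roots, $\tilde{D}$ has $2r$ distinct real roots in $E$, which are precisely the boundary points of $\tilde{E}$. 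Using that $P$ has exactly $r$ strict monotonic pieces on $E$ (by proposition \ref{prop-cle} applied to the original data), and that $\tilde{P}$ is uniformly close to $P$, one sees that in each such piece $\tilde{P}^{-1}([-\tilde{M},\tilde{M}])$ is a single sub-interval bounded by one root of $\tilde{P}-\tilde{M}$ and one of $\tilde{P}+\tilde{M}$; hence $\tilde{E}$ is a disjoint union of $r$ intervals $[\tilde{a}_j,\tilde{b}_j]$ with $\tilde{a}_0<\tilde{b}_0<\cdots<\tilde{a}_{r-1}<\tilde{b}_{r-1}$, and $\tilde{D}=\prod_{j}(X-\tilde{a}_j)(X-\tilde{b}_j)$.

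The tuple $(\tilde{P},\tilde{Q}=1,\tilde{D},\tilde{M},\tilde{E})$ is therefore a Pell-Abel configuration of genus $\tilde{g}=r-1$ with $\deg\tilde{P}=r$, so proposition \ref{prop-cle} applies to it; the conclusion is consistent (each $\tilde{E}_j$ contains $\tilde{r}_j=1$ monotonic piece, since $\tilde{Q}=1$ has no roots, and $\sum\tilde{r}_j=r$). For the capacity, corollary \ref{corCapaciteImageRecipPolynome} gives
\[\capa(\tilde{E})=\capa(\tilde{P}^{-1}([-\tilde{M},\tilde{M}]))=\capa([-\tilde{M},\tilde{M}])^{1/r}=(\tilde{M}/2)^{1/r},\]
which tends to $(M/2)^{1/r}=\capa(E)$ as $\tilde{M}\to M^{-}$. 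The main technical obstacle is the structural step in the previous paragraph: one must verify carefully that the interlacing of roots of $\tilde{P}\pm\tilde{M}$ yields exactly $r$ disjoint intervals (and not a coarser or finer decomposition), which requires keeping the perturbation $M-\tilde{M}$ and $\|\tilde{P}-P\|_{E}$ small compared to the separation between consecutive equioscillation points of $P$ on $E$.
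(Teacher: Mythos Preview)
Your argument is correct and matches the paper's approach: take $\tilde{Q}=1$, shrink $M$ to a rational $\tilde{M}<M$, perturb $P$ to a nearby rational monic $\tilde{P}$, set $\tilde{D}=\tilde{P}^2-\tilde{M}^2$, and invoke lemma~\ref{lmm-petiteperturbation} to locate the $2r$ simple real roots of $\tilde{D}$ in $E$; the paper merely performs the two perturbations sequentially (first $M\to\tilde{M}$ with $P$ fixed, then $P\to\tilde{P}$ via continuity of roots) rather than simultaneously under your constraint $\|\tilde{P}-P\|<M-\tilde{M}$. One small remark: your triangle-inequality line for $\tilde{E}\subset E$ uses $\|\tilde{P}-P\|_{E}$ at a point $x$ not yet known to lie in $E$, but this is inessential since your subsequent structural paragraph (each component of $\tilde{E}$ lies inside one monotonic piece of $P$ on $E$) already yields the inclusion.
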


\begin{proof}
     Let us consider $\tilde{M}\in [0,M]\cap \Q$. According to lemma \ref{lmm-petiteperturbation} applied to $q=\pm M$, $P^2-\tilde{M}^2$ has $2r$ roots in $E$ : $u_1 < \dots < u_{2r}$ with $[u_i,u_{i+1}]\subset [q_i,q_{i+1}]$, where $q_i$ are the roots of $Q$. By continuity of the roots, we can choose $\tilde{P}$ with rational coefficients sufficiently close to $P$, so that the roots of $\tilde{P}^2-\tilde{M}^2$ are still ordered the same way $\tilde{u}_1 < \dots < \tilde{u}_{2r}$. We then consider $\tilde{D}=\tilde{P}^2-\tilde{M}^2$, $\tilde{E} = \bigcup [\tilde{u}_i,\tilde{u}_{i+1}]\subset E$. Since $\capa(\tilde{E}) = (\frac {\tilde{M}}{2})^{\frac 1r}$, we can choose $\tilde{M}$, a rational number sufficiently close to $M$, so that $\capa(\tilde{E})$ is arbitrarily close to $\capa(E)$.
\end{proof}

From now on, assume that $P,Q,D,M$ have rational coefficients. Let us now prove Robinson's theorem, in the Pell-Abel case.

\begin{thm}[Robinson]
If $\capa(E)>1$, then there exists an infinite number of algebraic integers totally in $E$. More precisely, there exists a sequence of monic polynomials with integer coefficients whose degrees tend to infinity and whose roots are all in $E$.
\end{thm}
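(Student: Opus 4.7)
The plan is to construct, for arbitrarily large $n$, a monic polynomial $F_n \in \Z[X]$ of degree $nr$ all of whose roots lie in $E$; such polynomials furnish algebraic integers totally in $E$, and by Remark~\ref{rmqBornes} (finitely many of each degree in a compact set) this produces infinitely many distinct algebraic integers, hence a sequence of minimal polynomials of degree $\to\infty$. Following Proposition~\ref{prop-reelaurationnel}, I may assume that $(P, Q = 1, M, D, E)$ is rational, with $P \in \Q[X]$ monic of degree $r$, $M > 2$, and $E = \{x \in \R : |P(x)| \le M\}$ (by Proposition~\ref{prop-observationP} with $Q = 1$).

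The central object is the iterate $P_n \in \Q[X]$ defined by $(P + y)^n = P_n + y Q_n$ in $\Q[x, y]/(y^2 - D)$. It satisfies the Pell-Abel equation $P_n^2 - D Q_n^2 = M^{2n}$, has degree $nr$ with leading coefficient $2^{n-1}$, and on $E$ can be written $P_n(x) = M^n \cos(n \theta(x))$ exactly as in the proof of Proposition~\ref{prop-clesegment}. Its monic normalization $\psi_n := P_n / 2^{n-1}$ is therefore the Chebyshev polynomial of $E$ of degree $nr$, with $\|\psi_n\|_E = 2(M/2)^n$ and $nr + 1$ points of equioscillation. Applying Lemma~\ref{lmm-petiteperturbation} to the iterated datum $(P_n, Q_n, M^n)$ shows that any polynomial $F$ with $\|F - \psi_n\|_E < \|\psi_n\|_E$ has at least $nr$ roots in $E$; if moreover $F$ is monic of degree $nr$, all its roots lie in $E$.

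The task is thus to find a monic $F_n \in \Z[X]$ of degree $nr$ with $\|F_n - \psi_n\|_E < 2(M/2)^n$. I would imitate Lemmas~\ref{lmmlmmFeketeSzego} and~\ref{lmmFeketeSzego}: expand $\psi_n$ in the basis $\{P^l X^k : 0 \le l < n,\ 0 \le k < r\}$ of $\Q[X]_{< nr}$ and round the top coefficients (those with $l$ close to $n$) to $\Z$ in a structured inductive order, producing a monic integer $F_n$ for which $F_n - \psi_n = \sum_{l = 0}^{n - \mu - 1} P^l q_l$ with $q_l \in \Q[X]$ of degree $< r$ and coefficients in $[0, 1)$. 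Since $|P| \le M$ on $E$, this gives $\|F_n - \psi_n\|_E \le C\, M^{n - \mu}$ for a constant $C$ depending only on $P$. The inequality $C\, M^{n - \mu} < 2(M/2)^n$ rearranges as $2^n < C'\, M^\mu$, achievable by choosing $\mu = \lceil \alpha n \rceil$ for any $\alpha > (\log 2)/(\log M)$, which is possible precisely because $M > 2$.

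The main obstacle is this last estimate: a naive coefficient-wise rounding of $\psi_n$ in the monomial basis only yields $\|F_n - \psi_n\|_E \le \frac{1}{2} n r \, L^{nr-1}$ with $L = \max_E|x|$, which is typically far too weak, so using a basis adapted to $P$ is essential to extract the gain from $\capa(E) > 1$. Once this succeeds, the degrees $nr \to \infty$ with all roots in $E$, completing the Pell-Abel case. For the general case (no a priori Pell-Abel solution), one approximates an arbitrary $E$ from within by a union of segments $\tilde E \subset E$ coming from a real hyperelliptic curve whose divisor $r(\infty_- - \infty_+)$ is torsion in the Jacobian, preserving $\capa(\tilde E) > 1$ via the continuity of capacity (Corollary~\ref{cor-continutecapacitesegment}) and the density of such torsion-inducing curves.
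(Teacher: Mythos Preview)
Your overall strategy---iterating the Pell--Abel solution to obtain $\psi_n$ with $\|\psi_n\|_E = 2(M/2)^n$, then perturbing to $\Z[X]$ and invoking Lemma~\ref{lmm-petiteperturbation}---matches the paper exactly, as does the reduction of the general case to the Pell--Abel case by density. The gap is in the perturbation step, specifically in your choice of basis.

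You expand in $\{P^l X^k\}$, where $\|P^l\|_E = M^l = (2\lambda)^l$ with $\lambda = M/2$. As you correctly compute, this forces $\mu \gtrsim (\log 2/\log M)\,n$, i.e.\ $\mu$ must be a \emph{fixed positive fraction} of $n$. But restricting the correction to $l \le n-\mu-1$ requires the top $\mu r$ monomial coefficients of $\psi_n$ to already lie in $\Z$. Imitating Lemma~\ref{lmmlmmFeketeSzego}, this is arranged by choosing $n$ highly divisible; the paper takes $n_l = (l!)^2 m^l$ (where $P = X^r + \tfrac{1}{m}\Gamma$, $\Gamma \in \Z[X]$) and shows that the top $lr$ coefficients of $\psi_{n_l}$ are then integers. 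Any such divisibility scheme, however, gives at most $l = O(\log n)$ integer top coefficients, far short of the $\mu r \sim \alpha n r$ you need. So with the basis $\{P^l X^k\}$ the inequality $C\,M^{n-\mu} < 2(M/2)^n$ cannot be met for the values of $\mu$ actually available.

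The paper's remedy is to use the basis $\{X^j P_k\}_{0\le j<r,\;0\le k<n-l}$ of \emph{iterates} rather than powers. Since $\|P_k\|_E = 2\lambda^k$ (not $(2\lambda)^k$), the correction is bounded by
\[
\sum_{j<r}\sum_{k<n-l}|x|^j\,2\lambda^k \;\le\; \frac{2C\,\lambda^{n-l}}{\lambda-1},
\]
which is $< 2\lambda^n$ as soon as $l$ exceeds a \emph{fixed} threshold $l_0$ depending only on $E$. With this single change the divisibility argument (giving $l$ integer top blocks for $n = n_l$) is already sufficient, and letting $l\to\infty$ yields degrees $n_l r \to \infty$. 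Everything else in your outline---the monic normalisation, the equioscillation count, and the application of Lemma~\ref{lmm-petiteperturbation} to the iterated datum---is correct.
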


\begin{proof}
Since $\capa(E) = (\frac M2)^{1/r}$, $\capa(E)>1$ is equivalent to $M>2$. Let us consider $\lambda = \frac M2$. We have $\lambda >1$.

\medskip

We already have a polynomial with rational coefficients whose roots are all in $E$ : $P$. To find more of them, the idea is to raise the polynomial to higher powers. But instead of using $P$, like in the proof of Fekete-Szegö's theorem \ref{thmFeketeSzego}, we will implement this method using  $f=P+yQ$.

\medskip
We saw that $f(x)=P(x)+iy_1(x)Q(x) = 2\lambda e^{i\theta(x)}$. Therefore $f(x)^n = (2\lambda)^n e^{in\theta(x)} = 2^{n-1}(P_n(x) + iy_1(x) Q_n(x))$, with $P_n, Q_n \in \Q[X]$ monic polynomials. We can also rewrite $P_n$ as follows 
\[P_n(x) = 2 \lambda ^n \cos (n \theta(x)) \]

Let $T_n$ be the Chebyshev polynomial of degree $n$ with respect to $[-2,2]$, we have $P_n = \lambda ^n T_n(P/\lambda)$. With the explicit expression of $T_n$ : 
\[T_n = \sum_{k=0}^{n/2} (-1)^k \frac nk  \binom{n-k-1}{k-1}X^{n-2k} \]
We can completely expand the expression of the polynomial $P_n$ to write it the following way:
\[P_n(X) = X^{nr}+ \sum_{k=1}^{nr}\alpha_k X^{nr-k}\]
with $\alpha_k \in \Q$. The goal is now to provoke a perturbation of $P_n$ using a polynomial $q_n$ of degree $< nr$, such that $P_n -q_n \in \Z[X]$ and $q_n(x)< 2\lambda^n$ on $E$. Hence, according to lemma \ref{lmm-petiteperturbation}, $P_n-q_n$ will have all its $nr$ roots in $E$ and will still be a monic polynomial with integer coefficients. We will see that it is possible for an infinite number of degrees tending to infinity, which will conclude the proof. 

\medskip

Let us consider $m\in\N^*$ such that $P=X^r +\frac{1}{m}\Gamma $, where $\Gamma\in\Z[X]$. Let $l\in \N^*$ be a random integer (we will fix it later). Let us consider $n_l = (l!)^2m^{l}$. If $0\leqslant k \leqslant l$, we have that $k!\,m^k$ divides $n_l$ ; if $0\leqslant
2i+j \leqslant l$, we have that $m^j\,i!\,j!$ divides $n_l$. Therefore, we verify that $\alpha_1,\dots,\alpha_{lr}$ are integers. What remains in the sum is a polynomial $R_n$ of degree not greater than $nr-lr-1$ whose coefficients belong to $\Q$. Let us choose $(X^jP_k(X))_{0\leqslant j < r,\  0\leqslant k < n-l}$ as a basis of $\Q_{nr-lr-1}[X]$, and $c_{j,k}\in\Q\cap[0,1[$ such that $P_{n_l}-q_{n_l}\in\Z[X]$ (c.f. the proof of lemma \ref{lmmlmmFeketeSzego}), where
\[q_{n_l}=\sum_{0\leqslant j < r}\sum_ { 0\leqslant k < n-l} c_{j,k}X^jP_k\]

Let us remind that $P_k$ oscillates between $\pm 2\lambda^k$. Let us consider $C=\displaystyle\max_{x\in E}(\sum_{k=0}^{r-1} \abs{x}^k)$. Since $\lambda >1$, there exists $l_0$ such that for all $l>l_0$, $\frac{C}{\lambda^l (\lambda -1)}<1$. Therefore for $l\geqslant l_0$ : 
\[\abs{q_{n_l}(x)}\leqslant \sum_{j,k}\abs{x}^j 2\lambda^k
\leqslant
2C \sum_{k=0}^{n-l-1}\lambda^k 
\leqslant
2C\frac{\lambda^{n-l}}{\lambda -1}
< 2\lambda ^n \]

Therefore, we built a sequence of monic polynomials with integer coefficients whose degrees tend to infinity and whose roots all belong to $E$ (the sequence being $(P_{n_l}-q_{n_l})_{l\geqslant l_0}$).\\
QED
\end{proof}

\medskip

By refining this proof and using the fact that a polynomial function transforms an equilibrium measure into an equilibrium measure, we have the following stronger theorem : 

\begin{thm}\label{thm-densitePA}
For the sequence $P_n$ previously built, let us denote by $\mu_{P_n}$ the  (normalized) counting measure with respect to the roots of $P_n$. We then have $\mu_{P_n}\cvf \mu_K$, where $\mu_K$ is the equilibrium measure of $E$.
\end{thm}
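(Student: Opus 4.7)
The plan is to split the convergence into two halves: first identify the weak-$*$ limit of the counting measures of the \emph{unperturbed} auxiliary polynomials $\widetilde P_n := \lambda^n T_n(P/\lambda)$ (whose roots all lie in $E$ by Proposition~\ref{prop-cle}, since $\widetilde P_n$ equals $2\lambda^n \cos(n\theta(x))$ on $E$), then verify that the small perturbations $q_{n_l}$ used to bring the coefficients into $\Z$ do not affect that limit.

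For the first half, I would establish the pullback formula $\mu_E = \tfrac{1}{r}P^{*}\mu_{[-M,M]}$, where $P^{*}\mu$ is the preimage measure characterised by $\int\phi\,d(P^*\mu)=\int\bigl(\sum_{P(y)=w}\phi(y)\bigr)\,d\mu(w)$. Since $P$ is monic of degree $r$ with $P^{-1}([-M,M])=E$ (Prop.~\ref{prop-observationP},~\ref{prop-cle}), the factorisation $P(X)-w=\prod_{P(y)=w}(X-y)$ yields $U^{P^*\mu}(z)=U^\mu(P(z))$. Applied to $\mu=\mu_{[-M,M]}$, Frostman's theorem~\ref{thmFrostman} together with $V_E=V_{[-M,M]}/r$ (from Cor.~\ref{corCapaciteImageRecipPolynome}) shows that $\tfrac{1}{r}P^*\mu_{[-M,M]}$ is a probability measure on $E$ whose potential equals $V_E$ quasi-almost-everywhere, hence equals $\mu_E$ by the uniqueness part of Thm.~\ref{thmExistUniqMesrEqlbr}. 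Now the roots of $\widetilde P_n$ are the $P$-preimages of the Chebyshev nodes $\xi_{n,k}=M\cos((2k-1)\pi/(2n))$ of $[-M,M]$; these equidistribute weakly-$*$ to the arcsine measure $\mu_{[-M,M]}$ by a direct Riemann-sum argument on $[0,\pi]$, so $\mu_{\widetilde P_n}=\tfrac{1}{r}P^*\bigl(\tfrac{1}{n}\sum_k\delta_{\xi_{n,k}}\bigr)\cvf\mu_E$.

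For the perturbation step, fix $z\in\C\setminus E$. Chebyshev asymptotics outside $[-2,2]$ give $|\widetilde P_k(z)|^{1/k}\to\lambda\beta(z)$ for some $\beta(z)>1$ depending continuously on $z$; the explicit form $q_{n_l}=\sum_{0\leq j<r,\,0\leq k<n_l-l}c_{j,k}X^j\widetilde P_k$ with $c_{j,k}\in[0,1[$ then yields $|q_{n_l}(z)|=O((\lambda\beta(z))^{n_l-l})$ by a geometric sum, while $|\widetilde P_{n_l}(z)|\sim(\lambda\beta(z))^{n_l}$. Hence $|q_{n_l}(z)/\widetilde P_{n_l}(z)|=O((\lambda\beta(z))^{-l})\to 0$ as $l\to\infty$, uniformly on compact subsets of $\C\setminus E$, and
\[
U^{\mu_{P_n}}(z) = -\tfrac{1}{n_l r}\log|P_n(z)| = U^{\mu_{\widetilde P_{n_l}}}(z) - \tfrac{1}{n_l r}\log\bigl|1-q_{n_l}(z)/\widetilde P_{n_l}(z)\bigr| \longrightarrow U^{\mu_E}(z).
\]
By Banach--Alaoglu (Thm.~\ref{thmBanachAlaoglu}) I would extract a weak-$*$ subsequential limit $\nu$ of $\mu_{P_n}$, supported in $E$. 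For fixed $z\notin E$, $t\mapsto\log|z-t|$ is continuous on $E$, so weak-$*$ convergence yields $U^\nu(z)=\lim U^{\mu_{P_n}}(z)=U^{\mu_E}(z)$ on $\C\setminus E$. Since $E\subset\R$ has zero planar Lebesgue measure, this equality holds Lebesgue-almost-everywhere in $\C$, so $U^\nu=U^{\mu_E}$ as distributions; applying the distributional Laplacian $\Delta U^\mu=-2\pi\mu$ then forces $\nu=\mu_E$. Every subsequential limit equals $\mu_E$, so the whole sequence converges.

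The main obstacle will be organising the compatibility checks cleanly: verifying that weak-$*$ convergence is preserved by the pullback $\tfrac{1}{r}P^*$ (the finitely many critical values of $P$ are not charged by $\mu_{[-M,M]}$), controlling $\tfrac{1}{n_l r}\log|P_n|$ as a normal family on compacts of $\C\setminus E$, and handling the quasi-almost-everywhere subtleties in the final uniqueness step. An alternative route is the general Blatt--Saff--Totik principle (asymptotically Chebyshev polynomials equidistribute to $\mu_E$), which is not isolated as a theorem in the excerpt but whose ingredients all appear in the proof of Thm.~\ref{chebEgalCapaEgalTau}.
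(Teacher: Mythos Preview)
The paper does not actually give a proof of this theorem: immediately after the statement it says ``The proof of this theorem can be found in Serre's article \cite{serre2018bourbaki}'', offering only the hint that one should use ``the fact that a polynomial function transforms an equilibrium measure into an equilibrium measure''. Your proposal follows precisely this hint --- the pullback identity $\mu_E=\tfrac{1}{r}P^{*}\mu_{[-M,M]}$ is exactly that fact --- and then carries out the perturbation estimate in the spirit of the proof of Robinson's theorem just above. So there is nothing to compare against beyond the hint, and your outline is consistent with it.

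On the substance, the argument is sound. A few remarks on the points you flagged: the continuity of $w\mapsto\sum_{P(y)=w}\phi(y)$ (counted with multiplicity) follows from Newton's identities, so the pullback does preserve weak-$*$ convergence without needing to exclude critical values; the growth estimates $|P_k(z)|\le 2(\lambda\beta(z))^k$ and $|P_{n_l}(z)|\ge\tfrac{1}{2}(\lambda\beta(z))^{n_l}$ for large $n_l$ are uniform on compacta of $\C\setminus E$ because $\beta$ is continuous and $>1$ there, which makes the ratio bound clean; and the final identification $\nu=\mu_E$ via $\Delta U^\mu=-2\pi\mu$ is the standard Riesz route and correct, though it is the one step that reaches slightly beyond what the paper has set up explicitly. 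If you wanted to stay strictly inside the paper's toolkit you could instead invoke the unicity theorem for logarithmic potentials (two measures with compact support whose potentials agree Lebesgue-a.e.\ coincide), which is equivalent but sometimes stated without distributions.
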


\medskip 

This result is deep. Not only have we built an infinite number of algebraic integers totally in compact set $E$, but we also know that they are equidistributed with respect to the equilibrium measure of $E$. It partially answers the question asked in the introduction which dealt with the distribution of algebraic integers totally in a compact set. The proof of this theorem can be found in Serre's article \cite{serre2018bourbaki}.

\subsubsection{From the Pell-Abel case to the general case}
\paragraph{}
Let us give arguments that allows us to reduce the general case to Pell-Abel case. Let us consider $g\geqslant 0$ and \[U = \{(a_0, b_0, \dots, a_g, b_g)\in \R ^{2g+2} \ | \ a_0 < b_0 < \dots < a_g < b_g \}\]
$U_{PA}$ is the subset of $U$ such that for polynomial $D = \displaystyle \prod_{j=0}^g(X-a_j)(X-b_j)$, Pell-Abel equation has a solution. We have the following result :

\begin{thm}
$U_{PA}$ is dense in $U$. 
\end{thm}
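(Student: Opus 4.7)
The plan is to use Theorem \ref{thm-PellianEquivautTorsion} to reduce the density statement to a statement about the Jacobian of the hyperelliptic curve $C$: the Pell-Abel equation admits a solution for parameters $(a_0, b_0, \ldots, a_g, b_g) \in U$ if and only if the divisor class $[(\infty_-) - (\infty_+)] \in J(C)$ is torsion. Using the canonical differential $\eta$ of Theorem \ref{thm-formecanonique}, this class is $r$-torsion precisely when $r \cdot \eta$ has periods in $2\pi i \Z$ along every cycle. Since $\eta$ has zero periods on the cycles $\alpha_j$ by construction and periods $\pm 2\eta_j$ on cycles covering the $[a_j, b_j]$, the torsion condition becomes the rationality of the real numbers $\tau_j := \eta_j/(i\pi)$ for $j = 0, \ldots, g$. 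Proposition \ref{prop-periodeimaginaire} furnishes a linear relation $\sum_j \epsilon_j \tau_j = 1$, so only $g$ of the $\tau_j$ are independent.

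Now consider the real-analytic \emph{period map}
\[\Phi:\; U \longrightarrow \R^g,\quad (a_0, b_0, \ldots, a_g, b_g) \longmapsto (\tau_0, \tau_1, \ldots, \tau_{g-1}).\]
Real-analyticity can be made explicit by the substitution $x = \tfrac{a_j+b_j}{2} + \tfrac{b_j-a_j}{2}\cos\theta$, which rewrites each $\eta_j$ as an integral on the fixed interval $[0,\pi]$ of a function depending analytically on the endpoints. Then $U_{PA}$ is the preimage under $\Phi$ of the (dense) set of rational vectors in $\R^g$, so density of $U_{PA}$ in $U$ would follow from showing that $\Phi$ is open. Openness holds on any open subset of $U$ where $d\Phi$ has maximal rank $g$, and since $\Phi$ is real-analytic, the locus where the rank of $d\Phi$ is strictly less than $g$ is either all of $U$ or a proper real-analytic subset (hence nowhere dense). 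It is therefore enough to exhibit one point of $U$ where $d\Phi$ has rank $g$: density of $U_{PA}$ on the corresponding dense open set then extends to density on all of $U$ by passage to the closure.

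Exhibiting such a point is the main obstacle. The approach I would take is direct computation of the partial derivatives of the $\tau_j$ with respect to the endpoints, conveniently carried out after the change of variable above. The resulting $g \times g$ Jacobian matrix at a well-chosen symmetric parameter point reduces, after some algebra, to a matrix built out of the integrals $\int_{a_j}^{b_j} x^i/\sqrt{-D(x)}\,dx$ for $0 \leq i \leq g-1$ and $0 \leq j \leq g-1$ (with contributions coming both from the variation of the domain and from the parameter-dependence of $R$, which itself is determined by the vanishing of the periods of $\eta$ on the $\alpha_j$). These quantities are, up to signs and a factor of $i$, the periods $\int_{\alpha_j} \omega_i$ attached to the basis $\omega_i = x^i\,dx/y$ of holomorphic forms and the symplectic cycles $\alpha_j$ on $C$. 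Their nondegeneracy is precisely the content of Theorem \ref{thm-accouplementParfait} on the perfect coupling between holomorphic forms and cycles on a compact Riemann surface. This furnishes the required rank-$g$ point of $d\Phi$ and completes the argument.
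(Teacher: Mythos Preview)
Your approach is essentially the paper's: both identify $U_{PA}$ as the preimage of the rational points under a period-type map from $U$ to $\R^g$ (your $\Phi$, the paper's lift $\theta$ of $\nu$), and both need this map to be open in order to pull back the density of $\Q^g$. The paper's sketch in fact omits the openness argument entirely and defers to \cite{serre2018bourbaki}; you make it explicit via real-analyticity and a rank computation, which is a real improvement over what is written here.

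Two points deserve tightening. First, your claim that torsion of $[(\infty_-)-(\infty_+)]$ is detected solely by the $\alpha_j$-periods and the $\Gamma_j$-periods of $\eta$ presupposes that $\alpha_1,\dots,\alpha_g,\Gamma_0,\dots,\Gamma_{g-1}$ generate $H_1(C;\Z)$; this is true (adjacent $\alpha_j$ and $\Gamma_k$ meet at a branch point, and the resulting intersection matrix is triangular with $\pm 1$ on the diagonal), but it should be said. Second, the reduction of the Jacobian of $\Phi$ to a period matrix of holomorphic differentials is the crux and is asserted (``after some algebra'') rather than carried out; note moreover that the integrals $\int_{a_j}^{b_j} x^i/\sqrt{-D(x)}\,dx$ you land on are periods along the $\Gamma_j$, not the $\alpha_j$ of Theorem~\ref{thm-accouplementParfait}, so you must first observe that the $\Gamma_0,\dots,\Gamma_{g-1}$ themselves form half of a symplectic basis before invoking that theorem.
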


\medskip We shall give here the main steps of the proof. With an element $u\in U$ can be associated a point on the jacobian variety of curve $y^2=D(x)$ : $\nu(u)=\infty_- - \infty_+ \in \R^g/\Z^g$. Theorem \ref{thm-PellianEquivautTorsion} states that $u\in U_{PA}$ if and only if $\nu(u)=\infty_- - \infty_+$ is a point with finite multiplicity (a torsion point) on the jacobian variety. However we can lift $\nu : U \rightarrow \R^g/\Z^g$ in order to obtain a continuous function $\theta : U \rightarrow \Q^g$, and $\nu(u)$ is a point of finite multiplicity in $\R^g/\Z^g$ if and only if $\theta(u)\in\Q^g$. We then conclude thanks to the density of $\Q^g$ in $\R^g$. Additional details of this proof are provided in Serre's article \cite{serre2018bourbaki}. 

\medskip

Thanks to theorem \ref{thm-densitePA}, it is easy to reduce the general case to Pell-Abel case. In fact, if $E_u$ is a union of segments given by $u\in U$ such that $\capa(E_u)>1$, we can find $u'\in U_{PA}$ arbitrarily close to $u$ so that we still have $\capa(E_{u'})>1$, by continuity of $\capa(E_u)$ with respect to $u$ (corollary \ref{cor-continutecapacitesegment}).

\clearpage

\section*{Conclusion}
\paragraph{}
In this paper, we have responded to the initial problem in the majority of cases: 

\begin{quote}
" Which $\R$ segments have an infinity of algebraic integers totally in them ?"
\end{quote}

\paragraph{}

First, based on "elementary" remarks and numerical results, we had the intuition that segments longer than 4 had an infinity of algebraic integers totally included in them, while segments shorter than 4 had only a finite number of them. However, in order to be able to demonstrate these conjectures, it was necessary to consider our problem as a particular case of a more general problem and not only to consider the $\R$ segments, but all the compact subsets of the complex plane.

\medskip

We had to clarify the notion of size to have an equivalent of the length of a segment for compact subsets of $\C$. Thus, we introduced the notion of capacity resulting from the theory of potential. Using this concept, we have highlighted that the limit value of capacity for this problem is 1. Potential theory gives several results in this direction but in the case of real segments, it is necessary to combine it with properties of algebraic curves to obtain the desired results.
\paragraph{}

The "elementary" results of the section \ref{section1} allowed us to develop the following intuitions:

\begin{enumerate}

    \item Capacity is the measurement of size adapted to our problem for compact subsets of $\C$. 
    
    \item Generally compact sets with a capacity of less than 1 (including segments with a length of less than 4) have a finite number of algebraic integers totally in them.
    
    \item Generally compact sets with a capacity greater than 1 (including segments with a length greater than 4) have an infinity of algebraic integers totally in them. 
\end{enumerate}

The rest of our paper consists in proving results that are in the direction of these intuitions. The main ones are set out below.

\medskip

The first theorem resulting from the theory of potential gives a complete answer for the second point:

\begin{thm*}[Fekete]
    Any compact with capacity strictly smaller than $1$ has a finite number of algebraic integers totally in it. 
\end{thm*}
\begin{cor*}
    Any segment with length strictly smaller than $4$ has a finite number of algebraic integers totally in it.
\end{cor*}

The third point is more complex, but the theory of potential manages to give a partial result:

\begin{thm*}[Fekete-Szeg\"o]
    Let us consider a compact subset of $\C$ with capacity greater than or equal to $1$, symmetric with respect to complex conjugation. Then any neighborhood (whose complement is connected) of this compact subset has an infinite number of algebraic integers totally in it.
\end{thm*}

However, unlike the previous theorem, this theorem is not applicable to $\R$ segments.  By mixing notions of algebraic curves with the notion of capacity, we obtain a much stronger theorem that completes the discussion for the $\R$ segments.

\begin{thm*}[Robinson]
Any finite union of intervals of $\R$ with capacity strictly greater than $1$ has an infinite number of algebraic integers totally in it.
\end{thm*}

\begin{cor*}
    Any segment with length strictly greater than $4$ has an infinite number of algebraic integers totally in it.
\end{cor*}

\paragraph{}

We give here several avenues for reflection for the interested reader. Many problems remain open. 

\medskip

In the case of $\R$ segments, the situation where the segment length is equal to 4 is unknown, except in the case where the boundaries of the segment are integers. We know that there is then an infinite number of algebraic integers totally in it. What happens if the segment has length 4 and the boundaries are not integers ? 

\medskip

Then, in this paper we were mainly interested in the finiteness of algebraic integers totally in a segment. A natural extension would be to seek to know more about these numbers, especially to know their distribution. Some of the results are contained in our paper. For example, we have found all the algebraic integers totally in the $[-2.2]$ segment. In addition, for a segment $E$ with length strictly greater  than 4, by refining the demonstration of Robinson's theorem, it is possible to demonstrate the existence of a sequence of monic polynomials with integer coefficients having all their roots in $E$ and whose counting measure with respect to their roots converges weakly-* to  the equilibrium measure of $E$ (\ref{thm-densitePA}). Let us consider a sequence of distinct algebraic integers totally in $E$. Do the counting measure with respect to their conjugates converge weakly-* to the equilibrium measure of $E$?

\clearpage

\begin{appendix}

\section{Measure theory}\label{appendixmeasure}

\subsection{Measures}

The notion of measure that we use is Radon measure. 

\begin{defi}[Radon measure]
Consider $D\subset\C$. Let $C_c^0(D,\R)$ denote the space of functions on $D$ with compact support.

We call \emph{Radon measure} any continuous linear functional on $C_c^0(D,\R)$.

If $\mu$ is a Radon measure, we write $\displaystyle\int_D f \dif \mu = \mu(f)$.
\end{defi}

\begin{ex}[Dirac measure]
Let us consider $z_0\in D$. The Dirac measure on $z$, denoted by $\delta_{z_0}$, is the linear continuous functional which maps $f\in C_c^0(D,\R)$ to $f(z_0)$, \emph{i.e.} $\displaystyle \int f\dif\delta_{z_0} = f(z_0)$.
\end{ex}

\begin{ex}[(Averaged) Counting measure]
Let $F = \{z_1,\dots,z_n\}$ be a finite subset of $D$, the (averaged) counting measure with respect to $F$ is given by $\nu_F = \frac 1n \displaystyle\sum_{i=1}^n \delta_{z_i}$, \emph{i.e.}  $\displaystyle\int f\dif \nu_F =\frac 1n \displaystyle\sum_{i=1}^n f(z_i)$.
\end{ex}

\begin{defi}[Measure of a set]\label{defmeasureEns} Let $\mu$ be a measure on $D\subset\C$.
    \begin{enumerate}
        \item If $D$ is compact, the measure of $D$ is defined by $\mu(D):=\displaystyle\int_D 1 \dif \mu$. 
        \item If  $K\subset D$ is a compact subset, then $\mu(K)=\displaystyle \int_D \ind{K}\dif \mu = \displaystyle\int_K 1 \dif \mu.$
     \item If $D = \displaystyle\cup_{n\in\N} K_n$ where $(K_n)_{n\in\N}$ is an increasing sequence of compact subsets, then the measure of $D$ is  $\mu(D):=\lim_{n\to\infty}\mu(K_n)$.
     \item If $\mu(D)=1$, we call $\mu$ a probability measure on $D$.
     \end{enumerate}
\end{defi}

\begin{rmq}
$\int_D\ind{K}\dif\mu=\int_K1\dif\mu$ is defined by $\inf\{\mu(f):f\in C^0_c(D,\R),0\leq f\leq1, f|_K=1\}$.
\end{rmq}

\begin{defi}[Support of a measure]
Let $\mu$ be a measure on $D\subset\C$ and let $U\subset D$. We say that $\mu$ has its support in $U$ if and only if
$$\forall f\in C_c^0(D,\R), f|_U = 0 \Rightarrow \int_D f \dif \mu = 0$$

The \emph{support} of $\mu$ is  the intersection of all the closures of such $U$. Equivalently, it is also the smallest closed subset $U$ of $D$ verifying the previous condition. The support of $\mu$ is denoted by $\supp(\mu)$.
\end{defi}

\begin{defi}[Restriction/Extension of a measure]
Let $\mu$ be a measure on $D$ whose support is included  in a compact subset $K\subset D$. For all $f\in C^0(K,\R)$, there exists $\tilde{f}\in C^0_c(D,\R)$ such that $\Tilde{f}|_K=f$. We define the integral of $f$ with respect to $\mu$ by
\[\mu(f):=\int_Kf\dif\mu:=\int_D \Tilde{f}\dif\mu\]
The definition does not dependent on the choice of $\tilde{f}$; $\mu$ becomes in this way a measure restricted to $K$ (which will be denoted by $\mu$ as well for convenience). For all $f\in C^0(D,\R)$, the following integral is well-defined.
\[\mu(f):=\int_Df\dif\mu:=\mu(f|_K)\]
This extends the domain of definition of $\mu$ to $C^0(D,\R)$.
\end{defi}

\begin{rmq}
Let $\mu$ be a measure on $D$ with support in a compact subset $K$, then $\mu(D)=\mu(K)=\int_D1\dif\mu$.
\end{rmq}

\begin{rmq}Following these definitions, we can define a measure $\Tilde{\mu}$ by
\[\Tilde{\mu}(A)=\inf_{U\supset A} \sup_{K\subset U}\mu(K)\]
where $U$ is open and $K$ is closed. In this way we extend the measure to any mesurable subset of $D$. 
\end{rmq}

\subsection{Weak-$*$ convergence}
Let us define now the import notion of weak-$*$ convergence of measures. 

\begin{defi}[Weak-$*$ convergence]
    Let $D\subset\C$ and $(\mu_n)_{n\in\N}$ be a sequence of measures on $D$. Let $\mu$ be a measure on $D$. We say that $(\mu_n)_{n\in\N}$ converges weakly to $\mu$, denoted by  $\mu_{n} \overset{*}{\longrightarrow} \mu  $, if and only if 
    $$\forall f\in C_c^0(D,\R),\ \lim_{n\to\infty}\int_D f\dif \mu_n = \int_D f\dif\mu.$$
\end{defi}

\begin{ex}[Riemann integral]
We can consider Riemann integral as the limit measure of a sequence of counting measures : indeed, if $f:[0,1]\to\R$, we have $$\lim_{n\to\infty}\frac 1n \displaystyle\sum_{k=0}^n f(\frac kn) = \int_0^1 f\ \dif x.$$ Let $\nu_n$ denote the counting measure with respect to points $\{\frac kn\ | \ k=0,\dots,n\}$. The continuous linear functional  $\displaystyle\int_0^1$ is the limit measure of the sequence $(\nu_n)$ for weak-$*$ convergence.
\end{ex}

\clearpage

\begin{ex}
If $\nu_n$ is the counting measure with respect to $n$-th roots of unity $\{e^{\frac{2\pi i k}{n}}\ |\ k=0,\dots,n-1\}$, then 
$$\lim_{n\to\infty}\int f\dif \nu_n = \frac{1}{2\pi}\int_0^{2\pi}f(e^{i\theta})\dif \theta.$$
\end{ex}

\begin{prop}\label{propSuppDecroi}
Let $K$ be a compact subset of $\C$ such that $K=\displaystyle\bigcap_{n\in\N} B_n$ , where $(B_n)_{n\in\N}$ is a decreasing sequence of compact sets.
If there exists a sequence of probability measures $(\mu_n)_n$ such that $\mu_n \cvf \mu$ and $\supp(\mu_n)\subset B_n$ for each $n$, then $\supp(\mu)\subset K$.
\end{prop}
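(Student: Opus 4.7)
The plan is to prove the contrapositive on the level of points: I will show that any $z \notin K$ admits an open neighborhood on which $\mu$ puts no mass (in the sense that every test function supported there is killed by $\mu$), from which $z \notin \supp(\mu)$ follows directly by definition of support.

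First, I would exploit the hypothesis $K = \bigcap_n B_n$. Fix $z \notin K$. Since every $B_n$ is closed and their intersection is exactly $K$, there must exist some index $N$ with $z \notin B_N$; otherwise $z$ would lie in every $B_n$ and hence in $K$. Because $B_N$ is compact (so closed), one can pick $r>0$ small enough that the open disk $V := B(z,r)$ satisfies $V \cap B_N = \emptyset$. The decreasing property of the sequence then gives, for every $n \geq N$,
\[
V \cap B_n \subset V \cap B_N = \emptyset,
\]
hence $V \cap \supp(\mu_n) = \emptyset$ for all $n \geq N$.

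Next I would feed this into the weak-$*$ convergence. Take any $f \in C_c^0(\C,\R)$ with $\supp(f) \subset V$. For each $n \geq N$, $f$ vanishes identically on $\supp(\mu_n)$, so by the definition of support of a measure,
\[
\mu_n(f) = \int f\, \dif\mu_n = 0.
\]
Since $\mu_n \cvf \mu$, we get $\mu(f) = \lim_{n\to\infty} \mu_n(f) = 0$. As $f$ was arbitrary with support in $V$, the open neighborhood $V$ of $z$ witnesses that $z$ does not lie in $\supp(\mu)$. Therefore $\supp(\mu) \subset K$.

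I do not foresee a real obstacle here: the only two ingredients are (i) the elementary topological fact that a decreasing intersection of closed sets avoids a point iff some member already avoids it, and (ii) the definition of weak-$*$ convergence together with the definition of support. No regularity of $\mu$ or uniformity assumption is needed. The mildest point to be careful about is that the argument uses $C_c^0$ test functions supported strictly inside $V$, which poses no issue because we can always find a bump function supported in $V$ and nonzero at $z$ if one wishes to argue the contrapositive constructively.
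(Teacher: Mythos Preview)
Your proof is correct and slightly more elementary than the paper's. The paper argues globally: it takes an arbitrary $f\in C_c^0$ vanishing on $K$, sets $U=\{|f|<\epsilon\}$, uses compactness of the $B_n$ (finite intersection property) to get $B_N\subset U$ for some $N$, then builds cutoff functions $\phi_n$ with $\phi_n|_{B_n}=1$ and $\phi_n|_{U^c}=0$ to split $f=\phi_nf+(1-\phi_n)f$ and bound $|\mu_n(f)|<\epsilon$; passing to the limit gives $|\mu(f)|\leq\epsilon$, hence $\mu(f)=0$. You instead work pointwise: for $z\notin K$ you only need that some single $B_N$ misses $z$, which is just the definition of the intersection (no compactness argument), and then a disk $V$ around $z$ avoiding $B_N$ immediately kills $\mu_n(f)$ for every $f$ supported in $V$ and $n\ge N$. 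Your route avoids both the finite-intersection-property step and the cutoff construction; the paper's route, on the other hand, verifies the support condition for $K$ in one shot rather than point by point.
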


\begin{proof}
    Consider $f\in C^0_c(D,\R)$ such that $f|_K=0$, let us show that for all $\epsilon>0$, we have $\abs{\int_Kf\dif\mu}<\epsilon$.
    The open set ($f$ is continuous) $U:=\{|f|<\epsilon\}$ contains $K=\bigcap_nB_n$. Since all $K_i$ are compact, there exists an integer $N$ such that $\forall\,n\geq N,B_n\subset U$. Consider $\phi_n\in C^0_c(D,\R)$ such that
	\[\phi_n|_{B_n}=1,\quad\phi_n|_{U^c}=0,\quad 0\leq\phi_n\leq1\]
	Then for all $n\geq N$, $\abs{\phi_nf}<\epsilon, (1-\phi_n)f|_{B_n}=0$, so that $\abs{\int_Df\dif\mu_n}=\abs{\int_D\phi_nf\dif\mu_n}<\epsilon$; thus $\mu_n\cvf\mu$ gives
	$\abs{\int_Kf\dif\mu}\leq\epsilon$
\end{proof}

\medskip
We are now interested in the space of measures on a given topological space. Let $ X $ be a compact metric space. Let $ C(X) $ denote the set of continuous functions from $X$ to $\mathbb{R}$, and $ \mathcal{P}(X) $ denote the set of probability measures on $ X $. 

\begin{thm}[Banach-Alaoglu-Bourbaki]\label{thmBanachAlaoglu}
$ \mathcal{P}(X) $ is sequentially compact for the topology associated with the weak-$*$ convergence (\emph{i.e.}, weak-$*$ topology).

In other words, for any sequence of probability measures $(\mu_n)_n$, there are a sub-sequence $(\mu_{\varphi(n)})_n$ and a probability measure $\mu$ such that $\mu_{\varphi(n)} \overset{*}{\longrightarrow} \mu  $, \emph{i.e.} 
$$
\forall \phi \in C(X), \int_X \phi ~ \dif \mu_{\varphi(n)} {\longrightarrow}   \int_X \phi ~ \dif \mu
$$
\end{thm}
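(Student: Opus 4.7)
The plan is to combine separability of $C(X)$, a Cantor diagonal extraction, and the Riesz representation theorem. Since $X$ is a compact metric space, $C(X)$ equipped with the uniform norm is separable: one can exhibit a countable dense family $\{\phi_k\}_{k\in\N}\subset C(X)$, for instance by applying the Stone–Weierstrass theorem to the countable algebra generated by distances to points in a fixed countable dense subset of $X$.

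First, I would use the uniform bound $\bigl|\int_X\phi\dif\mu_n\bigr|\leqslant \|\phi\|_\infty$, which holds because each $\mu_n$ is a probability measure. Fix the enumeration $\{\phi_k\}$. The sequence $(\int\phi_1\dif\mu_n)_n$ is bounded in $\R$, so by Bolzano–Weierstrass it admits a convergent subsequence indexed by $\varphi_1$. From this subsequence, extract $\varphi_2\subset\varphi_1$ along which $(\int\phi_2\dif\mu_n)_n$ converges, and so on. The diagonal extraction $\varphi(n):=\varphi_n(n)$ then yields a subsequence $(\mu_{\varphi(n)})$ such that $\int\phi_k\dif\mu_{\varphi(n)}$ converges in $\R$ for every $k$.

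Next, I would upgrade this to convergence against \emph{every} $\phi\in C(X)$ by a standard $\varepsilon/3$ argument: given $\phi$ and $\varepsilon>0$, pick $\phi_k$ with $\|\phi-\phi_k\|_\infty<\varepsilon/3$; then for $n,m$ large enough,
\[\Bigl|\int\phi\dif\mu_{\varphi(n)}-\int\phi\dif\mu_{\varphi(m)}\Bigr|\leqslant 2\|\phi-\phi_k\|_\infty+\Bigl|\int\phi_k\dif\mu_{\varphi(n)}-\int\phi_k\dif\mu_{\varphi(m)}\Bigr|<\varepsilon,\]
so the sequence $(\int\phi\dif\mu_{\varphi(n)})_n$ is Cauchy, hence convergent. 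Define the functional $\Lambda(\phi):=\lim_n\int\phi\dif\mu_{\varphi(n)}$; linearity, the bound $|\Lambda(\phi)|\leqslant\|\phi\|_\infty$, positivity (if $\phi\geqslant 0$ then each $\int\phi\dif\mu_{\varphi(n)}\geqslant 0$), and $\Lambda(1)=1$ all pass to the limit trivially.

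Finally, I would invoke the Riesz–Markov representation theorem on the compact metric space $X$ to produce a unique Radon probability measure $\mu$ with $\Lambda(\phi)=\int_X\phi\dif\mu$ for all $\phi\in C(X)=C_c^0(X,\R)$. By construction $\mu_{\varphi(n)}\cvf\mu$, finishing the proof. The main obstacle is the appeal to Riesz–Markov, which is the deep analytic input; everything else is soft (boundedness plus diagonal extraction plus density). A secondary point to be careful about is proving separability of $C(X)$, which is why the metric (hence second-countable) structure of $X$ matters: without it the conclusion would still hold in a net-theoretic sense, but \emph{sequential} compactness could fail.
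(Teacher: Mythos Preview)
Your proof is correct and follows essentially the same approach as the paper: separability of $C(X)$, diagonal extraction to get convergence on a countable dense family, extension to all of $C(X)$ by density, and Riesz--Markov to represent the limiting functional by a probability measure. You give a bit more detail (the Stone--Weierstrass justification of separability and the explicit $\varepsilon/3$ Cauchy argument), but the structure is identical.
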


\begin{proof}

$X$ is a compact metric space, so that $ C(X) $ is separable, \emph{i.e.} there is a dense sequence $(\phi_n)_n$ in $C(X)$.
\\

The idea is to do a diagonal extraction. For $\phi_1$, we have $(\int_X \phi_1 ~ d\mu_{n})_n$ which is bounded. So we can extract a sub-sequence $(\mu_{\varphi_1(n)})_n$ such that $(\int_X \phi_1 ~ d\mu_{\varphi_1(n)})_n$ converges. We can extract in the same way $(\mu_{\varphi_2 (n)})_n$ from the sequence $(\mu_{\varphi_1(n)})_n$. Thus, we construct $(\mu_{\varphi_k (n)})_n $ for all $k \in \mathbb{N}$, which is extracted from the previous sequences.
\\

We consider then the sequence  $(\mu_{\varphi_n(n)})_n$. For $k \in \mathbb{N}, (\int_X \phi_k ~ d\mu_{\varphi_n(n)})_n$ converges : indeed, for some sufficiently large $n$ , ${\varphi_n}$ is extracted from ${\varphi_k}$.
Using the density of $(\phi_n)_n$, $(\int_X \phi ~ d\mu_{\varphi_n(n)})_n$ converges for $\phi \in C(X)$.
\\

Let us define the functional 
$$  \Lambda : C(X) \longrightarrow \mathbb{R}  ~~~~~~~~~~~~~~~~~~~~~~~~~~~~~~~~$$
$$ \phi \longmapsto \lim\limits_{n \rightarrow +\infty} \int_X \phi ~ d\mu_{\varphi_n(n)}
$$

It is clearly a positive linear functional. By Riesz–Markov–Kakutani representation theorem, we have : $  \Lambda(\phi) =  \int_X \phi ~ d\mu$ where $\mu $ is a measure $C(X)$.
\\

To prove the compactness of $\mathcal{P}(X)$, it is enough to show that $\mu \in \mathcal{P}(X)$, since we have by construction $
\forall \phi \in C(X), \int_X \phi ~ d\mu_{\varphi_n(n)} {\longrightarrow}   \int_X \phi ~ d\mu
$. This fact is true because $ \int_X  d\mu = \lim\limits_{k \rightarrow +\infty} \int_X  d\mu_{\varphi_n(n)}  = 1$.
\end{proof}

\subsection{Lower semi-continuous functions}

We wish to extend the concept of measure and weak-$*$ convergence to functions that are not necessarily continuous.

\begin{defi}[Lower semi-continuous function] A function $f:D\subset\R^m\rightarrow\R\cup\{+\infty\}$ is said to be \emph{lower semi-continuous (l.s.c.)} if one of the following equivalent properties is satisfied:
\begin{enumerate}[label = (\roman*)]
    \item $\forall z_0 \in D, f(z_0)\leqslant \displaystyle\liminf_{z\rightarrow z_0}{f(z)}$
    \item On all compact subsets $K\subset D$, $f$ is a pointwise limit of an increasing sequence of continuous functions
    \item $\forall \alpha\in\R$, $\{f>\alpha\}:=f^{-1}(]-\infty,\alpha[)$ is a open set.
\end{enumerate}
\end{defi}

\begin{defi}
Let $f$ be a l.s.c.  function and $\mu$ be a positive measure with support in a compact subset $K\subset D$.  We define the integral of $f$ with respect to $\mu$ : $$\int_K f  \dif \mu = \lim_{n\rightarrow \infty }\int_K f_n \dif \mu$$ where $(f_n)$ is an increasing sequence of continuous functions on $K$ which converges pointwise to $f|_{K}$
\end{defi}

\begin{prop}\label{propSciMin}
Any l.s.c function defined on a compact set has a minimum point.
\end{prop}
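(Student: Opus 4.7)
The plan is to adapt the standard argument that a continuous function on a compact set attains its infimum, replacing continuity by the l.s.c.\ inequality $f(x_0)\leq\liminf_{x\to x_0}f(x)$. Let $K$ be the compact set and set $m=\inf_K f\in\{-\infty\}\cup\R$; we want to exhibit $x_0\in K$ with $f(x_0)=m$.

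First I would pick a minimizing sequence $(x_n)_{n\in\N}$ in $K$, i.e.\ a sequence such that $f(x_n)\to m$ (one exists by the definition of the infimum, whether $m$ is finite or $-\infty$). Since $K$ is a compact subset of $\R^m$ (so sequentially compact), I extract a convergent subsequence $x_{n_k}\to x_0\in K$. Applying the defining property (i) of lower semi-continuity at the point $x_0$ to this subsequence gives
\[f(x_0)\;\leq\;\liminf_{k\to\infty}f(x_{n_k})\;=\;\lim_{k\to\infty}f(x_{n_k})\;=\;m.\]
Combined with the reverse inequality $f(x_0)\geq m$, which holds by definition of $m$, this yields $f(x_0)=m$.

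It then only remains to exclude $m=-\infty$: the inequality $f(x_0)\leq m$ would force $f(x_0)=-\infty$, which contradicts the hypothesis that $f$ takes values in $\R\cup\{+\infty\}$. Hence $m\in\R$ and $x_0$ is a genuine minimum point. There is no real obstacle here; the whole content of the argument is that l.s.c.\ is exactly the one-sided continuity needed for the classical proof to go through, and compactness supplies the convergent subsequence. An alternative route, if one prefers to avoid sequences, is to use characterization (iii): the closed nested sets $A_n=\{x\in K: f(x)\leq m+1/n\}$ are non-empty by definition of the infimum, so by the finite intersection property on the compact $K$ their intersection contains a point $x_0$ with $f(x_0)\leq m$, giving the same conclusion.
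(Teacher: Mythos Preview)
Your proof is correct. Your primary argument is sequential---extract a convergent subsequence from a minimizing sequence and invoke the l.s.c.\ inequality---whereas the paper's one-line hint is purely topological: first use the open cover $\{f>n\}_{n\in\Z}$ of $K$ (each set is open by characterization~(iii)) to extract a finite subcover and conclude that $f$ is bounded below, then apply the finite intersection property to the nested closed sets $\{f\leq \inf f+1/n\}_{n\in\N^*}$ to produce the minimizer. Your sequential route is perhaps more transparent and exploits that we are in a metric space; the paper's route works in any compact Hausdorff space. Amusingly, the ``alternative route'' you sketch at the end is exactly the second half of the paper's argument, so you have in fact recovered both proofs.
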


\begin{proof}
Consider the sets $\{f>n\}_{n\in\Z}$  and $\{f\leq\frac{1}{n}+\inf f\}_{n\in\N^*}$.
\end{proof}

\begin{prop}\label{propIntegralSciPositive}
Let $f$ be a l.s.c. function verifying $f|_K\geq\alpha$ and let $\mu\in\mathcal{P}(D)$ with $\supp(\mu)=K\subset D$. Then $\int_Kf\dif\mu\geq\alpha$. In addition, $\int_Kf\dif\mu=\alpha$ if and only if $f|_K\equiv\alpha$.
\end{prop}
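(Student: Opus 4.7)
The strategy is to exploit the defining representation of $\int_K f\dif\mu$ as the monotone limit of integrals of continuous approximants, and then to leverage the characterisation of $\supp(\mu)$ to detect any point where $f$ strictly exceeds $\alpha$.

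First I would fix an increasing sequence $(f_n)$ of continuous functions on $K$ converging pointwise to $f|_K$, as guaranteed by the l.s.c.\ assumption. A minor but convenient reduction is to replace $f_n$ by $\tilde f_n := \max(f_n,\alpha)$: the maximum of two continuous functions is continuous, the sequence $(\tilde f_n)$ is still increasing, and since $f\geq\alpha$ on $K$ we have $\tilde f_n \to \max(f,\alpha)=f$ pointwise on $K$. By definition, $\int_K f\dif\mu=\lim_n\int_K\tilde f_n\dif\mu$. Since $\mu$ is a probability measure supported in $K$, $\int_K\tilde f_n\dif\mu\geq\alpha\int_K 1\dif\mu=\alpha\mu(K)=\alpha$, and passing to the limit yields $\int_K f\dif\mu\geq\alpha$. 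This gives the first inequality.

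For the equality, one direction is immediate: if $f|_K\equiv\alpha$ then each $\tilde f_n$ can be taken equal to $\alpha$, so $\int_K f\dif\mu=\alpha\mu(K)=\alpha$. For the converse, I argue by contrapositive: assume there exists $z_0\in K$ with $f(z_0)>\alpha$ and prove $\int_K f\dif\mu>\alpha$ strictly. Since $\tilde f_n(z_0)\to f(z_0)>\alpha$, pick $n_0$ with $\tilde f_{n_0}(z_0)>\alpha$, and set $\eta := \tilde f_{n_0}(z_0)-\alpha>0$. By continuity of $\tilde f_{n_0}$, there is an open neighborhood $V$ of $z_0$ on which $\tilde f_{n_0}\geq\alpha+\eta/2$, while elsewhere on $K$ we still have $\tilde f_{n_0}\geq\alpha$.

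The key remaining step, and the only mildly delicate point, is to assert $\mu(V)>0$. This is exactly the content of the support condition $z_0\in\supp(\mu)=K$: if we had $\mu(V)=0$, then every continuous function $\phi$ with compact support contained in $V$ would satisfy $\int\phi\dif\mu=0$, forcing $\supp(\mu)\subset D\setminus V$, which contradicts $z_0\in V\cap\supp(\mu)$. Combining these, one obtains
\[
\int_K \tilde f_{n_0}\dif\mu \;\geq\; \alpha\,\mu(K\setminus V) + (\alpha+\eta/2)\,\mu(V) \;=\; \alpha + (\eta/2)\mu(V) \;>\; \alpha,
\]
and since $(\int_K \tilde f_n\dif\mu)_n$ is nondecreasing and tends to $\int_K f\dif\mu$, this gives $\int_K f\dif\mu>\alpha$, as desired. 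The only point requiring care is formalising the last displayed inequality through the continuous cut-off definition of $\mu(V)$, but this is a direct application of Definition \ref{defmeasureEns} and the support characterisation.
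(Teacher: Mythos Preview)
Your proof is correct. The overall strategy---approximate by continuous functions from below, then exploit the support condition via a bump-function argument---matches the paper's, but you streamline the first inequality: by replacing $f_n$ with $\max(f_n,\alpha)$ you obtain $\int_K\tilde f_n\dif\mu\geq\alpha$ immediately, whereas the paper keeps the original $f_n$ and invokes compactness of $K$ to argue that for every $\epsilon>0$ the open cover $K=\bigcup_n\{f_n>-\epsilon\}$ (after reducing to $\alpha=0$) admits a finite subcover, hence $f_n>-\epsilon$ on all of $K$ for large $n$. Your reduction avoids this covering step entirely. For the strict inequality the two arguments are essentially identical in spirit: the paper works directly with the l.s.c.\ function $f$, finds a neighborhood $B_2$ where $f\geq\tfrac12 f(\zeta_0)$, builds a bump $\chi\in C^0_c(B_2)$, and applies the first part to $f-\tfrac12 f(\zeta_0)\chi$; you instead pass to the continuous approximant $\tilde f_{n_0}$ before localising, which lets you manipulate a genuine continuous function and split the integral over $V$ and $K\setminus V$. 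Both reductions ultimately rest on the same point, namely that $z_0\in\supp(\mu)$ forces $\int\chi\dif\mu>0$ for any nonnegative bump $\chi$ equal to $1$ near $z_0$.
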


\begin{proof}
Since $\mu$ is a probability measure,  we can assume without loss of generality that $\alpha=0$.
Let $(f_n)$ be an increasing sequence of continuous functions on $K$ which converges pointwise to $f|_K\geq0$. Then $\forall\epsilon>0$, $K=\cup_{n\geq1}\{f_n>-\epsilon\}$. By compactness, $\exists N\,\forall n\geq N,f_n>-\epsilon$ on $K$, which implies $\int_Kf\dif\mu\geq-\epsilon$. So that $\int_Kf\dif\mu\geq0$.
\medskip\par For the second assertion, assume there exists $\zeta_0\in K$ such that $f(\zeta_0)>0$. Then we have $\int_Kf\dif\mu>0$.
Indeed, there exists two open neighborhoods  $B_1\subset B_2$ of $\zeta_0$ in $D$ such that $f|_{B_2}\geq\frac{1}{2}f(\zeta_0)$. Let  $\chi\in C^0_c(B_2,\R)$ such that $\chi|_{B_1}=1$ and $0\leq\chi\leq 1$. Then $f-\frac{1}{2}f(\zeta_0)\chi$ is a l.s.c  function on $D$ and positive on $K$.
By the above, $\int_K(f-\frac{1}{2}f(\zeta_0)\chi)\dif\mu\geq0$, so that $\int_Kf\dif\mu\geq\frac{1}{2}f(\zeta_0)\int_K\chi\dif\mu$.
The last quantity is strictly positive, otherwise we would have $\supp(\mu)\subset K\backslash B_1\subsetneq K$ (let $\phi\in C^0_c(D,\R)$ such that $\phi|_{K\backslash B_1}=0$, then $(1-\chi)\phi^\pm|_K=0$, $\int_K\phi^\pm\dif\mu=\int_K\chi\phi^\pm\dif\mu\leq\|\phi\|_\infty\int_K\chi\dif\mu=0$, then $\int_K\phi\dif\mu=0$), which is a contradiction.
\end{proof}

\begin{prop} Let $f$ be a l.s.c. function and $(\mu_n)$ be a sequence of positive measures with support included in a compact subset $K$ such that $\mu_n\overset{*}{\rightarrow} \mu$. Then:
$$\int_K f\dif\mu \leqslant \liminf_{n\rightarrow \infty}\int_K f\dif\mu_n$$
\end{prop}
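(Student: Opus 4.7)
The plan is to reduce the lower semi-continuous case to the continuous case (where weak-$*$ convergence applies directly), via the characterization of l.s.c.\ functions as increasing pointwise limits of continuous ones on compact sets.

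First, using property (ii) of the definition of a lower semi-continuous function, I would pick an increasing sequence $(f_k)_{k\in\N}$ of continuous functions on $K$ converging pointwise to $f|_K$. Since each $f_k$ is continuous on a compact set, it is bounded; using Tietze's extension theorem together with a cutoff function $\chi\in C_c^0(D,\R)$ with $\chi\equiv 1$ on $K$, I would produce an extension $\tilde f_k\in C_c^0(D,\R)$ with $\tilde f_k|_K=f_k$. As the measures $\mu_n$ and $\mu$ are all supported in $K$, the integrals $\int_D\tilde f_k\dif\mu_n$ and $\int_D\tilde f_k\dif\mu$ coincide with $\int_Kf_k\dif\mu_n$ and $\int_Kf_k\dif\mu$, independently of the chosen extension.

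Next, for every fixed $k$, since $f\geq f_k$ on $K$ and the measures $\mu_n$ are positive, we get $\int_Kf\dif\mu_n\geq\int_Kf_k\dif\mu_n$. Taking $\liminf_{n\to\infty}$ on both sides and invoking the weak-$*$ convergence $\mu_n\overset{*}{\to}\mu$ on the continuous function $\tilde f_k$ yields
\[\liminf_{n\to\infty}\int_Kf\dif\mu_n\;\geq\;\liminf_{n\to\infty}\int_Kf_k\dif\mu_n\;=\;\lim_{n\to\infty}\int_D\tilde f_k\dif\mu_n\;=\;\int_Kf_k\dif\mu.\]
Finally, letting $k\to\infty$: by the very definition of the integral of the l.s.c.\ function $f$ with respect to $\mu$ given earlier in the appendix, $\int_Kf_k\dif\mu$ increases to $\int_Kf\dif\mu$, which gives the desired inequality
\[\int_Kf\dif\mu\;\leq\;\liminf_{n\to\infty}\int_Kf\dif\mu_n.\]

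The only genuinely delicate point is the first step: making the weak-$*$ convergence applicable to functions $f_k$ that are only defined on $K$. The technical bridge is Tietze extension plus a bump function, after observing that all $\mu_n$ and $\mu$ are concentrated on $K$, so the extra values of the extension contribute nothing. The case $\int_Kf\dif\mu=+\infty$ requires no separate treatment: the monotone definition forces $\int_Kf_k\dif\mu\to+\infty$, which already drives the liminf to $+\infty$. No boundedness assumption on $\mu_n(K)$ is needed a priori, but it in fact follows automatically from weak-$*$ convergence by testing against a cutoff equal to $1$ on $K$.
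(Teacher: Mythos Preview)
Your proof is correct and follows essentially the same route as the paper: approximate $f$ from below by continuous $f_k$, use $f\geq f_k$ together with weak-$*$ convergence on $f_k$, then let $k\to\infty$ via the monotone definition of $\int_K f\dif\mu$. The only cosmetic difference is that the paper phrases the final passage as an $\epsilon$-argument, and it applies weak-$*$ convergence directly to $f_k$ on $K$ (relying implicitly on the Restriction/Extension-of-a-measure definition given just above in the appendix), whereas you make this explicit via Tietze plus a cutoff.
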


\begin{proof}\label{prop-smi-cvf}
Let $(f_n)$ be an increasing sequence of continuous functions which converges pointwise to $f$. Let $\epsilon >0$. There exists $n\in\N$ such that $\int_K f_n \dif\mu \geq \int_K f \dif\mu-\epsilon$.
There exists $m_0\in \N$ such that for $m\geq m_0$, $\int_K f_n \dif\mu_m\geq \int_K f_n \dif\mu-\epsilon$.
Then : $$\forall m \geq m_0, \int_K f d\mu_m \geq \int_K f_n\dif\mu_m\geq\int_Kf \dif\mu -2\epsilon$$
Thus $\displaystyle\liminf_{m\rightarrow \infty} \int_K f \dif \mu_m \geq \int_K f \dif\mu -2\epsilon$
\end{proof}

\begin{ex}
Let $z\in\C$, the function $t\mapsto\log(1/\abs{z-t})$ is lower semi-continuous, because it is continuous on $t\neq z$ and equal to $+\infty$ on $t=z$.
\end{ex}

\clearpage
\section{Semi-harmonic functions}\label{appendixeSemiharmonic}

\begin{defi}[(super-,sub-)harmonic functions]
    Let $D\subset\C$ be an open subset. A function $f:D\to\R$ is said to be \emph{harmonic} (\resp. \emph{super-harmonic, sub-harmonic}) if it is continuous (\resp. lower/upper semi-continuous) and satisfies the \emph{mean value property}: for all $z\in D$ and a disc $\{\abs{\zeta-z}\leq r\}\subset D$, we have
    \[f(z)=\frac{1}{2\pi}\int_0^{2\pi}f(z+re^{i\theta})\dif\theta\quad(\resp.\;\geq,\leq)\]
\end{defi}

A function is called  \emph{semi-harmonic} if it is harmonic or super-harmonic or sub-harmonic.

\medskip

Intuitively, a \emph{super}-harmonic function takes values \emph{larger} than its local mean values. However, it cannot take too large values because of the lower semi-continuity. 
\[f(z)\leq\liminf_{\zeta\to z}f(\zeta).\]

\begin{rmq}
Let $f:D\to\R$ be a continuous (\resp. lower semi-continuous, upper semi-continuous) function. Then $f$ is harmonic (\resp. super-harmonic, sub-harmonic) if and only if it satisfies the mean (\resp. super-mean, sub-mean) value property : for all $z\in D$, there exists $\delta>0$ such that the disk $\{\abs{\zeta-z}\leq \delta\}\subset D$ and for all $0<r\leq\delta$, we have
    \[f(z)=\frac{1}{2\pi}\int_0^{2\pi}f(z+re^{i\theta})\dif\theta\quad(\resp.\;\geq,\leq)\]
\end{rmq}

\begin{rmq}
    Assume  $f\in C^2(D)$. Then $f$ is super-harmonic if and only if $-\Delta f\geq0$. To understand this claim, we can consider the function
    \[m(r;z)=\frac{1}{2\pi}\int_0^{2\pi}f(z+re^{i\theta})\dif\theta\]
    and use the following formula
    \[\partial_rm(r;z)=\frac{1}{2\pi \abs{r}}\int_{B(z,\abs{r})}\Delta f.\]
\end{rmq}
\begin{proof}
    Consider $f$ as a function from $\R^2$ to $\R$ . Let $\overrightarrow{n}(\theta) = (\cos\theta,\sin\theta)$:
    \[\partial_r f((Re(z),Im(z))+r\mathbf{n}(\theta)) = \overrightarrow{\nabla} f \cdot \overrightarrow{n}(\theta). \]
    By Green's theorem:
    \begin{align*}
        \partial_r m(r;z)  &= \frac{1}{2\pi} \oint_{\partial B(z,r)} \overrightarrow\nabla f \cdot \overrightarrow{n}(\theta) \frac {\dif l}{r}\\
                           &= \frac{1}{2\pi r} \iint_{B(0,r)} \overrightarrow\nabla \cdot \overrightarrow\nabla f \cdot ds.
    \end{align*}
\end{proof}

\begin{ex}
Le $F$ be a holomorphic function on an open subset $D\in\C$. Then for $p>0$, $\abs{F}^p$ and $\log\abs{F}$ are sub-harmonic. Indeed, it is enough to prove this claim on the open set $\{F\neq 0\}$. Since locally $F^p$ (\resp. $\log F$) has a holomorphic branch, we can verify the continuity and apply Cauchy's formula. Then we conclude by using the triangular inequality for integrals (by considering the real part \resp.).
\end{ex}

\begin{prop}\label{propMin2super-harmonic}
    Let $f,g$ be two super-harmonic function on $U$. Then $\min(f,g)$ is also super-harmonic on $U$.
\end{prop}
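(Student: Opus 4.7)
The plan is to verify directly the two conditions in the definition of super-harmonicity: lower semi-continuity and the super-mean value inequality. Both will follow from the corresponding properties of $f$ and $g$ pointwise, with no integration-theoretic subtleties.

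First I would check that $h := \min(f,g)$ is lower semi-continuous. Fix $z_0 \in U$. For any sequence $z_n \to z_0$, one has $h(z_n) \leq f(z_n)$ and $h(z_n) \leq g(z_n)$, so $\liminf_{n} h(z_n) \leq \min\bigl(\liminf_n f(z_n), \liminf_n g(z_n)\bigr)$; but we also get the reverse inequality $\liminf_n h(z_n) \geq \min(f(z_0),g(z_0)) = h(z_0)$ by separating the subsequences along which $h(z_n) = f(z_n)$ and $h(z_n) = g(z_n)$, and using lower semi-continuity of $f$ and $g$ respectively. Hence $h$ is l.s.c.

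Next I would verify the super-mean value inequality. Fix $z \in U$ and a radius $r > 0$ with $\{|\zeta - z| \leq r\} \subset U$. Without loss of generality (swapping the roles of $f$ and $g$ if needed) assume $h(z) = f(z)$. Since $f$ is super-harmonic and $f \geq h$ pointwise on the circle, we obtain
\[
    h(z) = f(z) \geq \frac{1}{2\pi}\int_0^{2\pi} f(z+re^{i\theta})\, d\theta \geq \frac{1}{2\pi}\int_0^{2\pi} h(z+re^{i\theta})\, d\theta,
\]
where the last integral is well-defined because $h$, as a l.s.c.\ function bounded above on the compact circle by $\max(f,g)$ restricted there (in fact, bounded below locally and integrable against the l.s.c.\ convention set up in Appendix \ref{appendixmeasure}), can be integrated in the Lebesgue sense.

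There is essentially no obstacle: the main point to be careful about is simply that the integral of a lower semi-continuous function on a circle makes sense and that the inequality $f \geq h$ passes through the integral, which is immediate by monotonicity of integration. The argument is symmetric in $f$ and $g$, so the case-split by ``without loss of generality'' is harmless.
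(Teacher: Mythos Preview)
Your proof is correct and follows exactly the two-step outline the paper gives: first the minimum of two l.s.c.\ functions is l.s.c., then the super-mean inequality is inherited from whichever of $f,g$ realizes the minimum at the center. The paper's own proof is a two-line sketch with precisely this structure, and you have simply filled in the details. One cosmetic remark: the first displayed inequality in your l.s.c.\ argument (the upper bound $\liminf_n h(z_n)\leq\min(\liminf_n f(z_n),\liminf_n g(z_n))$) is true but irrelevant---only the lower bound $\liminf_n h(z_n)\geq h(z_0)$ is needed, and your subsequence argument for that is fine.
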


\begin{proof}
The minimum of two l.s.c. functions is still a l.s.c. function. Then we conclude by verifying the super-mean property.
\end{proof}

\begin{thm}[Minimum principle for super-harmonic functions]\label{thmMinFnctSurharmnq}
Let $D\subset\C$ be a connected and bounded open subset. Let $f$ be a non-constant super-harmonic function on $D$ such that
\[\liminf_{z\to\zeta}{f(z)}\geq m,\quad\forall\zeta\in\partial D\]
Then $f(z)>m$ for all $z\in D$. Thus, no non-constant super-harmonic function on a connected open set (not necessarily bounded) reaches its  minimum.
\end{thm}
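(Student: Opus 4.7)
The plan is to prove the strict inequality $f(z) > m$ for all $z \in D$ in two steps: first rule out interior minima via an open-closed argument based on the super-mean property, then use boundedness of $D$ to transmit the boundary bound $\liminf \geq m$ to the interior.

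\textbf{Step 1 (no interior minimum).} Set $\mu := \inf_{D} f \in \R \cup \{-\infty\}$ and suppose, toward a contradiction, that some $z_0 \in D$ satisfies $f(z_0) = \mu$ (in particular $\mu > -\infty$). Define $A := \{z \in D : f(z) = \mu\} = \{z \in D : f(z) \leq \mu\}$: the second description together with the lower semi-continuity of $f$ shows $A$ is closed in $D$. To see $A$ is also open, fix $z_1 \in A$ and pick $r_0 > 0$ with $\overline{B(z_1, r_0)} \subset D$; for each $r \in (0, r_0]$ the super-mean property combined with $f \geq \mu$ forces
\[\frac{1}{2\pi}\int_0^{2\pi}\bigl(f(z_1 + r e^{i\theta}) - \mu\bigr)\dif\theta = 0.\]
Applying Proposition~\ref{propIntegralSciPositive} to the non-negative l.s.c.\ function $f - \mu$ on the circle yields $f \equiv \mu$ on $\{\abs{z - z_1} = r\}$, hence on $B(z_1, r_0)$. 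Thus $A$ is open, and by connectedness of $D$ either $A = \emptyset$ or $A = D$; the latter gives $f \equiv \mu$, contradicting the non-constant hypothesis.

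\textbf{Step 2 (boundary bound).} Since the infimum is not attained, $f(z) > \mu$ for every $z \in D$. Choose a minimizing sequence $(z_n) \subset D$; by boundedness of $D$, up to extraction $z_n \to \zeta \in \overline{D}$. If $\zeta$ were in $D$, the lower semi-continuity of $f$ would give $f(\zeta) \leq \liminf_n f(z_n) = \mu$, contradicting that the infimum is not attained; so $\zeta \in \partial D$, and
\[m \leq \liminf_{z \to \zeta} f(z) \leq \lim_n f(z_n) = \mu.\]
Therefore $f(z) > \mu \geq m$ on $D$, proving the main assertion. The final sentence of the theorem is exactly the content of Step 1 on an arbitrary connected (possibly unbounded) open set: the argument there did not use boundedness, so if a super-harmonic function on such a set attains its minimum at some interior point, the clopen set of minimizers must be the full domain and $f$ is constant.

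The main obstacle is the rigorous passage from the super-mean \emph{equality} to the pointwise identity $f \equiv \mu$ on each small circle. Because $f$ is only lower semi-continuous (and not continuous), the elementary ``a non-negative continuous function whose integral vanishes is identically zero'' reasoning does not directly apply; this is precisely what Proposition~\ref{propIntegralSciPositive} in the appendix is designed to handle, guaranteeing that a non-negative l.s.c.\ function whose integral against the normalized arc-length measure on the circle vanishes must itself vanish identically. Once this l.s.c.\ subtlety is absorbed, the remainder of the proof is a standard topological combination of lower semi-continuity, compactness of $\overline{D}$, and connectedness of $D$.
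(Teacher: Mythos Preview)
Your proof is correct and follows essentially the same route as the paper: the clopen argument on $\{f=\mu\}$ using lower semi-continuity for closedness and the super-mean property together with Proposition~\ref{propIntegralSciPositive} for openness, followed by a compactness argument on $\overline{D}$ to push the minimum to the boundary. The only cosmetic difference is that the paper first extends $f$ to a l.s.c.\ function $\tilde f$ on $\overline D$ and invokes Proposition~\ref{propSciMin} to locate the minimum, whereas you extract a convergent minimizing subsequence directly; the content is the same.
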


\begin{proof}
The idea of the proof is to use the super-mean property and an open-closed set argument.
\medskip\par Firstly, we extend $f(z)$ to all points $\zeta\in\partial D$ by $\Tilde{f}(\zeta)=\liminf_{z\to\zeta}{f(z)}$ and $\Tilde{f}|_D=f$. So $\Tilde{f}$ is  a l.s.c. function on $\overline{D}$ and super-harmonic on $D$. Since $D\subset\C$ is bounded and $\overline{D}$ is compact, $\Tilde{f}$ reaches its  minimum $m'=\min_{\Bar{D}}\Tilde{f}$ on $\overline{D}$ (Prop. \ref{propSciMin}). 
\medskip\par Since $f$ is l.s.c., the set $\{f=m'\}=\{f\leq m'\}$ is closed in $D$. On the other hand, for all $\zeta_0\in D$ verifying $f(\zeta_0)=m'$, we have for some $\delta>0$ and all $r\in]0,\delta]$,
\[m'=f(\zeta_0)\geq\frac{1}{2\pi}\int_0^{2\pi}f(\zeta_0+re^{i\theta})\dif\theta\geq m'\]
By Prop. \ref{propIntegralSciPositive}, for all $r\in]0,\delta],\,\theta\in[0,2\pi]$, we have $f(\zeta_0+re^{i\theta})=f(\zeta_0)=m'$, \emph{i.e.} $f\equiv m'$ in a neighborhood of  $\zeta_0$, which shows that $\{f=m'\}$ is an open subset of $D$. Thus, $\{f\leq m'\}=\{f=m'\}$ is open-closed in $D$; it is not $D$ since $f$ is non-constant, so it is empty because $D$ is connected, \emph{i.e.} $f(z)>m',\,\forall z\in D$.
\medskip\par On the other hand, since $m'$ is effectively reached by $\Tilde{f}$, there exists $\zeta_0\in\overline{D}\backslash D=\partial D$ such that $\Tilde{f}(\zeta_0)=m'$. By definition of $\Tilde{f}$ and the hypothesis in the statement of the theorem (finally!), we have
\[\Tilde{f}(\zeta_0)=\liminf_{z\to\zeta_0}f(z)\geq m\].
So that $m'\geq m$. Then we conclude that $f(z)>m,\,\forall z\in D$.
\end{proof}

\begin{cor}[Maximum principle for potentials]\label{appendixthmPrincpMaxPotent}
    Let $\mu$ be a finite positive measure with compact support. If $U^\mu(z)\leq M$ for all $z\in\supp(\mu)$, then it holds for all $z\in\C$.
\end{cor}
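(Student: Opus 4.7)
The plan is to show that the set $A := \{z \in \C : U^\mu(z) > M\}$ is empty. By lower semi-continuity of $U^\mu$ (Example~\ref{exPotentialSuperharmonic}), $A$ is open; by the hypothesis $A \cap \supp(\mu) = \emptyset$, so $U^\mu$ is harmonic on $A$ (again Example~\ref{exPotentialSuperharmonic}). The case $\mu = 0$ is trivial, and otherwise the asymptotic $U^\mu(z) = -\mu(\C)\log|z| + O(1)$ as $|z| \to \infty$ forces $A$ to be bounded.

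Suppose for contradiction that $A \neq \emptyset$, and fix a connected component $D$ of $A$. The strategy is to produce a continuous extension of $U^\mu|_D$ to $\bar D$ with constant boundary value equal to $M$. Granted this, the function $v := M - U^\mu$ is harmonic (hence super-harmonic) on the bounded connected open set $D$, strictly negative on $D$ (since $U^\mu > M$ there), and satisfies $\liminf_{z \to \zeta} v(z) \geq 0$ for every $\zeta \in \partial D$. Applying the minimum principle for super-harmonic functions (Theorem~\ref{thmMinFnctSurharmnq}) with $m = 0$ yields either $v > 0$ on $D$ or $v \equiv 0$ on $D$; either conclusion contradicts $v < 0$ on $D$, and we are done.

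The core step is thus the boundary behavior. The inequality $\liminf_{z \to \zeta,\, z \in D} U^\mu(z) \geq M$ is immediate from $U^\mu > M$ on $D$, so it suffices to establish the matching upper bound $\limsup_{z \to \zeta,\, z \in D} U^\mu(z) \leq M$. At points $\zeta \in \partial D \setminus \supp(\mu)$, $U^\mu$ is harmonic in a full neighborhood of $\zeta$, hence continuous, with $U^\mu(\zeta) \leq M$, so both limits coincide with $U^\mu(\zeta)$. At points $\zeta \in \partial D \cap \supp(\mu)$ one exploits that $U^\mu(\zeta) \leq M < \infty$, which forbids $\mu$ from having an atom at $\zeta$; combined with the super-mean inequality $U^\mu(\zeta) \geq \frac{1}{2\pi}\int_0^{2\pi} U^\mu(\zeta + re^{i\theta})\dif\theta$ (whose right-hand side tends to $U^\mu(\zeta)$ as $r \to 0$) and the harmonicity of $U^\mu$ on $D$, one concludes that $U^\mu$ cannot concentrate values strictly above $M$ on the arcs of $\{|z-\zeta|=r\} \cap D$ without violating super-mean.

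The main obstacle is precisely this last upper-bound at boundary points $\zeta \in \partial D \cap \supp(\mu)$: since super-harmonicity only provides lower semi-continuity, the $\limsup$-control from within $D$ is not automatic and must be extracted from the super-mean inequality together with the harmonicity of $U^\mu$ on $D$. Once this pseudo–upper semi-continuity is in place, the minimum principle (Theorem~\ref{thmMinFnctSurharmnq}) delivers the contradiction in a single line.
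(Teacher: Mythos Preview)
Your overall architecture is fine and parallel to the paper's: both reduce to the minimum principle (Theorem~\ref{thmMinFnctSurharmnq}), the paper on $\C\setminus\supp(\mu)$, you on a component $D$ of $\{U^\mu>M\}$. In either case the entire proof rests on the upper bound $\limsup_{z\to\zeta_0}U^\mu(z)\le M$ as $z$ approaches a point $\zeta_0\in\supp(\mu)$ from outside, and this is precisely where your argument has a gap. The super-mean inequality you invoke only controls the \emph{average} of $U^\mu$ over the circle $|z-\zeta_0|=r$, not the values on the arc of that circle lying in $D$: nothing prevents $U^\mu$ from exceeding $M$ on a short arc in $D$ while being compensated by values well below $M$ on the remainder of the circle (which may meet $\supp(\mu)$ or regions where $U^\mu$ is small). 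Harmonicity of $U^\mu$ on $D$ is of no help since $\zeta_0\notin D$. Your final sentence ``one concludes that $U^\mu$ cannot concentrate values strictly above $M$\ldots'' is therefore the assertion to be proved, not a consequence of what precedes it.

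The paper supplies the missing estimate by a nearest-point comparison: for $z\notin K:=\supp(\mu)$ choose $\zeta=\zeta(z)\in K$ with $|\zeta-z|=\dist(z,K)$; then $|\zeta-t|\le|\zeta-z|+|z-t|\le 2|z-t|$ for every $t\in K$, so $\log\tfrac{|\zeta-t|}{|z-t|}\le\log 2$. Writing $U^\mu(z)-U^\mu(\zeta)=\int_K\log\tfrac{|\zeta-t|}{|z-t|}\dif\mu(t)$ and splitting over $K\cap B(\zeta_0,r)$ and $K\setminus B(\zeta_0,r)$, the near part is $\le(\log 2)\,\mu(B(\zeta_0,r))$ and the far part tends to $0$ as $z\to\zeta_0$. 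Since $U^\mu(\zeta)\le M$, one obtains $\limsup_{z\to\zeta_0}U^\mu(z)\le M+(\log 2)\,\mu(B(\zeta_0,r))$; letting $r\to 0$ and using $\mu(\{\zeta_0\})=0$ (which follows from $U^\mu(\zeta_0)\le M<\infty$) yields the required bound. This concrete inequality is the real content of the proof; once you have it, either your domain $D$ or the paper's $\C\setminus K$ works equally well for the concluding minimum-principle step.
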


\begin{proof}
    Let $K=\supp(\mu)$. Let us define $f(z):=-U^\mu(z)$, which is a harmonic function, \emph{a fortiori} super-harmonic on $\C\backslash K$. It is non-constant and converges to $+\infty$ when $z\to\infty$.
    For all $\zeta_0\in\partial K$, we have
    \begin{equation}\label{eqPrincMaxPotent}
        \liminf_{z\to\zeta_0}f(z)\geq-M.
    \end{equation}
    Then $f$ reaches its minimum if there exists $z\in\C$ such that $U^\mu(z)>M$.
    
    \medskip Now we should prove (\ref{eqPrincMaxPotent}). Let $r>0$. We have
    \[f(\zeta)-f(z)=\int\log\frac{\abs{\zeta-t}}{z-t}\dif\mu(t)=\left(\int_{K\cap B(\zeta_0,r)}+\int_{K\backslash B(\zeta_0,r)}\right)\log\frac{\abs{\zeta-t}}{\abs{z-t}}\dif\mu(t).\]
    Let us choose $\zeta=\zeta(z)\in\arg\min_{\zeta\in K}\abs{\zeta-z}$. Then for all $t\in K$, we have
    \[\frac{\abs{\zeta-t}}{\abs{z-t}}=\frac{\abs{\zeta-z}+\abs{z-t}}{\abs{z-t}}\leq\frac{\abs{t-z}+\abs{z-t}}{\abs{z-t}}=2.\]
    We derive that 
    \[\int_{K\cap B(\zeta_0,r)}\log\frac{\abs{\zeta-t}}{\abs{z-t}}\dif\mu(t)\leq 2\mu(B(\zeta_0,r)).\]
    On the other hand, when $z\to\zeta_0$, we have $\zeta\to\zeta_0$. So,
    \[\int_{K\backslash B(\zeta_0,r)}\log\frac{\abs{\zeta-t}}{\abs{z-t}}\dif\mu(t)\to \mu(B(\zeta_0,r)^c)\times\log(1)=0.\]
    The assumption on $U^\mu(z)$ implies that $f(\zeta)\geq-M>-\infty$. By passing to the limit $z\to\zeta_0$, we have
    \begin{equation}
    \label{ineqPrincMaxPotent}
        \liminf_{z\to\zeta_0}f(z)\geq-M-2\mu(B(\zeta_0,r)).
    \end{equation}
    Since $f\geq-M$ on $K$ by hypothesis, we have $f>-\infty$ everywhere on $\C$ by the definition of $f=-U^\mu(z)$. So
    \[\lim_{r\to0}\mu(B(\zeta_0,r))=\mu(\{\zeta_0\})=0.\]
    Combining with (\ref{ineqPrincMaxPotent}), we conclude the proof of (\ref{eqPrincMaxPotent}).
\end{proof}

We state the following theorem which generalizes the \hyperref[thmMinFnctSurharmnq]{minimum principle}:

\begin{thm}[Generalized minimum principle]\label{thmGenerlPrincpMin}
Let $D\subset\overline{\C}$ be a connected open set such that $\capa(\partial D)>0$. Let $f$ be a non-constant super-harmonic function, lower-bounded in $D$, verifying \emph{quasi-almost everywhere} in $D$ the inequality :
\[\liminf_{z\to\zeta}{f(z)}\geq m.\]
Then we have $f(z)>m$ for all $z\in D$.
\end{thm}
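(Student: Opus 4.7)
The plan is to reduce to the already-proved Theorem~\ref{thmMinFnctSurharmnq} by absorbing the capacity-zero exceptional set into a controlled perturbation of $f$. Let $S:=\{\zeta\in\partial D\,:\,\liminf_{z\to\zeta}f(z)<m\}$; by hypothesis $\capa(S)=0$. First I would produce an auxiliary super-harmonic function $h$ on $\C$ that is harmonic on $D$, bounded below, and takes the value $+\infty$ at every point of $S$. The natural candidate is the logarithmic potential $h=U^\nu$ of a positive finite measure $\nu$ supported in (a compact neighborhood of) $S\subset\partial D$: Example~\ref{exPotentialSuperharmonic} then guarantees super-harmonicity everywhere and harmonicity on $D$, and a suitable additive constant makes $h$ non-negative on any bounded set we care about.

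The key step, which is the main obstacle, is arranging $U^\nu\equiv+\infty$ on $S$. This is the classical Evans--Selberg construction for polar sets: since $\capa(S)=0$, one can exhaust $S$ by compacta $S_n$ with $\capa(S_n)\to0$, take probability measures $\mu_n$ on suitable neighborhoods with Robin constants tending to infinity, and form $\nu=\sum a_n\mu_n$ with $a_n>0$ summable and chosen so that $\sum a_n U^{\mu_n}(\zeta)$ diverges for every $\zeta\in S$. Once $\nu$ is in hand, set $f_\epsilon:=f+\epsilon U^\nu$ for each $\epsilon>0$. This is super-harmonic on $D$, and at every $\zeta\in\partial D$ we have $\liminf_{z\to\zeta}f_\epsilon(z)\geq m$: for $\zeta\notin S$ this uses the hypothesis together with $U^\nu\geq 0$, while for $\zeta\in S$ lower semi-continuity forces $\liminf U^\nu=+\infty$, and since $f$ is lower-bounded, $\liminf f_\epsilon=+\infty\geq m$.

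Applying Theorem~\ref{thmMinFnctSurharmnq} to $f_\epsilon$ yields $f(z)\geq m-\epsilon U^\nu(z)$ on $D$; letting $\epsilon\to 0$ gives $f\geq m$ on $D\setminus E$, where $E:=\{U^\nu=+\infty\}$ is a set of capacity zero, hence of Lebesgue measure zero by Proposition~\ref{propCapa}(f). If $D$ is unbounded (or contains $\infty$), I would first intersect with a large ball $B_R$, verifying that on $\partial B_R\cap D$ the function $f+\epsilon U^\nu$ still satisfies the required boundary condition (using the lower bound on $f$ and the asymptotic behavior $U^\nu(z)=-\log|z|+O(1)$ at infinity, absorbed by $\epsilon$ small enough), and then let $R\to\infty$. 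To extend $f\geq m$ to a point $z_0\in E\cap D$, apply the super-mean inequality on circles around $z_0$: Fubini ensures that for a.e.\ $r$ the bad set $E$ has one-dimensional measure zero on $\{|z-z_0|=r\}$, so $\frac{1}{2\pi}\int_0^{2\pi}f(z_0+re^{i\theta})\,\dif\theta\geq m$, whence $f(z_0)\geq m$.

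Finally, to upgrade the inequality to a strict one, suppose $f(z_0)=m$ at some $z_0\in D$. The super-mean property combined with $f\geq m$ forces equality $f\equiv m$ on every sufficiently small circle centered at $z_0$, so $\{f=m\}$ is open. It is also closed in $D$, being equal to $\{f\leq m\}$ for a lower semi-continuous $f$ satisfying $f\geq m$. Connectedness of $D$ then makes this set either empty or all of $D$; non-constancy of $f$ rules out the latter, yielding $f(z)>m$ for every $z\in D$.
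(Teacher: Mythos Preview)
The paper does not actually give a proof of this theorem: it simply cites Ransford's \emph{Potential Theory in the Complex Plane}, Theorem 3.6.9. Your approach via an Evans potential (build $U^\nu$ with $U^\nu\equiv+\infty$ on the exceptional polar set $S$, perturb to $f_\epsilon=f+\epsilon U^\nu$, apply the ordinary minimum principle, let $\epsilon\to0$, then upgrade to strict inequality by the open--closed argument) is exactly the standard route taken in Ransford, so in spirit your proof matches the intended one.

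There is, however, a genuine gap in your treatment of the unbounded case. You propose to intersect $D$ with a large ball $B_R$ and then invoke Theorem~\ref{thmMinFnctSurharmnq} on $D\cap B_R$, claiming that on the new boundary piece $\partial B_R\cap D$ the function $f+\epsilon U^\nu$ still satisfies $\liminf\geq m$ ``using the lower bound on $f$ and the asymptotic behavior $U^\nu(z)=-\log|z|+O(1)$, absorbed by $\epsilon$ small enough''. This does not work: on $\partial B_R\cap D$ you only know $f\geq L$ for some constant $L$ that may well be strictly less than $m$, and adding $\epsilon U^\nu\approx -\epsilon\log R$ makes things worse, not better. Neither sending $R\to\infty$ nor $\epsilon\to0$ recovers the bound $\geq m$ on this artificial boundary. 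The correct fix is to avoid the cutoff altogether and run the minimum principle directly on domains in $\overline{\C}$: the proof of Theorem~\ref{thmMinFnctSurharmnq} goes through verbatim once you use that a l.s.c.\ function on a compact subset of $\overline{\C}$ attains its infimum, treating $\infty$ via the chart $z\mapsto 1/z$ as in the definition following Proposition~\ref{propPullback}. (If $\infty\in\partial D$, note that $\{\infty\}$ is polar and can be absorbed into $S$; one then needs the Evans potential to blow up at $\infty$ as well, which requires a slight modification of the construction.)

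Two minor points: first, take $\nu$ supported in $\partial D$ (on compacta approximating $S$), not in a ``neighborhood of $S$'' --- otherwise $U^\nu$ need not be harmonic on $D$. Second, once $\supp(\nu)\subset\partial D$, the potential $U^\nu$ is harmonic and hence finite on $D$, so $E\cap D=\emptyset$ and your Fubini argument to extend $f\geq m$ across $E\cap D$ is unnecessary.
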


A proof of the theorem can be found in \cite{ransford1995potential}, Theorem 3.6.9.

\begin{prop}[Backward-pulling by holomorphic mapping]\label{propPullback}
Let $f$ be a non-constant holomorphic function on a connected open subset $D\subset\C$. Let  $D'=f(D)$ (a connected open set as well). Then for any (sub-, super-)harmonic function  $u:D'\to\R$, $u\circ f$ is also (sub-, super-)harmonic.
\end{prop}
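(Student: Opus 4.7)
The plan is to reduce to the subharmonic case and then proceed by a smooth approximation argument. Since $u$ is superharmonic if and only if $-u$ is subharmonic, and harmonic if and only if both, it suffices to prove the proposition for subharmonic $u$. (For the harmonic case, one can alternatively give the more direct argument: locally near any point, the harmonic $u$ is the real part of a holomorphic function $g$, so $u \circ f = \operatorname{Re}(g \circ f)$ is the real part of a holomorphic function, hence harmonic.) The upper semi-continuity of $u \circ f$ is immediate, since $f$ is continuous and composing an upper semi-continuous function with a continuous function preserves upper semi-continuity.

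\paragraph{}
For the sub-mean value inequality, I would first treat the case where $u \in C^{2}(D')$. Subharmonicity is then equivalent to $\Delta u \geq 0$. Using Wirtinger derivatives and the identity $\Delta = 4\,\partial_{z}\partial_{\bar z}$, together with the Cauchy--Riemann relation $\partial_{\bar z} f = 0$ for the holomorphic map $f$, a direct chain rule computation yields
$$\Delta(u\circ f)(z) \;=\; |f'(z)|^{2}\,(\Delta u)(f(z)) \;\geq\; 0,$$
so $u\circ f$ is $C^{2}$ and subharmonic on $D$.

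\paragraph{}
For a general upper semi-continuous subharmonic $u$, I would invoke the standard regularization procedure from potential theory. On any relatively compact subdomain $D'' \subset\subset D'$, and for sufficiently small $\epsilon > 0$, the mollifications $u_{\epsilon} := u \ast \rho_{\epsilon}$, where $\rho_{\epsilon}$ is a smooth radial bump supported in the disk of radius $\epsilon$, are $C^{\infty}$, still subharmonic, and decrease pointwise to $u$ as $\epsilon \downarrow 0$. Each composition $u_{\epsilon}\circ f$ is then subharmonic by the smooth case above, and since $u_{\epsilon}\circ f \downarrow u\circ f$, the monotone convergence theorem applied to both sides of the sub-mean inequality for $u_{\epsilon}\circ f$ transfers the inequality to $u\circ f$. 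Combined with the upper semi-continuity established in the first paragraph, this gives subharmonicity of $u\circ f$.

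\paragraph{}
The main obstacle is the regularization lemma used in the previous paragraph: one must check that $u_{\epsilon}$ remains subharmonic (which follows by applying Fubini's theorem to the sub-mean inequality for $u$) and that $u_{\epsilon} \downarrow u$ pointwise (which reduces to showing that the mean of $u$ over circles of radius $r$ around a fixed point is a non-decreasing function of $r$, a consequence of the sub-mean property, together with the upper semi-continuity of $u$ to identify the limit). At the level of exposition of this paper, this regularization result can safely be cited as a standard fact from potential theory rather than proved from scratch.
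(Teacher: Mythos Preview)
Your proof is correct and follows the standard route. The paper itself does not prove this proposition: it simply cites Ransford's \emph{Potential Theory in the Complex Plane} (Corollary 2.4.3) and moves on. What you have written is essentially the argument one finds there --- reduce to the subharmonic case, handle $C^2$ functions via the chain-rule identity $\Delta(u\circ f)=|f'|^{2}\,(\Delta u)\circ f$, and then pass to general subharmonic $u$ by radial mollification, using that a decreasing limit of subharmonic functions is subharmonic. Your final paragraph correctly identifies the one non-trivial ingredient (monotonicity of circular means and the resulting $u_\epsilon\downarrow u$) and, like the paper, is content to cite it rather than reprove it.
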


A proof of the proposition can be found (\cite{ransford1995potential}, Corollary 2.4.3).

This proposition allows us to extend the definition of (sub-, super-)harmonic functions to the Riemann sphere $\overline{\C}=\C\cup\{\infty\}$:

\begin{defi}[(Super-, sub-)harmonic function on Riemann sphere]
A function $f(z)$ defined on a neighborhood of $\infty$ ($\infty$ included) is said to be \emph{(sub-,super-)harmonic on a neighborhood of $\infty$} if $f(1/z)$ is (sub-, super-)harmonic at point $0$. A function $f:\overline{\C}\supset D\to\R$ is said to be \emph{(sub-,super-)harmonic} if all points of $D$ have a neighborhood where $f$ is \emph{(sub-,super-)harmonic}.
    
\end{defi}

\section{Some technical proofs}

\subsection{Proof of proposition \ref{propConditionConeReg}}\label{propConditionConeRegProof}

\begin{prop}
	Let $K\subset\C$ be a compact subset with $\capa{(K)}>0$. If $\Omega_K$ verifies the cone condition, then $\Omega_K$ is regular.
\end{prop}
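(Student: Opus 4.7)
The plan is to show that every $\zeta_0 \in \partial\Omega_K$ is a regular point, i.e.\ that $g_K(z,\infty) = V_K - U^{\mu_K}(z)$ (Thm.~\ref{thmPotent&Green}) tends to $0$ as $z \to \zeta_0$ from inside $\Omega_K$. Fix such a $\zeta_0$; by the cone condition choose $\zeta_1 \neq \zeta_0$ with $J := [\zeta_0, \zeta_1] \subset \C \setminus \Omega_K$, and, after translating and rotating (operations under which $g_K$ behaves covariantly by Corollary~\ref{cor-biholo}), assume $\zeta_0 = 0$ and $J = [0, L] \subset \R_+$. Then for every $\delta \in (0, L)$, the set $D := \Omega_K \cap B(0, \delta)$ avoids the half-line $[0, +\infty)$.

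The central idea is a \emph{barrier comparison}. Using the branch of $\sqrt{\cdot}$ with cut along $[0, +\infty)$ (so $\sqrt{re^{i\theta}} = \sqrt{r}\,e^{i\theta/2}$ for $\theta \in (0, 2\pi)$), set $b(z) := \mathrm{Im}(\sqrt{z})$; then $b$ is harmonic and strictly positive on $\C \setminus [0, +\infty) \supset D$, and $b(z) \to 0$ as $z \to 0$. On $D$ the function $g_K$ is harmonic, nonnegative and bounded above by some $M < \infty$, while Frostman's theorem (Thm.~\ref{thmFrostman}) gives $g_K(\zeta) = 0$ for quasi-every $\zeta \in \partial\Omega_K$. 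If one can pick $C > 0$ so that $Cb \geq M$ on $\partial B(0, \delta) \cap \overline{\Omega_K}$, then $Cb - g_K$ is harmonic on $D$, bounded below, and satisfies $\liminf_{z \to \zeta}(Cb - g_K)(z) \geq 0$ quasi-a.e.\ on $\partial D$. The generalized minimum principle (Thm.~\ref{thmGenerlPrincpMin}) then yields $g_K \leq Cb$ on $D$, and letting $z \to 0$ gives $g_K(z) \to 0$, so $\zeta_0$ is regular; since $\zeta_0 \in \partial\Omega_K$ was arbitrary, $\Omega_K$ is regular.

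The main obstacle is the uniform lower bound for the barrier on the arc $\partial B(0, \delta) \cap \overline{\Omega_K}$: the function $b$ vanishes at the corner point $(\delta, 0) \in J$, and $\Omega_K$ may accumulate there, so \emph{a priori} the infimum of $b$ on this arc is zero and no such $C$ exists. The standard remedy is to use a barrier adapted to the whole segment $J$ rather than to the half-line $[0,+\infty)$: for instance, compose the square root with a Möbius transformation sending $\zeta_1$ to $\infty$, so that the branch cut becomes exactly $J$ and the modified barrier $\tilde b$ is uniformly positive on $\partial B(0,\delta) \cap \overline{\Omega_K}$, which stays at positive distance from $J$ except near $\zeta_0$ itself. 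An equivalent (and perhaps more robust) route is to bypass the explicit construction via a Wiener-type thickness criterion, using the capacity of a segment (Ex.~\ref{capasegment}) to obtain $\capa(K^{\ce} \cap \overline{B}(\zeta_0, r)) \geq \min(r, L)/4$ for all $r > 0$, which is a quantitative density of $K^{\ce}$ near $\zeta_0$ known to be sufficient for regularity.
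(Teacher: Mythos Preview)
Your approach is the same as the paper's --- a barrier comparison feeding into the generalized minimum principle (Thm.~\ref{thmGenerlPrincpMin}) --- and you correctly diagnose the defect of the naive barrier $b(z)=\mathrm{Im}\sqrt{z}$ and the right cure. The paper's barrier is exactly your ``M\"obius fix'' in a slightly different analytic dress: it takes
\[
\chi(z)=\Re\!\left(\frac{1}{\log\dfrac{z-\zeta_0}{z-\zeta_1}}\right),
\]
which is positive and harmonic on $\{|(z-\zeta_0)/(z-\zeta_1)|<1\}\setminus[\zeta_0,\zeta_1]$, tends to $0$ as $z\to\zeta_0$, and --- because the branch cut is the segment $[\zeta_0,\zeta_1]$ rather than a half-line --- is bounded away from $0$ on the annular arc $\{\delta/2<|z-\zeta_0|<\delta\}\cap\Omega_K$ for small $\delta$. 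One point you gloss over that the paper treats explicitly: $D=\Omega_K\cap B(\zeta_0,\delta)$ need not be connected, so Thm.~\ref{thmGenerlPrincpMin} cannot be applied to it directly. The paper fixes this by truncating the barrier via $\tilde\chi=\min(\chi/A,1)$, extending it by $1$ to all of $\Omega_K$, and then working on $\Omega_K\cap B(\zeta_0,R)$ for $R$ large enough that this set is connected (it contains the connected annulus outside $K$). Finally, the Wiener-type thickness route you mention is perfectly legitimate but lies outside the paper's toolkit; no such criterion is proved here, so within this text only the explicit barrier argument is available.
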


\begin{proof}
	We shall prove that for all $\zeta_0\in\partial\Omega_K$,
	\[\lim_{z\to\zeta_0}g_K(z,\infty)=0\]
	\par (i) First, let $\chi(z)$ be a positive super-harmonic function defined on a non-empty connected open set $\Omega_0=\Omega_K\cap B(\zeta_0,\delta)$ such that
	\[A=\inf\left\{\chi(z):z\in\Omega_K,\frac{\delta}{2}<\abs{z-\zeta_0}<\delta\right\}>0\]
	\[\lim_{z\to\zeta_0}\chi(z)=0.\]

	Since $g_K(\cdot,\infty)$ is bounded on $\Omega_0$ (Thm. \ref{thmGreen}), one can find a large enough $M>0$ such that $MA>g_{K}(\cdot,\infty)$ on $\Omega_0$; thus for all $\zeta\in\Omega_K\cap\partial B(\zeta_0,\delta)$, we have
	\[\liminf_{z\to\zeta}(M\chi(z)-g_{K}(z,\infty))\geq0;\]
	we also deduce that $M\chi-g_K(\cdot,\infty)$ is \emph{bounded from below} on $\Omega_0$.
	Also, because $\chi$ is positive, for all $\zeta\in\partial\Omega_K$ such that $\abs{\zeta-\zeta_0}\leq\delta$, we have
	\[\liminf_{z\to\zeta}\chi(z)\geq0,\]
	then we have \emph{q.-a.e.} in $\partial\Omega_K$,
	\[\liminf_{z\to\zeta}(M\chi(z)-g_{K}(z,\infty))\geq0.\]
	On the other hand, $\chi$ is super-harmonic on $\Omega_0$,
	so $M\chi-g_K(\cdot,\infty)$ is super-harmonic because $g_K(\cdot,\infty)$ is harmonic.
	Observing that
	\[\partial\Omega_0\subset\{\zeta\in\partial\Omega_K:\abs{\zeta-\zeta_0}\leq\delta\}\cup\left\{\zeta\in\Omega_K:\abs{\zeta-\zeta_0}=\delta\right\}\]
	and that (Prop. \ref{propCapa}(c)) $\capa(\partial \Omega_0)=\capa(\overline{\Omega_0})>0$ since $\Omega_0$ is open and non-empty (Cor. \ref{corPropCapa}(a)).
	By applying the \hyperref[thmGenerlPrincpMin]{generalized minimum principle} on $\Omega_0$, we have for all $z\in\Omega_0$,
	\[M\chi(z)-g_{K}(z,\infty)\geq0,\]
	therefore,
	\[0=\lim_{z\to\zeta_0}M\chi(z)\geq\limsup_{z\to\zeta_0}g_K(z,\infty)\geq0\]
	\[\lim_{z\to\zeta_0}g_K(z,\infty)=0.\]
	\par (ii) Let $\chi$ be a positive super-harmonic function on $\Omega_1=\Omega_K\cap B(\zeta_0,\delta)\neq\emptyset$ such that
	\[A=\inf\left\{\chi(z):z\in\Omega_K,\frac{\delta}{2}<\abs{z-\zeta_0}<\delta\right\}>0\]
	\[\lim_{z\to\zeta_0}\chi(z)=0\]
	For $z\in\Omega_K$, define
	\[\tilde{\chi}(z)=
		\begin{cases}
			\min\left(\frac{\chi(z)}{A},1\right) & z\in\Omega_1\\
			1 & z\in\Omega_K\backslash\Omega_1
		\end{cases}\]
	By definition, $0<\tilde{\chi}\leq1$. As the minimum of two super-harmonic functions, $\tilde{\chi}$ is also super-harmonic on $\Omega_1$; $\tilde{\chi}\equiv1$ is super-harmonic on $\Omega_K\cap\{\abs{z-\zeta_0}>\frac{\delta}{2}\}$; therefore, $\tilde{\chi}$ is super-harmonic on $\Omega_K$. Furthermore, we have
	\[\lim_{z\to\zeta_0}\tilde{\chi}(z)=0.\]
	Take $R>2\delta$ large enough such that $\partial\Omega_K\subset B(\zeta_0,R)$, then $\Omega_0=\Omega_K\cap B(\zeta_0,R)\neq\emptyset$ is \emph{connected} because $\Omega_K$ is \emph{connected}. Also, we have
	\[\inf\left\{\chi(z):z\in\Omega_K,\frac{R}{2}<\abs{z-\zeta_0}<R\right\}=1>0.\]
	We have reduced our proof to the case (i) with $(\chi,\delta):=(\tilde{\chi},R)$.
	\medskip\par (iii) For all $\zeta_0\in\partial\Omega_K$, we shall prove the existence of such a function $\chi$ as in (ii) (for some $\delta>0$). We can take the composition of a M\"obius transformation and an analytic branch of $\Re(1/\log w),\abs{w}<1$.

	More precisely, since the cone condition is verified, one can choose $\zeta_1\neq\zeta_0$ such that the segment $[\zeta_0,\zeta_1]\subset\C\backslash\Omega_K$. Then take
	\[\chi(z)=\Re\left(1\Big/\log\frac{z-\zeta_0}{z-\zeta_1}\right),\quad z\in\Omega_K,\abs{\frac{z-\zeta_0}{z-\zeta_1}}<1\]
	for some well-chosen $\delta$. Taking Prop. \ref{propPullback} into account, the verification of this construction will be direct.
\end{proof}

\begin{rmq}\label{rmqReg}
	Notice that it is only in step (iii) that we have used the cone condition, which aimed at constructing a pair $(\delta,\chi)$ for (ii); therefore, without the cone condition, if one can construct a pair $(\delta,\chi)$ for all $\zeta_0\in\partial\Omega_K$ verifying the conditions in (ii),
	then $\Omega_K$ is still regular.
\end{rmq}
\clearpage

\subsection{Proof of theorem \ref{thmAppliHolom}}

\begin{thm}
	Let $K_1,K_2\subset\C$ be two non-empty compact subsets. Let $f:\Omega_{K_1}\cup\{\infty\}\to\Omega_{K_2}\cup\{\infty\}$ be a non-constant holomorphic mapping such that $f(\infty)=\infty$.
	Then,
	\begin{enumerate}[label=(\alph*)]
		\item $\abs{A_f}\capa(K_1)^{n_f}\geq\capa(K_2)$.
		\item If we assume in addition :
			\begin{enumerate}[label=(\roman*)]
				\item $\capa(K_2)>0$;
				\item $f^{-1}(\infty)=\{\infty\}$;

				\item $\Omega_{K_2}$ is regular;
				\item $f$ can be continuously extended to the boundaries $\partial\Omega_{K_1}\to\partial\Omega_{K_2}.$
			\end{enumerate}
			
			Then, $\capa(K_1)$>0, $\Omega_{K_1}$ is regular and 
			\[g_{K_1}(\cdot,\infty)=\frac{1}{n_f}g_{K_2}(f(\cdot),\infty)\]
			\[\abs{A_f}\capa(K_1)^{n_f}=\capa(K_2).\]
	\end{enumerate} 
\end{thm}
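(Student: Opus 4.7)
The strategy hinges on the uniqueness characterisation of Green's function (Theorem~\ref{thmGreen}): we compare $g_{K_2}\circ f$ to $n_f\,g_{K_1}(\cdot,\infty)$ and exploit the generalised minimum principle (Theorem~\ref{thmGenerlPrincpMin}). Set $G(z):=g_{K_2}(f(z),\infty)$, which is harmonic on $\Omega_{K_1}\setminus f^{-1}(\infty)$ by Proposition~\ref{propPullback} applied to $g_{K_2}(\cdot,\infty)$, and which extends to a superharmonic function on $\Omega_{K_1}\cup\{\infty\}$ with logarithmic poles to $+\infty$ at the points of $f^{-1}(\infty)$. The asymptotic of Theorem~\ref{thmPotent&Green}(1) combined with $f(z)\sim A_fz^{n_f}$ gives $G(z)=n_f\log\abs{z}+\log\abs{A_f}+V_{K_2}+o(1)$ as $z\to\infty$.

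For part~(a) I may assume $\capa(K_2)>0$. Suppose first that $\capa(K_1)>0$ and set $h:=G-n_f\,g_{K_1}(\cdot,\infty)$, a superharmonic function on $\Omega_{K_1}\cup\{\infty\}$ that extends continuously at $\infty$ to the value $\log\abs{A_f}+V_{K_2}-n_f V_{K_1}$. At the boundary, $G\geq 0$ (Corollary~\ref{corPositiveFonctGreen}) and $g_{K_1}(z,\infty)\to 0$ quasi-almost everywhere (Theorem~\ref{thmGreen}(3)), so $\liminf_{z\to\zeta}h(z)\geq 0$ q-a.e.\ on $\partial\Omega_{K_1}$. The generalised minimum principle forces $h\geq 0$ on $\Omega_{K_1}\cup\{\infty\}$; evaluating at $\infty$ and exponentiating yields $\abs{A_f}\capa(K_1)^{n_f}\geq\capa(K_2)$. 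The remaining case $\capa(K_1)=0$ is handled by approximation: applying the inequality just obtained to the neighbourhoods $K_1^{\epsilon}=\{z:\dist(z,K_1)\leq\epsilon\}$, which have $\Omega_{K_1^\epsilon}\subset\Omega_{K_1}$ and positive capacity, and letting $\epsilon\to 0^+$ via Proposition~\ref{propCapa}(d), one deduces $\capa(K_2)=0$, so the inequality still holds trivially.

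For part~(b), the additional hypotheses make $\tilde g:=\frac{1}{n_f}G$ a natural candidate for $g_{K_1}(\cdot,\infty)$: condition~(ii) removes all finite poles so $\tilde g$ is harmonic on $\Omega_{K_1}$ and bounded off any neighbourhood of $\infty$; the asymptotic above shows $\tilde g(z)-\log\abs{z}$ is bounded near $\infty$; and condition~(iv) combined with the regularity of $\Omega_{K_2}$ ensures that for every $\zeta_0\in\partial\Omega_{K_1}$ the continuous extension of $f$ sends $\zeta_0$ to a regular point $f(\zeta_0)\in\partial\Omega_{K_2}$, whence $\tilde g(z)\to 0$ as $z\to\zeta_0$ for \emph{every} boundary point and $\Omega_{K_1}$ is regular. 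Granted that this forces $\capa(K_1)>0$, the uniqueness clause of Theorem~\ref{thmGreen} gives $g_{K_1}(\cdot,\infty)=\tilde g=\frac{1}{n_f}g_{K_2}(f(\cdot),\infty)$; comparing the constant terms at $\infty$ via Theorem~\ref{thmPotent&Green}(1) yields $V_{K_1}=(\log\abs{A_f}+V_{K_2})/n_f$, equivalent to $\abs{A_f}\capa(K_1)^{n_f}=\capa(K_2)$.

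The main obstacle is the implicit step in part~(b): rigorously deducing $\capa(K_1)>0$ from the mere existence of a function $\tilde g$ satisfying the three properties of Theorem~\ref{thmGreen}. This is a converse to the existence clause proved there; the simplest route is probably to use $\tilde g$ to construct an explicit measure on $\partial\Omega_{K_1}$ of finite logarithmic energy (via its boundary behaviour), thereby bounding $V_{K_1}$ from above. A secondary delicate point is that the boundary transfer requires hypothesis~(iv) precisely to guarantee that the continuous extension of $f$ maps $\partial\Omega_{K_1}$ into $\partial\Omega_{K_2}$, rather than letting boundary points escape into the interior of $K_2$, where the regularity of $\Omega_{K_2}$ would be useless.
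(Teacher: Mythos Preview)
Your argument is essentially the paper's, and is correct, but you have misidentified the ``main obstacle'' in part~(b). The fact that $\capa(K_1)>0$ is not a difficulty at all: it follows immediately from part~(a) together with hypothesis~(i), since $\abs{A_f}\capa(K_1)^{n_f}\geq\capa(K_2)>0$. There is no need to build a measure of finite energy from $\tilde g$; once $\capa(K_1)>0$ is secured this way, you may invoke the uniqueness clause of Theorem~\ref{thmGreen} exactly as you propose (or, equivalently, apply the generalised minimum principle to both $h$ and $-h$, which is how the paper phrases it).

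One small technical point you skipped in part~(a): before applying Theorem~\ref{thmGenerlPrincpMin} you must check that $h$ is \emph{lower-bounded} on $\Omega_{K_1}\cup\{\infty\}$. This holds because $h$ is bounded near $\infty$ by the asymptotic expansion, while away from $\infty$ the term $n_f\,g_{K_1}(\cdot,\infty)$ is bounded (Theorem~\ref{thmGreen}(1)) and $G\geq 0$; the paper makes this explicit.
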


\begin{proof}
	(a). We can assume that $\capa(K_2)>0$ without loss of generality. Then $g_{K_2}(\cdot,\infty)>0$ is harmonic on  $\Omega_{K_2}$.
	So that $g_{K_2}(f(\cdot),\infty)>0$ is super-harmonic (Prop. \ref{propPullback}) on $\Omega_{K_1}$ (and it is harmonic if $f^{-1}(\infty)=\{\infty\}$). Furthermore, we have $g_{K_2}(w,\infty)=\log\abs{w}+V_{K_2}+o(1)$ when $w\to\infty$. Then,
	\[g_{K_2}(f(z),\infty)=\log\abs{f(z)}+V_{K_2}+o(1)=n_f\log\abs{z}+\log\abs{A_f}+V_{K_2}+o(1),\quad z\to\infty\]
	Assume first that $\capa(K_1)>0$. Let us define
	\begin{equation*}\label{defh(z)}
	h(z):=g_{K_2}(f(z),\infty)-n_fg_{K_1}(z,\infty).
	\end{equation*}
	Then $h$ is super-harmonic on $\Omega_{K_1}$, and even on $\Omega_{K_1}\cup\{\infty\}$ if we remove this isolated singularity by defining
	\[h(\infty):=\log\abs{A_f}+V_{K_2}-n_fV_{K_1}.\]
	According to Thm. \ref{thmGreen}, $\lim_{z\to\zeta}g_{K_1}(z,\infty)=0$ for \emph{quasi-almost every} $\zeta\in\partial\Omega_{K_1}$.
	So, we have for \emph{quasi-almost every} $\zeta\in\partial\Omega_{K_1}$
	\[\liminf_{z\to\zeta}h(z)\geq0.\]
	Besides, we have seen that $h(z)$ in bounded on a neighborhood of $\infty$; beyond this neighborhood , $g_{K_1}(\cdot,\infty)$ is bounded. Thus $h(z)$ is lower-bounded.
	So, the \hyperref[thmGenerlPrincpMin]{generalized minimun principle} implies that either $h(z)\equiv0$ or $h(z)>0$ on $\Omega_{K_1}$. In particular, 
	\[h(\infty)=\log\abs{A_f}+V_{K_2}-n_fV_{K_1}\geq0.\]
	\par Henceforth, we no longer assume $\capa(K_1)>0$. For $\epsilon>0$, let  $K_1^\epsilon=\{z\in\C:\dist(z,K_1)\leq\epsilon\}$, which has non-empty interior. Thus, $\capa(K_1^\epsilon)>0$ according to Cor. \ref{corPropCapa}(a). Then we have $\abs{A_f}\capa(K_1^\epsilon)^{n_f}\geq\capa(K_2)$ by previous arguments.
	Applying  Prop. \ref{propCapa}(d) to the sequence $(K_1^{1/n})$, we have
	\[\abs{A_f}\capa(K_1)^{n_f}\geq\capa(K_2)\]
	\par (b). We have $\capa(K_1)>0$ by (a). By the continuous extension theorem, we have that $\lim_{z\to\zeta_0}f(z)$ exists and belongs to  $\Omega_{K_2}$ for all $\zeta_0\in\partial\Omega_{K_1}$. So the regularity of $\Omega_{K_2}$ implies that
	\[\lim_{z\to\zeta_0}g_{K_2}(f(z),\infty)=0,\quad\forall\zeta_0\in\partial\Omega_{K_1}\]

We keep the notations of (a). We remind that $h(z)$ is bounded on a neighborhood $U$ of $\infty$;
The conditions (ii) and (iv) imply that $f(U)$ is included in the complementary of a neighborhood of $\infty$ in $\Omega_{K_2}$. Then $g_{K_2}(f(\cdot),\infty)$ is bounded in $\Omega_{K_1}\backslash U$. We have then shown that $h(z)$ is a harmonic function which is bounded on $\Omega_{K_1}\cup\{\infty\}$ and verifies 
\[\lim_{z\to\zeta}h(z)=0.\] for 
\emph{quasi-almost every} $\zeta\in\partial\Omega_{K_1}$.
By the \hyperref[thmGenerlPrincpMin]{generalized minimum principle} applied to $h$ and to $-h$, we have $h(z)\equiv0$, in particular $h(\infty)=0$. So, we have
	\[V_{K_1}=\frac{1}{n_f}(\log\abs{A_f}+V_{K_2})\]
	\[\capa(K_1)=\left(\frac{\capa(K_2)}{\abs{A_f}}\right)^{1/{n_f}}.\]
\end{proof}

\end{appendix}

\newpage
\nocite{*}
\bibliographystyle{unsrt-fr}
\bibliography{biblio}

\end{document}